\newcommand{\e}{\varepsilon}
\renewcommand{\phi}{\varphi}
\newcommand{\id}{\one}
\newcommand{\ssup}[1] {{\scriptscriptstyle{({#1}})}}
\newcommand{\one}{{\mathbf 1}}
\newcommand{\Prob}{{\mathrm{Prob}}}
\newcommand{\R}{\mathbb R}
\newcommand{\Z}{\mathbb Z}
\newcommand{\N}{\mathbb N}
\newcommand{\E}{\mathbb E}
\renewcommand{\P}{\mathbb P}
\newcommand{\Za}{Z^{\ssup 1}_t}
\newcommand{\Zb}{Z^{\ssup 2}_t}
\newcommand{\Zc}{Z^{\ssup i}_t}
\newcommand{\Zaa}{\hat Z^{\ssup 1}_t}
\newcommand{\Zbb}{\hat Z^{\ssup 2}_t}
\newcommand{\Zcc}{\hat Z^{\ssup i}_t}
\newtheorem{theorem}{Theorem}[section]
\newtheorem{lemma}[theorem]{Lemma}
\newtheorem{prop}[theorem]{Proposition}
\newcommand{\heap}[2]  {\genfrac{}{}{0pt}{}{#1}{#2}}
\newcounter{remnr}
\newenvironment{remark}{\refstepcounter{remnr}
{\sf Remark~\arabic{remnr}.\ }\nopagebreak  }
{\nopagebreak {\hfill{$\diamond$}}\\ }
\renewcommand{\phi}{\varphi}
\renewcommand{\P}{\mathbb{P}}
\renewcommand{\E}{\mathbb{E}}
\numberwithin{equation}{section}
\begin{document}

\title[Delocalising the PAM through partial duplication]{Delocalising the parabolic Anderson model \\ through partial duplication of the potential}
\author{Stephen Muirhead$^1$}
\address{$^1$Mathematical Institute, University of Oxford}
\email{muirhead@maths.ox.ac.uk}
\author{Richard Pymar$^2$}
\address{$^2$Department of Statistics, University College London (Currently: Department of Economics, Mathematics and Statistics, Birkbeck, University of London)}
\email{r.pymar@bbk.ac.uk}
\author{Nadia Sidorova$^3$}
\address{$^3$Department of Mathematics, University College London}
\email{n.sidorova@ucl.ac.uk}
\subjclass[2010]{60H25 (Primary) 82C44, 60F10 (Secondary)}
\keywords{Parabolic Anderson model, localisation}
\thanks{The first author was supported by the Engineering \& Physical Sciences Research Council (EPSRC) Fellowship EP/M002896/1 held by Dmitry Belyaev. The second author was supported by the EPSRC Grant EP/M027694/1 held by Codina Cotar. The authors thank an anonymous referee for helpful comments and corrections.}
\date{\today}

\begin{abstract}
The parabolic Anderson model on $\Z^d$ with i.i.d.\ potential is known to completely localise if the distribution of the potential is sufficiently heavy-tailed at infinity. In this paper we investigate a modification of the model in which the potential is partially duplicated in a symmetric way across a plane through the origin. In the case of potential distribution with polynomial tail decay, we exhibit a surprising phase transition in the model as the decay exponent varies. For large values of the exponent the model completely localises as in the i.i.d.\ case. By contrast, for small values of the exponent we show that the model may delocalise. More precisely, we show that there is an event of non-negligible probability on which the solution has non-negligible mass on two sites.
\end{abstract}
\maketitle


\section{Introduction}

\subsection{Delocalising the parabolic Anderson model}
Given a potential field $\xi : \Z^d \to \R$, the parabolic Anderson model (PAM) is the solution to the Cauchy problem with localised initial condition 
\begin{align}
\label{eq:PAM} \partial_t u(t,z) &=\Delta u(t,z)+\xi(z)u(t,z), & (t,z)\in (0,\infty)\times \Z^d,\\
\nonumber u(0,z)&=\one_{\{0\}}(z), & z\in\Z^d,
\end{align} 
where $\Delta$ is the discrete Laplacian acting on functions $f:\Z^d\to\R$ by
\begin{align*}
(\Delta f)(z)=\sum_{|y-z|=1}(f(y)-f(z)), \qquad z\in\Z^d, 
\end{align*}
with $|\cdot|$ the standard $\ell_1$ distance. The PAM models the competition between smoothing effects, generated by the Laplacian, and roughening effects, generated by the potential. It is well known that if the potential $\xi$ is sufficiently inhomogeneous, the PAM may undergo a process of \textit{localisation} in which its solution is eventually concentrated, at typical large times, on a small number of spatially disjoint clusters of sites. Indeed, if $\xi$ is an i.i.d.\ random field with the law of $\xi(\cdot)$ sufficiently heavy-tailed at infinity, the solution is known to eventually concentrate on a single site with overwhelming probability, i.e.\ there exists a $\Z^d$-valued process $Z_t$ such that, as $t \to \infty$,
\[   \frac{u(t, Z_t)}{\sum_{z \in \Z^d} u(t, z)} \to 1 \quad \text{in probability.} \]
In this case we say that the PAM \textit{completely localises}.

\smallskip

While there are many results in the literature establishing localisation in the PAM in various settings (see Section~\ref{s:PAM} for an overview), our understanding of the \textit{absence} of localisation is much less well-developed. In the case that the potential $\xi$ is a random field, there are at least two features of $\xi$ which may prevent complete localisation in the PAM. First, the potential may be too homogeneous on large scales -- too close to a constant potential -- for sharp peaks in the solution to form. Second, even if the potential is sufficiently inhomogeneous, complete localisation may be prevented by the presence of `duplicated' regions in which the potential is very similar; in this case, the solution may have no reason to favour one such region over another.

\smallskip

This paper is motivated by the following question:
\begin{quote}
\textit{Given a random potential for which the PAM completely localises, what kind of `duplication' of the potential will cause complete localisation to fail?}
\end{quote}

Of course, there are trivial ways to prevent complete localisation by introducing duplication. For instance, if the potential is symmetric about some plane through the origin then $u(t,\cdot)$ is also symmetric about this plane, and so complete localisation cannot occur. This paper considers a model of \textit{partial duplication} in which we pick a fraction $p \in (0, 1)$ of the sites to duplicate across the plane of symmetry. It turns out that this model exhibits a rich phenomenon of delocalisation; indeed, if the potential is i.i.d.\ with Pareto distribution (i.e.\ with polynomial tail decay), we show that the model exhibits a phase transition in the Pareto parameter. 

\subsection{Localisation in the PAM}
\label{s:PAM}
The study of localisation in the PAM has received much attention in recent years. This began with the seminal paper \cite{GM90} and is by now well-understood, see \cite{GW05,Konig16,Moerters11} for surveys. In the i.i.d.\ case, for a wide class of potentials with unbounded tails it is known that the solution to the PAM is concentrated at typical large times on a small number of spatially disjoint clusters of sites, known as \textit{islands}. The shape of the potential and the solution $u(t,\cdot)$ on these islands was first studied in \cite{GKM07} for the case of double-exponential potentials. More recently, it has been shown that for sufficiently heavy-tailed potentials (Pareto \cite{KLMS}, exponential \cite{LM12}, Weibull \cite{FM,ST}), the solution exhibits the strongest possible form of localisation: complete localisation. In \cite{MP} this was shown to also be the case for a model that replaced the Laplacian with the generator of a trapped random walk. By contrast, in very recent work \cite{BKS16} it has been shown that in the double-exponential case the PAM localises on a single connected island, rather than on a single site. This has confirmed the long standing conjecture that, in the i.i.d.\ case, potentials with double-exponential tail decay form the boundary of the complete localisation universality class.

\smallskip

The model we consider is an example of the PAM in a random potential that has spatial correlation. To the best of our knowledge, the only previous work that has considered the PAM with correlated potential in a discrete setting is \cite{GM00}, in which the motivation was to more accurately model a physical system by introducing long-range correlations. The main result in that paper is an asymptotic formula for moments of the total solution; this shows that the solution is intermittent in a certain weak sense, but is not precise enough to determine the localisation/delocalisation properties of the model.

\subsection{The PAM with partially duplicated potential}
In this section we formally introduce the \textit{PAM with partially duplicated potential} that is the object of our study. For the remainder of the paper we fix $d=1$. This avoids certain additional complications that arise in higher dimensions, while preserving the phenomena that we seek to investigate; we comment on the nature of these complications in Section \ref{s:heu}.

\smallskip

We begin by introducing the partially duplicated potential~$\xi$. Define an auxiliary random field $\xi_0: \Z \to [1, \infty)$ consisting of independent Pareto random variables with parameter $\alpha>0$, that is, with distribution function 
\begin{align*}
F(x)=1-x^{-\alpha}, \quad x \ge 1.
\end{align*}
Fix a parameter $p\in (0,1)$ that controls the density of duplicated sites. Abbreviate $\N_0 = \N\cup\{0\}$, and define a random field $\xi : \Z \to [1, \infty)$ by setting $\xi(n) =\xi_0(n)$ for each $n\in\N_0$ and, for each $n\in \N$, independently setting
\[
\xi(-n) = \begin{cases}
\xi_0(n)&\mbox{with probability $p$},\\
\xi_0(-n)&\mbox{otherwise}.
\end{cases}
\]
We henceforth refer to $\xi$ as the potential, and denote its corresponding probability and expectation by $\Prob$ and $\mathrm{E}$ respectively. 

\smallskip

The model that we consider is the parabolic Anderson model on $\Z$ -- i.e.\ the solution of equation \eqref{eq:PAM} -- with the partially duplicated potential $\xi$. It follows from~\cite{GM90} by the same argument as in the i.i.d.\! case that the solution exists provided that $\alpha>1$, and is given by the Feynman-Kac formula
\begin{align*}
u(t,z)=\E\Big[\exp\Big\{\int_0^t\xi(X_s)ds\Big\}\one\{X_t=z\}\Big], \qquad (t,z)\in (0,\infty)\times\Z,
\end{align*}
where $(X_t)_{t\ge 0}$ is a continuous-time random walk on $\Z$ with generator $\Delta$ started at the origin and $\P$ and $\E$ are its corresponding probability and expectation. We denote by 
\begin{align*}
U(t) = \sum_{z\in\Z}u(t,z)
\end{align*}
the total mass of the solution.

\smallskip

\subsection{The phase transition in the model}
We are now ready to introduce our results. Let $D =\{z \in \Z:\xi(z)=\xi(-z)\}$ denote the set of integers whose potential values are \emph{duplicated}, and $E = \Z \setminus D$ the set of positive integers whose potential values are unique (or \emph{exclusive}) to them. For each $t>0$ and $z\in \Z$, define the functional
\begin{align*}
\Psi_t(z) =\xi(z)-\frac{|z|}{t}\log\xi(z).
\end{align*}
Notice that $\Psi_t$ represents a balance between the local potential value and a `penalty term' which increases in the distance to the origin; it turns out that $\Psi_t$ is a good approximation for the asymptotic growth rate of the high peaks of the solution of the PAM, in the sense that, for a high peak centred at $z \in \Z^d$,
\[ \frac{1}{t}  \log  u(t, z) \approx  \Psi_t(z)   , \]
see \cite{KLMS} for example. For each $t > 0$, let $\Omega_t$ be the set of maximisers of $\Psi_t$; in Lemma~\ref{exi} we prove that either $\Omega_t = \{z\}$ for some $z \in E$, or $\Omega_t = \{-z, z\}$ for some $z \in D$. Define $\mathfrak{D}_t =\{|\Omega_t| = 2\}$ to be the event that the maximisers of $\Psi_t$ are duplicated; an example of this event and its complement are depicted in Figure \ref{fig:d}. 

\smallskip 

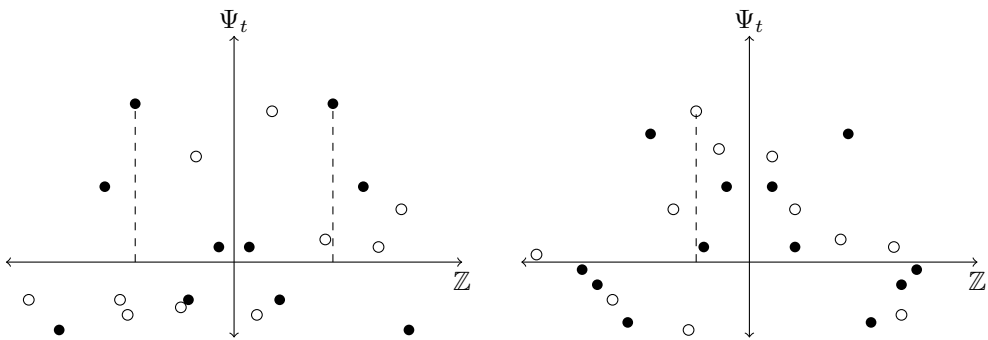
\begin{figure}[ht]
\begin{tikzpicture}
\draw [<->] (-3,0) -- (3,0);
\draw [<->] (0,-1) -- (0,3);
\node [below] at (3, 0) {$\Z$};
\node at (0, 3.2) {$\Psi_t$};
\foreach \Point in {(0.2, 0.2), (-0.2, 0.2), (0.6, -0.5), (-0.6, -0.5),(1.3, 2.1),(-1.3, 2.1), (1.7, 1), (-1.7, 1), (2.3, -0.9),(-2.3, -0.9)}{
    \fill \Point circle[radius=2pt];}
 \foreach \Point in {(0.3, -0.7), (-0.5, 1.4), (0.5, 2), (-0.7, -0.6),(1.2, 0.3),(-1.4, -0.7), (1.9, 0.2), (-1.5, -0.5), (2.2, 0.7),(-2.7, -0.5)}{
    \draw \Point circle[radius=2pt];}
   \draw[dashed] (1.3, 0) -- (1.3, 2.1);
   \draw[dashed] (-1.3, 0) -- (-1.3, 2.1);   
\end{tikzpicture}
\hspace{0.3cm}
\begin{tikzpicture}
\draw [<->] (-3,0) -- (3,0);
\draw [<->] (0,-1) -- (0,3);
\node [below] at (3, 0) {$\Z$};
\node at (0, 3.2) {$\Psi_t$};
\foreach \Point in {(0.3, 1), (-0.3, 1), (0.6, 0.2), (-0.6, 0.2),(1.6, -0.8),(-1.6, -0.8), (1.3, 1.7), (-1.3, 1.7), (2, -0.3),(-2, -0.3), (2.2, -0.1),(-2.2, -0.1)}{
    \fill \Point circle[radius=2pt];}
 \foreach \Point in {(0.6, 0.7), (0.3, 1.4), (-0.4, 1.5), (-0.7, 2),(1.2, 0.3),(-1, 0.7), (1.9, 0.2), (-1.8, -0.5), (2, -0.7),(-2.8, 0.1), (-0.8, -0.9)}{
    \draw \Point circle[radius=2pt];}
      \draw[dashed] (-0.7, 0) -- (-0.7, 1.96);
\end{tikzpicture}
\caption{An example of the event $\frak{D_t}$ (left) and its complement (right). The filled and empty circles represent the values of $\Psi_t$ for points in $D$ and and $E$ respectively; we have only plotted the top order statistics of $\Psi_t$. The dashed lines mark out the sites in $\Omega_t$.}
\label{fig:d}
\end{figure}

Our first result is to show that, for all values of the Pareto parameter $\alpha > 1$, the model always localises on the set $\Omega_t$. We also show that the event $\mathfrak{D}_t$ has non-negligible probability. Of course, outside the event $\mathfrak{D}_t$ this is already enough to conclude that the model completely localises. 

\begin{theorem}[Localisation of the model]
\label{main0}
Let $\alpha>1$. As $t\to\infty$, 
\begin{align}
\label{twopoints}
\frac{1}{U(t)}\sum_{z\in\Omega_t}u(t,z)\to 1
\qquad\text{in probability}.
\end{align}
Moreover, as $t\to\infty$, 
\begin{align*}
\Prob(\mathfrak{D}_t)\to \frac{p}{2-p}.
\end{align*}
\end{theorem}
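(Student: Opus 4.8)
The proof splits naturally into the two assertions, and the localisation statement \eqref{twopoints} is by far the harder one. For \eqref{twopoints} my plan is to follow the now-standard strategy for proving complete localisation in the heavy-tailed PAM (as in \cite{KLMS}), adapted to track the set $\Omega_t$ of (possibly two) maximisers rather than a single site. The first step is a \emph{point-process} analysis: rescale the potential appropriately (by $t/\log t$ or the relevant Pareto scaling) and show that, with probability tending to $1$, the functional $\Psi_t$ has a well-separated top order statistic --- either a unique maximiser in $E$, or a duplicated pair $\{-z,z\}$ with $z \in D$, with a macroscopic gap to the next-largest value of $\Psi_t$ and with all of $\xi(z)$, $|z|$, and the gap of the expected polynomial orders. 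This is where Lemma~\ref{exi} is used to guarantee the dichotomy. The second step is an upper bound: using the Feynman--Kac representation and a path-decomposition / spectral argument, show $\sum_{z \notin \Omega_t} u(t,z) = o\big(\sum_{z \in \Omega_t} u(t,z)\big)$ by comparing the principal eigenvalue of $\Delta + \xi$ near $\Omega_t$ to that near any competing site, the gap in $\Psi_t$ giving exponential-in-$t$ separation of the Feynman--Kac weights. The third step is a matching lower bound on $\sum_{z \in \Omega_t} u(t,z)$ relative to $U(t)$, obtained by restricting the random walk to reach (one of) the maximiser(s) quickly and then stay localised there until time $t$; here the duplication is actually helpful, since in the event $\mathfrak{D}_t$ the walk may go to either of $\pm z$.

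For the probability statement $\Prob(\mathfrak{D}_t) \to p/(2-p)$, the plan is a direct computation on the point process of $(\Psi_t\text{-values, location type})$. By the first step above, whether $\mathfrak{D}_t$ occurs is asymptotically determined by whether the largest value of $\Psi_t$ is attained at a duplicated or an exclusive site. The key observation is that for $n \in \N$ the duplicated site $n$ carries potential $\xi_0(n)$ (shared with $-n$), while the \emph{pair} of exclusive sites $\{n,-n\}$ (when $n \notin D$) carries two independent Pareto variables $\xi_0(n),\xi_0(-n)$, each appearing in $\Psi_t$ at distance $n$ from the origin. Since each $n\in\N$ is duplicated independently with probability $p$, in the extreme-value limit one is comparing a thinned point process where each ``potential record'' at level $\approx n$ is of duplicated type with probability $p$ and of exclusive type with probability $1-p$, but the exclusive type contributes \emph{two} independent records. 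Computing the probability that the overall maximum is of duplicated type then reduces to a ratio: duplicated sites contribute ``rate'' proportional to $p$ and exclusive sites to ``rate'' proportional to $2(1-p)$, giving $p/(p + 2(1-p)) = p/(2-p)$. I would make this rigorous either via convergence of the relevant marked Poisson point process of rescaled $\Psi_t$-values, or by a direct Borel--Cantelli-type estimate comparing $\Prob(\max_{z\in D}\Psi_t(z) > \max_{z \in E}\Psi_t(z))$ using the explicit Pareto tails.

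The main obstacle is the localisation statement \eqref{twopoints}, specifically the upper bound showing negligible mass away from $\Omega_t$. The subtlety particular to this model is that one must rule out mass escaping not just to a single competitor but to the whole array of near-optimal sites, \emph{and} one must correctly handle the two-site case: when $\Omega_t = \{-z,z\}$ the mass genuinely splits between two sites, so the argument must show concentration on the \emph{union} while allowing arbitrary splitting within it. I expect this to require a careful choice of the ``relevant'' neighbourhood of $\Omega_t$ and a version of the spectral-gap / killing argument that is insensitive to which element of $\Omega_t$ the walk visits. The point-process step and the $\Prob(\mathfrak{D}_t)$ computation, by contrast, I expect to be routine given the explicit Pareto distribution, modulo bookkeeping of the independence structure of $\xi$ on $D$ versus $E$.
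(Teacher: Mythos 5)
Your skeleton matches the paper's: point-process control of $\Psi_t$ (Lemma~\ref{exi} for the dichotomy, Section~\ref{s:pp} for the gap and scaling, encapsulated in the event $\mathcal{E}_t$), a Feynman--Kac lower bound by restricting the walk (Lemma~\ref{lowerbound}), an upper bound showing mass away from $\Omega_t$ is negligible, and the probability computation $\Prob(\mathfrak{D}_t)\to p/(2-p)$ via the rate ratio $p:2(1-p)$ in the limiting marked point process --- this last part is precisely the paper's Lemma~\ref{probb} and Proposition~\ref{prob}, and your heuristic is exactly the right one. Where you diverge is the mechanism of the upper bound. You sketch a spectral/principal-eigenvalue comparison of $\Delta+\xi$ near $\Omega_t$ versus competing sites; the paper instead works directly with a decomposition of $U(t)$ over geometric paths through the integral $I$ in~\eqref{ihat} and its recursive structure (Section~\ref{s:i}), killing paths that jump too often or fail to hit $\Omega_t$ (Lemmas~\ref{upperbound1}--\ref{upperbound2}, Proposition~\ref{u12}) and then separately showing that paths which hit $\Omega_t$ but end elsewhere contribute negligibly (Propositions~\ref{u3} and~\ref{prop:U0null}). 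That last step deserves emphasis: \eqref{twopoints} asserts concentration on the two lattice sites themselves, not on a neighbourhood, so one must rule out nearby sites the walk reaches by first passing through $\Omega_t$; an eigenvalue comparison between separated islands does not obviously supply this, and some path-level or local-resolvent argument would be needed anyway. The paper's path-decomposition route is also chosen for economy: the same $I$-function machinery that finishes Theorem~\ref{main0} is exactly what drives the second-order analysis underlying Theorems~\ref{main1} and~\ref{main2}.
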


Our next two results establish the following phase transition in the model. If $\alpha \in (1, 2)$, then on the event $\mathfrak{D}_t$ the two sites in $\Omega_t$ both have a non-negligible proportion of the solution; in other words the model \textit{delocalises}. By contrast, if $\alpha \ge 2$ only one site in $\Omega_t$ has a non-negligible proportion of the solution; in other words, the model completely localises whether the event $\mathfrak{D}_t$ holds or not. Surprisingly, the critical value of $\alpha = 2$ does not depend on the value of~$p$. To state these result, let $\Za \in \Omega_t$, with $\Za$ chosen to be positive on the event $\mathfrak{D}_t$. 

\begin{theorem}[Delocalisation in the case $\alpha \in (1, 2)$]
\label{main1}
Let $\alpha \in (1,2)$. As $t\to\infty$,
\begin{align*}
\mathcal{L}\Big(\frac{u(t,\Za)}{u(t,-\Za)} \: \Big| \: \mathfrak{D}_t\Big)\Rightarrow \mathcal{L}(\Upsilon),
\end{align*}
where $\Upsilon$ is a random variable with positive density on $\R_+$, $\mathcal{L}(\cdot)$ denotes the law of a random variable, and $\Rightarrow$ denotes weak convergence. In Section \ref{s:up} we give an explicit construction of the random variable $\Upsilon$.
\end{theorem}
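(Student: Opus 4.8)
The plan is to use Theorem~\ref{main0} to reduce the statement to the limiting behaviour of the single ratio $u(t,\Za)/u(t,-\Za)$ conditionally on $\mathfrak{D}_t$, then to expand $u(t,\pm\Za)$ via the Feynman--Kac formula finely enough to identify this ratio, up to negligible corrections, as a product over the sites on the path joining the origin to $\Za$, and finally to read off the limit of that product from an $\alpha$-stable limit theorem together with the convergence of the local environments around the two competing peaks.

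First I would collect the consequences of what is already available. On $\mathfrak{D}_t$, Lemma~\ref{exi} gives $\Omega_t=\{-\Za,\Za\}$ with $\Za\in D$, so that $\xi(\Za)=\xi(-\Za)=:h$; and \eqref{twopoints} gives $u(t,\Za)+u(t,-\Za)=U(t)(1+o(1))$, so it is exactly the conditional law of $u(t,\Za)/u(t,-\Za)$ that we must compute. I would also import from the preliminary analysis the order of the maximiser, $\Za\asymp(t/\log t)^{\alpha/(\alpha-1)}$ and $h\asymp(t/\log t)^{1/(\alpha-1)}$, so that $h^{\alpha}\asymp\Za$, together with the convergence (after rescaling) of the top order statistics of $\Psi_t$ to a limiting point process, of which $(\Za,h)$ and the nearby potential values are functionals. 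The relation $h^{\alpha}\asymp\Za$ is what will produce the critical exponent.

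The technical core is a sharp representation of $u(t,z)$, valid on a high-probability event uniformly for $z$ ranging over the maximiser and its few nearest competitors:
\[
u(t,z)=\exp\{t\,\lambda_t(z)\}\;\prod_{j=0}^{|z|-1}\frac{1}{\xi(z)-\xi(\operatorname{sgn}(z)\,j)}\;\;\mathfrak{t}(z)\,(1+o(1)),
\]
where $\lambda_t(z)=\xi(z)+O(1)$ is an eigenvalue-type growth rate (morally, the principal eigenvalue of $\Delta+\xi$ localised near $z$) and the remaining factor $\mathfrak{t}(z)$ collects the sub-exponential corrections, of which the piece depending on $z$ only through $|z|$ cancels in the ratio and the rest is governed by the potential in a bounded neighbourhood of $z$. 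The heuristic behind the representation is that the dominant contribution to $u(t,z)$ comes from walks that run monotonically from $0$ to $z$, spend a negligible time in transit and the bulk of the time near $z$: the product is then the cost of the traverse and $\mathfrak{t}(z)$ the local correction near the peak. Establishing this with errors small enough to capture the (barely sub-exponential) fluctuations of the ratio, and controlling the effect of conditioning on $\mathfrak{D}_t$, is where I would lean most heavily on the techniques of \cite{KLMS} and on the earlier sections of this paper, and I expect it to be the main obstacle. Given the representation, forming the ratio at $z=\Za$ and $z=-\Za$: the prefactor $\exp\{t(\lambda_t(\Za)-\lambda_t(-\Za))\}$ tends to $1$, because the $O(1/h)$ corrections to the two eigenvalues agree (as $\xi(\Za)=\xi(-\Za)$), so that $t(\lambda_t(\Za)-\lambda_t(-\Za))=O(t/h^{2})$, which tends to $0$ since $\alpha<3$; the $j=0$ term of the product cancels since $\xi(0)=\xi(-0)$; and one is left with
\[
\frac{u(t,\Za)}{u(t,-\Za)}=\frac{\mathfrak{t}(\Za)}{\mathfrak{t}(-\Za)}\;\prod_{j=1}^{\Za-1}\frac{h-\xi(-j)}{h-\xi(j)}\;(1+o(1)).
\]

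Finally I would analyse the product. Each duplicated $j$ contributes the factor $1$; for each exclusive $j$, a Taylor expansion gives $\log\big((h-\xi(-j))/(h-\xi(j))\big)=-\big(\xi_0(-j)-\xi_0(j)\big)/h+O\big((\xi_0(j)^2+\xi_0(-j)^2)/h^2\big)$, and on a high-probability event the second-order terms sum to $O(\Za/h^{2})=o(1)$, so that
\[
\log\prod_{j=1}^{\Za-1}\frac{h-\xi(-j)}{h-\xi(j)}=-\frac1h\!\!\sum_{\substack{1\le j<\Za\\ j\in E}}\!\!\big(\xi_0(-j)-\xi_0(j)\big)+o(1).
\]
This is a sum of roughly $(1-p)\Za$ i.i.d.\ symmetric summands lying in the domain of attraction of a symmetric $\alpha$-stable law, normalised by $h$ with $h^{\alpha}\asymp\Za$. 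For $\alpha\in(1,2)$ it converges in distribution (conditionally on $\mathfrak{D}_t$, after checking that this conditioning perturbs the relevant joint law only negligibly and that the number of exclusive sites below $\Za$ satisfies a law of large numbers) to an explicit random multiple of a symmetric $\alpha$-stable variable, the multiple being the functional of the limiting point process that records $\lim\Za/h^{\alpha}$. For $\alpha\ge2$ the same normalisation forces the sum to diverge, which is exactly why complete localisation persists there and why the threshold $\alpha=2$ is independent of~$p$. Combining this stable limit with the joint convergence of $\mathfrak{t}(\Za),\mathfrak{t}(-\Za)$ to bounded random variables determined by the (asymptotically independent) local environments at the two peaks gives $\mathcal{L}\big(u(t,\Za)/u(t,-\Za)\mid\mathfrak{D}_t\big)\Rightarrow\mathcal{L}(\Upsilon)$ with $\Upsilon$ the random variable assembled in Section~\ref{s:up}; that $\mathcal{L}(\Upsilon)$ has a positive density on $\R_+$ is then immediate from the corresponding property of the $\alpha$-stable factor.
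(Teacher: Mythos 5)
Your overall strategy is the right one and matches the paper's: reduce to the ratio $u(t,\Za)/u(t,-\Za)$ on $\mathfrak{D}_t$ via Theorem~\ref{main0}, represent that ratio (up to $o(1)$) as a product over sites between the origin and $\Za$ using the direct-path reduction, and pass to the limit via point process convergence of the rescaled potential. But two of your steps contain genuine gaps, and they are exactly the places where the paper has to work harder.

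First, the Taylor expansion $\log\big((h-\xi(-j))/(h-\xi(j))\big)=-(\xi_0(-j)-\xi_0(j))/h+O\big((\xi_0(j)^2+\xi_0(-j)^2)/h^2\big)$ together with the estimate that the remainders sum to $O(\Za/h^2)=o(1)$ is false for $\alpha\in(1,2)$. The step from $\sum_j(\xi_0(j)^2+\xi_0(-j)^2)/h^2$ to $O(\Za/h^2)$ would need $\xi_0(j)^2=O(1)$ uniformly, but the point process limit tells us that a Poisson number of sites $j<|\Za|$ have $\xi(j)$ of order $h$; for those sites the expansion is not locally valid and the quadratic (and all higher) terms are $O(1)$, while even averaging over the remaining sites gives $\mathrm{E}\big[\xi(j)^2\id_{\{\xi(j)<h\}}\big]\asymp h^{2-\alpha}$ and hence a total second-order contribution of order $\Za h^{-\alpha}\asymp 1$, not $o(1)$. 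This is precisely why the paper works with the exact increments $\log(1-\xi(z)/\xi(\Za))$, defines $S_t$ without expanding, truncates at potential values below $\delta\xi(\Za)$ (Lemma~\ref{epseps}), and explicitly flags the Taylor expansion in the heuristics section as divergent for $\alpha<2$.

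Second, the limit is not a random multiple of a symmetric $\alpha$-stable variable, and the conditioning on the maximiser is not a negligible perturbation. Given $\mathcal{G}_t$, the law of $\xi(j)$ is Pareto conditioned on $\Psi_t(j)<\Psi_t(\Za)$, and the cut-off encoded by $\Psi_t(j)$ depends on $|j|$ through the penalty $(|j|/t)\log\xi(j)$; the summands are independent but not identically distributed, with a location-dependent effective upper tail. The limiting object in Lemma~\ref{s} is the terminal value of a time-inhomogeneous L\'evy process whose L\'evy measure~\eqref{levy} has both an $x$-dependent intensity $(Y-\rho X+\rho x)^{-\alpha}$ and an $x$-dependent truncation of jump sizes; its small-jump asymptotics are $\alpha$-stable-like (which is where the scaling identity $h^\alpha\asymp\Za$ plays its role and where positivity of the density ultimately comes from), but the full law is not stable. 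One has to extract this structure by passing the conditioned point process to the limit, as in Proposition~\ref{td} and Lemma~\ref{s}, rather than by invoking a classical stable limit theorem for i.i.d.\ summands.

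A minor point: Proposition~\ref{shortest} gives the exact prefactor $e^{t\xi(\Za)-2t}$ for the direct paths, which cancels identically in the ratio on $\mathfrak{D}_t$ since $\xi(\Za)=\xi(-\Za)$; there is no eigenvalue correction $\lambda_t(z)$ to control, nor a local factor $\mathfrak{t}(z)$, and the bound $t(\lambda_t(\Za)-\lambda_t(-\Za))=O(t/h^2)$ is not needed.
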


\begin{theorem}[Complete localisation in the case $\alpha \ge 2$]
\label{main2}
Let $\alpha\ge 2$. As $t\to\infty$,
\begin{align*}
\Big|\log \frac{u(t,\Za)}{u(t,-\Za)}\Big|\to\infty \qquad \text{in probability.} 
\end{align*} 
\end{theorem}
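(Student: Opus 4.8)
The plan is to leverage the point-to-point machinery developed for Theorem~\ref{main1}: that analysis should already yield, with probability tending to $1$ and uniformly over $z$ near $\Omega_t$, a refined expansion
\[
\log u(t,z)=t\,\xi(z)-\sum_{k}\log\big(\xi(z)-\xi(k)\big)+t\!\!\sum_{|y-z|=1}\!\!\frac{1}{\xi(z)-\xi(y)}+(\text{lower order}),
\]
where the first sum runs over the sites $k$ on the geodesic from $0$ to $z$ (excluding $z$ itself). The second term is essentially the logarithm of the value at the origin of the normalised principal eigenfunction of $\Delta+\xi$ localised at $z$, and the third the first-order correction to the principal eigenvalue; note that $-\sum_k\log\xi(z)=-|z|\log\xi(z)$ already reproduces the penalty in $t\Psi_t(z)$. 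On $\mathfrak{D}_t$ we have $\xi(\Za)=\xi(-\Za)$ (by definition of $D$) and $\Psi_t(\Za)=\Psi_t(-\Za)$ (both lie in $\Omega_t$), so the leading terms $t\,\xi(\pm\Za)$ agree; subtracting the expansion at $z=-\Za$ from that at $z=\Za$ and cancelling every term coming from a duplicated site gives, with $\ell=\Za>0$,
\[
\log\frac{u(t,\Za)}{u(t,-\Za)}=\sum_{0<k<\ell,\,k\in E}\log\frac{\xi(\Za)-\xi(-k)}{\xi(\Za)-\xi(k)}+O\!\Big(\frac{t}{\xi(\Za)^2}\Big)+o(1)\qquad\text{in probability.}
\]

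After discarding the $O(1)$ intermediate sites at which $\xi(k)$ is an atypically large fraction of $\xi(\Za)$ (these contribute $O(\log t)$ in total, of lower order than what follows), a first-order Taylor expansion of each remaining summand gives
\[
\log\frac{u(t,\Za)}{u(t,-\Za)}=\frac{1}{\xi(\Za)}\sum_{0<k<\ell,\,k\in E}\big(\xi(k)-\xi(-k)\big)+\big(\text{terms of smaller order for }\alpha<4\big)+o(1).
\]
For $\alpha<4$ it thus suffices to show that $\xi(\Za)^{-1}\big|\sum_{0<k<\ell,\,k\in E}(\xi(k)-\xi(-k))\big|\to\infty$ in probability; for $\alpha\ge4$ the eigenvalue correction $t/\xi(\Za)^2$ is comparable or dominant, and the same anti-concentration argument applies to it instead (the two contributions being built from disjoint parts of the potential, so they do not mutually cancel).

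Two inputs go into the anti-concentration. First, the scale of the maximiser: extreme-value analysis of $\Psi_t$ over $\Z$ (standard for the Pareto PAM, and unaffected by conditioning on $\mathfrak{D}_t$) gives, with probability tending to $1$, $\ell=|\Za|=(t/\log t)^{\alpha/(\alpha-1)+o(1)}$ and $\xi(\Za)=(t/\log t)^{1/(\alpha-1)+o(1)}$. Second, conditional independence: condition on $\mathfrak{D}_t$, on $\Za=z$, and on $\xi(z)=m$. Since $\Psi_t(y)$ depends only on $\xi(y)$ for $y\notin\{z,-z\}$, the event that $z$ and $-z$ are the tied maximisers factorises over the remaining sites, so the exclusive potential values $\xi(k),\xi(-k)$ for $0<k<\ell$ become conditionally independent Pareto variables truncated to a level of order $m$; hence the differences $\xi(k)-\xi(-k)$ are conditionally independent, symmetric, mean zero, with a common variance $\asymp\mathrm{Var}(\xi_0\wedge m)$, bounded away from $0$ and $\infty$ when $\alpha>2$ and $\asymp\log m\asymp\log t$ when $\alpha=2$. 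Their number is $(1-p)\ell(1+o(1))$, and $m=o(\sqrt{\ell\,\mathrm{Var}(\xi_0\wedge m)})$ precisely when $\alpha\ge2$, so Lindeberg's central limit theorem applies: $\xi(\Za)^{-1}\sum_k(\xi(k)-\xi(-k))$, divided by the normalising factor $(t/\log t)^{(\alpha-2)/(2(\alpha-1))+o(1)}$ (respectively $\asymp\sqrt{\log t}$ at $\alpha=2$), converges to a nondegenerate Gaussian, and since this factor tends to infinity for $\alpha\ge2$ the sum itself has absolute value tending to infinity in conditional probability. Averaging over the asymptotically typical values of $(\Za,\xi(\Za))$ on $\mathfrak{D}_t$ completes the proof.

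The main obstacle is the first step: establishing the refined expansion with a remainder that is genuinely $o(1)$, rather than the $o(t\Psi_t(z))$ that suffices for complete localisation in the i.i.d.\ case, uniformly over the random candidate set $\Omega_t$. This requires sharp control of the principal eigenfunction of the localised operator both near and far from the peak, and of the perturbative remainder for the eigenvalue, together with care about the dependence between the location and height of the maximiser and the intervening potential values feeding into the correction sum — precisely the dependence that the conditioning in the anti-concentration step is arranged to neutralise. Once the expansion is in hand, the content specific to Theorem~\ref{main2} is the elementary divergence of a sum of i.i.d.\ truncated Pareto variables.
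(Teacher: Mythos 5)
Your high-level template --- condition on auxiliary information and show that the conditional fluctuations of a suitable partial sum over the exclusive sites diverge in a Gaussian way --- is the right one, and you correctly identify the anti-concentration step as elementary. But the route you propose to reach it is different from the paper's, and the step you yourself flag as ``the main obstacle'' (a refined expansion of $\log u(t,z)$ with an unconditional $o(1)$ remainder, via principal eigenfunction and eigenvalue perturbation) is exactly the step the paper engineers its way around rather than prove. For $\alpha\ge 2$ such an expansion is not available: the heuristics section explains that non-direct paths to the localisation site contribute non-negligibly, so $\prod_k(\xi(z)-\xi(k))^{-1}$ over the geodesic is not a sufficiently accurate approximation to the eigenfunction at the origin, and the loop corrections you lump into ``lower order'' live on the same polynomial scale as the CLT signal and carry their own randomness. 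Likewise, the ``$O(1)$ intermediate sites at which $\xi(k)$ is an atypically large fraction of $\xi(\Za)$'' that you want to discard as an $O(\log t)$ error are precisely the top order statistics of the intervening potential; showing they leave an $o(1)$ residue would itself require a sharp extreme-value analysis you do not have.

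The paper's actual mechanism avoids needing the remainder to be small at all. Proposition~\ref{prop:U0null} restricts the significant paths to those visiting the set $\mathcal{N}_t$ of \emph{high} exclusive sites (those with $\xi(z)>\delta_t\xi(\Za)$) at most once, and Proposition~\ref{P:decomp} writes $\log u(t,\Za)-\log u(t,-\Za)=Q_t-P_t+o(1)$ with $Q_t=\sum_{z\in\mathcal{N}_t}Q_t(z)$ the product contribution from $\mathcal{N}_t$ and $P_t$ an $\mathcal{F}_t$-measurable random variable, where $\mathcal{F}_t=\sigma(D,\Za,\mathcal{N}_t,\{\xi(z):z\notin\mathcal{N}_t\})$. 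The point is that $P_t$ is \emph{not} small --- it absorbs all loop corrections, all low-potential sites, and anything resembling your eigenvalue correction --- but it is deterministic given $\mathcal{F}_t$. Proposition~\ref{P:var} shows $\mathrm{Var}_{\mathcal{F}_t}Q_t\to\infty$ (at the modest rate $\delta_t^{2-\alpha}$, not your $a_t^{\alpha-2}$, but that is enough) and Proposition~\ref{P:clt} gives a conditional CLT, so $\Prob_{\mathcal{F}_t}(|Q_t-P_t|<c)\to 0$ whatever $P_t$ is. Your conditioning (on $\mathfrak{D}_t$, $\Za$, $\xi(\Za)$) is too coarse to fix the remainder, which is why your plan is forced into proving $o(1)$. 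Two smaller points: the paper never Taylor-expands $\log(1-\xi(z)/\xi(\Za))$, which cleanly handles $\alpha=2$ and avoids your case split at $\alpha=4$; and your fallback at $\alpha\ge 4$, anti-concentration from the eigenvalue correction built out of $O(1)$ neighbouring potential values, is not a CLT statement at all, and the assertion that it ``does not mutually cancel'' against the rest of the remainder is not a proof.
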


\begin{remark}
At first glance it may seem counter-intuitive that delocalisation occurs for small, rather than large, values of~$\alpha$, since by analogy with the i.i.d.\ case we might expect that the heavier the tails of the potential, the stronger the localisation. However, in our model it is precisely the strengthening of the concentration effect for small $\alpha$ which results in delocalisation. 

\smallskip

To explain this, consider that if $\alpha$ is smaller, the advantage of the sites in $\Omega_t$ relative to other sites is increased. We show that, if $\alpha$ is small enough, this advantage is so great that the impact of the other potential values (at sites closer to the origin than $\Za$) is minimal, and the solution cannot readily distinguish between the sites in $\Omega_t$. On the other hand, for large values of $\alpha$ the advantage is less pronounced, and the fluctuations in the other potential values eventually force one of the sites in~$\Omega_t$ to be significantly more beneficial than the other. In the next subsection, we give some heuristics for why the transition occurs at $\alpha = 2$.
\end{remark}

\begin{remark}
One surprising aspect of the phase transition in the model is that it is not \textit{sharp}. In particular, the random variable $\Upsilon$ in Theorem \ref{main1} does not, as might be expected, degenerate for small $\alpha$. As will be further explained in the next subsection, this is ultimately due to two different scales, arising from distinct sources, exactly cancelling each other out.
\end{remark}

\begin{remark}
The proof of the Theorem \ref{main0} is relatively straightforward, and is similar to analogous results in the i.i.d.\ case, see \cite{HMS, KLMS, ST}. The proof of Theorems \ref{main1} and \ref{main2} are much more involved, and require us to analyse the model, and indeed the PAM with i.i.d.\ potential, in much finer detail than has been done in previous work.
\end{remark}

\begin{remark}
Our main results can be recast as a demonstration of the robustness, or lack thereof, of the total mass of the solution of the PAM with i.i.d.\ potential under a resampling of some of the potential values.  More precisely, suppose $u(t, z)$ denotes the solution of the PAM on $\mathbb{Z}$ with the i.i.d.\ potential~$\xi_0$, with $U(t) = \sum_z u(t, z)$ the total mass of the solution. Now consider resampling each potential value independently with probability $q \in(0,1)$, and let $\tilde u(t, z)$ be the solution of the PAM with this resampled potential, with $\tilde{U}(t) = \sum_z \tilde{u}(t, z)$ the total mass of the solution. Then our results, suitably translated, demonstrate the following phase transition. If $\alpha \in (1,2)$, then there exists an event of non-negligible probability on which $U(t)/\tilde U(t)$ converges in distribution to a random variable with positive density on $\R_+$. By contrast, if $\alpha\ge2$, then $|\log  U(t)/\tilde U(t)| \to\infty$ in probability. 
\end{remark}

\subsection{Heuristics for the phase transition}
\label{s:heu}
We start by recalling (see above) that the high peaks of the solution have the first-order approximation
\[  \log  u(t, z) \approx t \Psi_t(z)  =    t \xi(z)- |z| \log\xi(z)  . \]
The main challenge with analysing our model, compared to i.i.d.\! case, is that on the event $\mathfrak{D}_t$ there are two maximisers of the functional $\Psi_t$. Hence the height of the peak at these sites is identical up to the first-order approximation. As a result, and in contrast to the i.i.d.\ case, in order to understand the localisation phenomena we must turn to \textit{second-order} contributions. Notice that the first-order approximation, captured by the functional $\Psi_t$, has the nice feature that it is \textit{local}, depending on the value of~$\xi$ at the site $z$ only. By contrast, the second-order contributions depend on \textit{all} the potential values \textit{along entire paths} to $\Za$ and $-\Za$. This makes them much more challenging to study.

\smallskip

To explain the phase transition in the model at $\alpha = 2$, we show that the second-order contributions undergo two distinct transitions as $\alpha$ increases, both of which, seemingly coincidentally, occur at $\alpha = 2$. The first transition is the negligibility or otherwise of non-direct paths which end at the sites in $\Omega_t$; this transition serves mainly as a extra technical difficulty in our proofs, rather than a determining factor in the phase transition of the model. The second transition is a shift in the fluctuations of the second-order contributions from the Gaussian universality class ($\alpha \ge 2$) to the $\alpha$-stable universality class ($\alpha \in (1, 2)$), and it is this which turns out to cause the phase transition of the model. 

\smallskip

These transitions are also relevant for the PAM with i.i.d.\ potential, and give a more nuanced understanding of localisation phenomena in the i.i.d.\ case than has previously been available. For example, in the case $\alpha \in (1, 2)$, our proof of the first transition establishes that the \textit{PAM path measure}, given by
\[  d \mathbb{Q}( (X_s)_{s \le t} ) = \frac{1}{U(t)} \exp\Big\{\int_0^t\xi_0(X_s)ds \Big\} \, d \P( (X_s)_{s \le t} )  ,\]
concentrates on a \textit{single geometric path} (i.e.\ the direct path to the localisation site), which is much stronger result than the complete localisation of the solution. In the case $\alpha \ge 2$, we strongly suspect that the path measure instead concentrates on a class of paths that end at the localisation site but which also contain small loops. 

\subsubsection{The first transition: Direct/non-direct paths} 
Recall that the Feynman-Kac formula allows us to consider the contribution to $U(t)$ coming from different geometric paths which start at the origin. Assuming the localisation result in Theorem \ref{main0}, we know that, for all $\alpha > 1$, the only significant contribution to $U(t)$ comes from paths which end in $\Omega_t$. In Proposition~\ref{u3} we show that, if $\alpha \in (1,2)$, the only significant contribution to $U(t)$ actually come from the \textit{direct} paths to $\Omega_t$; here we give some heuristics for why this should be true. On the other hand, if $\alpha \ge 2$, then we strongly believe that certain sets of non-direct paths \textit{do} make a non-negligible contribution to $U(t)$; since we do not need this for our main results, we do not formally prove this.

\smallskip

Assume that $\alpha \in (1,2)$ and let $y^{(t)}$ denote the direct path from the origin to $\Za$. For the purposes of keeping the calculations simple, we will show only that the contribution to $U(t)$ from paths $\Pi^{(t, +)}$ from the origin to $\Za$ obtained by adding a single loop of length two to $y^{(t)}$, anywhere along the path except at the end, are negligible with respect to the contribution to $U(t$) from the path $y^{(t)}$ itself. The same argument can be extended, with minor adaption, to cover all non-direct paths to~$\Omega_t$.

\smallskip

We can assume without loss of generality that $\Za \in \N$. For any path $y=(y_0,\ldots,y_n)$ of length $n$ we can write the contribution from $y$ at time $t$ as 
\[ U(t,y)=e^{-2t} I_{n}(t; \xi(y_0),\dots,\xi(y_{n})), \]
for a function $I_n(t; a_0,\dots,a_n)$ with a rather nice structure; see equations \eqref{uy1} and \eqref{ihat}. In Lemma~\ref{L:comp} we prove a bound on $I$ which enables us to compare $U(t, y)$ for various paths. This lemma implies that for any path $y^{(t,+)} \in \Pi^{(t, +)}$ we have
\[
\frac{U(t,y^{(t,+)})}{U(t,y^{(t)})}\le \max_{0\le j<\Za}(\xi(\Za)-\xi(j))^{-2},
\]
which reflects the fact that each extra step induces in a `penalty' of order $(\xi(\Za)-\xi(j))^{-1}$. In Proposition \ref{e0} we prove that (up to a small correction) $\xi(\Za)-\xi(j)\ge (t/\log t)^{1/(\alpha-1)}$ for any $0\le j<\Za$, and also that $|\Za|$ is asymptotically  $(t/\log t)^{\alpha/(\alpha-1)}$. Since there are no more than $2|\Za|$ such paths (there are $|\Za|$ places to add the loop and two directions the loop can go in), their total contribution is at most
\[
2|\Za|\Big(\frac{\log t}{t}\Big)^{2/(\alpha-1)}U(t,y^{(t)})\le 2\Big(\frac{t}{\log t}\Big)^{(\alpha -2)/(\alpha-1)}U(t,y^{(t)}).
\]
Notice that the exponent is negative if $\alpha < 2$, which confirms that such paths are negligible with respect to the direct path. As mentioned, we can readily extend this argument to all non-direct paths.

\subsubsection{The second transition: The universality class of fluctuations}
To keep things simple, and since the intuition is correct, we shall for now assume that, for all $\alpha > 1$, it is sufficient to consider only direct paths (even though we strongly believe that this is only true in the case $\alpha \in (1, 2)$).

\smallskip

Assume that the event $\mathfrak{D}_t$ holds and denote by $y^{(t,1)}$ the direct path to $\Za$ and $y^{(t,-1)}$ the direct path to $-\Za$. We derive in Lemma \ref{bounds} that, provided $a_n\neq a_i$ for $i\neq n$, the function $I$ satisfies
\[
I_n(t; a_0,\dots,a_n)=e^{ta_n}\prod\limits_{j=0}^{n-1}\frac{1}{a_n-a_j}
-\sum_{i=0}^{n-1}I_i(t,a_0,\dots,a_i)\prod_{j=i}^{n-1}
\frac{1}{a_n-a_j}.
\]
 In Proposition \ref{shortest} we show that the second term in the form of $I$ above can be essentially discarded when considering the direct path, thus giving
\[ U(t,y^{(t, 1)})\approx e^{t\xi(\Za)-2t}
\prod_{j=0}^{|\Za|-1}\frac{1}{\xi(\Za)-\xi( j)}, \]
and similarly for $U(t,y^{(t, -1)})$. Using the assumption that only direct paths are significant, we obtain
\begin{align}
\label{e:heur}
\log \frac{u(t,\Za)}{u(t,-\Za)}& \approx - \!\!\sum_{0\le j<|\Za|}\!\!
\Big[\log\Big(1-\frac{\xi(j)}{\xi(\Za)}\Big)-\log\Big(1-\frac{\xi(-j)}{\xi(\Za)}\Big)\Big] \\
& \nonumber \approx  \xi(\Za)^{-1}\sum_{0\le j<|\Za|}(\xi(j)-\xi(-j)) + \ldots .
\end{align}
where we have used a Taylor expansion for the logarithm in the last step (this Taylor expansion does not actually converge if $\alpha < 2$, but it does give a good insight into the scale of the fluctuations; see Section \ref{s:main1} for precise statements). Note that the summand is zero for each $j\in D$ and so in expectation there are $q|\Za|$ non-zero terms, where $q = 1-p$. At this point we have reduced the study of the ratio $u(t,\Za)/u(t,-\Za)$ to the study of fluctuations in the sum of independent (although not identically distributed) random variables, and so we may appeal to the well-developed theory of such fluctuations. 

\smallskip

In the case $\alpha \in (1,2)$, these fluctuations belong to the $\alpha$-stable universality class, and so we obtain 
\begin{align}
\label{eq:2}
\log \frac{u(t,\Za)}{u(t,-\Za)}\approx  \xi(\Za)^{-1}(q|\Za|)^{\frac1{\alpha}}Y,
\end{align}
where $Y$ is a certain non-degenerate random variable. Since we prove in Proposition \ref{e0} that
\begin{align}
\label{eq:approx}
 \xi(\Za) \approx (t/\log t)^{1/(\alpha-1)} \quad \text{and} \quad |\Za| \approx  (t/\log t)^{\alpha/(\alpha-1)}, 
 \end{align}
 the growing scales in \eqref{eq:2} exactly cancel out. Hence the ratio $u(t, \Za)/u(t, \Zb)$ remains of constant order as $t \to \infty$, and so there is a non-negligible proportion of the solution at both sites in $\Omega_t$.
 
\smallskip

In the case $\alpha > 2$, the fluctuations are instead in the Gaussian universality class, and so we obtain 
\[ \log \frac{u(t,\Za)}{u(t,-\Za)}\approx  \xi(\Za)^{-1}(q|\Za|)^{\frac1{2}}Y. \]
Using \eqref{eq:approx}, this gives that
\[ \left| \log \frac{u(t,\Za)}{u(t,-\Za)} \right| \approx   (t/\log t)^{\frac{1}{\alpha-1}(-1 +\alpha/2)}  \to \infty  . \]
The case $\alpha = 2$ is slightly more delicate, but using the extra logarithmic factor that appears in the fluctuations, we can prove that $| \log u(t,\Za)/u(t,-\Za) | \to \infty$ also in this case.

\smallskip

The above analysis also gives an indication why the model is harder to study in $d \ge 2$. Indeed, even assuming only the shortest paths to $\Omega_t$ make a non-negligible contribution to $U(t)$, since there are in general many such shortest paths we must replace \eqref{e:heur} with
\[ \log  \frac{u(t,\Za)}{u(t,-\Za)} \approx  \log  \frac{ \sum_{p \in \{\text{shortest paths to $\Za$}\}}  \prod_{0\le j<|\Za|} (\xi(\Za) - \xi(p_j))^{-1} }{\sum_{p \in \{ \text{shortest paths to $-\Za$}\}}  \prod_{0\le j<|\Za|}  (\xi(\Za) - \xi(p_j))^{-1} }  ,  \]
where $(p_j)$ denote the sites along $p$. The fluctuation theory for this expression is significantly more complicated than for \eqref{e:heur}, as it does not reduce to the study of sums of independent random variables.

\subsection{Future work}
Intuitively, the closer $p$ is to $1$, the more symmetric the model becomes and the more likely that the model delocalises for a wider class of potentials. Our results show that if $p$ is uniformly bounded away from $1$ then this intuition is not realised, since the threshold $\alpha = 2$ is the same for all values of $p \in (0 ,1)$. This leads us to wonder what happens if $p$ is not uniformly bounded away from $1$. One way to investigate this is to let $\xi(z)=\xi(-z)$ with probability $p=p(|z|)$ that depends on the distance of $z$ from the origin. We can then ask the question: how fast should $p(n)\to1$ so that, for a given value of $\alpha>2$, complete localisation fails? We conjecture that there is a critical scale for $p(n)$ such that if and only if $p\to1$ slower than this scale then complete localisation holds. We will investigate this model in a future paper. 

\bigskip

\section{Outline of proof}
\label{s:outline}
In this section we give an outline of the proof of our main results, and an overview of the rest of the paper. We assume henceforth that $\alpha > 1$.

\smallskip

\textbf{Step $1$: Trimming the path set.}
As already remarked, the Feynman-Kac formula allows us to consider contributions to $u(t,z)$ coming from various geometric paths which start at the origin and are at site $z$ at time $t$. The first step is to eliminate paths that \textit{a priori} make a negligible contribution to the solution, either because they fail to hit the sites in $\Omega_t$ or because they make too many jumps. This step is rather standard, and is similar to in \cite{HMS, KLMS, ST}.

\smallskip

We now define the \textit{a priori} negligible paths. Introduce the scales 
\begin{align*}
r_t=\Big(\frac{t}{\log t}\Big)^{\frac{\alpha}{\alpha-1}}
\qquad\text{and}\qquad
a_t=\Big(\frac{t}{\log t}\Big)^{\frac{1}{\alpha-1}},
\end{align*}
which, as suggested in \eqref{eq:approx}, are the asymptotic scales for $|\Za|$ and $\xi(\Za)$ respectively. For technical reasons, we also introduce some auxiliary positive scaling functions $f_t \to 0$ and $g_t \to \infty$ which can be thought of as being arbitrarily slowly decaying or growing. We shall need these scales to satisfy
\begin{align}
\label{fg}
g_t,1/f_t=O(\log\log\log t). 
\end{align} 
Let $R_t=|\Za|(1+f_t)$. For any set $A\subseteq \Z$ denote by $\tau_A=\inf\{t>0:X_t\in A\}$ its hitting time by the continuous-time random walk $(X_s)$. Let $J_t$ be the number of jumps of $(X_s)$ by time $t$. We decompose the total mass $U(t)$ into a significant component
\[U_0(t) =\mathrm{E}\Big[\exp\Big\{\int_0^t \xi(X_s)ds\Big\} \one\{J_t \le R_t, \tau_{\Omega_t}  < t \} \Big] \]
and a negligible component $U_1(t) = U(t) - U_0(t)$. \smallskip

In Section~\ref{s:neg} we use standard methods to prove that $U_1$ is negligible with respect to~$U$ as long as certain typical properties of $\xi$ hold. To define these properties, denote, for each $n\in\N_0$, 
\begin{align*}
\xi_n^{\ssup 1}=\max\{\xi(z):|z|\le n\}
\qquad\text{and}\qquad
\xi_n^{\ssup 2}=\max\{\xi(z):|z|\le n, \xi(z)<\xi_n^{\ssup 1}\}.
\end{align*}
Let $\Zb$ be a maximiser of $\Psi_t$ on the set $\Z \setminus \Omega_t$; we prove that $\Zb$ exists in Lemma \ref{exi}. The typical properties are contained in the event
\begin{align*}
\mathcal{E}_t
&=\Big\{r_tf_t<|\Za|<r_tg_t, \ a_tf_t<\xi(\Za)<a_tg_t, \ \Psi_t(\Za)-\Psi_t(\Zb)>a_tf_t , \\
&\phantom{aaaaaa}\Psi_t(\Za)>f_t\xi(\Za), \ \xi(\Za)=\xi_{R_t}^{\ssup 1}, \ \xi_{R_t}^{\ssup 1}-\xi_{R_t}^{\ssup 2}> a_tf_t, \ \xi(z)<\frac{|z|}{t}\log\frac{|z|}{2et} \,\,\forall\,|z|>r_tg_t
\Big\},
\end{align*}
which in particular guarantees a large gap between the value of $\Psi_t$ at sites in $\Omega_t$ and all other sites.

\smallskip

\textbf{Step $2$: Reduction to subsets of paths that end at $\Omega$.}
At this point understanding $U_0$ becomes the main goal, and we aim to find out which paths make a non-negligible contribution to it; here we make a distinction between the cases $\alpha \in (1, 2)$ and $\alpha \ge 2$ (see the heuristics in Section \ref{s:heu}).

\smallskip

The main input is a careful analysis of the properties of the function $I$ that defines the contribution to $U(t)$ for any path. To define this function precisely, denote by 
\begin{align*}
\mathcal{P}_{all}=\{y=(y_0,\dots,y_{\ell})\in \Z^{\ell+1}:\ell\in\N_0,|y_i-y_{i-1}|=1\text{ for all }1\le i\le \ell\}
\end{align*}
the set of all geometric paths on $\Z$.  For each path $y\in\mathcal{P}_{all}$, denote by $\ell(y)$ its length (counted as the number of edges). Denote by $(\tau_i)_{i\in \N_0}$
the sequence of the jump times of the continuous-time random walk $(X_t)$ and by 
\begin{align*}
P(t,y)=\{X_0=y_0, \, X_{\tau_0+\cdots+\tau_{i-1}}=y_i\text{ for all }1\le i\le \ell(y),  \, t-\tau_{\ell(y)}\le\tau_0+\cdots+\tau_{\ell(y)-1}<t\} 
\end{align*}
the event that the random walk has the trajectory $y$ up to time $t$. Let
\begin{align*}
U(t,y)=\E\Big[\exp\Big\{\int_0^t\xi(X_s)ds\Big\}\one_{P(t,y)}\Big]
\end{align*}
be the contribution of the event $P(t,y)$ to $U(t)$. By direct computation, we have 
\begin{align}
U(t,y)
&=2^{-\ell(y)}\E\Big[\exp\Big\{\sum_{i=0}^{\ell(y)-1}\tau_i\xi(y_i)+\Big(t-\sum_{i=0}^{\ell(y)-1}\tau_i\Big)\xi(y_{\ell(y)})\Big\}
\one\Big\{\sum_{i=0}^{\ell(y)-1}\tau_i<t,\sum_{i=0}^{\ell(y)}\tau_i>t\Big\}\Big]\notag\\
&=2\int_{\R_{+}^{\ell(y)+1}}\exp\Big\{ \sum_{i=0}^{\ell(y)-1}x_i \xi(y_i)+\Big(t-\sum_{i=0}^{\ell(y)-1}x_i\Big)\xi(y_{\ell(y)})
-2\sum_{i=0}^{\ell(y)}x_i\Big\}\notag\\
&\phantom{aaaaaaaaaaaaaa}\times
\one\Big\{\sum_{i=0}^{\ell(y)-1}x_i<t,\sum_{i=0}^{\ell(y)}x_i> t\Big\}\Big]dx_0\cdots dx_{\ell(y)}\notag\\
&=e^{-2t} I_{\ell(y)}(t; \xi(y_0),\dots,\xi(y_{\ell(y)})).
\label{uy1}
\end{align}  
where the function $I$ is defined by
\begin{align}
\label{ihat}
I_n(t; a_0,\dots,a_n)=e^{ta_n}\int_{\R_{+}^{n}}\exp\Big\{ \sum_{i=0}^{n-1}x_i (a_i-a_n)\Big\}
\one\Big\{\sum_{i=0}^{n-1}x_i<t\Big\}dx_0\cdots dx_{n-1},
\end{align}
for each $t>0$, $n\in\N$, and $a_0,\dots,a_n\in\R$. In particular, $I_0(t; a_0)=e^{ta_0}$. 

\smallskip

In Section \ref{s:i} we show that $I$ has a rather neat symmetric structure and study its properties. Using this understanding, in Section~\ref{s:paths} we identify the paths making non-negligible contribution to $U_0$. For $\alpha \in (1,2)$ the situation is relatively simple: in Propositions~\ref{shortest} and~\ref{u3} we show that only the direct paths to $\Omega_t$ are significant, and approximate their contribution to $U_0(t)$ by a certain product over the path. This is useful because, since each site is visited at most once, we can invoke standard fluctuation theory to analyse this product.

\smallskip

The situation is more complicated for $\alpha\ge 2$ since we strongly suspect that non-direct paths are significant. Instead we show in Proposition~\ref{prop:U0null} that, as long as certain additional typical properties of~$\xi$ hold, we can limit the significant paths to those that end at $\Omega_t$ and visit each site in $\{0\}\cup\mathcal{N}_t$ at most once, where $\mathcal{N}_t$ is a set of \textit{non-duplicated sites of high potential}. The advantage is that, after careful conditioning, it will be sufficient to study the fluctuations of the contribution from sites in~$\mathcal{N}_t$. Since these sites are visited at most once, we can again apply standard fluctuation theory. 

\smallskip

To define the set $\mathcal{N}_t$ precisely, we first introduce an additional auxiliary scaling function
\begin{align*}
\delta_t=(\log t)^{-\frac{1}{2\alpha}}
\end{align*}
which is chosen in such a way that, on the one hand, $1/\delta_t$ grows slower than $(\log t)^{\frac{1}{\alpha}}$, but on the other hand, $\log (1/\delta_t)$ grows faster than any power of $g_t$ and $1/f_t$. For each $t>0$, we then let
\begin{align}
\label{n}
\mathcal{N}_t=
\big\{z\in\Z: 0<|z|<|\Za|, z \in E, \xi(z)>\delta_t\xi(\Za)\big\}.
\end{align}
The additional typical properties we need are
\[ \mathcal{E}_t^{[2,\infty)} =\Big\{\delta_t^{-\alpha}/\log\log(1/\delta_t)<|\mathcal{N}_t|
<\delta_t^{-\alpha}\log(1/\delta_t), \inf_{z\in \mathcal{N}_t,x\in \Omega_t}|z-x|>g_t\Big\} , \]
which guarantees the set $\mathcal{N}_t$ is large enough and well-separated from $\Omega_t$; in Proposition~\ref{e2} we prove that this event holds eventually with overwhelming probability, assuming the event $\mathcal{E}_t$ also holds.

\smallskip

This analysis is already enough to finish the proof of Theorem~\ref{main0} assuming the event $\mathcal{E}_t$ holds; we complete the proof at the end of Section \ref{s:paths}. 

\smallskip

\textbf{Step $3$: Point process techniques.} In Section~\ref{s:pp} we build up a point process approach to study the high exceedences of $\xi$ and the top order statistics of the penalisation functional~$\Psi_t$. We start by proving that the potential $\xi$, properly rescaled, converges to a Poisson point process. We then use this convergence to pass certain functionals of $\xi$, including properties of $\Psi_t$, to the limit. Since this analysis involves several lengthy computations, some of the proofs are deferred to Appendix \ref{A:B}.

\smallskip

To end the section, we draw two main consequences from our point process analysis. First, we establish that the event $\mathcal{E}_t$ holds eventually with overwhelming probability. Second, we give an explicit construction for the limit random variable~$\Upsilon$ appearing in Theorem \ref{main1}; this is done via identifying it as the law of a certain time-inhomogeneous L\'{e}vy process stopped at a random time.

\smallskip

\textbf{Step $4$: Fluctuation theory for the ratio $u(t, \Za)/u(t, -\Za)$.} At this point we have assembled all the main ingredients, and all that is left is to apply fluctuation theory to analyse the ratio $u(t, \Za)/u(t, -\Za)$; here we again distinguish between the cases $\alpha \in (1, 2)$ and $\alpha \ge 2$ (see the heuristics in Section \ref{s:heu}).

 \smallskip

In Section~\ref{s:main1} we study the case $\alpha \in (1,2)$ and complete the proof of Theorem~\ref{main1}. In particular, since only direct paths contribute significantly to $U_0(t)$, and since the contribution from these paths can be approximated by a product over the path, we can use standard theory to study these fluctuations. With the aid of our point process analysis, we prove that the ratio $u(t,\Za)/u(t,-\Za)$ converges to the limit random variable we identify in Section \ref{s:pp}.
 
\smallskip

In Section~\ref{s:main2} we study the case $\alpha \ge 2$ and complete the proof of Theorem~\ref{main2}. Here we apply a central limit theorem to establish that the fluctuations in $u(t, \Za)/u(t, -\Za)$ due to the sites $\mathcal{N}_t$ (which are visited at most once) are in the Gaussian universality class; the proof of the central limit theorem is deferred to Appendix \ref{clt}. These fluctuations turn out to already be sufficient to prove that $| \log u(t,\Za)/u(t,-\Za)| \to \infty$, irrespective of the contribution due to the other sites.

\bigskip


\section{Preliminaries}
\label{s:prelim}

In this section we establish some preliminary results. First, we prove asymptotic properties of the potential $\xi$. Second, we establish the negligibility of $U_1(t)$. Lastly, we study the structure of the function $I$ introduced in \eqref{ihat}.

\subsection{Asymptotic properties of the potential}
\label{s:pot}

To begin, we establish asymptotic properties of the potential. This allows us to deduce properties of the maximisers $\Za$ and $\Zb$, and also to establish that $\mathcal{E}_t^{[2,\infty)}$ holds eventually with overwhelming probability. 

\begin{lemma} 
\label{assup}
Recall that $\xi_n^{\ssup 1} = \max_{|z| \le n} \xi(z)$. For every $\e>0$, almost surely 
\begin{align*}
n^{1/\alpha-\e}<\xi_n^{\ssup 1}<n^{1/\alpha}(\log n)^{1/\alpha+\e}
\end{align*}
eventually.
\end{lemma}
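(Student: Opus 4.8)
The statement is a Borel–Cantelli argument applied to the maximum of polynomially-tailed i.i.d.-type random variables. The key point is that although $\xi$ is not i.i.d., it is obtained from the i.i.d.\ field $\xi_0$ by a duplication procedure, so $\{\xi(z) : |z| \le n\}$ is a set of at most $2n+1$ random variables, each of which is equal in distribution to a Pareto($\alpha$), and among them the values $\{\xi_0(z) : |z|\le n\}$ are genuinely independent. Since $\xi_n^{\ssup 1} = \max_{|z|\le n}\xi(z) = \max_{|z|\le n}\xi_0(z)$ (because every value of $\xi$ on $\{-n,\dots,n\}$ is some value of $\xi_0$ on $\{-n,\dots,n\}$, and conversely $\xi(m)=\xi_0(m)$ for $m\ge 0$), the maximum is exactly the maximum of $2n+1$ i.i.d.\ Pareto($\alpha$) variables, and the problem reduces to a completely standard extreme-value estimate.

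\textbf{Upper bound.} Fix $\e>0$. Using $\Prob(\xi_0(z) > x) = x^{-\alpha}$ and a union bound over $|z|\le n$,
\[
\Prob\big(\xi_n^{\ssup 1} \ge n^{1/\alpha}(\log n)^{1/\alpha+\e}\big) \le (2n+1)\, n^{-1}(\log n)^{-1-\alpha\e} = O\big(n^{-\alpha\e}(\log n)^{-1-\alpha\e}\big)\cdot\text{(const)}.
\]
This is not quite summable in $n$, but it is summable along the subsequence $n_k = 2^k$, and since $n\mapsto \xi_n^{\ssup 1}$ is non-decreasing while $n\mapsto n^{1/\alpha}(\log n)^{1/\alpha+\e}$ is (eventually) non-decreasing and regularly varying, a standard sandwiching argument between consecutive $n_k$ upgrades the subsequential statement to all $n$ (at the cost of shrinking $\e$, which is harmless since $\e$ was arbitrary). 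Borel–Cantelli then gives that $\xi_n^{\ssup 1} < n^{1/\alpha}(\log n)^{1/\alpha+\e}$ eventually, almost surely.

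\textbf{Lower bound.} For the lower bound we use independence of $\{\xi_0(z): 0\le z\le n\}$ (these $n+1$ values are genuinely i.i.d.\ and all appear among $\{\xi(z):|z|\le n\}$). Then
\[
\Prob\big(\xi_n^{\ssup 1} \le n^{1/\alpha - \e}\big) \le \Prob\big(\max_{0\le z\le n}\xi_0(z) \le n^{1/\alpha-\e}\big) = \big(1 - n^{-(1-\alpha\e)}\big)^{n+1} \le \exp\big(-(n+1)n^{-(1-\alpha\e)}\big) = \exp\big(-\Theta(n^{\alpha\e})\big),
\]
which is summable in $n$; Borel–Cantelli gives $\xi_n^{\ssup 1} > n^{1/\alpha-\e}$ eventually, almost surely. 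Intersecting over a sequence $\e\downarrow 0$ yields the claim.

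\textbf{Main obstacle.} There is no serious obstacle; the only mild subtlety is the non-summability of the raw union bound in the upper estimate, which forces the subsequence-plus-monotonicity trick rather than a direct Borel–Cantelli. One should also take a moment to record cleanly the identity $\xi_n^{\ssup 1} = \max_{|z|\le n}\xi_0(z)$ so that the dependence introduced by duplication is seen to be irrelevant for the maximum (it would matter for, say, $\xi_n^{\ssup 2}$, but that is not needed here).
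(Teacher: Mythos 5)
Your overall strategy is sound and is essentially a self-contained reproof of the estimate the paper outsources to a citation: where you run the union bound plus Borel--Cantelli along dyadic scales with a monotonicity sandwich, the paper simply quotes~\cite[Lemma~3.5]{HMS}, which records exactly the two bounds for the i.i.d.\ field $\xi_0$, and then observes that they transfer to $\xi$. Both routes are fine; yours is more elementary and verifiable in situ, the paper's is shorter.

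Two of your intermediate statements are, however, incorrect, even though your actual estimates do not rely on them. First, the claimed identity
$\xi_n^{\ssup 1}=\max_{|z|\le n}\xi_0(z)$ is false: the duplication procedure overwrites $\xi_0(-m)$ with $\xi_0(m)$ at duplicated sites $-m$, so the value $\xi_0(-m)$ may not appear in $\{\xi(z):|z|\le n\}$ at all. What is true (and what your argument in fact uses) is the pair of one-sided inequalities
$\max_{0\le z\le n}\xi_0(z)\le \xi_n^{\ssup 1}\le \max_{|z|\le n}\xi_0(z)$,
the left one because $\xi(m)=\xi_0(m)$ for $m\ge 0$, the right one because every $\xi(z)$ with $|z|\le n$ equals some $\xi_0(w)$ with $|w|\le n$. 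Your lower bound invokes only the left inequality, and your upper bound union bound only needs each $\xi(z)$ to be marginally Pareto (or the right inequality); neither needs equality. Second, the asserted order of magnitude in the upper bound is wrong: $(2n+1)n^{-1}(\log n)^{-1-\alpha\e}$ is $\Theta\big((\log n)^{-1-\alpha\e}\big)$, not $O\big(n^{-\alpha\e}(\log n)^{-1-\alpha\e}\big)$. You immediately afterward treat it (correctly) as non-summable and pass to the dyadic subsequence, so this is only a typo in the displayed $O(\cdot)$, but as written it contradicts the very next sentence. Fixing both statements leaves a complete and correct proof.
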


\begin{proof} According to~\cite[Lemma 3.5]{HMS}, almost surely the sequence $(\xi_0(z))_{z \in \N}$ of independent Pareto($\alpha$) random variables satisfies 
\[ \max\{\xi_0(z):|z|\le n\}<n^{1/\alpha}(\log n)^{1/\alpha+\e} \]
and
\[ \min \left\{  \max\{\xi_0(z):0\le z\le n\}, \max\{\xi_0(z):-n\le z\le 0\} \right\} >n^{1/\alpha-\e} \]
eventually for all $n$, and the result follows.
\end{proof}

\begin{lemma} 
\label{exi}
For fixed $t$, almost surely either $\Omega_t = \{z\}$ for some $z \in E$ or $\Omega_t = \{-z, z\}$ for some $z \in D$, and the same conclusion holds for the maximisers of $\Psi_t$ on the set $\Z \setminus \Omega_t$. Moreover, almost surely $\Psi_t(\Za) > \Psi_t(\Zb)>1$ eventually for all $t$.
\end{lemma}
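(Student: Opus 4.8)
The plan is to split the claim into an existence/finiteness part, a structural part, and the two quantitative inequalities. The first step, and the only one with real content, is to show that, almost surely, $\Psi_t(z)\to-\infty$ as $|z|\to\infty$ for every $t$, so that the supremum of $\Psi_t$ over any subset of $\Z$ is attained; this is where $\alpha>1$ is used. Fixing $\e>0$ small and using the bound $\xi_n^{\ssup 1}<n^{1/\alpha}(\log n)^{1/\alpha+\e}$ from Lemma~\ref{assup} (an almost sure statement about $\xi$ alone, uniform in $t$), one gets $\xi(z)=o(|z|)$, hence for $|z|=n$ large either $\xi(z)<\xi(0)$, so that $\Psi_t(z)\le\xi(z)<\xi(0)=\Psi_t(0)$ since $\log\xi(z)\ge0$, or $\xi(z)\ge\xi(0)$, so that $\log\xi(z)\ge\log\xi(0)>0$ and $\Psi_t(z)\le\xi(z)-\tfrac{n}{t}\log\xi(0)\to-\infty$. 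Since $\xi(0)>1$ almost surely (the Pareto law is atomless), for every $t$ the maximisers of $\Psi_t$ lie in a finite, $t$-dependent window, so $\Omega_t$ is finite and non-empty, the maximisers of $\Psi_t$ on $\Z\setminus\Omega_t$ exist, and $\Psi_t(\Za)\ge\xi(0)>1$.

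For the structure, observe that $\Psi_t(z)$ depends on $z$ only through $|z|$ and $\xi(z)$, so it is automatically equal on each reflected pair $\{-w,w\}$ with $w\in D$; in particular a duplicated maximiser always comes with its reflection, and such a pair lies entirely inside $\Omega_t$ or entirely outside it (if $w\in D$ and $-w\in\Omega_t$ then $\Psi_t(w)=\Psi_t(-w)=\max_{\Z}\Psi_t$, forcing $w\in\Omega_t$). To rule out any other coincidence, fix $t$ and note that for any pair of distinct sites $z,z'$ that is not such a reflected pair, the values $\xi(z),\xi(z')$ are independent (tracing through the duplication construction; in the case $z'=-z$ this holds on the event that both sites lie in $E$), so since $\xi$ is atomless and $a\mapsto a-\tfrac{|z|}{t}\log a$ takes each value at most twice, $\Prob(\Psi_t(z)=\Psi_t(z'))=0$. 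Intersecting over the countably many such pairs, almost surely the only coincidences among $\{\Psi_t(z)\}_{z\in\Z}$ are the forced ones; hence no three sites share the maximal value of $\Psi_t$ on any set, a two-element maximiser set must be a reflected pair in $D$, and a single maximiser, not being a nonzero element of $D$, lies in $E$ (the case $\{0\}=\{-0,0\}$ falling under the reflected-pair alternative). This gives the dichotomy for $\Omega_t$, and the identical argument on $\Z\setminus\Omega_t$ gives it there.

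The inequality $\Psi_t(\Za)>\Psi_t(\Zb)$ is then immediate: $\Omega_t$ is by definition the full set of maximisers of $\Psi_t$ over $\Z$, so $\Psi_t<\Psi_t(\Za)$ on $\Z\setminus\Omega_t$, and the supremum over $\Z\setminus\Omega_t$, being attained, is strictly smaller. For $\Psi_t(\Zb)>1$ at large $t$, I would use that $\Psi_t(z)\to\xi(z)$ as $t\to\infty$ for each fixed $z$ and that almost surely $\xi(j)>1$ for $j=1,2,3$, so $\Psi_t(j)>1$ for all large $t$; moreover almost surely no $t>0$ makes all three of $\Psi_t(1),\Psi_t(2),\Psi_t(3)$ coincide, since each pairwise equality determines $t$ uniquely from the atomless triple $(\xi(1),\xi(2),\xi(3))$ and two such values a.s.\ differ, whence at least one $j\in\{1,2,3\}$ lies outside $\Omega_t$ and $\Psi_t(\Zb)\ge\Psi_t(j)>1$. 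Together with $\Psi_t(\Za)>1$ for every $t$, this gives $\Psi_t(\Za)>\Psi_t(\Zb)>1$ eventually. The one step I expect to require care is the decay estimate of the first paragraph: it is what makes the maximiser sets finite, is where $\alpha>1$ enters, and must be run simultaneously for all $t$, whereas the atomlessness arguments are genuinely only "for fixed $t$".
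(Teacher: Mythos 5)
Your proof is correct and follows the same overall plan as the paper's: almost-sure growth bounds from Lemma~\ref{assup} force $\Psi_t$ to decay at infinity so that the maximiser sets are finite and nonempty, atomlessness together with the independence built into the duplication construction gives the structural dichotomy, and a few fixed positive sites secure $\Psi_t(\Zb)>1$ for large $t$. One small remark: your two-case decay argument, comparing everything against $\Psi_t(0)=\xi(0)>1$, is actually a little more careful than the single displayed upper bound in the paper's proof (which, read literally, fails for $z$ with $\xi(z)$ near $1$, although the intended conclusion that $\Psi_t$ is bounded above with only finitely many near-maximisers is of course correct), and your three-site version of the final step makes the ``eventually for all $t$'' qualifier slightly more explicit than the paper's two-site version.
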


\begin{proof} By Lemma~\ref{assup} with $0<\e<\min\{1-1/\alpha,1/\alpha\}$, for all $z$ with $|z|$ sufficiently large
\begin{align*}
\Psi_t(z)\le |z|^{1/\alpha+\e}-\frac{|z|}{t}(1/\alpha-\e)\log |z|,
\end{align*}
which is a bounded function of $|z|$. Hence $\Psi_t$ is bounded for each $t>0$. Since $\Psi_t(z)$ is a continuous random variable with no point mass, this implies the first statement.

\smallskip

For the second statement, let $z_1,\,z_2\in \mathbb{Z}_+$ be fixed sites satisfying $\xi(z_1)\wedge\xi(z_2)>1$ (such sites exist almost surely). Then $\Psi_t(z_1)\wedge\Psi_t(z_2)>1$ for all $t$ sufficiently large and so in particular
$\Psi_t(\Za)$ and~$\Psi_t(\Zb)$ are both larger than one eventually.  Again since $\Psi_t(z)$ is a continuous random variable with no point mass, this implies the second statement.

\end{proof}

\begin{prop}
\label{e2}
$\text{\rm Prob}\big(\mathcal{E}_t^{[2,\infty)} \: | \: \mathcal{E}_t \big)\to 1$ as $t\to\infty$.
\end{prop}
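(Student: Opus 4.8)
The strategy is to verify the two parts of $\mathcal{E}_t^{[2,\infty)}$ separately, working under the conditional law $\Prob(\,\cdot\mid\mathcal{E}_t)$. The input from $\mathcal{E}_t$ we use is: the crude bounds $r_tf_t<|\Za|<r_tg_t$ and $a_tf_t<\xi(\Za)<a_tg_t$; the identity $\xi(\Za)=\xi_{R_t}^{\ssup 1}=\xi_{|\Za|}^{\ssup 1}$ (the last equality because $\Za$ itself lies in the ball of radius $|\Za|$); and the relation $r_ta_t^{-\alpha}=1$. The \emph{upper} bound $|\mathcal{N}_t|<\delta_t^{-\alpha}\log(1/\delta_t)$ and the separation condition are then routine first-moment estimates. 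On $\mathcal{E}_t$ one has $\mathcal{N}_t\subseteq\{z:0<|z|<r_tg_t,\ \xi(z)>\delta_ta_tf_t\}$; since conditioning on $\mathcal{E}_t$ only pushes the potential at sites with $|z|<|\Za|$ downward in the stochastic order (given $\Za$, $\xi(\Za)$ and the duplication pattern these values are i.i.d.\ Pareto variables conditioned below $\xi(\Za)$ together with finitely many further downward constraints), the conditional mean of the right-hand cardinality is at most $2r_tg_t(\delta_ta_tf_t)^{-\alpha}=2g_tf_t^{-\alpha}\delta_t^{-\alpha}$, and Markov's inequality together with the fact that $\log(1/\delta_t)\asymp\log\log t$ dominates every power of $g_t$ and of $1/f_t=O(\log\log\log t)$ gives the upper bound. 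For the separation, $\Omega_t\subseteq\{-|\Za|,|\Za|\}$, there are only $O(g_t)$ sites $z$ with $|z|<|\Za|$ within distance $g_t$ of $\Omega_t$, and each satisfies $\Prob(z\in\mathcal{N}_t\mid\mathcal{E}_t)\le(\delta_ta_tf_t)^{-\alpha}=\delta_t^{-\alpha}f_t^{-\alpha}r_t^{-1}$, which decays super-polynomially in $t$; a union bound finishes it.

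The \emph{lower} bound $|\mathcal{N}_t|>\delta_t^{-\alpha}/\log\log(1/\delta_t)$ is the substantial point. Conditioning further on $\Za$, $\xi(\Za)$ and the duplication pattern, the sites $z$ with $0<|z|<|\Za|$ that can contribute to $\mathcal{N}_t$ are the exclusive ones, whose $\approx 2q|\Za|$ potential values ($q=1-p$) are independent Pareto variables conditioned below $\xi(\Za)$, subject only to the additional downward constraints of $\mathcal{E}_t$ (no second value within $a_tf_t$ of $\xi(\Za)$, no site with $\Psi_t$ within $a_tf_t$ of $\Psi_t(\Za)$, the far-field bound), each of conditional probability tending to $1$. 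Hence $|\mathcal{N}_t|$ is, up to a reweighting by a factor tending to $1$, a sum of independent Bernoulli variables of conditional mean $\sim 2q|\Za|(\delta_t\xi(\Za))^{-\alpha}=2q\,\delta_t^{-\alpha}V_t$, where $V_t:=|\Za|\xi(\Za)^{-\alpha}$. The crude bounds only place $V_t$ in the wide window $(f_tg_t^{-\alpha},g_tf_t^{-\alpha})$, which is not enough; the extra ingredient is that $V_t=\Theta(1)$ under $\Prob(\,\cdot\mid\mathcal{E}_t)$. This is where $\xi(\Za)=\xi_{|\Za|}^{\ssup 1}$ enters: being the maximum of $\Theta(|\Za|)$ essentially i.i.d.\ Pareto variables, $\xi(\Za)$ forces $|\Za|\xi(\Za)^{-\alpha}=\Theta(1)$ — concretely via the asymptotics $|\Za|\asymp r_t$, $\xi(\Za)\asymp a_t$ with matching constants provided by the extreme-value/point-process description of the i.i.d.\ Pareto field (cf.\ Section~\ref{s:pp}), or by a direct estimate of the joint law of the position and value of the maximiser of $\Psi_t$. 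Granting $V_t=\Theta(1)$, the conditional mean is of order $\delta_t^{-\alpha}$, a Chebyshev bound gives $|\mathcal{N}_t|\sim 2q\,\delta_t^{-\alpha}V_t$ in conditional probability, and since $1/\log\log(1/\delta_t)\to0$ while $\log(1/\delta_t)\to\infty$ this falls in the required window with conditional probability tending to $1$.

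The main obstacle is the rigorous handling of the conditioning underlying the previous paragraph. One must: (i) decompose $\mathcal{E}_t$ by first conditioning on $(\Za,\xi(\Za))$ and identify the residual law of $(\xi(z))_{0<|z|<|\Za|}$ as the claimed conditioned product measure; (ii) verify that each remaining constraint of $\mathcal{E}_t$ (and of $\mathfrak{D}_t$, on that event) has conditional probability tending to $1$, so that it reweights the law only by a factor tending to $1$ — it is important here that these constraints become mild precisely once $V_t$ is known to be $\Theta(1)$; and (iii), most delicately, establish $V_t=\Theta(1)$ itself, i.e.\ obtain sharp two-sided control of $|\Za|$ and $\xi(\Za)$, which is complicated by the circular way $R_t=|\Za|(1+f_t)$, and hence the ball over which $\xi(\Za)$ is the maximum, depends on $|\Za|$. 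All the remaining steps are standard first- and second-moment computations for i.i.d.\ Pareto fields.
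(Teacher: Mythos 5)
Your outline for the upper bound on $|\mathcal{N}_t|$ and for the separation condition is in line with the paper: condition on $\mathcal{G}_t=\sigma(D,\Za,\xi(\Za))$, use that the $\{z\in\mathcal{N}_t\}$ are then conditionally independent with $\Prob_{\mathcal{G}_t}(z\in\mathcal{N}_t)\le 2(a_tf_t\delta_t)^{-\alpha}$, and finish with Markov/union-bound (the paper packages this as stochastic domination by a binomial and a geometric, respectively). That part is fine.

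The lower bound is where you deviate, and where you introduce a gap that the paper's own argument deliberately avoids. You assert that the crude bounds $r_tf_t<|\Za|<r_tg_t$, $a_tf_t<\xi(\Za)<a_tg_t$ ``are not enough'' and that one needs the sharper fact $V_t:=|\Za|\xi(\Za)^{-\alpha}=\Theta(1)$. That is false, and it is precisely the point of the $1/\log\log(1/\delta_t)$ slack in the lower threshold of $\mathcal{E}_t^{[2,\infty)}$. Under $\mathcal{G}_t$ and on $\mathcal{E}_t$ one has $\Prob_{\mathcal{G}_t}(z\in\mathcal{N}_t)>(a_tg_t\delta_t)^{-\alpha}/2$ uniformly, and combined with the law-of-large-numbers event $\{f_t|\Za|<|\{|y|<|\Za|:y\in E\}|<g_t|\Za|\}$ one gets $|\mathcal{N}_t|\succ\mathrm{Bin}\big(f_t^2 r_t,\ (a_tg_t\delta_t)^{-\alpha}/2\big)$, whose conditional mean is $f_t^2 g_t^{-\alpha}\delta_t^{-\alpha}/2$ after using $r_t a_t^{-\alpha}=1$. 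Since $\delta_t$ is fixed while $f_t,g_t$ are free to decay/grow arbitrarily slowly (the constraint $g_t,1/f_t=O(\log\log\log t)$ is only an upper bound, and the paper explicitly requires $\log(1/\delta_t)$ to dominate every power of $g_t$ and $1/f_t$), one can arrange $g_t^\alpha/f_t^2=o(\log\log(1/\delta_t))$, so the binomial mean exceeds the lower threshold $\delta_t^{-\alpha}/\log\log(1/\delta_t)$ by a factor tending to infinity, and binomial concentration finishes it. No pointwise control of $V_t$ is needed.

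As a result, your proposal is over-engineered in exactly the place you yourself flag as ``the substantial point'' and ``most delicate'': proving $V_t=\Theta(1)$ (e.g.\ via the point-process convergence of Section~\ref{s:pp}, which postdates Proposition~\ref{e2} in the paper, or via a direct extreme-value computation entangled with the self-referential definition of $R_t$). You correctly note the circular dependence of $R_t$ on $|\Za|$ as an obstacle, but you do not resolve it, and you also take on the task of reweighting by the residual constraints of $\mathcal{E}_t$ and $\mathfrak{D}_t$, none of which is necessary. The paper's route is strictly simpler: work conditionally on $\mathcal{G}_t$, sandwich $|\mathcal{N}_t|$ between two binomials whose parameters follow from the \emph{crude} bounds already encoded in $\mathcal{E}_t$ plus a trivial LLN event, and let the generous $\log\log(1/\delta_t)$ and $\log(1/\delta_t)$ margins in the definition of $\mathcal{E}_t^{[2,\infty)}$ absorb the $g_t,1/f_t$ crudeness. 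You should replace the $V_t=\Theta(1)$ machinery by this elementary estimate; as written, the lower-bound part of your argument contains a genuine unresolved step and depends on later sections of the paper unnecessarily.
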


\begin{proof} 
Let
\begin{align*}
\mathcal{E}'_t=\Big\{r_t f_t<|\Za|<r_t g_t, \xi(\Za)>a_tf_t,\Psi_t(\Za)>f_t\xi(\Za)\Big\}.
\end{align*}
and 
\begin{align*}
\mathcal{E}''_t=\Big\{f_t|\Za|<\big|\big\{|y|<|\Za|:y \in E \big\}\big|<g_t|\Za|\Big\},
\end{align*}
and observe that we may work on the event $\mathcal{E}'_t \cap \mathcal{E}''_t$ since $\mathcal{E}'_t$ is implied by $\mathcal{E}_t$ and $\Prob(\mathcal{E}''_t | \mathcal{E}'_t)\to 1$ by the law of large numbers.

\smallskip

For each $t>0$, denote by $\mathcal{G}_t$ the $\sigma$-algebra generated by $D$, $\Za$ and $\xi(\Za)$, and denote the conditional probability with respect to $\mathcal{G}_t$ by $\Prob_{\mathcal{G}_t}$. It is easy to see that, conditionally on $\mathcal{G}_t$, 
the events 
\begin{align*}
\big\{z\in \mathcal{N}_t\big\}_{z \in E,|z|<|\Za|}
\end{align*}
are independent.  Hence we can stochastically dominate the desired properties of $\mathcal{N}_t$ by equivalent
properties of Bernoulli trials, and use standard properties of such trials to complete the proof. For each $z \in E$, $|z|<|\Za|$, the conditional distribution of $\xi(z)$ with respect to $\mathcal{G}_t$ is the Pareto distribution with parameter $\alpha$ conditioned on $\Psi_t(z)<\Psi_t(\Za)$. Observe that
\begin{align}
\label{c1}
1\ge \int_{1}^{\infty}\frac{\alpha dy}{y^{\alpha+1}}
\one_{\{y-\frac{|z|}{t}\log y<\Psi_t(\Za)\}}
\ge \int_{1}^{\Psi_t(\Za)}\frac{\alpha dy}{y^{\alpha+1}}
=1-\Psi_t(\Za)^{-\alpha}>1/2
\end{align}
uniformly for all $z$ for all sufficiently large $t$ almost surely. Further, using $\delta_t\xi(\Za)>\delta_t f_ta_t>1$
and $\Psi_t(\Za)>f_t\xi(\Za)>\delta_t \xi(\Za)$ on 
$\mathcal{E}'_t$ eventually, we have
\begin{align*}
\int_{1}^{\infty}\frac{\alpha dy}{y^{\alpha+1}}
\one_{\{y>\delta_t\xi(\Za),y-\frac{|z|}{t}\log y<\Psi_t(\Za)\}}
&\ge \int_{\delta_t\xi(\Za)}^{f_t\xi(\Za)}\frac{\alpha dy}{y^{\alpha+1}}\\
&=\xi(\Za)^{-\alpha}\big(\delta_t^{-\alpha}-f_t^{-\alpha}\big)
>(a_tg_t\delta_t)^{-\alpha}/2
\end{align*}
and 
\begin{align*}
\int_{1}^{\infty}\frac{\alpha dy}{y^{\alpha+1}}
\one_{\{y>\delta_t\xi(\Za),y-\frac{|z|}{t}\log y<\Psi_t(\Za)\}}
\le \int_{\delta_t\xi(\Za)}^{\infty}\frac{\alpha dy}{y^{\alpha+1}}
=(\xi(\Za)\delta_t)^{-\alpha}
<(a_tf_t\delta_t)^{-\alpha}.
\end{align*}
Combining two above inequalities with~\eqref{c1} we get 
\begin{align}
\label{bin1}
(a_tg_t\delta_t)^{-\alpha}/2<\Prob_{\mathcal{G}_t}\big(z\in \mathcal{N}_t\big)< 2(a_tf_t\delta_t)^{-\alpha}
\end{align}
uniformly for all $z$ for all sufficiently large $t$ almost surely.
Using this together with the conditional independence and the properties guaranteed by $\mathcal{E}'_t$ and $\mathcal{E}''_t$ we infer that eventually
\begin{align}
\label{bin}
\text{Bin}(f_t^2 r_t, (a_tg_t\delta_t)^{-\alpha}/2)
\prec |\mathcal{N}_t|\prec
\text{Bin}(g_t^2 r_t, 2(a_tf_t\delta_t)^{-\alpha}),
\end{align}
where $\text{Bin}(n,\tau)$ denotes  a binomial random variable with parameters $n\in\N$ and $\tau\in [0,1]$, 
and $\prec$ denotes stochastic domination. By looking at the characteristic function of the binomial distribution we see that 
\begin{align*}
\frac{\text{Bin}(n_t\tau_t)}{n_t\tau_t}\Rightarrow 1
\end{align*}
as $t\to\infty$ if $n_t\tau_t\to \infty$. 
This condition is clearly satisfied by both binomial random variables in~\eqref{bin} by the choice of $\delta_t, f_t$, and $g_t$. 
To complete the proof of the inequalities on $|\mathcal{N}_t|$, it remains to notice that $\delta_t^{-\alpha}/\log\log(1/\delta_t)\ll f_t^2 r_t (a_t g_t\delta_t)^{-\alpha}/2$ and $2g_t^2 r_t (a_tf_t\delta_t)^{-\alpha}\ll \delta_t^{-\alpha}\log(1/\delta_t)$
since $r_ta_t^{-\alpha}=1$ and by the choice of $\delta_t, f_t$, and $g_t$. 

\smallskip

Similarly, the upper bound in~\eqref{bin1} also implies that, conditionally on $\mathcal{G}_t$, 
\begin{align*}
\inf_{z\in \mathcal{N}_t,x\in \Omega_t}|z-x|\succ 
\min\big\{\text{Geo}_1\big(2(a_tf_t\delta_t)^{-\alpha}\big), \text{Geo}_2\big(2(a_tf_t\delta_t)^{-\alpha}\big)\big\},
\end{align*}
where $\text{Geo}_1(\tau)$ and $\text{Geo}_2(\tau)$ denote  two independent geometric random variables with parameter $\tau\in [0,1]$ supported on $\N$. Observe that
\begin{align*}
\mathbb{P}(\text{Geo}(\tau_t)>g_t)=
(1-\tau_t)^{\lfloor g_t\rfloor}\to 1
\end{align*}
as $t\to\infty$ if $\tau_tg_t\to 0$. It remains to notice that 
$2(a_tf_t\delta_t)^{-\alpha}g_t\to 0$ as $t\to\infty$.
\end{proof}

\subsection{Eliminating the \textit{a priori} negligible paths}
\label{s:neg}

We begin by decomposing $U_1(t)$ into
\begin{align}
\label{fc1}
U'_1(t)&=\mathrm{E}\Big[\exp\Big\{\int_0^t \xi(X_s)ds\Big\}\one\{J_t>R_t\}\Big],\\
\label{fc2}
U''_1(t)&=\mathrm{E}\Big[\exp\Big\{\int_0^t \xi(X_s)ds\Big\}\one\{J_t\le R_t,\tau_{\Omega_t}\ge t\}\Big].
\end{align}
We first find a lower bound for $U$ in Lemma~\ref{lowerbound} and upper bounds for $U_1'$ and $U_1''$ in Lemmas~\ref{upperbound1} and~\ref{upperbound2} respectively, before combining these to prove the negligibility of $U_1$. This approach is standard and similar to~\cite{HMS, KLMS, ST}. 

\begin{lemma} 
\label{lowerbound}
Almost surely,
\begin{align*}
\log U(t)>t\Psi_t(\Za) - 2t + O(\log t)
\end{align*}
on the event $\mathcal{E}_t$ as $t\to\infty$.
\end{lemma}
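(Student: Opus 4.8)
\textbf{Proof strategy for Lemma~\ref{lowerbound}.}
The plan is to bound $U(t)$ from below by the single contribution $U(t, y^{(t)})$, where $y^{(t)}$ is the direct (shortest) geometric path from the origin to $\Za$, and then to estimate this contribution using the explicit formula \eqref{uy1}--\eqref{ihat}. Since $U(t) = \sum_y U(t,y) \ge U(t, y^{(t)})$ and every summand is nonnegative, this is a valid lower bound; the work is entirely in showing that this one path already produces growth rate $t\Psi_t(\Za) - 2t$ up to an $O(\log t)$ error. Writing $n = |\Za|$ for the length of the direct path and $a_i = \xi(i)$ for $0 \le i \le n$ (assuming without loss of generality that $\Za \in \N$, the other case being symmetric), we have $U(t, y^{(t)}) = e^{-2t} I_n(t; a_0, \dots, a_n)$.

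The key step is a lower bound on $I_n(t; a_0, \dots, a_n)$ of the form $e^{ta_n} \prod_{j=0}^{n-1} (a_n - a_j)^{-1} \times (\text{something not too small})$. On the event $\mathcal{E}_t$ we have $a_n = \xi(\Za) = \xi^{\ssup 1}_{R_t}$, so $a_n - a_j \ge \xi^{\ssup 1}_{R_t} - \xi^{\ssup 2}_{R_t} > a_t f_t$ for all $j < n$; in particular each factor $a_n - a_j$ is bounded below by a positive power of $t$, hence $\ge 1$ eventually, so that $\prod_{j=0}^{n-1}(a_n - a_j)^{-1} \le 1$ — but we need the reverse direction, a \emph{lower} bound on $I_n$, so the relevant point is instead that enlarging the integration region in \eqref{ihat} can only help. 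Concretely, I would restrict the integral defining $I_n$ to the region where each $x_i$ lies in a small window (say $x_i \in [0, \epsilon_t]$ with $n\epsilon_t \ll t$, so the constraint $\sum x_i < t$ is automatic), on which $\exp\{\sum_i x_i(a_i - a_n)\} \ge \exp\{-\sum_i x_i (a_n - a_i)\}$ is bounded below by a controlled quantity; optimising the window gives $I_n(t; a_0,\dots,a_n) \ge e^{ta_n} \prod_{j=0}^{n-1}(a_n - a_j)^{-1} \cdot c_n$ with $\log c_n = O(n)$ or better. Since $n = |\Za| < r_t g_t$ on $\mathcal{E}_t$ and $r_t = (t/\log t)^{\alpha/(\alpha-1)}$ is polynomial in $t$, an error of order $\log$ of this product is $O(n \log a_t) = O(r_t g_t \log t)$, which is far larger than $O(\log t)$ — so the crude windowing is not sharp enough and one must be more careful. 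The right approach is to evaluate $I_n$ essentially exactly: by the partial-fraction / recursive structure of $I$ developed in Section~\ref{s:i} (the identity quoted in the heuristics, $I_n = e^{ta_n}\prod_{j<n}(a_n-a_j)^{-1} - \sum_{i<n} I_i \prod_{j=i}^{n-1}(a_n-a_j)^{-1}$), the leading term is exactly $e^{ta_n}\prod_{j=0}^{n-1}(a_n - a_j)^{-1}$ and the correction terms are lower order because each $(a_n - a_j)^{-1}$ is small; on $\mathcal{E}_t$ one shows $\log I_n(t; a_0, \dots, a_n) = t\xi(\Za) - \sum_{j=0}^{n-1}\log(\xi(\Za) - \xi(j)) + O(\log t)$. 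Then $\sum_{j=0}^{n-1} \log(\xi(\Za) - \xi(j)) \le \sum_{j=0}^{n-1}\log \xi(\Za) = n\log\xi(\Za) = |\Za|\log\xi(\Za) = t \cdot \frac{|\Za|}{t}\log\xi(\Za) = t(\xi(\Za) - \Psi_t(\Za))$, which gives $\log U(t, y^{(t)}) \ge t\xi(\Za) - 2t - t(\xi(\Za) - \Psi_t(\Za)) + O(\log t) = t\Psi_t(\Za) - 2t + O(\log t)$, as required.

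The main obstacle is controlling the product $\prod_{j=0}^{n-1}(\xi(\Za) - \xi(j))^{-1}$ — more precisely, showing that the genuine quantity $\sum_{j<n}\log(\xi(\Za) - \xi(j))$ differs from the naive bound $n\log\xi(\Za)$ by only $O(\log t)$, not by something of order $|\Za|$. Here one uses that $\xi(\Za) - \xi(j) = \xi(\Za)(1 - \xi(j)/\xi(\Za))$ and that for all but negligibly many $j < n$ the ratio $\xi(j)/\xi(\Za)$ is small (indeed $< \delta_t$ for $j \notin \mathcal{N}_t$, by \eqref{n}), so $-\log(1 - \xi(j)/\xi(\Za)) = O(\xi(j)/\xi(\Za))$ and $\sum_{j<n} \xi(j)/\xi(\Za) = \xi(\Za)^{-1}\sum_{j<n}\xi(j)$, which is $O(|\Za| \cdot a_t / \xi(\Za)) = O(|\Za|) $ — still too big. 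The resolution must use that $|\Za|/t \to 0$ at the right rate: since $|\Za| \asymp r_t = (t/\log t)^{\alpha/(\alpha-1)}$ and... in fact $r_t \gg t$, so this does \emph{not} work directly either, which signals that the correct statement subsumes the product into the $\Psi_t$ term via the exact identity $|\Za| \log \xi(\Za) = t(\xi(\Za) - \Psi_t(\Za))$ above rather than trying to bound the product by a constant. I would therefore organise the proof around: (i) the exact formula $U(t, y^{(t)}) = e^{-2t}I_n(t;\xi(0),\dots,\xi(\Za))$; (ii) a clean estimate $\log I_n(t; a_0, \dots, a_n) \ge ta_n - \sum_{j<n}\log(a_n - a_j) + O(\log n)$ valid when $\min_{j<n}(a_n - a_j)$ is bounded below by a positive power of $t$ (true on $\mathcal{E}_t$), proved from the recursive structure of $I$ or directly from \eqref{ihat}; (iii) the algebraic identity rewriting $\sum_{j<n}\log(a_n - a_j) = n\log a_n - \sum_{j<n}\log(1 - \xi(j)/\xi(\Za)) \le n\log a_n$ and $n \log a_n = t(\xi(\Za) - \Psi_t(\Za))$; and (iv) noting $\log n = O(\log t)$ on $\mathcal{E}_t$ since $n < r_t g_t$. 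Steps (i), (iii), (iv) are routine; step (ii) is where the real content lies, and it is essentially a one-line consequence of the structure of $I$ established in Section~\ref{s:i}, so I expect the author's proof to cite a lemma from there and then perform the short computation in (iii)--(iv).
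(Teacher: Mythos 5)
The paper does not go through the function $I_n$ at all. Instead it cites an external estimate from~\cite[Prop.~4.2]{KLMS}: restrict the Feynman--Kac expectation to the event that the walk reaches $z$ in exactly $|z|$ jumps within time~$\rho t$ and then stays at~$z$ for the remaining $(1-\rho)t$. Computing the probability of this event (a Poisson probability and Stirling) and the accumulated potential gives
$\log U(t) > t(1-\rho)\xi(z)-|z|\log\frac{|z|}{e\rho t}-2t+O(\log|z|)$
for any $\rho\in(0,1]$, and then the choice $z=\Za$, $\rho=|\Za|/(t\xi(\Za))$ makes the exponent collapse exactly to $t\Psi_t(\Za)-2t$. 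So your prediction that the paper ``cites a lemma from Section~\ref{s:i}'' is wrong; the paper sidesteps $I_n$ entirely, and this is a cleaner argument because it involves no intermediate potential values.

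Your route — bound $U(t)$ below by $U(t,y^{(t)})=e^{-2t}I_n(t;a_0,\ldots,a_n)$ and use $\prod_{j<n}(a_n-a_j)^{-1}\ge a_n^{-n}$ together with $n\log a_n=t(\xi(\Za)-\Psi_t(\Za))$ — is conceptually sound, and steps (i), (iii), (iv) are fine. The gap is in step~(ii): you assert that $\log I_n\ge ta_n-\sum_{j<n}\log(a_n-a_j)+O(\log t)$ because, in the recursion of Lemma~\ref{bounds}, ``the correction terms are lower order because each $(a_n-a_j)^{-1}$ is small.'' That reasoning is not adequate. The subtracted term is $\sum_{i<n}I_i(t;a_0,\ldots,a_i)\prod_{j=i}^{n-1}(a_n-a_j)^{-1}$, and the factors $I_i$ are themselves exponentially large in $ta_i$; small prefactors $(a_n-a_j)^{-1}$ do not by themselves make the subtraction negligible. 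One must compare it to the leading term. This can be done without circularity — for instance by bounding the subtraction by $e^{2t}U_1''(t)$ and invoking Lemma~\ref{upperbound2} plus the gap $\Psi_t(\Za)-\Psi_t(\Zb)>a_tf_t$ that holds on $\mathcal{E}_t$ (note Lemma~\ref{upperbound2} does not rely on Lemma~\ref{lowerbound}), or more directly by a Chernoff bound: writing $b_j=a_n-a_j>a_tf_t$,
\[
\int_{\R_+^n}e^{-\sum x_jb_j}\one\{\textstyle\sum x_j\ge t\}\,dx
\le e^{-\lambda t}\prod_j(b_j-\lambda)^{-1}\le e^{-ta_tf_t/2}\,2^n\prod_jb_j^{-1},\quad \lambda=\tfrac12\min b_j,
\]
and $e^{-ta_tf_t/2}2^n\to0$ since $ta_t=r_t\log t\gg r_tg_t\ge n$ on $\mathcal{E}_t$ by~\eqref{fg}. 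Either completion yields $I_n\ge(1-o(1))e^{ta_n}\prod_jb_j^{-1}$ and hence the lemma with an even better $O(1)$ error. As written, though, your proposal does not contain the comparison that makes step~(ii) go through, and the reason you offer for it is not correct. (A side remark: your discarded ``windowing'' idea fails for a slightly different reason than the one you give — the problem is that $\prod_j(1-e^{-\epsilon_tb_j})$ is raised to the $n$th power and $n$ is of polynomial order in $t$, which overwhelms the slow decay of $e^{-\epsilon_tb_j}$ given the constraint $n\epsilon_t<t$; it is not simply that $\log c_n=O(n)$.)
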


\begin{proof} The idea of the proof as the same as of~\cite[Prop.~4.2]{KLMS}. 
Let $\rho\in (0,1]$ and $z\in\Z$, $z\neq 0$.  Following the lines of~\cite[Prop.~4.2]{KLMS},
we obtain
\begin{align}
\label{lb1}
\log U(t) > \exp\Big\{t(1-\rho)\xi(z)-|z|\log\frac{|z|}{e\rho t}-2t+O(\log|z|)\Big\}.
\end{align}
Take $z=\Za$ and $\rho=\frac{|\Za|}{t\xi(\Za)}$. Observe that on the event $\mathcal{E}_t$ this 
$\rho$ eventually belongs to $(0,1]$ since 
\begin{align*}
\frac{|\Za|}{t\xi(\Za)} < \frac{g_t r_t}{t f_t a_t}=\frac{g_t }{f_t \log t}\to 0
\end{align*}
by~\eqref{fg}. Substituting this into~\eqref{lb1} we obtain 
\begin{align*}
\log U(t) > \exp\Big\{t\xi(\Za)-|\Za|\log\xi(\Za)-2t+O(\log t)\Big\}.
\end{align*}
as required. 
\end{proof}

\begin{lemma} 
\label{upperbound1}
Almost surely, 
\begin{align*}
\log U_1'(t) < \max\Big\{t\Psi(\Zb) +o(ta_tf_t),  \xi(\Za)-R_t\log\frac{R_t}{2et} + O(t) \Big\}  
\end{align*}
on the event $\mathcal{E}_t$ as $t\to\infty$. 
\end{lemma}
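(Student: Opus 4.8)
The plan is to bound $U_1'(t)$, the contribution from trajectories that make more than $R_t$ jumps by time $t$, by splitting the path set according to whether the trajectory ever reaches beyond the support window. First I would use the Feynman--Kac representation and the elementary bound $\int_0^t \xi(X_s)\,ds \le t \max_{s\le t}\xi(X_s)$ together with the fact that the number of jumps $J_t$ is a Poisson random variable of mean $2t$ independent of the jump chain. More precisely, writing $M_t = \sup\{|X_s| : s \le t\}$ for the range of the walk, one decomposes $U_1'(t)$ into the part on $\{M_t \le r_tg_t\}$ and the part on $\{M_t > r_tg_t\}$. On the first event the integral is at most $t\,\xi^{\ssup 1}_{r_tg_t}$; since $R_t = |\Za|(1+f_t)$ and, on $\mathcal E_t$, $\xi(\Za)=\xi^{\ssup 1}_{R_t}=\xi^{\ssup 1}_{r_tg_t}$ (the last equality because $R_t < r_tg_t$ and, by the final clause of $\mathcal E_t$, no site beyond $r_tg_t$ can carry potential comparable to $\xi(\Za)$), this part is controlled by $e^{t\xi(\Za)}\,\Prob(J_t > R_t)$, and a standard Poisson tail estimate $\Prob(J_t>R_t)\le \exp\{-R_t\log\frac{R_t}{2et}+O(R_t)\}$ gives the second term in the maximum, after absorbing $O(R_t)=O(t)$ (since $R_t \asymp r_t = o(t)$) into the error.

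Next I would handle the part of $U_1'(t)$ on $\{M_t > r_tg_t\}$. Here the trajectory visits some site $z$ with $|z| > r_tg_t$, and by the last event in $\mathcal E_t$ such sites satisfy $\xi(z) < \frac{|z|}{t}\log\frac{|z|}{2et}$. The point is that once we enter this regime the functional $\Psi_t$ is so negative that these paths are dominated by paths that stay within the window and end at $\Zb$ (the best site outside $\Omega_t$), whose contribution is governed by $t\Psi_t(\Zb)$ up to lower-order corrections. This is exactly the kind of estimate carried out in the comparable step of \cite{HMS,KLMS,ST}: one bounds the exponential weight along such an excursion by performing a discrete summation/integration over the time spent at each far-away site, using that $\xi(z)\,t - |z|\log(\cdot)$ is maximised, over $|z| > r_tg_t$, by a quantity that is $o(ta_tf_t)$ smaller than $t\Psi_t(\Za)$, hence at most $t\Psi_t(\Zb)+o(ta_tf_t)$ after accounting for the jump-number and combinatorial factors (the number of paths of length $\le R_t$ contributes only $e^{O(R_t\log R_t)} = e^{o(ta_tf_t)}$, which is why the $f_t,g_t$ scales are taken to decay/grow as slowly as \eqref{fg} permits).

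Combining the two parts via $\log(A+B)\le \log 2 + \max\{\log A,\log B\}$ then yields the claimed bound. The main obstacle will be the second part: making rigorous the comparison of far-reaching paths with the $\Zb$-contribution requires carefully summing the Feynman--Kac weight over the time partition along a long excursion, and checking that neither the entropy of path choices nor the time spent beyond $r_tg_t$ can compensate for the large penalty $\frac{|z|}{t}\log\frac{|z|}{2et}$; one has to be slightly careful that the excursion may spend most of its time near the origin and only briefly visit a far site, so the estimate should be phrased in terms of $\max_{s\le t}\Psi_t(X_s)$ along the trajectory rather than a crude worst-case bound. The first part, by contrast, is a routine Poisson large-deviation computation once the identification $\xi^{\ssup 1}_{r_tg_t}=\xi(\Za)$ on $\mathcal E_t$ is in hand.
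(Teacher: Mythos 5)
There is a genuine gap in the first part of your decomposition, and it is not a minor one. You claim that on $\mathcal E_t$ one has $\xi^{\ssup 1}_{R_t}=\xi^{\ssup 1}_{r_tg_t}$, justifying this by appealing to the final clause of $\mathcal E_t$. But that clause only controls sites with $|z|>r_tg_t$; it says nothing about the intermediate region $R_t<|z|\le r_tg_t$, and $\mathcal E_t$ only guarantees $\xi(\Za)=\xi^{\ssup 1}_{R_t}$. In fact the identity you want is typically false: the region $(R_t,r_tg_t]$ contains order $r_tg_t$ sites, so the maximum of the Pareto potential there is of order $(r_tg_t)^{1/\alpha}=a_tg_t^{1/\alpha}$, which exceeds $\xi(\Za)\asymp a_t$ once $g_t\to\infty$. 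Consequently, the bound you propose on $\{M_t\le r_tg_t\}$, namely $e^{t\xi^{\ssup 1}_{r_tg_t}}\Prob(J_t>R_t)$, does not reduce to the second term of the stated maximum; the discrepancy $t(\xi^{\ssup 1}_{r_tg_t}-\xi(\Za))$ is of order $ta_tg_t^{1/\alpha}$, which dwarfs the $o(ta_tf_t)$ tolerance because $g_t\to\infty$ while $f_t\to 0$.

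The missing ingredient is the coupling between the number of jumps and the location at which the walk can collect a high potential value: if the maximum of $\xi$ over the window is attained at some $z_0$ with $|z_0|>R_t$, then any path that actually spends time at $z_0$ must have $J_t\ge |z_0|$, so the relevant Poisson cost is $-|z_0|\log\frac{|z_0|}{2et}$, not $-R_t\log\frac{R_t}{2et}$. The paper exploits precisely this by bounding $\log U_1'(t)<\max_{n>R_t}\big[t\xi^{\ssup 1}_n-n\log\frac{n}{2te}\big]+O(t)$, keeping $n$ and $\xi^{\ssup 1}_n$ coupled, and then splitting on the location $z_t$ of the maximiser: if $|z_t|\le R_t$ one uses $\xi(z_t)=\xi(\Za)$ (this is where the $\mathcal E_t$ clause you invoked actually applies) and gets the second branch of the maximum; if $|z_t|>R_t$ a monotonicity argument forces $n_t=|z_t|$, and then $t\xi(z_t)-|z_t|\log\frac{|z_t|}{2te}=t\Psi_t(z_t)+|z_t|\log\frac{2te\,\xi(z_t)}{|z_t|}$, which is bounded by $t\Psi_t(\Zb)+o(ta_tf_t)$ for $R_t<|z_t|\le r_tg_t$ and by $O(t)$ for $|z_t|>r_tg_t$. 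So the intermediate region $(R_t,r_tg_t]$ must be fed into the $t\Psi_t(\Zb)$ branch, not the Poisson branch; your two-way split by $M_t$ alone does not see this, and without it the argument does not close. Your overall strategy (top potential value times a Poisson tail, plus a $\Psi_t$-based estimate for far sites) is the right shape, but the decomposition needs to be refined, the false identity dropped, and the crucial jump/location coupling made explicit.
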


\begin{proof} Observe that the number of jumps $J_t$ of the continuous-time random walk
by the time $t$ has Poisson distribution with parameter $2t$.  
Fix some $0<\varepsilon<1-1/\alpha$. We can estimate the integral in~\eqref{fc1} by $t\xi^{\ssup 1}_n$ on the event $\{J_t=n\}$ and then use Lemma~\ref{assup} to obtain the almost-sure bound
\begin{align}\label{eq:u1'}
U_1'(t)\le \sum_{n>R_t}e^{t\xi_n^{(1)}-2t}\frac{(2t)^n}{n!}\le \sum_{n>R_t}\exp\{tn^{\frac1{\alpha}+\varepsilon}-2t\}\frac{(2t)^n}{n!}.
\end{align}We now present an upper-bound for the tail of this series. Fix some $\theta>1$ and $\beta>(1-\varepsilon-1/\alpha)^{-1}$. Define $\gamma:=\beta(1-\varepsilon-1/\alpha)-1$ and note that $\gamma>0$. By Stirling's formula,
\[
n!=\sqrt{2\pi n}\left(\frac{n}{e}\right)e^{\delta(n)},\qquad\mbox{with }\lim_{n\to\infty}\delta(n)=0,
\]and so for all $n>t^\beta$ and sufficiently large $t$,
\begin{align*}
&t n^{\frac1{\alpha}+\varepsilon}+n\log(2t)-\log(n!)\le t n^{\frac1{\alpha}+\varepsilon}-n\log\frac{n}{2et}-\delta(n)\\&\le tn^{\frac1{\alpha}+\varepsilon}\left(1-\frac{n^{1-\frac1{\alpha}-\varepsilon}}{t}\log\frac{n}{2et}-\frac{\delta(n)}{tn^{\frac1{\alpha}+\varepsilon}}\right)\le
tn^{\frac1{\alpha}+\varepsilon}\left(1-t^\gamma\log\frac{t^{\beta-1}}{2e}-\frac{\delta(n)}{tn^{\frac1{\alpha}+\varepsilon}}\right)\le -\theta\log n.
\end{align*}Splitting the sum on the right of \eqref{eq:u1'} at $n=\lceil t^\beta\rceil$, and using $\sum_{n>\lceil t^\beta\rceil}n^{-\theta}=o(1)$, we have
\begin{align*}
\log U_1'(t)\le \log\Big[\sum_{n> R_t}e^{t\xi_n^{\ssup 1}-2t}\frac{(2t)^n}{n!}\Big]
< \max_{n> R_t}\Big[t\xi_n^{\ssup 1}-n\log\frac{n}{2te}\Big]+O(t).
\end{align*} 
Denote by $n_t> R_t$ the maximiser of the expression on the right-hand side, and by $z_t\in \Z$  
a point such that $\xi(z_t)=\xi_{n_t}^{\ssup 1}$. If $|z_t|\le R_t$ then $\xi(z_t)=\xi(\Za)$ on the event $\mathcal{E}_t$
and so 
\begin{align*}
\log U_1'(t) < t\xi(\Za)-n_t\log\frac{n_t}{2te}+O(t) < t\xi(\Za)-R_t\log\frac{R_t}{2te}+O(t).
\end{align*}
If $|z_t|>R_t$ then by monotonicity $n_t=|z_t|$. If in fact $|z_t|>r_tg_t$, then on the event $\mathcal{E}_t$,
\[
\log U'_t(t)<O(t),
\] whereas if $R_t<|z_t|\le r_tg_t$ then almost surely
\begin{align*}
\log U_1'(t) &< t\Psi_t(z_t)+|z_t|\log\xi(z_t)-|z_t|\log\frac{|z_t|}{2te}+O(t) \\&< t\Psi_t(z_t)+O\left(ta_t\frac{g_t\log\log t}{\log t}\right)\le t\Psi(\Zb)+o(ta_tf_t)
\end{align*}
by Lemma~\ref{assup} and \eqref{fg}.
\end{proof}

\begin{lemma} 
\label{upperbound2}
Almost surely, 
\begin{align*}
\log U_1''(t) < t\Psi(\Zb)+o(t a_t f_t)
\end{align*}
on the event $\mathcal{E}_t$ as $t\to\infty$.
\end{lemma}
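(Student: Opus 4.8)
The plan is to bound $U_1''(t)$ by summing the contributions $U(t,y)$ over all geometric paths $y$ that make at most $R_t$ jumps and never hit $\Omega_t$, and to show that every such contribution is dominated by $\exp\{t\Psi_t(\Zb)+o(ta_tf_t)\}$. First I would recall from \eqref{uy1}--\eqref{ihat} that $U(t,y)=e^{-2t}I_{\ell(y)}(t;\xi(y_0),\dots,\xi(y_{\ell(y)}))$, and I would use a crude but robust bound on $I_n$: since the exponent in \eqref{ihat} is at most $t\max_i a_i$ and the integration region has volume at most $t^n/n!$, one gets $I_n(t;a_0,\dots,a_n)\le e^{t\max_i a_i}\,t^n/n!$, hence $U(t,y)\le e^{-2t}e^{t M(y)} t^{\ell(y)}/\ell(y)!$ where $M(y)=\max_i\xi(y_i)$. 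Because $y$ avoids $\Omega_t$ and makes at most $R_t=|\Za|(1+f_t)$ jumps, it stays in the ball of radius $R_t$ about the origin, so $M(y)\le\xi^{\ssup 1}_{R_t}$; but on $\mathcal{E}_t$ the maximiser of $\xi$ in that ball is $\Za\in\Omega_t$ and the runner-up satisfies $\xi^{\ssup 1}_{R_t}-\xi^{\ssup 2}_{R_t}>a_tf_t$, so a path avoiding $\Omega_t$ has $M(y)\le\xi^{\ssup 2}_{R_t}$, which is strictly below $\xi(\Za)$ by a macroscopic margin. Summing the geometric bound over the at most $\sum_{n\le R_t}2^n$ such paths and comparing with the number-of-jumps estimate already carried out in Lemma~\ref{upperbound1} gives
\[
\log U_1''(t) \le \max_{n\le R_t}\Big[t\,\xi^{\ssup 2}_{R_t}-n\log\frac{n}{2et}\Big]+O(t).
\]

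Next I would optimise over $n$. The function $n\mapsto -n\log\frac{n}{2et}$ is maximised at $n=2t$ (value $2t$), which is far below $R_t$ on $\mathcal{E}_t$, so the max is attained at $n=\min\{R_t, \text{something}\}$; in all cases the bracket is at most $t\,\xi^{\ssup 2}_{R_t}+O(t)$ if the constraint $n\le R_t$ is not binding and at most $t\,\xi^{\ssup 2}_{R_t}-R_t\log\frac{R_t}{2et}+O(t)$ otherwise — but the latter quantity, evaluated with $R_t=|\Za|(1+f_t)$ and $\xi^{\ssup 2}_{R_t}$ in place of $\xi(\Za)$, is still at most $t\Psi_t(\Zb)+o(ta_tf_t)$. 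To see this I would compare $t\,\xi^{\ssup 2}_{R_t}-R_t\log\xi^{\ssup 2}_{R_t}$ with $t\Psi_t(\Zb)=t\xi(\Zb)-|\Zb|\log\xi(\Zb)$: since $\Zb$ is the maximiser of $\Psi_t$ off $\Omega_t$ and the site realising $\xi^{\ssup 2}_{R_t}$ lies in $E$ off $\Omega_t$ within the ball of radius $R_t$, one has $\Psi_t(\cdot)\le\Psi_t(\Zb)$ at that site, which after controlling the discrepancy between the two penalty terms (both $O(a_t g_t\log\log t/\log t)=o(a_tf_t)$ by \eqref{fg} and the bound $|\Za|\asymp r_t$, $\xi(\Za)\asymp a_t$ on $\mathcal{E}_t$) yields exactly the claimed bound. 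This is essentially the same case analysis as at the end of the proof of Lemma~\ref{upperbound1}, specialised to paths confined to $\{|z|\le R_t\}\setminus\Omega_t$.

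The main obstacle is bookkeeping rather than conceptual: one must be careful that the crude bound $I_n\le e^{tM(y)}t^n/n!$ does not lose too much, and that the factor $2^n$ from the number of length-$n$ paths is harmless — this is fine because $2^n \le e^{n\log 2}$ is absorbed into the $O(t)$ term once one notes that the relevant range of $n$ is $n=O(R_t)=O(t/\log t)$, so $n\log 2 = O(t)$; combined with the $-n\log\frac{n}{2et}$ term this is dominated. A secondary subtlety is that on the event $\{\tau_{\Omega_t}\ge t\}$ a path of length $n\le R_t$ genuinely cannot reach $\Omega_t$, so it is confined to $\{|z|< |\Za|(1+f_t)\}$; but one must also account for paths that could in principle reach $\pm\Za$ — these are excluded by the indicator $\one\{\tau_{\Omega_t}\ge t\}$, so $M(y)\le\xi^{\ssup 2}_{R_t}$ holds on the relevant event, and the gap $\xi^{\ssup 1}_{R_t}-\xi^{\ssup 2}_{R_t}>a_tf_t$ from $\mathcal{E}_t$ is what makes the whole estimate strictly better than the trivial one and forces the $o(ta_tf_t)$ error. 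Finally I would remark that the bound is consistent with Lemma~\ref{upperbound1}'s first alternative, so that combining the three lemmas with the lower bound of Lemma~\ref{lowerbound} and the gap $\Psi_t(\Za)-\Psi_t(\Zb)>a_tf_t$ guaranteed by $\mathcal{E}_t$ yields the negligibility of $U_1(t)$ relative to $U(t)$.
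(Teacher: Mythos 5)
There is a genuine gap: replacing the $n$-dependent maximum $\zeta_n=\max\{\xi(z):|z|\le n,\,z\notin\Omega_t\}$ by the uniform bound $\xi^{\ssup 2}_{R_t}$ loses too much. After that replacement the optimisation over $n$ is trivial: since $-n\log\frac{n}{2te}$ is maximised at $n=2t$, and on $\mathcal{E}_t$ one has $R_t\asymp r_tg_t\gg t$, the constraint $n\le R_t$ is \emph{not} binding (so the second alternative you contemplate never arises), and your bound collapses to
$\log U_1''(t)\le t\,\xi^{\ssup 2}_{R_t}+O(t)$.
Now write $z^*$ for the site achieving $\xi^{\ssup 2}_{R_t}$; since $z^*\notin\Omega_t$ we have $\Psi_t(z^*)\le\Psi_t(\Zb)$, hence
\begin{align*}
t\,\xi^{\ssup 2}_{R_t}-t\Psi_t(\Zb)\le t\xi(z^*)-t\Psi_t(z^*)=|z^*|\log\xi(z^*),
\end{align*}
and typically $|z^*|\asymp R_t\asymp r_tg_t$ while $\log\xi(z^*)\asymp\log t$, giving an error $\asymp r_tg_t\log t=t\,a_t\,g_t$. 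Since $g_t/f_t\to\infty$, this is \emph{not} $o(ta_tf_t)$, so your bound does not give the stated conclusion. Incidentally, your claim that the site realising $\xi^{\ssup 2}_{R_t}$ lies in $E$ is also unjustified; it may well lie in $D$, though this is a secondary issue.

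The paper's proof avoids this loss by keeping $\zeta_n$ inside the maximisation. Writing $n_t$ for the maximiser of $t\zeta_n-n\log\frac{n}{2te}$ and $z_t$ with $\xi(z_t)=\zeta_{n_t}$, the argument splits on the size of $|z_t|$. If $|z_t|<r_t(\log t)^{-2}$, the potential $\zeta_{n_t}$ itself is $O(a_t(\log t)^{-1/\alpha+\e})=o(a_tf_t)$ by Lemma~\ref{assup}, so the whole expression is $o(ta_tf_t)$ with no comparison to $\Psi_t(\Zb)$ needed. If $|z_t|\ge r_t(\log t)^{-2}$, then by monotonicity $n_t=|z_t|$, which lets one rewrite $t\zeta_{n_t}-n_t\log\frac{n_t}{2te}=t\Psi_t(z_t)+|z_t|\log\frac{2te\xi(z_t)}{|z_t|}$; the point is that the penalty $-n_t\log\frac{n_t}{2te}$ now cancels the large contribution $|z_t|\log\xi(z_t)$ up to a term $O(r_tg_t\log\log t)=o(ta_tf_t)$, and $\Psi_t(z_t)\le\Psi_t(\Zb)$ gives the claim. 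This coupling between the growth of $\zeta_n$ and the penalty $-n\log\frac{n}{2te}$ is precisely what your uniform bound discards, and is essential to getting the error down to $o(ta_tf_t)$ rather than $O(ta_tg_t)$.
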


\begin{proof} 
For any $n\in\N_0$, let
\begin{align*}
\zeta_n=\max\{\xi(z):|z|\le n, z \notin \Omega_t\} .
\end{align*}
Similarly to Lemma~\ref{upperbound1} we have 
\begin{align*}
\log U_1''(t)\le \log\Big[\sum_{n\le R_t}e^{t\zeta_n-2t}\frac{(2t)^n}{n!}\Big]
< \max_{n\le R_t}\Big[t\zeta_n-n\log\frac{n}{2te}\Big]+O(t).
\end{align*} 
Denote by $n_t\le R_t$ the maximiser of the expression on the right-hand side, and by $z_t\in \Z$  a point such that $\xi(z_t)=\zeta_{n_t}$. 

\smallskip

If $|z_t|<r_t (\log t)^{-2}$ then 
by monotonicity and Lemma~\ref{assup} with small 
$\e>0$
\begin{align*}
t\zeta_{n_t}-n_t\log\frac{n_t}{2te} <
t\xi_{r_t(\log t)^{-2}}^{\ssup 1}+2t
< t r_t^{1/\alpha}(\log t)^{-1/\alpha+\e}
=o(t a_t f_t).
\end{align*}
If $r_t (\log t)^{-2}\le |z_t|\le R_t$ then by monotonicity 
\begin{align*}
t\zeta_{n_t}-n_t\log\frac{n_t}{2te}
=t\xi(z_t)-|z_t|\log\frac{|z_t|}{2te}
=t\Psi(z_t)+|z_t|\log\frac{2te\xi(z_t)}{|z_t|}.
\end{align*}
Clearly $\Psi(z_t)\le \Psi(\Zb)$ since $z_t\notin\Omega_t$. 
By Lemma~\ref{assup} on the event $\mathcal{E}_t$
\begin{align*}
|z_t|\log\frac{2te\xi(z_t)}{|z_t|}
< |z_t|\log\frac{2te(\log |z_t|)^{1/\alpha+\e}}{|z_t|^{1-1/\alpha}}
= R_t O(\log\log t)=O(r_tg_t\log\log t)=o(t a_t f_t)
\end{align*}
by~\eqref{fg} as required.
\end{proof}

\begin{prop} 
\label{u12}
Almost surely, 
\begin{align*}
\frac{U_1(t)}{U(t)}\one_{\mathcal{E}_t} \to 0
\end{align*}
as $t\to\infty$.
\end{prop}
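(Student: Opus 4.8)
The plan is to combine the lower bound for $U(t)$ from Lemma~\ref{lowerbound} with the upper bounds for $U_1'(t)$ and $U_1''(t)$ from Lemmas~\ref{upperbound1} and~\ref{upperbound2}, all of which hold almost surely on the event $\mathcal{E}_t$ as $t \to \infty$. Since $U_1(t) = U_1'(t) + U_1''(t)$, it suffices to show that $\log U_1'(t) - \log U(t) \to -\infty$ and $\log U_1''(t) - \log U(t) \to -\infty$ on $\mathcal{E}_t$, because then $U_1(t)/U(t) \to 0$ there (the sum of two terms each of which is negligible relative to $U(t)$ is itself negligible).

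First I would treat the two terms in the upper bound for $\log U_1'(t)$ from Lemma~\ref{upperbound1} separately, together with $\log U_1''(t)$. For the terms of the form $t\Psi_t(\Zb) + o(t a_t f_t)$ (appearing in both Lemma~\ref{upperbound1} and Lemma~\ref{upperbound2}), I would subtract the lower bound $\log U(t) > t\Psi_t(\Za) - 2t + O(\log t)$ and recall that on $\mathcal{E}_t$ we have the gap $\Psi_t(\Za) - \Psi_t(\Zb) > a_t f_t$; hence the difference is bounded above by $-t a_t f_t + o(t a_t f_t) + O(\log t)$, and since $t a_t f_t = t (t/\log t)^{1/(\alpha-1)} f_t$ grows faster than $\log t$ (using $1/f_t = O(\log\log\log t)$ from~\eqref{fg}), this tends to $-\infty$. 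For the remaining term $\xi(\Za) - R_t \log\frac{R_t}{2et} + O(t)$, I would note that $R_t = |\Za|(1 + f_t)$ with $|\Za| > r_t f_t$ on $\mathcal{E}_t$, so $R_t/(2et) \ge r_t f_t/(2et) = (t/\log t)^{1/(\alpha-1)} f_t/(2e) \to \infty$; thus $R_t \log\frac{R_t}{2et}$ is of order at least $r_t f_t \log((t/\log t)^{1/(\alpha-1)}) \gg t a_t g_t \gg t \xi(\Za)$ on $\mathcal{E}_t$ (using $\xi(\Za) < a_t g_t$ and the extra $\log$ factor), so this term minus $\log U(t)$ is dominated by $-R_t\log\frac{R_t}{2et} + t\Psi_t(\Za) + O(t)$, which is $\to -\infty$ since the $R_t \log$-term beats $t\xi(\Za) \ge t\Psi_t(\Za)$.

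The main obstacle is making the comparison in the $\xi(\Za) - R_t\log\frac{R_t}{2et}$ term fully precise: one must verify carefully that the penalty $R_t \log(R_t/2et)$ genuinely dominates $t \Psi_t(\Za)$ on $\mathcal{E}_t$, i.e. that the extra factor $1 + f_t$ built into $R_t$ (beyond the true order $|\Za| \approx r_t$) produces a term $\asymp |\Za| f_t \log t \asymp t a_t f_t$, which matters because $|\Za|\log\xi(\Za)$ is itself of order $t\xi(\Za) \asymp t a_t g_t$, and $a_t g_t$ dominates $a_t f_t$. The correct bookkeeping is: on $\mathcal{E}_t$, $R_t\log\frac{R_t}{2et} = |\Za|(1+f_t)\big(\log\frac{|\Za|}{2et} + \log(1+f_t)\big) \ge |\Za|\log\frac{|\Za|}{2et} + c\, |\Za| f_t \log t$ for large $t$ (since $\log\frac{|\Za|}{2et} \asymp \log t$ and $|\Za| > r_t f_t$), while $t\xi(\Za) = |\Za|\log\xi(\Za) + t\Psi_t(\Za)$ and $|\Za|\log\xi(\Za) \le |\Za|\log(a_t g_t) \asymp \frac{1}{\alpha-1}|\Za|\log(t/\log t) + |\Za|\log g_t$; matching the leading $|\Za|\log t$ terms (note $\log\frac{|\Za|}{2et}$ and $\log\xi(\Za) \approx \frac{1}{\alpha-1}\log(t/\log t)$ are of the same order, with the ratio controlled) leaves a surviving negative contribution of order $|\Za| f_t \log t \gg t a_t f_t \gg t$. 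I would therefore write out this estimate explicitly to conclude $\xi(\Za) - R_t\log\frac{R_t}{2et} + O(t) - \log U(t) \to -\infty$ on $\mathcal{E}_t$. Combining all cases gives $\log U_1(t) - \log U(t) \to -\infty$ on $\mathcal{E}_t$ almost surely, hence $\frac{U_1(t)}{U(t)}\one_{\mathcal{E}_t} \to 0$, which is the claim.
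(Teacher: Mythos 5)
Your overall strategy is exactly the paper's: subtract the lower bound for $\log U(t)$ from Lemma~\ref{lowerbound} from the upper bounds in Lemmas~\ref{upperbound1} and~\ref{upperbound2}, use the gap $\Psi_t(\Za)-\Psi_t(\Zb)>a_tf_t$ on $\mathcal{E}_t$ for the ``$t\Psi_t(\Zb)$'' terms, and expand $R_t\log\frac{R_t}{2et}$ with the factor $(1+f_t)$ to extract a surviving negative term of order $|\Za|f_t\log t$ for the remaining case. Your third paragraph reproduces the paper's decomposition
$|\Za|\log\xi(\Za)-R_t\log\frac{R_t}{2et}\le |\Za|\log\frac{2et\xi(\Za)}{|\Za|}-f_t|\Za|\log\frac{|\Za|}{2et}$ and the rest of the bookkeeping, so the proof is correct and in substance identical to the paper's.

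Two of your intermediate order-of-magnitude claims are, however, not justified and should be dropped or corrected. In the second paragraph you assert that $R_t\log\frac{R_t}{2et}\gg ta_tg_t\gg t\xi(\Za)$ on $\mathcal{E}_t$; this is false, since $R_t\log\frac{R_t}{2et}$ can be as small as order $r_tf_t\log t = ta_tf_t \ll ta_tg_t$ when $|\Za|$ is near the lower end $r_tf_t$ allowed by $\mathcal{E}_t$ (you seem to recognise this, which is why you redo the computation, but the claim should not be left standing). Similarly, in the third paragraph the chain ``$|\Za|f_t\log t\gg ta_tf_t\gg t$'' is not valid: with $|\Za|>r_tf_t$ one only gets $|\Za|f_t\log t \gtrsim r_tf_t^2\log t = ta_tf_t^2$, which is \emph{smaller} than $ta_tf_t$. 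What actually closes the argument is that the surviving negative term $\asymp |\Za|f_t\log\frac{|\Za|}{2et}\ \gtrsim\ r_tf_t^2\log t$ dominates both $O(t)$ (since $a_tf_t^2\to\infty$) and the cross-term $|\Za|\log\frac{2et\xi(\Za)}{|\Za|}\lesssim r_tg_t\log\log t$ (since $g_t\log\log t/(f_t^2\log t)\to 0$ by \eqref{fg}); you allude to the cross-term being ``controlled'' but should spell out this comparison explicitly, as it is the genuine constraint that drives the choice of $f_t$ and $g_t$ in \eqref{fg}.
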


\begin{proof} 
We first claim that $U_1'(t)/U(t) \to 0$ on the event $\mathcal{E}_t$. Combining Lemmas~\ref{lowerbound} and~\ref{upperbound1} we have on $\mathcal{E}_t$
\begin{align*}
\log U_1'(t)-\log U(t) < \max\Big\{t\Psi(\Zb)+o(ta_tf_t),t\xi(\Za)-R_t\log\frac{R_t}{2et}+O(t)\Big\}-t\Psi_t(\Za).
\end{align*}
First, on the event $\mathcal{E}_t$
\begin{align}
\label{firstmax}
t\Psi_t(\Zb)-t\Psi_t(\Za)+o(ta_tf_t)<-t a_t f_t+o(ta_tf_t)\to -\infty.
\end{align}
Second, 
\begin{align*}
t\xi(\Za)-R_t\log\frac{R_t}{2et}-t\Psi_t(\Za)+O(t)
&=|\Za|\log\xi(\Za)-R_t\log\frac{R_t}{2et}+O(t)\\
&< |\Za|\log\frac{2et\xi(\Za)}{|\Za|}-f_t|\Za|\log\frac{|\Za|}{2et}+O(t).
\end{align*}
On the event $\mathcal{E}_t$, as $t\to\infty$, we have for the first term 
\begin{align*}
|\Za|\log\frac{2et\xi(\Za)}{|\Za|} < r_tg_t\log\frac{2etg_ta_t}{f_tr_t}
=r_tg_t\log\frac{2eg_t\log t}{f_t}\sim r_tg_t\log\log t
\end{align*}
and for the second term 
\begin{align*}
f_t|\Za|\log\frac{|\Za|}{2et} > f_tr_tf_t\log\frac{r_tf_t}{2et}\sim \frac{1}{\alpha-1}f_t^2r_t\log t.
\end{align*}
By~\eqref{fg} the first term is negligible with respect to the second term 
and so 
\begin{align*}
t\xi(\Za)-R_t\log\frac{R_t}{2et}-t\Psi_t(\Za)+O(t)\to -\infty,
\end{align*}
which proves the claim.

\smallskip

It remains to show that $U_1''(t)/U(t) \to 0$ on the event $\mathcal{E}_t$. Combining Lemmas~\ref{lowerbound} and~\ref{upperbound2} we have on the event $\mathcal{E}_t$
\begin{equation*}
\log U_1''(t)-\log U(t) < t\Psi_t(\Zb)-t\Psi_t(\Za)+o(ta_tf_t)
< -ta_tf_t+o(ta_tf_t)\to-\infty . \qedhere
\end{equation*}
\end{proof}

\subsection{Structure of the function $I$}
\label{s:i}

In this section we study the structure of the function $I$ introduced in \eqref{ihat}. Our point of departure is the recursion 
\begin{align}
\label{rec2}
I_n(t;a_0,\dots,a_n)=\frac{1}{a_{n}-a_{n-1}}
\Big[I_{n-1}(t; a_0,\dots,a_{n-2},a_n)-I_{n-1}(t; a_0,\dots,a_{n-2},a_{n-1})\Big]
\end{align}
whenever $a_n\neq a_{n-1}$, obtained by evaluating the integral over $x_{n-1}$ in \eqref{ihat}. By iterating this recursion we establish the following.

\begin{lemma} 
\label{bounds}
The following hold:
\begin{enumerate}
\item If $a_n \neq a_i$ for $i \neq n$ then 
\begin{align*}
I_n(t; a_0,\dots,a_n)=e^{ta_n}\prod\limits_{j=0}^{n-1}\frac{1}{a_n-a_j}
-\sum_{i=0}^{n-1}I_i(t,a_0,\dots,a_i)\prod_{j=i}^{n-1}
\frac{1}{a_n-a_j} .
\end{align*}
Moreover, if $a_0,\dots,a_n$ are pairwise distinct then
\begin{align}
\label{iii}
I_n(t; a_0,\dots,a_n)=\sum_{i=0}^{n}e^{ta_i}\prod\limits_{\heap{j=0}{j\neq i}}^{n}\frac{1}{a_i-a_j};
\end{align}
\item $I_n$ is symmetric with respect to the variables $a_0,\dots,a_n$.  
\end{enumerate}
\end{lemma}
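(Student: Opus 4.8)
The plan is to prove both parts of Lemma~\ref{bounds} by induction on $n$, using the recursion \eqref{rec2} as the engine. The base case $n=0$ is immediate since $I_0(t;a_0)=e^{ta_0}$ (empty products and sums), and the case $n=1$ follows by evaluating \eqref{ihat} directly or by one application of \eqref{rec2}.

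For part (1), assume the first displayed formula holds for all indices up to $n-1$. Starting from \eqref{rec2}, I would substitute the inductive expression for both $I_{n-1}(t;a_0,\dots,a_{n-2},a_n)$ and $I_{n-1}(t;a_0,\dots,a_{n-2},a_{n-1})$. In the first of these the ``final variable'' is $a_n$, and in the second it is $a_{n-1}$; crucially, both have the same first $n-1$ entries $a_0,\dots,a_{n-2}$, so the terms $I_i(t;a_0,\dots,a_i)$ for $i\le n-2$ appearing in the sums are \emph{identical} in the two expansions. Taking the difference and dividing by $a_n-a_{n-1}$, the exponential terms $e^{ta_j}$ for $j\le n-2$ combine via the elementary partial-fraction identity
\[
\frac{1}{a_n-a_{n-1}}\Big(\frac{1}{a_n-a_j}-\frac{1}{a_{n-1}-a_j}\Big)=\frac{-1}{(a_n-a_j)(a_{n-1}-a_j)},
\]
while the $e^{ta_{n-1}}$ term (present only in the second expansion) produces, after division, precisely the missing $-I_{n-1}(t;a_0,\dots,a_{n-1})\prod_{j=n-1}^{n-1}(a_n-a_j)^{-1}$ summand, using $I_{n-1}(t;a_0,\dots,a_{n-1})=e^{ta_{n-1}}\prod_{j=0}^{n-2}(a_{n-1}-a_j)^{-1}$ plus its own lower-order terms. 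Bookkeeping these contributions and matching them against the claimed formula for $I_n$ is the main computational step; it is routine but requires care with signs and with which index ranges the products run over. The closed form \eqref{iii} then follows either by a parallel induction (expanding \eqref{rec2}, using that $I_{n-1}$ of pairwise-distinct arguments has the stated symmetric form, and again invoking the partial-fraction identity to telescope $\frac{1}{a_n-a_{n-1}}(\frac{1}{a_i-a_n}-\frac{1}{a_i-a_{n-1}})$ correctly) or by substituting \eqref{iii} for each $I_i$ into the first formula of part (1) and checking the coefficient of each $e^{ta_i}$ collapses appropriately; I would present the direct induction as cleaner.

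For part (2), symmetry in $a_0,\dots,a_n$, the representation \eqref{iii} makes symmetry manifest \emph{when the arguments are pairwise distinct}, since the right-hand side is visibly invariant under permutations. To extend to arbitrary arguments (allowing coincidences), I would note that for fixed $t$ and $n$ the function $I_n(t;a_0,\dots,a_n)$ is continuous in $(a_0,\dots,a_n)\in\R^{n+1}$ — indeed it is given by the integral \eqref{ihat} of a jointly continuous integrand over a bounded region, so continuity follows from dominated convergence — and the set of pairwise-distinct tuples is dense; hence symmetry on a dense set plus continuity gives symmetry everywhere.

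The main obstacle I anticipate is the sign-and-index bookkeeping in the inductive step of part (1): one has to verify that the difference of two truncated expansions, after dividing by $a_n-a_{n-1}$, reorganises \emph{exactly} into the claimed form, and it is easy to misplace a sign or to mishandle the ``extra'' $e^{ta_{n-1}}$ term that becomes a new lowest-order $I_{n-1}$ summand. A clean way to control this is to first prove \eqref{iii} for pairwise-distinct arguments (where the algebra is most transparent, being a standard divided-difference / Lagrange-interpolation identity applied to $a\mapsto e^{ta}$), then to extend it by continuity, then deduce symmetry, and finally read off the first formula of part (1) by grouping the terms $i=0,\dots,n-1$ of \eqref{iii} and recognising each group as $-I_i(t;a_0,\dots,a_i)\prod_{j=i}^{n-1}(a_n-a_j)^{-1}$; this reorders the logical flow but avoids the most error-prone direct manipulation.
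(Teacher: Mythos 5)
Your primary route for the first identity in part (1) has a genuine gap, not merely a bookkeeping hazard. You propose to substitute the inductive formula into \emph{both} terms of \eqref{rec2}, i.e.\ into $I_{n-1}(t;a_0,\dots,a_{n-2},a_n)$ and into $I_{n-1}(t;a_0,\dots,a_{n-2},a_{n-1})$. But the inductive hypothesis for the first formula requires the \emph{last} argument to be distinct from all the preceding ones. This is true for the first term (its last argument is $a_n$, and the lemma's hypothesis gives exactly $a_n\neq a_i$ for $i<n$), but it is \emph{not} true for the second term: there the last argument is $a_{n-1}$, and nothing in the hypothesis says $a_{n-1}\neq a_j$ for $j<n-1$. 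So you are not entitled to expand the second term at all. The paper's proof is precisely engineered to dodge this: it applies the inductive hypothesis \emph{only to the first term} of \eqref{rec2} and leaves $I_{n-1}(t;a_0,\dots,a_{n-1})$ untouched — that untouched term, after dividing by $a_n-a_{n-1}$, is exactly the $i=n-1$ summand of the target formula. This is shorter than what you propose and requires no partial-fraction identity at all.

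Your fallback plan — prove \eqref{iii} first (as a divided-difference/Lagrange identity, or, as in the paper, by checking the right-hand side satisfies the recursion \eqref{rec2} and matches at $n=0$), deduce symmetry via continuity, and then read off the first formula by grouping the $i=0,\dots,n-1$ terms of \eqref{iii} — is essentially sound and close in spirit to the paper's treatment of \eqref{iii} and part (2). One point you should make explicit: \eqref{iii} only holds for pairwise-distinct arguments, whereas the first formula of part (1) is asserted under the weaker hypothesis that merely $a_n$ is distinct from the rest. After grouping the terms of \eqref{iii} to obtain the first formula for pairwise-distinct tuples, you therefore still need a final continuity argument (letting the $a_i$ with $i<n$ coalesce while keeping $a_n$ separated) to reach the full generality of the statement; both sides are continuous in $(a_0,\dots,a_{n-1})$ on the region $\{a_n\neq a_j\ \forall j<n\}$, so this does go through, but it should be said. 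Your treatment of part (2) — symmetry from \eqref{iii} plus density and continuity — matches the paper exactly.
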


\begin{proof}
The first statement in $(1)$ follows by induction from~\eqref{rec2}, where we apply induction to the first term in the recursion  and keep the second term. The second statement in $(1)$ also follows by induction once we notice that it is true for $n=0$ and the expression on the right hand side satisfies the recursion~\eqref{rec2}. Finally, the symmetry of $I_n$ for pairwise distinct variables follows from the symmetry of the expression on the right hand side of~\eqref{iii}. Then it extends by continuity to all variables. 
\end{proof}

We now establish two upper bounds on the function $I$. The first bounds the effect of adding additional steps onto a base path. The second bounds the effect of changing the largest value of $a_i$ along a path; for this we shall need an additional lemma that establishes `negative dependence' in the effect on $I$ due to changes in the $a_i$.

\begin{lemma}
\label{L:comp}
Let $m,n\in\mathbb{N}_0$ and suppose $a_j<a_n$ for all $0\le j< n$ and $a_j=a_n$ for all $n\le j\le n+m$. Then for any $t>0$, $0\le k\le n$, and $0\le i\le m$,
\begin{align*}
I_{n+m}(t;a_0,\ldots,a_{n+m})
&\le \frac{t^i}{i!}I_{n+m-i-k}(t;a_k,\ldots,a_{n+m-i})\prod_{j=1}^k\frac1{a_n-a_{j-1}}
\end{align*}
\end{lemma}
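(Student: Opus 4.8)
The plan is to prove the bound by a double induction: an outer induction on the number $i$ of "free" steps at the end of the path that we strip off using the factor $t^i/i!$, and an inner induction on the number $k$ of initial steps that we strip off using the product $\prod_{j=1}^k (a_n-a_{j-1})^{-1}$. The two reductions are essentially independent, and each is driven by a one-step estimate coming directly from the integral representation \eqref{ihat}.

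First I would handle the case $i=0$, $k=0$, which is the trivial identity $I_{n+m}\le I_{n+m}$, and then peel off steps from the two ends. For the \emph{end} steps: since $a_{n+m}=a_n$ and all the appended variables equal $a_n$, in the integral \eqref{ihat} the factor $\exp\{x_{n+m-1}(a_{n+m-1}-a_{n+m})\}$ is identically $1$, and integrating out that last variable subject to $\sum x_j<t$ simply contributes a factor at most $t$ (the length of the interval over which $x_{n+m-1}$ ranges), reducing $I_{n+m}$ to $I_{n+m-1}$ with the same remaining arguments. Iterating $i$ times and keeping track of the constraint region yields the factor $t^i/i!$; more precisely, repeatedly applying the one-step bound $I_{n+m}(t;\ldots)\le t\, I_{n+m-1}(t;\ldots)$ gives $t^i$, and the sharper $t^i/i!$ is obtained by noting that the $i$ stripped variables are jointly constrained to a simplex of volume $t^i/i!$ rather than a cube — so it is cleanest to strip all $i$ of them at once, bounding $\exp\{\cdots\}\le 1$ on those coordinates and integrating the indicator of $\{\sum_{j=n+m-i}^{n+m-1} x_j < t\}$ directly. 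For the \emph{initial} steps: here I would use the recursion \eqref{rec2} together with monotonicity properties of $I$, or more transparently integrate out $x_0$ in \eqref{ihat}. Since $a_0<a_n=a_{n+m-i}$ (note $a_{n+m-i}=a_n$ because $i\le m$), the integral $\int_0^\infty \exp\{x_0(a_0-a_n)\}(\cdots)\,dx_0$ over the half-line is bounded by $(a_n-a_0)^{-1}$ times the corresponding integral with the $x_0$-variable removed and the constraint relaxed, which is exactly $I_{n+m-i-1}(t;a_1,\ldots,a_{n+m-i})$. Iterating $k$ times, and using at each stage that $a_{j-1}<a_n$, produces the product $\prod_{j=1}^k (a_n-a_{j-1})^{-1}$ and leaves $I_{n+m-i-k}(t;a_k,\ldots,a_{n+m-i})$.

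The two peeling operations act on disjoint blocks of variables (the first $k$ and the last $i$), and on the middle block the arguments are untouched, so the two estimates compose without interference; combining them gives exactly the claimed inequality
\[
I_{n+m}(t;a_0,\ldots,a_{n+m})\le \frac{t^i}{i!}\,I_{n+m-i-k}(t;a_k,\ldots,a_{n+m-i})\prod_{j=1}^k\frac1{a_n-a_{j-1}}.
\]
One should also check the degenerate cases where $n+m-i-k$ could become $0$ or negative; the hypotheses $0\le k\le n$ and $0\le i\le m$ ensure $n+m-i-k\ge 0$, and when it is $0$ the residual factor is just $I_0(t;a_k)=e^{ta_k}=e^{ta_n}$, consistent with everything.

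The main obstacle is getting the combinatorial factor $t^i/i!$ rather than the crude $t^i$: a naive one-step-at-a-time argument loses the factorial, so the end-stripping must be done by directly estimating the multidimensional integral over the last $i$ coordinates against the volume of the simplex $\{x\ge 0:\sum x<t\}$. A secondary technical point is justifying the half-line bound for the initial steps when the remaining $x$-variables are themselves constrained by $\sum x_j<t$; this is fine because enlarging the integration region for $x_0$ from $[0,t-\sum_{j\ge1}x_j]$ to $[0,\infty)$ only increases the integral, and on $[0,\infty)$ the exponential $\exp\{x_0(a_0-a_n)\}$ with $a_0-a_n<0$ integrates to $(a_n-a_0)^{-1}$. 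Everything else is a bookkeeping induction.
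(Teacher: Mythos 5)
Your proof is correct and follows essentially the same route as the paper: first integrating out the last $i$ variables jointly to obtain the $t^i/i!$ factor (as you note, doing them all at once is what produces the factorial), then peeling off the first $k$ variables one at a time using $a_{j-1}<a_n$. The only minor deviation is that you peel off the initial steps by integrating $x_0$ directly over the half-line, whereas the paper invokes the recursion \eqref{rec2} combined with the symmetry of $I$ from Lemma~\ref{bounds} and the nonnegativity of $I$; both routes are valid and yield the same one-step bound.
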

\begin{proof}Integrating with respect to the last $i$ variables 
we obtain 
\begin{align*}
I_{n+m}
&(t; a_0,\dots,a_{n+m})\\
&=e^{ta_n}\int_{\R_{+}^{n}}\exp\Big\{ \sum_{s=0}^{n-1}x_s (a_s-a_n)\Big\}
\one\Big\{\sum_{s=0}^{n+m-i-1}x_s+\!\!\!\!\sum_{s=n+m-i}^{n+m-1}\!\!\!\!
x_s<t\Big\}\Big]dx_0\cdots dx_{n+m-1}\\
&=e^{ta_n}\frac{1}{i!}\int_{\R_{+}^{n}}\exp\Big\{ \sum_{s=0}^{n-1}x_s (a_s-a_n)\Big\}
\one\Big\{\sum_{s=0}^{n+m-i-1}x_s<t\Big\}\Big(t-\sum_{s=0}^{n+m-i-1}x_s\Big)^idx_0\cdots dx_{n+m-i-1}\\
&\le \frac{t^i}{i!}I_{n+m-i}(t; a_0,\dots,a_{n+m-i}).
\end{align*}
Further, it follows from~\eqref{rec2} and symmetry of $I$ proved in Lemma~\ref{bounds} that 
\begin{align*}
I_{n+m-i}(t; a_0,\dots,a_{n+m-i})
&\le 
I_{n+m-i-1}(t; a_1,\dots,a_{n+m-i})\frac{1}{a_n-a_0}
\le\cdots\\
&\le 
I_{n+m-i-k}(t;a_k,\ldots,a_{n+m-i})\prod_{j=1}^k\frac1{a_n-a_{j-1}} . \qedhere
\end{align*} 
\end{proof}

Our `negative dependence' lemma requires the application of a result of \cite{bss}, which we state below. 
\begin{theorem}[{\cite[Theorem 4.1]{bss}}]
\label{T:bss}
Fix $s\in\mathbb{R}$ and $n \in \mathbb{N}$. Let $X_0,\ldots, X_{n+1}$ be independent random variables each with a log-concave density, and let $(Y_0,\ldots,Y_n)$ be a random vector satisfying
\[ \mathcal{L} \left( (Y_0,\ldots,Y_n) \right) = \mathcal{L} \left( (X_0,\ldots,X_n) \big|\, X_0+\cdots+X_{n+1} =s \right) . \]
Then for each $0\le i<j\le n$, 
\[
\E(Y_iY_j)\le \E(Y_i)\E(Y_j).
\]
\end{theorem}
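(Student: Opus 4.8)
The plan is to reduce the statement to a two-dimensional one and then invoke the classical fact that a bivariate density with the \emph{reverse-rule} property has negatively quadrant-dependent marginals.

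\emph{Step 1: reduction to three variables.} Fix $i<j\le n$ and set $Z=\sum_k X_k$, the sum running over $k\in\{0,\dots,n+1\}\setminus\{i,j\}$. Since the convolution of (one or more) log-concave densities is again log-concave --- a classical consequence of Pr\'ekopa's theorem --- the variable $Z$ has a log-concave density, and $X_i,X_j,Z$ are independent. Taking the $(i,j)$-marginal of the conditioned vector gives $(Y_i,Y_j)\overset{d}{=}(X_i,X_j)\mid X_i+X_j+Z=s$. Hence it suffices to prove: if $A,B,C$ are independent with log-concave densities $f,g,h$ and $(A',B')\overset{d}{=}(A,B)\mid A+B+C=s$, then $\E(A'B')\le\E(A')\E(B')$.

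\emph{Step 2: the reverse-rule inequality.} A change of variables shows that $(A',B')$ has joint density $p(a,b)=f(a)\,g(b)\,h(s-a-b)/c_0$ for a finite normalising constant $c_0$ (positive for $s$ in the relevant support); in particular $p$ is itself log-concave, so $A'$ and $B'$ have finite moments of all orders. The key structural point is that
\[ p(a,b)\,p(a',b')\le p(a,b')\,p(a',b)\qquad\text{whenever }a\le a',\ b\le b'. \]
Indeed, the $f$- and $g$-factors cancel and there remains $h(r)\,h(r-\delta-\eta)\le h(r-\delta)\,h(r-\eta)$ with $r=s-a-b$, $\delta=a'-a\ge0$, $\eta=b'-b\ge0$; since $r-\delta-\eta\le r-\delta,\,r-\eta\le r$ and $(r-\delta)+(r-\eta)=r+(r-\delta-\eta)$, this is exactly the defining inequality of log-concavity of $h$ (and it holds trivially when an argument lies outside the support of $h$).

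\emph{Step 3: from the reverse rule to negative correlation.} Integrating the displayed inequality over the region $\{a\le x<a',\ b\le y<b'\}$ and using that the integrand factorises gives $\P(A'\le x,B'\le y)\,\P(A'>x,B'>y)\le \P(A'\le x,B'>y)\,\P(A'>x,B'\le y)$, which for a probability measure is equivalent to negative quadrant dependence, $\P(A'\le x,B'\le y)\le \P(A'\le x)\,\P(B'\le y)$ for all $x,y$. Substituting this into Hoeffding's covariance identity
\[ \E(A'B')-\E(A')\E(B')=\int_{\R}\!\int_{\R}\big[\P(A'\le x,B'\le y)-\P(A'\le x)\P(B'\le y)\big]\,dx\,dy\le 0 \]
completes the argument.

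I expect Step 2 --- spotting the reverse-rule structure of the conditioned density --- to be the only genuinely non-routine point; granting that, Steps 1 and 3 are simply applications of two standard facts (log-concavity is preserved under convolution, and reverse-rule densities are negatively quadrant dependent), and the argument above is essentially the one I would write.
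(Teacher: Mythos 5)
Your proof is correct. The paper does not actually prove the statement --- it merely cites \cite{bss}, first for the fact that log-concave densities are PF$_2$, then for \cite[Theorem 4.1]{bss} (which establishes that the conditioned vector is reverse-regular of order 2 in pairs), and finally for the remark after \cite[Definition 2.2]{bss} (which says that RR$_2$-in-pairs implies pairwise negative correlation). You supply a self-contained proof along the same conceptual chain: log-concavity $\Rightarrow$ RR$_2$ $\Rightarrow$ negative quadrant dependence $\Rightarrow$ negative covariance via Hoeffding. The genuine added ingredient in your argument is Step~1, the marginalisation down to three variables: by lumping all the remaining coordinates into a single variable $Z$ whose density is log-concave (closure of log-concavity under convolution), you reduce the problem to a bivariate density of the explicit form $p(a,b)\propto f(a)\,g(b)\,h(s-a-b)$, for which the RR$_2$ inequality is literally the four-point form of log-concavity of $h$. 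This avoids having to prove (or state) the general RR$_2$-in-pairs theorem for the full conditioned vector, and is arguably cleaner for the stated conclusion, which is only about pairwise covariances anyway. The details --- cancellation of the $f,g$ factors, the four-point log-concavity inequality with $\delta,\eta\ge 0$, handling of points outside $\operatorname{supp} h$, the integration giving the $2\times2$-table inequality $p_{11}p_{22}\le p_{12}p_{21}$, the elementary algebra showing this implies $p_{11}\le(p_{11}+p_{12})(p_{11}+p_{21})$, and the Hoeffding identity (valid here since the bivariate density is itself log-concave and hence has finite moments) --- are all in order.
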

\begin{proof}
First we remark that a density being log-concave is equivalent to the density being a \emph{Polya frequency function of order 2} (or PF$_2$, using the terminology from \cite{bss}). Then \cite[Theorem 4.1]{bss} implies that $(Y_1,\ldots,Y_n)$ is \emph{reverse regular of order 2 in pairs} (again using the terminology of \cite{bss}). Then the discussion following Definition 2.2 in \cite{bss} demonstrates that this implies the result.
\end{proof}
\begin{lemma}
\label{L:logmono} 
Let $n\ge 2$. For any $k\neq j$
\begin{align}
\label{jk}
\frac{\partial^2}{\partial a_k\partial a_j}\log I_n(t;a_0,\dots,a_n)\le 0.
\end{align}
\end{lemma}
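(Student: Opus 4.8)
The statement is a second-order log-concavity (in fact log-submodularity) inequality for $I_n$ in pairs of its arguments. My plan is to exploit the probabilistic representation of $I_n$ as (a constant times) the density or survival function of a sum of exponential random variables, and then reduce \eqref{jk} to the negative-dependence statement in Theorem~\ref{T:bss}. Concretely, fix all variables except $a_j$ and $a_k$, and write $I_n(t;a_0,\dots,a_n)$, via \eqref{ihat}, as an integral of $e^{\sum_i x_i(a_i-a_n)}$ over a simplex-type region. Since $I_n$ is symmetric in its arguments by Lemma~\ref{bounds}(2), I may relabel so that $j$ and $k$ are, say, the first two indices; the key object becomes $\psi(a_j,a_k) := \log I_n$ as a function of these two parameters, and I want $\partial_{a_j}\partial_{a_k}\psi \le 0$.

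\textbf{Key steps.} First, I would recognise that $\partial_{a_j}\log I_n = \E_{\mathbb{Q}}[X_j]$ and $\partial_{a_k}\log I_n = \E_{\mathbb{Q}}[X_k]$, where $\mathbb{Q}$ is the probability measure on $\R_+^n$ with density proportional to $e^{\sum_i x_i(a_i-a_n)} \one\{\sum_i x_i < t\}$ — i.e.\ differentiating under the integral sign brings down a factor $x_j$ (resp.\ $x_k$), and dividing by $I_n$ itself normalises. Then
\[
\frac{\partial^2}{\partial a_k \partial a_j}\log I_n = \E_{\mathbb{Q}}[X_j X_k] - \E_{\mathbb{Q}}[X_j]\,\E_{\mathbb{Q}}[X_k] = \mathrm{Cov}_{\mathbb{Q}}(X_j, X_k),
\]
so the whole lemma reduces to showing this covariance is non-positive. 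Second, I would identify the law $\mathbb{Q}$: under $\mathbb{Q}$, $(X_0,\dots,X_{n-1})$ is distributed as a vector of independent exponentials (with rates $a_n - a_i > 0$ — here the hypothesis $a_j < a_n$ matters, though one should check the degenerate cases $a_i = a_n$ and handle them by a limiting argument or by noting the corresponding coordinate is then uniform-type) conditioned on $\sum_{i=0}^{n-1} X_i < t$. Introducing an auxiliary exponential variable $X_n'$ (or a uniform "slack" variable) to turn the inequality constraint $\sum X_i < t$ into an equality constraint $\sum_{i=0}^{n} X_i = t$ puts us exactly in the setting of Theorem~\ref{T:bss}: exponential densities are log-concave (PF$_2$), and conditioning on a fixed sum of $n+1$ of them gives a vector whose pairwise covariances are non-positive. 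Third, I would invoke Theorem~\ref{T:bss} to conclude $\E_{\mathbb{Q}}[X_jX_k] \le \E_{\mathbb{Q}}[X_j]\E_{\mathbb{Q}}[X_k]$, which is exactly \eqref{jk}.

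\textbf{Main obstacle.} The routine part is the differentiation-under-the-integral computation and the identification of $\mathbb{Q}$. The genuine subtlety — and where I expect to spend the most care — is setting up the reduction to Theorem~\ref{T:bss} cleanly: the constraint in \eqref{ihat} is an \emph{inequality} $\sum_{i<n} x_i < t$, not an equality, so one must introduce the extra slack coordinate correctly (an $\mathrm{Exp}$ variable rather than a deterministic one won't directly give the conditioning structure; a uniform slack is the natural choice, and the uniform density on a bounded interval is log-concave, hence PF$_2$). One must also deal with the cases where some $a_i = a_n$ for $i$ between $n$ and $n+m$ in the ambient problem — but in the statement of Lemma~\ref{L:logmono} the arguments $a_0,\dots,a_n$ are the full generic list, so I would either assume they are pairwise distinct with $a_n$ not necessarily the maximum (using symmetry to reduce to the case needed) and extend by continuity, or argue directly that the covariance formula and Theorem~\ref{T:bss} apply after relabelling. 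The continuity/approximation step to remove any genericity assumptions, and to cover boundary configurations, is the one place the argument needs to be written with some attention rather than waved through.
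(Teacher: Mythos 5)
Your proposal takes essentially the same route as the paper: interpret $\frac{\partial^2}{\partial a_k \partial a_j}\log I_n$ as a covariance under the Gibbs measure $\mathbb{Q}$, realise $\mathbb{Q}$ as the conditional law of independent log-concave random variables given their fixed sum via a uniform slack coordinate, and invoke Theorem~\ref{T:bss}. The one worry you flag — signs of $a_n - a_i$, genericity, degenerate coordinates — dissolves in the paper's formulation because each $W_i$ is given density proportional to $e^{(a_i - a_n)x}$ on the \emph{bounded interval} $[0,t]$, which is log-concave for any sign of $a_i - a_n$ (including $a_i = a_n$), so no ordering hypothesis, limiting argument, or continuity extension is required.
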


\begin{proof}
By symmetry of $I$ proved in Lemma~\ref{bounds} it suffices to prove the statement for 
$j,k\neq n$. Denote ${\bf a}=(a_0,\dots,a_n)$. It is easy to see that~\eqref{jk} is equivalent to showing  that
\begin{align}
\label{multi}
I_n(t; {\bf a})^{-1}\frac{\partial^2 }{\partial a_k\partial a_j}I_n(t; {\bf a})\le 
\Big[I_n(t; {\bf a})^{-1}\frac{\partial}{\partial a_k}I_n(t; {\bf a})\Big]
\cdot\Big[I_n(t; {\bf a})^{-1}\frac{\partial }{\partial a_j}I_n(t; {\bf a})\Big]
\end{align}
Fix $t>0$. Let $W_i$, $0\le i\le n$, be independent random variables with density $c_i e^{(a_i-a_n)x}$ on $[0,t]$ and zero otherwise, 
where $c_i$ is a normalising constant, and let $W_{n+1}$ be uniform on $[0,t]$. We remark that each of $W_i$, $0\le i\le n+1$, has a log-concave density. Further, let 
$\hat W_i$, $0\le i\le n$, be defined by 
\begin{align*}
(\hat W_0,\dots,\hat W_n)\stackrel{d}{=}\Big(W_0,\dots,W_n\Big|\sum_{i=0}^{n+1} W_i=t\Big).
\end{align*}
Since the densities of $W_i$, $0\le i\le n+1$,
are log-concave, by Theorem \ref{T:bss} we have 
\begin{align*}
\mathrm{E} \big(\hat W_k\hat W_j\big)\le 
\mathrm{E} (\hat W_k)\mathrm{E} (\hat W_j). 
\end{align*}
To prove~\eqref{multi}, it suffices now to show that 
\begin{align*}
\mathrm{E} (\hat W_k)
=I_n(t; {\bf a})^{-1}\frac{\partial}{\partial a_k}I_n(t; {\bf a})
\qquad\text{and}\qquad
\mathrm{E} \big(\hat W_k\hat W_j\big)
=I_n(t; {\bf a})^{-1}\frac{\partial^2 }{\partial a_k\partial a_j}I_n(t; {\bf a}).
\end{align*}
For this note that
\begin{align*}
\mathrm{E}(\hat W_k)&=c\int_{\mathbb{R}_+^{n+1}}x_k\exp\Big\{\sum_{i=0}^{n}(a_i-a_n)x_i\Big\}\one\Big\{\sum_{i=0}^n x_i\le t\Big\}dx_0\cdots dx_n=ce^{-ta_n}\frac{\partial}{\partial a_k}I_n(t;{\bf a})
,\\
\mathrm{E}(\hat W_k \hat W_j)&=c\int_{\mathbb{R}_+^{n+1}}x_kx_j\exp\Big\{\sum_{i=0}^{n}(a_i-a_n)x_i\Big\}\one\Big\{\sum_{i=0}^n x_i\le t\Big\}dx_0\cdots dx_n=ce^{-ta_n}\frac{\partial^2}{\partial a_k\partial a_j}I_n(t;{\bf a})
,\end{align*}where $c$ is a normalising constant and thus satisfies
\[
1=c\int_{\mathbb{R}_+^{n+1}}\exp\Big\{\sum_{i=0}^{n}(a_i-a_n)x_i\Big\}\one\Big\{\sum_{i=0}^n x_i\le t\Big\}dx_0\cdots dx_n=ce^{-ta_n}I_n(t;{\bf a}).
\]Plugging this value of $c$ into the above equations gives the required identities.
\end{proof}

\begin{lemma}
\label{L:Ixtoy}
Let $n\ge 2$, $x<y$  and $a_0,\dots,a_{n-1}\in \R$ be such that 
$a_i\le y$ for all $0\le i\le n-1$. Then
\[
I_n(t;{\bf a},x)\le \frac{n}{(y-x)t}I_n(t;{\bf a},y),
\]
where ${\bf a}=(a_0,\dots,a_{n-1})$.
\end{lemma}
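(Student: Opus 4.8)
The plan is to establish this as a differential inequality: by Lemma~\ref{L:logmono}, the mixed second derivative $\partial^2_{a_k a_j} \log I_n \le 0$, which in particular (taking $k = n$ the last variable and $j$ ranging over the others) tells us that $\partial_{a_n} \log I_n(t; \mathbf{a}, a_n)$ is non-increasing in each of $a_0, \dots, a_{n-1}$. The key is therefore to track how $\log I_n(t; \mathbf{a}, z)$ grows as we increase the last argument $z$ from $x$ up to $y$, and to produce a lower bound on $\tfrac{\partial}{\partial z} \log I_n(t; \mathbf{a}, z)$ that we can integrate.

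First I would compute $\tfrac{\partial}{\partial z} \log I_n(t; \mathbf{a}, z)$ directly from the integral representation \eqref{ihat}. Writing $a_n = z$ in \eqref{ihat} and differentiating, one gets
\[
\frac{\partial}{\partial z} \log I_n(t; \mathbf{a}, z) = \mathrm{E}\Big[\, t - \sum_{i=0}^{n-1} \hat W_i \,\Big] = t - \sum_{i=0}^{n-1}\mathrm{E}(\hat W_i),
\]
where the $\hat W_i$ are the conditioned random variables from the proof of Lemma~\ref{L:logmono} (the last coordinate $t - \sum \hat W_i = \hat W_n \ge 0$ is exactly the residual uniform slot $W_{n+1}$, or can be read off by differentiating the constraint). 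Since each $\hat W_i \le t$ and they sum with the residual to at most $t$, this derivative lies in $[0, t]$, but we need a quantitative lower bound bounded away from $0$ in a way that produces the factor $n/((y-x)t)$.

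The main obstacle, and the heart of the argument, is getting that quantitative lower bound on $\tfrac{\partial}{\partial z}\log I_n(t; \mathbf{a}, z)$ uniformly for $z \in [x, y]$. Here is where the monotonicity from Lemma~\ref{L:logmono} enters: because $\partial_{a_n}\log I_n$ is non-increasing in each $a_i$, $0 \le i \le n-1$, and we are assuming $a_i \le y$, we have
\[
\frac{\partial}{\partial z}\log I_n(t; \mathbf{a}, z) \ge \frac{\partial}{\partial z}\log I_n(t; \underbrace{y, \dots, y}_{n}, z)\Big|_{\text{evaluated appropriately}},
\]
i.e.\ we may replace all of $a_0, \dots, a_{n-1}$ by their largest allowed value $y$ without increasing the derivative. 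For the all-equal configuration $I_n(t; y, \dots, y, z)$ the integral \eqref{ihat} is explicit — it is $e^{tz}\int_{\mathbb{R}_+^n} e^{(y-z)\sum x_i}\one\{\sum x_i < t\}\,dx$, which via the change of variables to $s = \sum x_i$ becomes $e^{tz}\int_0^t e^{(y-z)s}\tfrac{s^{n-1}}{(n-1)!}\,ds$ — and one computes $\tfrac{\partial}{\partial z}\log$ of this explicitly. A short estimate on this one-dimensional integral (comparing $\int_0^t e^{-(z-y)s}s^{n-1}ds$ against a truncation, or a direct bound $\mathrm{E}(\hat W_i) \le \tfrac{t}{n}\cdot\tfrac{1}{(z-y)t/n + 1}$-type inequality) yields $t - \sum_{i=0}^{n-1}\mathrm{E}(\hat W_i) \ge \tfrac{n}{(z-x)t} \wedge (\text{something})$; integrating $\tfrac{\partial}{\partial z}\log I_n \ge \tfrac{n}{(z-x)t}$... — actually more carefully, integrating the bound $\tfrac{\partial}{\partial z}\log I_n(t;\mathbf a,z) \ge \tfrac{1}{z - x}$ (or the sharper explicit form) from $z = x$ to $z = y$ gives $\log I_n(t; \mathbf{a}, y) - \log I_n(t; \mathbf{a}, x) \ge \log\tfrac{(y-x)t}{n}$, which rearranges to the claimed inequality. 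The delicate point is choosing the right elementary lower bound for the derivative in the all-$y$ case so that the integral comes out to exactly the logarithm of $(y-x)t/n$; I would handle this by the substitution above and a crude but correct bound on the incomplete-Gamma-type integral.
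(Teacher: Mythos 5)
Your overall strategy is essentially the paper's: write $I_n(t;\mathbf a,x)/I_n(t;\mathbf a,y)$ as $\exp\{-\int_x^y \partial_s\log I_n\,ds\}$, invoke Lemma~\ref{L:logmono} to lower-bound $\partial_s\log I_n(t;\mathbf a,s)$ by pushing each $a_i$ up to $y$ (the paper does this after a translation so that it compares against $I_n(t;\mathbf 0,s-y)$, but this is the same reduction), and then control the all-equal case explicitly. So structurally you have rediscovered the proof.

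However, the final quantitative step as you have written it is wrong, and this is the one place where the lemma needs care. You propose the pointwise bound $\partial_z\log I_n(t;\mathbf a,z)\ge \frac{1}{z-x}$ and claim its integral from $x$ to $y$ is $\log\frac{(y-x)t}{n}$. In fact $\int_x^y\frac{dz}{z-x}$ diverges, so this cannot be the right bound; moreover the derivative is always at most $t$ (it equals $t-\sum_i\mathrm E(\hat W_i)$ with all $\hat W_i\ge 0$), so the bound $\frac{1}{z-x}$ already fails once $z-x<1/t$. There is no need to manufacture a pointwise lower bound on the derivative at all. After reducing to $\mathbf a=\mathbf 0$, the integral of the derivative telescopes to $\log I_n(t;\mathbf 0,0)-\log I_n(t;\mathbf 0,x-y)$, and the ratio $I_n(t;\mathbf 0,x-y)/I_n(t;\mathbf 0,0)$ can be bounded directly: $I_n(t;\mathbf 0,0)=t^n/n!$ is immediate, and for $I_n(t;\mathbf 0,x-y)$ one substitutes $u_{n-1}=x_0+\cdots+x_{n-1}$ and extends the innermost integral to $(-\infty,t]$, giving $\frac{t^{n-1}}{(y-x)(n-1)!}$. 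Dividing yields $\frac{n}{(y-x)t}$ exactly. You should replace the ``integrate a pointwise bound on the derivative'' step by this direct comparison of the two endpoint values $I_n(t;\mathbf 0,x-y)$ and $I_n(t;\mathbf 0,0)$.
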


\begin{proof}
Since the function $s\mapsto \log I_n(t;{\bf a},s)$ is continuous we can write
\begin{align}
\label{ff1}
\frac{I_n(t;{\bf a},x)}{I_n(t;{\bf a},y)}=\exp\Big\{-\int_x^y\frac{\partial}{\partial s}\log I_n(t;{\bf a},s)ds\Big\}.
\end{align}
It follows from the definition~\eqref{ihat} of $I_n$ that 
$$\log I_n(t;{\bf a},s)=\log I_n(t;{\bf a}-{\bf y},s-y)+ty,$$ 
where ${\bf y}=(y,\dots,y)$
and hence 
\begin{align*}
\frac{\partial}{\partial s}\log I_n(t;{\bf a},s)=\frac{\partial}{\partial s}\log I_n(t;{\bf a}-
{\bf y},s-y).
\end{align*}
Since all $a_i\le y$ we can use monotonicity proved in Lemma~\ref{L:logmono} to obtain 
\begin{align}
\label{ff2}
\frac{\partial}{\partial s}\log I_n(t;{\bf a}-
{\bf y},s-y)\ge \frac{\partial}{\partial s}\log I_n(t;{\bf 0},s-y),
\end{align}
where ${\bf 0}=(0,\dots,0)$. This implies 
\[
\frac{I_n(t;{\bf a},x)}{I_n(t;{\bf a},y)}\le \exp\Big\{-\int_x^y\frac{\partial}{\partial s}\log I_n(t;{\bf 0},s-y)\,ds\Big\}=\frac{I_n(t;{\bf 0},x-y)}{I_n(t;{\bf 0},0)}.
\]
It is easy to see that 
\begin{align}
\label{ff3}
I_n(t;{\bf 0},0)=\frac{t^n}{n!}.
\end{align}
Using $y>x$ and the substitution $u_i=x_i$, $0\le i\le n-2$ and 
$u_{n-1}=x_0+\cdots+x_{n-1}$ in the definition~\eqref{ihat}
of $I_n$, we also obtain integrating over $u_{n-1}$ that
\begin{align}
\label{ff4}
I_n(t;{\bf 0},x-y)
\le e^{t(x-y)}\int_{\R_{+}^{n-1}}
\Big[\int_{-\infty}^t e^{u_{n-1}(y-x)}du_{n-1}\Big]du_0\dots du_{n-2}
=\frac{t^{n-1}}{(y-x)(n-1)!}.
\end{align}
Combining~\eqref{ff1}, \eqref{ff2}, \eqref{ff3} and~\eqref{ff4} gives the stated result.
\end{proof}

\bigskip


\section{Significant paths}
\label{s:paths}

The aim of this section is to determine which paths make a non-negligible contribution to $U_0(t)$. As described in Section \ref{s:heu}, in the case $\alpha \in (1, 2)$ we prove that only the direct paths to $\Omega_t$ are significant. In the case $\alpha \ge 2$, we can only prove the much weaker result that the significant paths are those which end at $\Omega_t$ and visit the set $\mathcal{N}_t \cup \{0\}$ at most once, where $\mathcal{N}_t$ is the set of non-duplicated sites of high potential defined in \eqref{n} (actually this is true for all $\alpha > 1$, but is not as strong as what we prove for $\alpha \in (1,2)$).

\smallskip

Assuming the event $\mathcal{E}_t$ holds, this is already enough to prove the localisation statement in Theorem~\ref{main0}; we complete this proof at the end of the section.

\subsection{The case $\alpha \in (1, 2)$: Direct paths to $\Omega_t$}
\label{s:paths1}

To prove that only direct paths are significant, we first give an approximation for the contribution made by the direct paths, and then use this approximation to show the negligibility of all other paths. Denote by $y^{\ssup{t,1}}\in\mathcal{P}_{all}$, $y^{\ssup{t,-1}}\in\mathcal{P}_{all}$ the shortest geometric paths from $0$ to $|\Za|$ and to $-|\Za|$, respectively.  

\smallskip

Before we begin, we state a small combinatorial lemma that will be used in Proposition \ref{u3} below.

\begin{lemma} 
\label{binom}
For any $n\ge 4$ and any $w\in\N_0 $,
\begin{align*}
{{n+2w}\choose{w}} < 16 n^w.
\end{align*}
\end{lemma}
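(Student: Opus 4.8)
The claim is the elementary inequality $\binom{n+2w}{w} < 16\, n^w$ for all $n \ge 4$ and $w \in \N_0$. The plan is to bound the binomial coefficient by a product of $w$ factors, each of which is close to $n$, and to absorb the discrepancy into the constant $16$. First I would handle the trivial case $w = 0$, where the left-hand side is $1$ and the claim is immediate. For $w \ge 1$, I would write
\[
\binom{n+2w}{w} = \prod_{j=1}^{w} \frac{n+2w-j+1}{j} \le \prod_{j=1}^{w} \frac{n+2w}{j} = \frac{(n+2w)^w}{w!}.
\]

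\textbf{Key steps.} The next step is to compare $(n+2w)^w / w!$ with $n^w$. Since $n+2w \le n(1 + 2w/n)$ and, more crudely, one can split into the regime $w \le n$ and $w > n$. When $w \le n$ we have $n + 2w \le 3n$, so $(n+2w)^w/w! \le 3^w n^w / w!$, and since $3^w/w! \le e^3 / \text{(something)}$... actually $\sum_w 3^w/w! = e^3 \approx 20$, which is slightly too big, so one should be a touch more careful: note $3^w/w! \le 16$ for all $w \ge 0$ can be checked since the sequence $3^w/w!$ is maximised around $w=2,3$ with value $9/2 = 4.5$, comfortably below $16$. When $w > n \ge 4$, write $n + 2w < 3w$, so $(n+2w)^w/w! < 3^w w^w/w!$; then use the Stirling-type bound $w! > (w/e)^w$ to get $3^w w^w / w! < (3e)^w$, and since $w > n \ge 4$ one has $n^w > 4^w$, so it would suffice that $(3e)^w < 16 \cdot 4^w$, i.e. $(3e/4)^w < 16$ — but $3e/4 \approx 2.04 > 1$, so this fails for large $w$. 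Hence the crude split needs refinement: in the regime $w > n$ I would instead keep more of the factorial, e.g. bound $(n+2w)^w/w! \le 3^w w^w/w!$ and compare against $n^w$ by noting we need $3^w w^w/(w! \, n^w) < 16$; since $n \ge 4$ and the function $w \mapsto w^w/(w! 4^w) \cdot 3^w$... this still grows. The clean fix is to not be wasteful at the first step: keep $\binom{n+2w}{w} \le (n+2w)^w/w!$ and observe that for $w \ge n$ we actually want to exploit that many numerator factors $n+2w-j+1$ are being divided by comparably large $j$. A cleaner route: use $\binom{n+2w}{w}\le \binom{3w}{w}$ when $n\le w$ (monotonicity in $n$ is false, but $n+2w\le 3w$ and $\binom{m}{w}$ increases in $m$), and bound $\binom{3w}{w} \le 2^{3w} = 8^w \le 4^w\cdot 2^w$; since $n\ge 4$... still need $8^w < 16\cdot n^w$ with $n\ge w$, which for $n=w$ gives $8^w<16 w^w$, true for all $w\ge1$ since $w^w \ge 8^w$ once $w\ge 8$ and the small cases $w=1,\dots,7$ can be checked by hand. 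So the two-regime argument goes through with a finite case-check.

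\textbf{Main obstacle.} The only real subtlety is choosing the split point and the crude bounds so that the constant $16$ (rather than something larger) actually suffices; a careless application of $\sum 3^w/w! = e^3 < 21$ or of Stirling gives a constant slightly too large, so one must either sharpen one inequality or verify a handful of small-$w$ cases explicitly. I expect the write-up to consist of: (i) the $w=0$ base case; (ii) the regime $1\le w\le n$ via $\binom{n+2w}{w}\le (3n)^w/w! = 3^w n^w/w!$ together with $\sup_{w\ge 0} 3^w/w! = 9/2 < 16$; (iii) the regime $w > n\ge 4$ via $\binom{n+2w}{w}\le\binom{3w}{w}\le 8^w$ and a direct check that $8^w < 16\, n^w$ for $n\ge w > 4$, reducing to $8^w\le 16\,w^w$ which holds for all $w\ge1$ after inspecting $w\le 7$. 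None of this requires anything beyond arithmetic, so there is no genuine mathematical difficulty — just bookkeeping to land on the stated constant.
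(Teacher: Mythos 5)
Your regime 1 ($1 \le w \le n$) is fine: $\binom{n+2w}{w} \le (n+2w)^w/w! \le (3n)^w/w! = n^w \cdot 3^w/w!$ and $\sup_{w\ge 0} 3^w/w! = 9/2 < 16$. But regime 2 ($w > n \ge 4$) has a genuine gap, and the ``clean fix'' you suggest does not close it. Replacing $n+2w$ by $3w$ is exponentially wasteful when $n$ is small and $w$ is large: for $n=4$ fixed, $\binom{4+2w}{w}\sim 16\cdot 4^w/\sqrt{\pi w}$ (so the stated inequality is asymptotically tight up to a $\sqrt{w}$), whereas $\binom{3w}{w}\sim (27/4)^w/\sqrt{w}$ — the ratio $(27/16)^w$ blows up. Concretely, your chain needs $\binom{3w}{w}\le 8^w < 16\,n^w$ with only $n\ge 4$ available, i.e.\ $2^w<16$, which fails for $w\ge 4$ — precisely the regime you are in. The sentence ``still need $8^w<16\cdot n^w$ with $n\ge w$, \dots reducing to $8^w\le 16\,w^w$'' inverts the regime: there $w>n$, so $n^w< w^w$ and the reduction goes the wrong way. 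So as written, regime 2 is not proved.

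The paper avoids the regime split entirely by inducting on $n$ with base case $n=4$: there $\binom{4+2w}{w}<2^{4+2w}=16\cdot 4^w$ trivially (sum of the row of Pascal's triangle), and the inductive step uses the exact ratio
\begin{align*}
\binom{n+1+2w}{w}=\binom{n+2w}{w}\cdot\frac{n+1+2w}{n+1+w}
=\binom{n+2w}{w}\Big(1+\frac{w}{n+1+w}\Big)
\end{align*}
together with Bernoulli's inequality $(1+1/n)^w\ge 1+w/n>1+w/(n+1+w)$. This is tighter precisely where your bound is loose, because it never replaces $n+2w$ by anything larger; it tracks the per-step multiplicative gain exactly. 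If you want to salvage a direct (non-inductive) argument, you would need an estimate for $\binom{n+2w}{w}$ that retains the $n$-dependence in the numerator — e.g.\ $\binom{n+2w}{w}\le \binom{2w}{w}\cdot\prod_{j=1}^n\frac{2w+j}{w+j}\le \frac{4^w}{\sqrt{w+1}}\cdot 2^n$ type bounds — but that is more work than the paper's two-line induction.
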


\begin{proof} For $n=4$ we have ${{n+2w}\choose{w}} < 2^{n+2w}=16n^w$ for all $w$. By induction 
\begin{align*}
{{n+1+2w}\choose{w}}={{n+2w}\choose{w}}\cdot \frac{n+1+2w}{n+1+w} < 16n^w\Big(1+\frac{w}{n+1+w}\Big)
< 16(n+1)^w
\end{align*}
since 
\begin{align*}
\Big(\frac{n+1}{n}\Big)^w\ge 1+\frac w n > 1+\frac{w}{n+1+w}
\end{align*}
by Bernoulli's inequality. 
\end{proof}

\begin{prop} 
\label{shortest}
Almost surely
\begin{align*}
U(t,y^{\ssup{t,\iota}})=
\Big\{e^{t\xi(\Za)-2t}
\prod_{j=0}^{|\Za|-1}\frac{1}{\xi(\Za)-\xi(j\iota)}\Big\}
+o(1)U(t)
\end{align*}
for $\iota=\text{\rm sgn}(\Za)$ on the event $\mathcal{E}_t$ and for each $\iota\in\{-1,1\}$ on the event $\mathcal{E}_t\cap \mathfrak{D}_t$, as $t\to\infty$.
\end{prop}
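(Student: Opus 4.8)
The plan is to use the exact formula for $I$ from Lemma \ref{bounds}(1) applied to the direct path, and show that the leading term dominates. Fix $\iota$ and write $y = y^{\ssup{t,\iota}}$, so $\ell(y) = |\Za|$ and the potential values along $y$ are $\xi(0), \xi(\iota), \dots, \xi(\iota(|\Za|-1)), \xi(\Za)$ (using $\xi(\Za) = \xi(\iota|\Za|)$ on $\mathcal{E}_t$, resp.\ on $\mathcal{E}_t \cap \mathfrak{D}_t$). On $\mathcal{E}_t$ we have $\xi(\Za) = \xi_{R_t}^{\ssup 1}$, so $\xi(\Za)$ is strictly the largest value among $\xi(0), \dots, \xi(\iota|\Za|)$, hence $a_n \ne a_i$ for $i \ne n$ and Lemma \ref{bounds}(1) gives
\begin{align*}
I_{|\Za|}(t; \xi(0), \dots, \xi(\Za)) = e^{t\xi(\Za)} \prod_{j=0}^{|\Za|-1} \frac{1}{\xi(\Za) - \xi(j\iota)} - \sum_{i=0}^{|\Za|-1} I_i(t; \xi(0),\dots,\xi((i-1)\iota), \xi(i\iota)) \prod_{j=i}^{|\Za|-1} \frac{1}{\xi(\Za) - \xi(j\iota)}.
\end{align*}
Multiplying by $e^{-2t}$, the first term is exactly the claimed main term. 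So the task reduces to showing the error sum, after multiplying by $e^{-2t}$, is $o(1) U(t)$.

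For the error sum, the strategy is to bound each $I_i$ in the sum. Here I would distinguish the terms by the size of the index $i$. For the bulk of the terms I would use Proposition \ref{e0} (the separation estimate $\xi(\Za) - \xi(j\iota) \gtrsim (t/\log t)^{1/(\alpha-1)} = a_t$, up to the small $f_t$-correction promised in $\mathcal{E}_t$, valid for $0 \le j < |\Za|$), so that each factor $(\xi(\Za)-\xi(j\iota))^{-1}$ contributes a gain of roughly $1/(a_t f_t)$. Meanwhile $I_i(t; \dots) \le e^{t \xi_i^{\ssup 1}} t^i / i! \le e^{t\xi(\Za)} (2t)^i/i!$ crudely (or better, $I_i \le e^{t \max}t^i/i!$ from the definition \eqref{ihat}), and comparing with the main term $e^{t\xi(\Za)}\prod_{j=0}^{|\Za|-1}(\xi(\Za)-\xi(j\iota))^{-1}$ the $i$-th error term is smaller by a factor at most $(t \cdot (\text{something}))^{|\Za|-i}$ times $\prod_{j=0}^{i-1}(\xi(\Za)-\xi(j\iota))$; the cleanest route is probably to compare the error sum directly against $U(t,y)$ itself by extracting the common product $\prod_{j=i}^{|\Za|-1}(\xi(\Za)-\xi(j\iota))^{-1}$ and noting that $e^{-2t}I_i(t;\dots)\prod_{j=i}^{|\Za|-1}(\xi(\Za)-\xi(j\iota))^{-1}$ is the contribution $U(t, y|_{[0,i]})$ of the truncated direct path times the tail product — but truncated paths end away from $\Omega_t$, so they are controlled by the lower bound for $U(t)$ in Lemma \ref{lowerbound} together with the $\Psi_t$-gap $\Psi_t(\Za) - \Psi_t(\Zb) > a_t f_t$ from $\mathcal{E}_t$. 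More concretely: $e^{-2t} I_i(t; \xi(0), \dots, \xi(i\iota)) \le e^{-2t} e^{t\xi(i\iota)} t^i / i! \le e^{t \Psi_t(i\iota) + O(t) + |i\iota|\log \xi(i\iota) - i \log(i/2et)}$, and since $i\iota \notin \Omega_t$ we have $\Psi_t(i\iota) \le \Psi_t(\Zb) \le \Psi_t(\Za) - a_t f_t$, so this is at most $e^{t\Psi_t(\Za) - ta_tf_t + o(ta_tf_t)} \le o(1) U(t)$ by Lemma \ref{lowerbound}. Then the tail products $\prod_{j=i}^{|\Za|-1}(\xi(\Za)-\xi(j\iota))^{-1} \le (a_t f_t)^{-(|\Za|-i)} \le 1$ eventually (since $a_t f_t \to \infty$), so each of the at most $|\Za| \le r_t g_t$ error terms is $o(1)U(t)$, and there being polynomially many of them against a stretched-exponentially small factor $e^{-t a_t f_t}$, the whole sum is $o(1)U(t)$.

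The main obstacle I anticipate is making the bound on $I_i$ for the small-$i$ terms (especially $i$ close to $|\Za|$) genuinely tight enough: when $i$ is close to $|\Za|$ the path truncation argument above is weakest because $\Psi_t(i\iota)$ could be close to $\Psi_t(\Za)$ for $i\iota$ near $\Za$, and the single factor $(\xi(\Za) - \xi((|\Za|-1)\iota))^{-1}$ may not be enough of a gain on its own. The fix is to be careful that $\mathcal{E}_t$ also controls the \emph{second} largest value $\xi_{R_t}^{\ssup 2}$ with $\xi_{R_t}^{\ssup 1} - \xi_{R_t}^{\ssup 2} > a_t f_t$, so that \emph{every} factor $\xi(\Za) - \xi(j\iota) \ge a_t f_t$ uniformly, not just on average — this is exactly what Proposition \ref{e0} should provide — and then the product of all $|\Za| - i$ such factors gives a gain $(a_t f_t)^{-(|\Za|-i)}$ which, combined with the crude $t^{|\Za|-i}/(|\Za|-i)!$ growth from the $I$-estimates, is controlled because $t/(a_t f_t) = (t^{\alpha-2}(\log t)^{\cdots})^{1/(\alpha-1)}/f_t \to 0$ precisely when $\alpha < 2$ — this is where the hypothesis $\alpha \in (1,2)$ enters crucially. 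On the event $\mathcal{E}_t \cap \mathfrak{D}_t$ the argument is identical with $\Za$ replaced by either of $\pm\Za$, both being in $\Omega_t$ with $\xi(\pm\Za) = \xi_{R_t}^{\ssup 1}$, so nothing changes.
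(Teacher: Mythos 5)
Your plan starts from the same place the paper does (the exact identity of Lemma~\ref{bounds}(1), giving the main term plus an error sum of $e^{-2t}I_i$ terms, each of which you correctly identify as the contribution $U(t,w^{\ssup i})$ of a truncated direct path), and your ``cleanest route'' paragraph contains the essence of the paper's argument: truncated paths avoid $\Omega_t$, their total is absorbed into $U_1''(t)$, and then Lemma~\ref{upperbound2} together with the $\Psi_t$-gap and the lower bound from Lemma~\ref{lowerbound} kills the error. The clean way to run this, which the paper uses, is to bound each tail product $\prod_{j=i}^{|\Za|-1}(\xi(\Za)-\xi(j\iota))^{-1}\le 1$ outright (every factor exceeds $a_tf_t>1$ on $\mathcal{E}_t$), so the error sum is bounded by $\sum_i U(t,w^{\ssup i})\le U_1''(t)$, and one never bounds $I_i$ term-by-term at all.

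Where you go wrong is your final paragraph. The worry about the small-$i$ / $i$ near $|\Za|$ terms is unfounded: on $\mathcal{E}_t$ we have $\Psi_t(i\iota)\le\Psi_t(\Zb)\le\Psi_t(\Za)-a_tf_t$ uniformly for \emph{every} $i\iota\notin\Omega_t$, so the gap is not degraded near the endpoint. More importantly, you conclude that the hypothesis $\alpha\in(1,2)$ ``enters crucially'' through $t/(a_tf_t)\to 0$; this is false for this proposition. Proposition~\ref{shortest} is stated and holds for all $\alpha>1$, and the paper's proof has no $\alpha$-restriction: there is no factor $t^{|\Za|-i}/(|\Za|-i)!$ in this argument (you are importing that from the comparison with non-direct paths in Proposition~\ref{u3}, which is the place $\alpha<2$ genuinely enters). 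A secondary slip: the crude estimate $I_i(t;\xi(0),\dots,\xi(i\iota))\le e^{t\xi(i\iota)}t^i/i!$ should read $I_i\le e^{t\max_{0\le j\le i}\xi(j\iota)}t^i/i!$ (use the symmetry of $I$), but since the $U_1''$ route bypasses term-wise bounds on $I_i$, this is harmless. If you commit to the $U_1''$ route and drop the last paragraph, your proof becomes the paper's.
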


\begin{proof} 
Fix $t>0$, $\iota\in\{-1,1\}$, and assume the corresponding event $\mathcal{E}_t$ or $\mathcal{E}_t\cap \mathfrak{D}_t$ holds. Denote $n=|\Za|$, $a_i=\xi(i\iota)$, $0\le i\le n$. According to~\eqref{uy1} and Lemma~\ref{bounds} we have 
\begin{align*}
U(t,y^{\ssup{t,\iota}})
&=e^{-2t} I_n(t; a_0,\dots,a_n)\\
&=e^{t\xi(\Za)-2t}\prod\limits_{j=0}^{|\Za|-1}\frac{1}{\xi(\Za)-\xi(j\iota)}
-\sum_{i=0}^{n-1}e^{-2t} I_i(t;a_0,\dots,a_i)\prod_{j=i}^{n-1}
\frac{1}{a_n-a_j}.  
\end{align*}
Observe that on $\mathcal{E}_t$, $a_n-a_j>a_tf_t>1$ eventually for all $1\le j<n$. Further, again by~\eqref{uy1} we have 
\begin{align*}
e^{-2t} I_i(t;a_0,\dots,a_i)
=U(t,w^{\ssup i})
\end{align*} 
for all $0\le i<n$, where $w^{\ssup i}$ is the shortest path to $i\iota$.  Since $\sum_{i=0}^{n-1}U(t,w^{(i)})\le U_1''(t)$, by Lemma~\ref{upperbound2} we have
\begin{align*}
\sum_{i=0}^{n-1}e^{-2t} I_i(t;a_0,\dots,a_i)\prod_{j=i}^{n-1}
\frac{1}{a_n-a_j}
\le U_1''(t) < \exp\Big\{t\Psi_t(\Zb)+o(ta_tf_t)\Big\}.
\end{align*}
Combining this with the lower bound for $U(t)$ from Lemma~\ref{lowerbound} and also taking into account that $t\Psi(\Za)-t\Psi_t(\Zb)>ta_tf_t$  we obtain 
\begin{align*}
\sum_{i=0}^{n-1}e^{-2t} I_i(t;a_0,\dots,a_i)\prod_{j=i}^{n-1}
\frac{1}{a_n-a_j}=o(1)U(t),
\end{align*}
which completes the proof.
\end{proof}

\begin{prop}
\label{u3}
Let $\alpha \in (1,2)$. Almost surely,
\begin{align*}
U_0(t)=(1+o(1))\sum_{\iota\in\{-1,1\}}U(t,y^{\ssup{t,\iota}})
\end{align*}
on the event $\mathcal{E}_t$, as $t\to\infty$.
\end{prop}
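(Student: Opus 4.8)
The plan is to bound the contribution to $U_0(t)$ of all paths other than the two direct paths $y^{\ssup{t,\pm1}}$, and to show this is $o(1) U(t)$; combined with Proposition~\ref{shortest} this gives the claim. Recall $U_0(t)$ counts only paths with $J_t \le R_t$ and $\tau_{\Omega_t} < t$, so every path $y$ contributing to $U_0(t)$ has length $\ell(y) \le R_t$ and hits $\Omega_t$. Write $U_0(t) = \sum_{y} U(t,y)$ over such paths. Since $y$ must reach some point of $\Omega_t$, we have $\ell(y) \ge |\Za|$, with equality only for the direct paths. The strategy is to organise the remaining paths by the endpoint in $\Omega_t$ they first hit and by the number of ``extra'' steps $w := \ell(y) - |\Za| \ge 1$ beyond the minimum, and to show $\sum_{w \ge 1}(\text{contribution of length-}(|\Za|+w)\text{ paths}) = o(1)U(t)$.

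The key mechanism is Lemma~\ref{L:comp}, which quantifies the ``penalty'' incurred by extra steps. First I would reduce to paths that stay within $\{|z| \le R_t\}$: by the event $\mathcal{E}_t$, any site with $|z| > r_t g_t$ has $\Psi_t(z)$ so small that paths venturing there are already absorbed into $U_1(t)$ (cf.\ Lemmas~\ref{upperbound1}--\ref{upperbound2}), and in any case $R_t = |\Za|(1+f_t)$ bounds the path length. For a path $y$ of length $|\Za| + w$ ending (first) at $\iota|\Za|$, $\iota = \mathrm{sgn}$ of the endpoint, one expresses $U(t,y) = e^{-2t} I_{|\Za|+w}(t;\xi(y_0),\dots)$ and applies Lemma~\ref{L:comp} (and the symmetry/monotonicity of $I$ from Lemmas~\ref{bounds},~\ref{L:Ixtoy}) to peel off the $w$ extra visits, each of which is not the maximiser $\Za$, gaining a factor bounded by $\prod (\xi(\Za) - \xi(j))^{-1} \le (a_t f_t)^{-w}$ on $\mathcal{E}_t$, together with a combinatorial factor $t^w/w!$ or similar from the integration. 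Comparing with $U(t,y^{\ssup{t,\iota}})$ via Proposition~\ref{shortest}, one obtains, heuristically as in Section~\ref{s:heu},
\begin{align*}
\sum_{y:\, \ell(y) = |\Za|+w} U(t,y) \le C_w \Big(\frac{\log t}{t}\Big)^{w/(\alpha-1)} \binom{|\Za|+2\lceil w/2\rceil}{\lceil w/2\rceil} U(t,y^{\ssup{t,\iota}}),
\end{align*}
where the binomial counts the number of ways to insert $\lceil w/2 \rceil$ loops and is controlled by Lemma~\ref{binom} by $16 |\Za|^{\lceil w/2\rceil} \le 16 (r_t g_t)^{\lceil w/2\rceil}$. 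Using $|\Za| \asymp r_t = (t/\log t)^{\alpha/(\alpha-1)}$ and $\xi(\Za) - \xi(j) \ge a_t f_t \asymp (t/\log t)^{1/(\alpha-1)}$ on $\mathcal{E}_t$, each extra pair of steps contributes a net factor of order $|\Za| (\log t / t)^{2/(\alpha-1)} \asymp (t/\log t)^{(\alpha-2)/(\alpha-1)} \to 0$ precisely because $\alpha < 2$. Summing the resulting geometric-type series over $w \ge 1$ then gives $o(1)U(t)$.

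I expect the main obstacle to be the bookkeeping in the summation over non-direct paths: one must simultaneously (i) control the combinatorial explosion in the number of paths of a given length (paths can backtrack and form loops in many configurations, not just simple two-loops), which is where Lemma~\ref{binom} and a careful encoding of a path by its set of ``excursions'' is needed; (ii) ensure that the per-step penalty estimate from Lemma~\ref{L:comp} applies uniformly — this requires knowing that every site visited ``extra'' times has potential bounded away from $\xi(\Za)$ by at least $a_t f_t$, which on $\mathcal{E}_t$ follows from $\xi_{R_t}^{\ssup 1} - \xi_{R_t}^{\ssup 2} > a_t f_t$ but needs care for sites visited once only in a detour; and (iii) handle the fact that the comparison inequality degrades when the path revisits $\Za$ or wanders far — here the restriction to $\{|z| \le R_t\}$ and the bound $\ell(y) \le R_t$ keep the total number of extra steps $w \le f_t |\Za|$, so the series $\sum_w (t/\log t)^{(\alpha-2)w/(2(\alpha-1))}(r_t g_t)^{w/2}$ still converges to something $o(1)$ by the choice of $f_t, g_t$ in \eqref{fg}. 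Once these three points are in place, the estimate $U_0(t) - \sum_\iota U(t,y^{\ssup{t,\iota}}) = o(1)U(t)$ follows by summation, and dividing through by $U(t)$ (which dominates $\sum_\iota U(t,y^{\ssup{t,\iota}})$ up to constants by Lemma~\ref{lowerbound} and Proposition~\ref{shortest}) completes the proof.
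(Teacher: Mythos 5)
Your overall strategy is the paper's: bound each non-direct path via Lemma~\ref{L:comp}, count paths using Lemma~\ref{binom}, and observe that $\alpha<2$ forces the resulting series to vanish. But there is a genuine gap in how you treat paths that re-visit $\Omega_t$.

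You assert that the $w$ extra visits are ``each of which is not the maximiser $\Za$.'' This is false: after first hitting $\Omega_t$, a path may return to $\Za$ arbitrarily many times, and for those visits Lemma~\ref{L:comp} does \emph{not} produce a factor $(\xi(\Za)-\xi(\cdot))^{-1}\le (a_tf_t)^{-1}$. The hypothesis of the lemma places the repeated maximal value at the end, and the mechanism that removes $i$ of those copies is integration over the extra holding times, yielding a factor $t^i/i!$, not an inverse gap. Your point~(iii) flags the issue but proposes to absorb it via the length cap $\ell(y)\le R_t$; that cap bounds the number of such re-visits but does not tame the resulting power of $t$, so your per-step penalty estimate simply does not apply to these steps. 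The paper's decomposition into $(m,w,s)$ --- $m$ re-visits to $\Omega_t$, $2w$ excess edges before first hitting $\Omega_t$, $s$ non-$\Omega_t$ sites visited afterwards --- is designed precisely so that Lemma~\ref{L:comp} with $i=m$ and $k=n$ separates the two effects: the $m$ re-visits contribute $t^m/m!$ and every non-$\Omega_t$ excess visit contributes $\le(a_tf_t)^{-1}$. The triple sum then factorizes, and $\alpha<2$ enters in \emph{two} independent places: $r_tg_t/(a_tf_t)^2\to 0$ (the loop mechanism you correctly identify), and $2t/(a_tf_t)\to 0$, which makes $\sum_m \frac{t^m}{m!}\bigl(\frac{2}{a_tf_t}\bigr)^m = e^{2t/(a_tf_t)}\to 1$ (the mechanism you are missing). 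Both exponents equal $(\alpha-2)/(\alpha-1)$, but they arise from different parts of the estimate and must be checked separately.

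A secondary point: the series $\sum_w (t/\log t)^{(\alpha-2)w/(2(\alpha-1))}(r_tg_t)^{w/2}$ you write down diverges. The ``net factor'' $|\Za|(\log t/t)^{2/(\alpha-1)}$ already absorbs the combinatorial count $|\Za|\lesssim r_tg_t$ coming from Lemma~\ref{binom}, so multiplying by an additional $(r_tg_t)^{w/2}$ double-counts; the common ratio $(t/\log t)^{(\alpha-2)/(\alpha-1)}\cdot r_tg_t\asymp(t/\log t)^2$ diverges. The correct per-loop ratio is $r_tg_t/(a_tf_t)^2$, which does tend to zero for $\alpha\in(1,2)$ given the slow growth of $g_t,1/f_t$ from \eqref{fg}.
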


\begin{proof}
For a path $y\in \mathcal{P}_{all}$ that hits $\Omega_t$, 
let $z_t(y)\in \Omega_t$ be the first point where $y$ hits $\Omega_t$, i.e.,
\begin{align*}
z_t(y)=y_i, \quad\text{ where }i=\min\{j: y_j\in\Omega_t\}.
\end{align*}

Denote by $m_t(y)$ the number of times $\Omega_t$
is visited minus one, i.e., 
\begin{align*}
m_t(y)=|\{0\le i\le \ell(y):
y_i\in\Omega_t\}|-1. 
\end{align*}
Denote by $2w_t(y)$ the difference between 
the hitting time of $z_t(y)$ and $|\Za|$, i.e., 
\begin{align*}
w_t(y)=\frac{\min\{i:y_i=z_t(y)\}-|\Za|}{2}.
\end{align*} 
Finally, denote by $s_t(y)$ the number of points on the path 
after the first visit to $\Omega_t$ that do not belong to 
$\Omega_t$, i.e., 
\begin{align*}
s_t(y)=|\{|\Za|+2w_t(y)<i\le \ell(y):y_i\notin \Omega_t\}|.
\end{align*}
Observe that $s_t(y)\ge m_t(y)$.

\smallskip

For each $t>0$ and $m\in\N\cup\{0\}$, $w\in\N\cup\{0\}$, $s\ge m$, denote
\begin{align*}
\mathcal{P}^{t}_{m,w,s}
=\big\{y\in\mathcal{P}_{all}:
&y_0=0, m_t(y)=m, w_t(y)=w,s_t(y)=s\big\}.
\end{align*} 
Using Lemma~\ref{binom} we have 
\begin{align*}
\big|\mathcal{P}_{m,w,s}^{t}\big|
\le 2^s{|\Za|+2w\choose w}
< 16 \cdot 2^s|\Za|^w < 16 \cdot 2^s(r_tg_t)^w.
\end{align*}
For any 
$y\in\mathcal{P}^t_{m,w,s}$ we use~\eqref{uy1} 
and Lemma~\ref{L:comp}  
with $n+m$ being the length of $y$, $i=m$, $k=n$, 
$a_0,\dots,a_{n-1}$ being the values of $\xi$ along $y$ except when it visits $\Omega_t$, and $a_n,\dots,a_{n+m}=\xi(\Za)$
and obtain 
\begin{align*}
U(t,y)\le e^{t\xi(\Za)-2t}\frac{t^m}{m!}
\prod_{\heap{j=0}{y_j\notin\Omega_t}}^{\ell(y)}\frac{1}{\xi(\Za)-\xi(y_j)},
\end{align*}
on the event $\mathcal{E}_t$. We will keep $|\Za|$ terms in the product corresponding 
to one visit to each of the points $i\iota$, $0\le i\le |\Za|-1$, where $\iota=\mathrm{sgn}(z_t(y))$, 
and estimate the rest by 
\begin{align*}
\xi(\Za)-\xi(y_j)\ge 
\xi_{R_t}^{\ssup 1}-\xi_{R_t}^{\ssup 2}
> a_tf_t.
\end{align*}
This implies 
\begin{align*}
U(t,y) < \Big\{e^{t\xi(\Za)-2t}
\prod_{j=0}^{|\Za|-1}\frac{1}{\xi(\Za)-\xi(i\iota)}\Big\}
\frac{t^m}{m!}
(a_tf_t)^{-2w-s}.
\end{align*}
By Proposition~\ref{shortest} we obtain on $\mathcal{E}_t$
\begin{align}
\label{bbb}
U(t,y) < \big[U(t,y^{\ssup{t,\iota}})+o(1)U(t)\big]
\frac{t^m}{m!}
(a_tf_t)^{-2w-s}.
\end{align}
Let us show that the total mass corresponding to all paths from $\mathcal{P}^{t}_{m,w,s}$
except those corresponding to 
$(m,w,s)=(0,0,0)$ is negligible. 
Indeed, 
\begin{align*}
\sum_{w=0}^{\infty}\sum_{m=0}^{\infty}\sum_{s=m}^{\infty}
&|\mathcal{P}^t_{m,w,s}|\frac{t^m}{m!}
(a_tf_t)^{-2w-s}\one\{(m,k,s)\neq (0,0,0)\}\\
& <16\sum_{w=0}^{\infty}\sum_{m=0}^{\infty}\sum_{s=m}^{\infty}
2^s(r_tg_t)^w\frac{t^m}{m!}
(a_tf_t)^{-2w-s}\one\{(m,w,s)\neq (0,0,0)\}\\
&=16\Big[\Big(\sum_{w=0}^{\infty}\Big(\frac{r_tg_t}{a_t^2f_t^2}\Big)^{w}\Big)\Big(\sum_{m=0}^{\infty}\frac{t^m}{m!}
\Big(\sum_{s=m}^{\infty}\Big(\frac{2}{a_tf_t}\Big)^s\Big)\Big)-1\Big]\\
&=16\Big[
\Big(1-\frac{r_tg_t}{a_t^2f_t^2}\Big)^{-1}
\Big(1-\frac{2}{a_tf_t}\Big)^{-1}
\sum_{m=0}^{\infty}\frac{1}{m!}\big(\frac{2t}{a_tf_t}\Big)^m
-1\Big]\\
&=16\Big[\Big(1-\frac{r_tg_t}{a_t^2f_t^2}\Big)
\Big(1-\frac{2}{a_tf_t}\Big)^{-1}
\exp\Big\{\frac{2t}{a_tf_t}\Big\}
-1\Big]=o(1)
\end{align*}
since $\frac{r_tg_t}{a_t^2f_t^2}=o(1)$, $\frac{2}{a_tf_t}=o(1)$,
and $\frac{2t}{a_tf_t}=o(1)$ as $\alpha \in (1,2)$.  
Combining this with~\eqref{bbb} we obtain on the event $\mathcal{E}_t$
\begin{align*}
U_0(t) < \sum_{\iota\in\{-1,1\}}\big[U(t,y^{\ssup{t,\iota}})+o(1)U(t)\big](1+o(1)),
\end{align*}
which gives the required result by Proposition \ref{u12}.
\end{proof}

\subsection{The case $\alpha \ge 2$: Paths to $\Omega_t$ visiting sites in $\mathcal{N}_t$ at most once}
\label{s:paths2}

Our proof proceeds in two stages. First, we analyse the portion of the part up until the first visit to $\Omega_t$ and after the last visit to $\Omega_t$, and show that, in this portion of the path, it is never beneficial to visit sites in $\mathcal{N}_t \cup \{0\}$ more than once. Second, we analyse the portion of the path consisting of the loops that occur between first and last visit to $\Omega_t$, showing that it is never beneficial for these loops to return to sites in $\mathcal{N}_t \cup \{0\}$; in fact, we show the stronger result that these loops have length at most $\lfloor 2\alpha \rfloor$ (although we suspect that the optimal bound is actually $\lfloor \alpha \rfloor$).

\smallskip

Denote by $\mathcal{P}^t$ the set of all geometric paths contributing to $U_0(t)$, that is, those visiting $\Omega_t$ and having length at most $R_t$.  Fix $t>0$ and let $y\in \mathcal{P}^t$. The \emph{skeleton}
of $y$, denoted $\text{skel}(y)$, is the geometric path from the origin to a site in $\Omega_t$ constructed by chronologically removing all loops in $y$ which start and end at any site belonging to $\{0\}\cup \mathcal{N}_t$ up until the first visit of $\Omega_t$ as well as removing any part of the path after the final visit of $y$ to $\Omega_t$. 

\smallskip

We can now partition $\mathcal{P}^t$ into equivalence classes by saying that paths $y$ and $\hat y$ are in the same class if and only if $\text{skel}(y)=\text{skel}(\hat y)$. We write $\mathfrak{P}^t$ for the set of all such equivalence classes.
Note that any such equivalence class 
$\mathcal{P}\in\mathfrak{P}^t$ contains the \emph{null path}, $y_{\mathrm{null}}^\mathcal{P}\in\mathcal{P}$, defined as $y_{\mathrm{null}}^\mathcal{P}=\mathrm{skel}(y_{\mathrm{null}}^\mathcal{P})$. Observe that every null path, prior to visiting $\Omega_t$ for the first time, either (i) visits each site in $\{0\} \cup (\mathcal{N}_t \cap \mathbb{N})$ exactly once, or (ii) visits each site in $\{0\} \cup (\mathcal{N}_t \cap -\mathbb{N})$ exactly once. In particular, until the first visit of $\Omega_t$ each null path visits either only positive integers, or only negative integers. 

\smallskip

The importance of the null path is through the following lemma, which states that the contribution to the solution coming from an equivalence class is dominated by that coming from the null path.

\begin{lemma}
\label{L:nullpaths}
Almost surely,
\[
\sum_{y\in\mathcal{P}}U(t,y)< (1+o(1))U(t,y_{\mathrm{null}}^\mathcal{P})
\]
uniformly for all $\mathcal{P}\in\mathfrak{P}^t$ 
on the event $\mathcal{E}_t\cap \mathcal{E}_t^{[2,\infty)}$, as $t\to\infty$.
\end{lemma}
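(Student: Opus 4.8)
The plan is to decompose any path $y \in \mathcal{P}$ into three parts: the initial segment up until the first visit to $\Omega_t$, the "middle" consisting of the loops that are performed between the first and last visit to $\Omega_t$, and the final segment after the last visit to $\Omega_t$ (which, by the definition of the skeleton and the null path, is empty for $y_{\mathrm{null}}^\mathcal{P}$). Since all paths in $\mathcal{P}$ share the same skeleton, they differ only in which loops (based at sites of $\{0\} \cup \mathcal{N}_t$ in the initial segment, and at arbitrary sites in the middle segment) are inserted. The idea is to sum over all such insertions and show that the resulting geometric-type series is $(1+o(1))$ times the contribution of the loop-free null path. Using the representation \eqref{uy1} and the multiplicativity structure of $I$, each loop of length $2\ell$ inserted at a site $z$ contributes at most a factor controlled by Lemma~\ref{L:comp} (giving $\frac{t^\ell}{\ell!}$ from the extra waiting-time integrations) together with the penalty factors $(\xi(\Za)-\xi(z_j))^{-1}$ for the sites $z_j$ visited on the loop, via Lemma~\ref{L:Ixtoy} when a loop reaches a site where the potential is not the maximum.

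First I would set up the counting: the initial segment of the skeleton visits some subset of $\{0\}\cup\mathcal{N}_t$; at each such site one may insert loops that only return to $\{0\}\cup\mathcal{N}_t$ (by definition of skeleton). A loop starting and ending at a site $z \in \{0\}\cup\mathcal{N}_t$ of length $2w$ visits intermediate sites, and the key point is that every site visited on such a loop is either in $\mathcal{N}_t$ (so its potential is at most $\xi(\Za)$, and the gap $\xi(\Za) - \xi(\cdot)$ is at least $a_tf_t$ by $\mathcal{E}_t$ unless it equals the second-largest value) or outside $\mathcal{N}_t$ with $|z| < |\Za|$, in which case $\xi(\cdot) \le \delta_t\xi(\Za)$ so the gap is at least $(1-\delta_t)\xi(\Za)$. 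Combining the combinatorial count of such loops (the number of loops of length $2w$ from a fixed site is at most $2^{2w}$ — or, more carefully, bounded using a Catalan-type estimate as in Lemma~\ref{binom}) with the per-step penalty factor, the sum over all loop insertions at all sites of the initial segment factorizes into a product over sites of geometric series of the form $\sum_w (C |\mathcal{N}_t| /(a_tf_t)^2)^w$ or similar, each of which is $1 + o(1)$ provided the ratio is $o(1)$; here one uses $|\mathcal{N}_t| < \delta_t^{-\alpha}\log(1/\delta_t)$ from $\mathcal{E}_t^{[2,\infty)}$ together with $a_t f_t \to \infty$ and the choice of $\delta_t$ to check the ratio tends to zero. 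A similar argument handles the middle segment, except there loops may be based at any site, not just $\{0\}\cup\mathcal{N}_t$; but since the middle segment is between visits to $\Omega_t$, every such loop is penalized, and the analogous geometric series converges with ratio $o(1)$ using the total number of sites available ($\le R_t$) against the penalty — this is essentially the computation already performed in the proof of Proposition~\ref{u3}. Throughout, the factor $\frac{t^m}{m!}$ from repeat visits to $\Omega_t$ itself (which are not removed by the skeleton construction in the middle) must be tracked and summed as $\exp\{2t/(a_tf_t)\} = 1+o(1)$.

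The main obstacle, I expect, is making the decomposition-and-summation genuinely \emph{uniform} over all equivalence classes $\mathcal{P} \in \mathfrak{P}^t$, and in particular controlling the combinatorics of loop insertions without double-counting. The subtlety is that a single path $y$ may be reconstructed from its null path in multiple ways if one is not careful about the chronological order in which loops are stripped; the phrase "chronologically removing all loops" in the definition of the skeleton is what pins this down, and the proof must exploit it to get a clean bijection between paths in $\mathcal{P}$ and finite sequences of (site, loop)-insertions. Once that bookkeeping is set up, the estimate reduces to bounding a product of convergent geometric series, and the analytic inputs (Lemmas~\ref{L:comp}, \ref{L:Ixtoy}, and the bounds from $\mathcal{E}_t$, $\mathcal{E}_t^{[2,\infty)}$) are exactly what is needed. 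A secondary technical point is that a loop in the \emph{initial} segment may, a priori, wander far from $\{0\}\cup\mathcal{N}_t$ before returning; one must argue that such excursions are still penalized at every step outside $\mathcal{N}_t$ (since those sites have potential at most $\delta_t\xi(\Za)$, or lie beyond $|\Za|$ where $\mathcal{E}_t$ controls $\xi$), so that the length of the loop enters the bound with a genuinely small base, keeping the series summable.
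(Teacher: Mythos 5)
Your core analytic idea --- each step outside $\Omega_t$ incurs a penalty of at least $a_tf_t$, so extra edges over the null path contribute a geometric series with ratio $o(1)$ --- is the same one the paper uses. But your proposal has two linked issues that would make the argument as written fail, plus it is more elaborate than necessary.

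The first issue is a misreading of the skeleton/null-path definition. The skeleton only strips loops (from $\{0\}\cup\mathcal{N}_t$) that occur \emph{before the first visit to} $\Omega_t$, and discards the tail after the last visit to $\Omega_t$; the middle segment between the first and last visit to $\Omega_t$ is preserved. Consequently all paths in the same class $\mathcal{P}$ have \emph{identical} middle segments, and in particular the same number $m+1$ of visits to $\Omega_t$; they differ from $y_{\mathrm{null}}^\mathcal{P}$ only in initial-segment loops based at $\{0\}\cup\mathcal{N}_t$ and in an appended tail. You instead treat the middle-segment loops as variable within $\mathcal{P}$, which forces you to introduce a sum over $m$ and a factor $\exp\{2t/(a_tf_t)\}$, and you claim this is $1+o(1)$. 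That claim is false in precisely the regime where this lemma is used: for $\alpha\ge 2$ we have $a_t=(t/\log t)^{1/(\alpha-1)}$ growing no faster than $t$, so $t/(a_tf_t)\to\infty$ and $\exp\{2t/(a_tf_t)\}$ diverges. (The computation $\exp\{2t/(a_tf_t)\}=1+o(1)$ only holds for $\alpha<2$, which is why it works in the proof of Proposition~\ref{u3} but cannot be borrowed here.) The correct resolution is exactly that no such sum is needed because $m$ is fixed by the skeleton: the paper applies Lemma~\ref{L:comp} with $i=0$, so no $t^i/i!$ factor appears at all, and the $m+1$ visits to $\Omega_t$ are handled wholesale through the equality $a_n=\cdots=a_{n+m}$.

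Once the misreading is repaired, your product-of-geometric-series bookkeeping could be made to work, but the paper's accounting is much lighter: it simply groups paths $y\in\mathcal{P}$ by their excess length $k$ over $y_{\mathrm{null}}^\mathcal{P}$, bounds the count by $|\mathcal{P}^k|\le(2(|\mathcal{N}_t|+2))^k<\delta_t^{-2\alpha k}$ (each extra edge is attached either to a loop at one of the $|\mathcal{N}_t|+1$ base points in $\{0\}\cup\mathcal{N}_t$ or to the tail, with two directions), bounds each term by $U(t,y_{\mathrm{null}}^\mathcal{P})(a_tf_t)^{-k}$ via Lemma~\ref{L:comp} alone (no need for Lemma~\ref{L:Ixtoy} or Lemma~\ref{binom} here), and sums $\sum_k(a_tf_t\delta_t^{2\alpha})^{-k}=1+o(1)$ since $a_tf_t\delta_t^{2\alpha}\to\infty$. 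Your concern about uniformity over $\mathcal{P}$ and double-counting is legitimate but is dissolved by this crude count, which is uniform in $\mathcal{P}$ by construction.
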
  
 
\begin{proof} 
For $k\in\mathbb{N}$, write $\mathcal{P}^k$ for the subset of $\mathcal{P}$ consisting of the paths with additional length $k$ compared to $y_{\mathrm{null}}^\mathcal{P}$. 
We have on $\mathcal{E}_t^{[2,\infty)}$
$$|\mathcal{P}^k|\le \big(2(|\mathcal{N}_t|+2)\big)^{k}
< \delta_t^{-2\alpha k}$$
since each of the additional $k$ pieces will be added to a loop
at a site in $\{0\}\cup\mathcal{N}_t$ or at the end in at most two ways. 
Note that no null path can visit both sites in $\Omega_t$ since each null path is in $\mathcal{P}_t$ and has length at most $R_t<2|\Za|$. 
Using~\eqref{uy1} and 
Lemma \ref{L:comp} 
with $m+1$ being the number of visits of $y$ to $\Omega_t$, $n+m$ the length of $y$, $i=0$,
$a_0,\dots, a_{k-1}$ the values of $\xi$ at the additional points of $y$, $a_k,\dots,a_{n-1}$ the values of $\xi$ along $y_{\mathrm{null}}^\mathcal{P}$ except when it visits $\Omega_t$, and $a_n=\dots=a_{n+m}$ the value of $\xi$ on $\Omega_t$, we obtain 
\begin{align*}
U(t,y)\le U(t,y_{\mathrm{null}}^\mathcal{P})\prod_{j=1}^k
\frac{1}{a_n-a_{j-1}}.
\end{align*}
on $\mathcal{E}_t$. Since none of the additional sites visited by any path in $\mathcal{P}$ are in $\Omega_t$, we have on 
$\mathcal{E}_t$ 
\[
U(t,y) < U(t,y_{\mathrm{null}}^\mathcal{P})(a_tf_t)^{-k},
\]
and thus 
\begin{align*}
\sum_{y\in\mathcal{P}}U(t, y)&=\sum_{k=0}^\infty\sum_{ y\in\mathcal{P}^k}U(t,y) < U(t,y_{\mathrm{null}}^\mathcal{P})\sum_{k=0}^\infty \big(a_tf_t\delta_t^{2\alpha}\big)^{-k}=(1+o(1))U(t,y_{\mathrm{null}}^\mathcal{P})
\end{align*}
on $\mathcal{E}_t\cap \mathcal{E}_t^{[2,\infty)}$ as
$a_tf_t\delta_t^{2\alpha}\to \infty$.
\end{proof} 

We now eliminate paths that make loops from $\Omega_t$ that return to sites in $\mathcal{N}_t$. Denote by $\mathrm{Null}^t_1$ the set of all null paths in $\mathcal{P}^t$  which visit each site in $\{0\}\cup\mathcal{N}_t$ at most once,  $\mathrm{Null}^t_2$ for all other null paths in $\mathcal{P}^t$ and $\mathrm{Null}^t$ for their union.

\begin{lemma}
\label{L:null1}
Almost surely,
\[
\sum_{y\in \mathrm{Null}^t_2}U(t,y)=o(1)\sum_{y\in \mathrm{Null}^t_1}U(t,y)
\]
on the event $\mathcal{E}_t\cap\mathcal{E}^{[2,\infty)}_t$, 
as $t\to\infty$.
\end{lemma}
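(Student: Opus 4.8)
The plan is to compare the contribution of each null path in $\mathrm{Null}^t_2$ to that of a suitably associated null path in $\mathrm{Null}^t_1$, and then sum up. A null path $y\in\mathrm{Null}^t_2$ differs from a path in $\mathrm{Null}^t_1$ in that, after the first visit to $\Omega_t$, it makes one or more loops that return to sites in $\{0\}\cup\mathcal{N}_t$ (equivalently, it visits some such site more than once). The key structural observation is that deleting all such loops from $y$ produces a path $\tilde y\in\mathrm{Null}^t_1$ which ends at the same site of $\Omega_t$ and visits the same sites of $\{0\}\cup\mathcal N_t$, just fewer times. I would set up a map $y\mapsto\tilde y$ and control both (a) the multiplicity of the preimage of a given $\tilde y\in\mathrm{Null}^t_1$ and (b) the ratio $U(t,y)/U(t,\tilde y)$.

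For the ratio, I would invoke \eqref{uy1} together with Lemma~\ref{L:comp}, exactly as in the proof of Lemma~\ref{L:nullpaths}: if $y$ has $k$ extra steps relative to $\tilde y$, all of them occurring in loops at sites in $\{0\}\cup\mathcal N_t$ after the first visit to $\Omega_t$, then grouping the $m+1$ visits to $\Omega_t$ (using $i=0$, $k=n$ in the notation there, with $a_0,\dots$ being the $\xi$-values at the extra sites) yields
\[
U(t,y)\le U(t,\tilde y)\prod_{j=1}^k\frac{1}{\xi(\Za)-\xi(y_j)}\le U(t,\tilde y)(\delta_t\xi(\Za))^{-k}
\]
on $\mathcal E_t$, where the second inequality uses that each extra site lies in $\mathcal N_t$ and so has $\xi$-value at most... wait---no: each extra site is in $\{0\}\cup\mathcal N_t$, on which $\xi(\cdot)\le\xi(\Za)$, but to get a useful bound I actually want to exploit that these sites are \emph{far} from $\Omega_t$. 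By the separation property in $\mathcal E_t^{[2,\infty)}$, $\inf_{z\in\mathcal N_t,x\in\Omega_t}|z-x|>g_t$, so any loop from $\Omega_t$ reaching $\mathcal N_t\cup\{0\}$ has length at least $2g_t$; hence $k\ge 2g_t$ for every $y\in\mathrm{Null}^t_2$, and each such step still carries a penalty factor of order $(a_tf_t)^{-1}$ since a loop visiting $\mathcal N_t$ passes through sites between $\Omega_t$ and $\mathcal N_t$ whose potential is strictly less than $\xi(\Za)$ by the gap $\xi^{\ssup 1}_{R_t}-\xi^{\ssup 2}_{R_t}>a_tf_t$. So $U(t,y)<U(t,\tilde y)(a_tf_t)^{-k}$ with $k\ge 2g_t$.

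Next I would bound the number of $y\in\mathrm{Null}^t_2$ mapping to a fixed $\tilde y\in\mathrm{Null}^t_1$ with a given extra length $k$: as in Lemma~\ref{L:nullpaths} this is at most $(2(|\mathcal N_t|+2))^k<\delta_t^{-2\alpha k}$, since each extra step can be inserted into a loop at one of the $|\mathcal N_t|+2$ eligible sites in at most two directions. Summing,
\[
\sum_{y\in\mathrm{Null}^t_2}U(t,y)\le\sum_{\tilde y\in\mathrm{Null}^t_1}U(t,\tilde y)\sum_{k\ge 2g_t}\delta_t^{-2\alpha k}(a_tf_t)^{-k}
=\Big(\sum_{\tilde y\in\mathrm{Null}^t_1}U(t,\tilde y)\Big)\sum_{k\ge 2g_t}\big(a_tf_t\delta_t^{2\alpha}\big)^{-k},
\]
and the geometric tail $\sum_{k\ge 2g_t}(a_tf_t\delta_t^{2\alpha})^{-k}=(1+o(1))(a_tf_t\delta_t^{2\alpha})^{-2g_t}=o(1)$ since $a_tf_t\delta_t^{2\alpha}\to\infty$ by the choice of $f_t,g_t,\delta_t$. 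This gives the claim. The main obstacle is bookkeeping: making the map $y\mapsto\tilde y$ and the preimage-counting precise enough that every extra step genuinely contributes either a factor $(a_tf_t)^{-1}$ (from passing between $\mathcal N_t$ and $\Omega_t$, or through the complement of $\mathcal N_t\cup\{0\}$ near those sites) — the subtlety being loops entirely inside a neighbourhood of a single $\mathcal N_t$-site, which one must still charge a penalty to via Lemma~\ref{L:comp} applied with the relevant local potential gap; one should verify that $\xi(\Za)-\xi(y_j)>a_tf_t$ for \emph{all} extra sites $y_j$, which holds because all of them lie in $\{0\}\cup\mathcal N_t\cup(\text{sites on loops from }\Omega_t)$, and on $\mathcal E_t$ every site other than $\Omega_t$ within distance $R_t$ has potential below $\xi^{\ssup 1}_{R_t}-a_tf_t$. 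Once that uniform gap is in hand the estimate above closes.
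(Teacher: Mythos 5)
Your high-level strategy — associate to each $y\in\mathrm{Null}^t_2$ a path $\tilde y\in\mathrm{Null}^t_1$ by deleting loops, bound $U(t,y)/U(t,\tilde y)$ via Lemma~\ref{L:comp}, and control preimage multiplicities — is essentially the paper's, but your execution has a genuine error at the key step.

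\textbf{The gap: you cannot take $i=0$ in Lemma~\ref{L:comp}.} The loops you delete are excursions that start and end at $\Omega_t$ (this is forced by the structure of null paths: any extra visit to $\{0\}\cup\mathcal N_t$ after the first visit to $\Omega_t$ sits inside an excursion from $\Omega_t$). Deleting such an excursion removes one occurrence of the value $\xi(\Za)$ from the argument list of $I$; in the notation of Lemma~\ref{L:comp} the deleted sites include one or more of the ``equal maximum'' values $a_n=\cdots=a_{n+m}$, so you must take $i\ge$ (number of excursions removed), which is $\ge 1$. The factor $t^i/i!$ is then unavoidable, and for $\alpha\ge2$ a single uncompensated factor $t$ already ruins a bound like $(a_tf_t)^{-1}$, since $t/(a_tf_t)\to\infty$. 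Your own side remark — that $\xi(\Za)-\xi(y_j)>a_tf_t$ for all extra $y_j$ because they lie off $\Omega_t$ — is precisely what fails: the endpoint $\Omega_t$ of each deleted loop \emph{is} among the removed $y_j$, and for it the denominator vanishes. The argument can be salvaged: since every deleted excursion has length $\ge 2g_t$ on $\mathcal E_t^{[2,\infty)}$, one has $i\le k/(2g_t)$ and hence $t^i(a_tf_t)^{-(k-i)}\le\bigl(t^{1/(2g_t)}(a_tf_t)^{-1+1/(2g_t)}\bigr)^k$, whose base tends to zero because $g_t\to\infty$ and $\log(a_tf_t)\sim\tfrac1{\alpha-1}\log t$; but you neither flag the issue nor carry out this extra step.

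\textbf{A secondary issue: the preimage count is borrowed from the wrong place.} The bound $\bigl(2(|\mathcal N_t|+2)\bigr)^k$ in Lemma~\ref{L:nullpaths} counts ways to grow a skeleton by loops anchored at sites of $\{0\}\cup\mathcal N_t$ (or the tail). Here the added pieces are excursions anchored at $\Omega_t$, so the relevant enumeration is: at most $\lfloor k/(2g_t)\rfloor$ excursions of total length $k$ and shape count $2^k$, each inserted at one of the $O(R_t-|\Za|)$ visits of $\tilde y$ to $\Omega_t$ — giving a bound of the form $2^{O(k)}(r_tg_tf_t)^{k/(2g_t)}$, analogous to the paper's $2^k(r_tg_tf_t)^{k/k_0}$. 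The estimate you wrote is not an upper bound for the quantity you need; it only coincidentally has the right flavour.

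For comparison, the paper sidesteps both issues by proving something stronger: it removes \emph{all} loops from $\Omega_t$ of length at least a fixed $k_0>2\alpha$ (not just the long ones hitting $\mathcal N_t$), partitions $\mathrm{Null}^t$ accordingly, and keeps exact track of the $t^i/i!$ contribution with $i\le\lfloor k/k_0\rfloor$. The choice $k_0>2\alpha$ is exactly what makes the resulting geometric series converge; $\mathrm{Null}^t_2$ is then handled because any of its paths has a loop of length $\ge g_t\gg k_0$. Your route is morally similar but, as written, the decisive inequality $U(t,y)<U(t,\tilde y)(a_tf_t)^{-k}$ is false.
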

\begin{proof}
Note that by the construction of null paths, the only way for a null path to visit a site in $\mathcal{N}_t$ more than once is by having a loop from $\Omega_t$. On the event $\mathcal{E}^{[2,\infty)}_t$ this loop must have length at least $g_t$. We shall show a stronger result than is needed: that all null paths with loops from $\Omega_t$ of length more than $k_0$, where $k_0>2\alpha$, have negligible contribution to the solution compared to the contribution from all other null paths. 
\smallskip

To do this we partition $\mathrm{Null}^t$ into equivalence classes by saying two null paths 
are in the same class if and only if they are identical after removing all loops from $\Omega_t$ of length at least  $k_0$. For any such equivalence class $\mathcal{P}$, write $y^{\mathcal{P}}_\mathrm{min}$ for the path in $\mathcal{P}$ of minimum length (i.e.\ the path without any loops from $\Omega_t$ of length at least  $k_0$). Further, for any 
$k\ge k_0$, write $\mathcal{P}^k$ for the set of paths in 
$\mathcal{P}$ with additional length $k$ compared to $y^{\mathcal{P}}_\mathrm{min}$. Finally we write $\mathfrak{N}^t$ for the set of all such equivalence classes. 

\smallskip

Observe that for all $k\ge k_0$ and $\mathcal{P}\in \mathfrak{N}^t$, any path $y\in \mathcal{P}^k$ can make no more than $\lfloor k/k_0\rfloor$ extra visits to $\Omega_t$ compared to $y^{\mathcal{P}}_\mathrm{min}$.
Using~\eqref{uy1} and 
Lemma \ref{L:comp} 
with $m+1$ being the number of visits of $y$ to $\Omega_t$, $n+m$ the length of $y$, $i$ the number of additional visits to $\Omega_t$ compared to $y^{\mathcal{P}}_\mathrm{min}$,  
$a_0,\dots, a_{k-1-i}$ the values of $\xi$ at the additional points of $y$ except when it visits $\Omega_t$, $a_{k-i}=\cdots=a_{k-1}$ the value of $\xi$ on $\Omega_t$, $a_k,\dots,a_{n-1}$ the values of $\xi$ along $y_{\mathrm{min}}^\mathcal{P}$ except when it visits $\Omega_t$, and $a_n=\dots=a_{n+m}$ the value of $\xi$ on $\Omega_t$, we obtain 
\[
U(t,y)\le U(t,y^{\mathcal{P}}_\mathrm{min}) 
\frac{t^i}{i!}\prod_{j=0}^{k-1-i}\frac{1}{a_n-a_j}
< U(t,y^{\mathcal{P}}_\mathrm{min}) t^{ k/k_0}(a_tf_t)^{-k+ k/k_0}
\]
on $\mathcal{E}_t$.
Further, on $\mathcal{E}_t$
\begin{align*}
|\mathcal{P}^k|\le 2^k\big[(R_t-|\Za|+1)/2\big]^{\lfloor k/k_0\rfloor}< 2^k(r_tg_tf_t)^{k/k_0} 
\end{align*}
since there are at most $\lfloor k/k_0\rfloor$ additional loops, at most $(R_t-|\Za|+1)/2$ points where such a loop can be created, and at most $2^k$ shapes of the loops. 
\smallskip

Hence, for any $\mathcal{P}\in\mathfrak{N}^t$, on the event $\mathcal{E}_t$
\begin{align*}
\sum_{y\in \mathcal{P}}U(t,y)
&\le U(t,y^{\mathcal{P}}_\mathrm{min})
+\sum _{k=k_0}^\infty\sum_{ y\in \mathcal{P}^k}U(t,y)\\
&< U(t,y^{\mathcal{P}}_\mathrm{min})\Big(1+
\sum_{k=k_0}^\infty 2^k (t r_tg_tf_t)^{k/k_0}(a_tf_t)^{k/k_0-k}\Big)\\&
=U(t,y^{\mathcal{P}}_\mathrm{min})\Big(1
+ 2^{k_0}tr_tg_tf_t (a_t f_t)^{1-k_0}\Big[1-2(tr_tg_tf_t)^{1/k_0}(a_tf_t)^{1/k_0-1}\Big]^{-1}\Big).
\end{align*}
Since $k_0>2\alpha$ this implies 
\[
\sum_{y\in \mathcal{P}}U(t,y) < U(t,y^{\mathcal{P}}_\mathrm{min})(1+o(1))
\]
as $t\to\infty$ uniformly over the equivalence classes. 
To conclude the proof, note that 
\begin{align*}
\sum_{y\in \mathrm{Null}^t}U(t,y)=\sum_{\mathcal{P}\in\mathfrak{N}^t}\sum_{y\in\mathcal{P}}U(t,y) <  (1+o(1))\sum_{\mathcal{P}\in\mathfrak{N}^t}U(t,y^{\mathcal{P}}_\mathrm{min}) < (1+o(1)) \sum_{y\in \mathrm{Null}^t_1}U(t,y)
\end{align*}
on the event $\mathcal{E}_t\cap\mathcal{E}^{[2,\infty)}_t$. 
\end{proof}

\begin{prop}
\label{prop:U0null}
Almost surely,
\[ U_0(t)=(1+o(1))\sum_{y\in \mathrm{Null}^t_1}U(t,y) \]
on the event $\mathcal{E}_t\cap\mathcal{E}^{[2,\infty)}_t$, 
as $t\to\infty$.
\end{prop}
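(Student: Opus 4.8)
The goal is to chain together the three preliminary results already established in this subsection (Lemma~\ref{L:nullpaths}, Lemma~\ref{L:null1}, and the negligibility of $U_1$) together with the decomposition of $\mathcal{P}^t$ into equivalence classes. The strategy is: first account for $U_1$, then reduce $U_0$ to a sum over null paths using the equivalence-class decomposition and Lemma~\ref{L:nullpaths}, and finally reduce from all null paths to those in $\mathrm{Null}^t_1$ using Lemma~\ref{L:null1}.

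**Step 1 (reduction to $U_0$ via Proposition~\ref{u12}).** By Proposition~\ref{u12} we have $U_1(t)/U(t) \to 0$ on $\mathcal{E}_t$, so $U_0(t) = U(t) - U_1(t) = (1+o(1))U(t)$. In particular it suffices to show that $\sum_{y \in \mathrm{Null}^t_1} U(t,y) = (1+o(1)) U_0(t)$, and since $\mathrm{Null}^t_1 \subseteq \mathcal{P}^t$ and each $U(t,y) \ge 0$, the inequality $\sum_{y\in\mathrm{Null}^t_1}U(t,y) \le \sum_{y\in\mathcal{P}^t}U(t,y) = U_0(t)$ is immediate; the content is the reverse bound.

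**Step 2 (equivalence-class decomposition and Lemma~\ref{L:nullpaths}).** Recall $\mathcal{P}^t$ is the set of geometric paths contributing to $U_0(t)$, i.e.\ those hitting $\Omega_t$ with length at most $R_t$, and $\mathfrak{P}^t$ is its partition into equivalence classes by skeleton. Then
\[
U_0(t) = \sum_{\mathcal{P} \in \mathfrak{P}^t} \sum_{y \in \mathcal{P}} U(t,y) < (1+o(1)) \sum_{\mathcal{P} \in \mathfrak{P}^t} U(t, y_{\mathrm{null}}^{\mathcal{P}})
\]
uniformly on $\mathcal{E}_t \cap \mathcal{E}_t^{[2,\infty)}$ by Lemma~\ref{L:nullpaths}. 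Since the map $\mathcal{P} \mapsto y_{\mathrm{null}}^{\mathcal{P}}$ is a bijection between $\mathfrak{P}^t$ and $\mathrm{Null}^t$ (each null path is its own skeleton, and each class contains exactly one null path), the right-hand side equals $(1+o(1))\sum_{y \in \mathrm{Null}^t} U(t,y)$.

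**Step 3 (discard $\mathrm{Null}^t_2$ via Lemma~\ref{L:null1}) and conclude.** Split $\mathrm{Null}^t = \mathrm{Null}^t_1 \cup \mathrm{Null}^t_2$; by Lemma~\ref{L:null1}, $\sum_{y \in \mathrm{Null}^t_2} U(t,y) = o(1) \sum_{y \in \mathrm{Null}^t_1} U(t,y)$ on $\mathcal{E}_t \cap \mathcal{E}_t^{[2,\infty)}$, hence $\sum_{y \in \mathrm{Null}^t} U(t,y) = (1+o(1)) \sum_{y \in \mathrm{Null}^t_1} U(t,y)$. Combining with Step~2 gives $U_0(t) < (1+o(1)) \sum_{y \in \mathrm{Null}^t_1} U(t,y)$ on $\mathcal{E}_t \cap \mathcal{E}_t^{[2,\infty)}$, and with the trivial reverse inequality from Step~1 this yields the claim. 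The only point requiring a little care is that all the $o(1)$ error terms from Lemmas~\ref{L:nullpaths} and~\ref{L:null1} are \emph{uniform} over the respective equivalence classes, so that summing over (the exponentially many) classes does not destroy the estimate — but this uniformity is exactly what those two lemmas assert, so no real obstacle remains; the proof is essentially bookkeeping once those lemmas are in hand.
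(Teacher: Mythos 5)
Your proof is correct and follows essentially the same route as the paper: decompose $\mathcal{P}^t$ into skeleton equivalence classes, apply Lemma~\ref{L:nullpaths} uniformly over classes to reduce to $\mathrm{Null}^t$, then apply Lemma~\ref{L:null1} to reduce to $\mathrm{Null}^t_1$, with the trivial lower bound $\sum_{y\in\mathrm{Null}^t_1}U(t,y)\le U_0(t)$ closing the two-sided estimate. The invocation of Proposition~\ref{u12} in your Step~1 is harmless but unnecessary, since the statement relates $U_0(t)$ directly to the null-path sum and never involves $U_1(t)$ or $U(t)$.
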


\begin{proof} This is a direct consequence of Lemmas~\ref{L:nullpaths} and~\ref{L:null1}. Indeed, 
\begin{align*}
U_0(t)
&=\sum_{y\in\mathcal{P}^t}U(t,y)
=\sum_{\mathcal{P}\in \mathfrak{P}^t}\sum_{y\in\mathcal{P}}
U(t,y)<(1+o(1))\sum_{\mathcal{P}\in \mathfrak{P}^t}U(t,y^{\mathcal{P}}_\mathrm{null})\\
&=(1+o(1))\sum_{y\in \mathrm{Null}^t}U(t,y)
=(1+o(1))\sum_{y\in \mathrm{Null}^t_1}U(t,y)
\end{align*} 
on the event $\mathcal{E}_t\cap\mathcal{E}^{[2,\infty)}_t$, as $t\to\infty$.
\end{proof}

\subsection{Completion of the proof of Theorem~\ref{main0}}
We are now in a position to prove the localisation statement in Theorem \ref{main0} on the event that $\mathcal{E}_t$ holds; the fact that $\P(\mathcal{E}_t) \to 1$ as $t \to \infty$ will be proven in Proposition \ref{e0}. The second statement of Theorem \ref{main0}, that $\P(\mathfrak{D}_t) \to p/(2-p)$, will be proven in Proposition~\ref{prob}.

\smallskip

By Proposition \ref{e2} we may work on the event $\mathcal{E}_t\cap\mathcal{E}^{[2,\infty)}_t$. Since $U_1$ is negligible with respect to~$U$ by Proposition~\ref{u12}, it remains to show that the contribution to $U_0$ from the paths not ending in~$\Omega_t$ is negligible. For $\alpha \in (1,2)$ this follows from Propositions~\ref{u3}; for $\alpha\ge 2$ this follows from Propositions~\ref{prop:U0null}. In fact, the latter argument works for all $\alpha>1$ but we prefer to use the much simpler argument for $\alpha \in (1,2)$.

\bigskip


\section{Point process analysis}
\label{s:pp}

In this section we develop a point processes approach to analyse the high exceedences of $\xi$ and top order statistics of the penalisation functional $\Psi_t$. We use this analysis to prove that the $\mathcal{E}_t$ holds eventually with overwhelming probability. We also use it to give an explicit construction for the limiting random variable $\Upsilon$ from Theorem~\ref{main1}. Since the proofs in this section are quite technical, we defer some of them to Appendix \ref{A:B}.

\smallskip

Recall that $E = \Z \setminus D$ denotes the set of positive integers whose potential values are exclusive, and abbreviate $q = 1-p$.

\subsection{Point process convergence for the rescaled potential}
The first step is to establish that the potential, properly rescaled, converges to a Poisson point process. The limiting point process will arise as a superposition of two distinct independent Poisson point processes that are, respectively, the limit of the potential restricted to the duplicated and the exclusive sites.

\smallskip

Let us begin by defining the limiting point process. Consider the measure
\begin{align*}
\mu(dx\otimes dy)=dx\otimes \frac{\alpha}{|y|^{\alpha+1}}dy
\end{align*}
on $\R^2$. 
In the sequel, we denote by the same symbol the restriction of $\mu$
to subsets of $\R^2$, and we denote by $(0,\infty]$ the extension of
$(0,\infty)$ by the point $\infty$, equipped with the topology generated
by the topology of $(0,\infty)$ and the sets of the form $(a,\infty]$,
for all $a\in \R$.
Let $\Pi^{\ssup e}$ be a Poisson point process on $\R\times (0,\infty]$ with the intensity measures
$q\mu$. Let $\Pi^{\ssup{d,+}}$ be a Poisson point process on 
$[0,\infty)\times (0,\infty]$ with the intensity measures 
$p\mu$ and independent of $\Pi^{\ssup e}$. Let $\Pi^{\ssup{d,-}}$ be a Poisson point process on 
$(-\infty,0]\times (0,\infty]$ defined by 
$\Pi^{\ssup{d,-}}(A)=\Pi^{\ssup{d,+}}(\hat A)$ for any Borel set $A\subseteq (-\infty,0]\times (0,\infty]$, where 
$\hat A$ is the reflection of the set $A$ with respect to the $y$-axis. Finally, let $\Pi^{\ssup{d}}$ be the point process on $\R\times (0,\infty]$ defined by 
\begin{align*}
\Pi^{\ssup{d}}(A)
=\Pi^{\ssup{d,-}}(A\cap (-\infty,0]\times (0,\infty])
+\Pi^{\ssup{d,+}}(A\cap [0,\infty)\times (0,\infty]),
\end{align*} 
and let 
\begin{align*}
\Pi=\Pi^{\ssup d}+\Pi^{\ssup e}
\end{align*}
be a point process on $\R\times (0,\infty]$. Denote the corresponding probability and expectation by 
$\Prob_{*}$ and $\mathrm{E}_*$. 

\smallskip

We show the convergence of the potential, properly rescaled, to the Poisson point process $\Pi$. Let 
\begin{align*}
\Pi^{\ssup e}_s=\sum_{z\in E}\e\Big(\frac{z}{s}, \frac{\xi(z)}{s^{1/\alpha}}\Big), \quad
\Pi^{\ssup{d,+}}_s=\sum_{z\in D, z\ge 0}\e\Big(\frac{z}{s}, \frac{\xi(z)}{s^{1/\alpha}}\Big) \quad \text{and} \quad
\Pi^{\ssup{d,-}}_s=\sum_{z\in D, z\le 0}\e\Big(\frac{z}{s}, \frac{\xi(z)}{s^{1/\alpha}}\Big), 
\end{align*}
where $\e(x,y)$ denotes the Dirac measure in $(x,y)$. Denote 
\begin{align*}
\Pi^{\ssup d}_s=\Pi^{\ssup{d,+}}_s+\Pi^{\ssup{d,-}}_s
\qquad\text{and}\qquad
\Pi_s=\Pi^{\ssup d}_s+\Pi^{\ssup e}_s.
\end{align*}

The following convergence result is classical and we defer its proof to Appendix \ref{A:B}.
\begin{lemma}
\label{lppp}
As $s\to\infty$, $(\Pi^{\ssup {d,+}}_s, \Pi^{\ssup {d,-}}_s, \Pi^{\ssup e}_s)$ converges in law to $(\Pi^{\ssup {d, +}}, \Pi^{\ssup {d, -}},\Pi^{\ssup e})$, and in particular, $\Pi_s$ converges in law to $\Pi$.
\end{lemma}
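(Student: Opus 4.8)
The statement to prove is Lemma~\ref{lppp}: the rescaled potential point processes converge jointly in law to the Poisson point processes constructed above.

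\medskip

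The plan is to reduce everything to the standard Poisson-convergence criterion for point processes of i.i.d.\ points, and then assemble the three pieces. First I would recall that since $\xi_0(\cdot)$ consists of i.i.d.\ Pareto($\alpha$) random variables, the classical result (see e.g.\ Resnick, \emph{Extreme Values, Regular Variation and Point Processes}) gives that $\sum_{z \in \N_0} \e\big(z/s, \xi_0(z)/s^{1/\alpha}\big)$ converges in law, on $[0,\infty) \times (0,\infty]$, to a Poisson point process with intensity $\mu$; the tail $\Prob(\xi_0 > x) = x^{-\alpha}$ is exactly regularly varying with index $-\alpha$, so the vague convergence of $s\,\Prob(\xi_0/s^{1/\alpha} \in \cdot\,)$ to $\alpha y^{-\alpha-1}dy$ on $(0,\infty]$ holds, and combining with the uniform placement of the sites $z/s$ on $[0,\infty)$ gives the two-dimensional statement. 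The same applies to the negative-indexed i.i.d.\ family $(\xi_0(-z))_{z \in \N}$.

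\medskip

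Second, I would handle the thinning/duplication structure. Recall that each site $-n$ ($n \in \N$) independently copies $\xi_0(n)$ with probability $p$ and keeps its own independent value $\xi_0(-n)$ otherwise; thus $D \cap \N$ is obtained by independent $p$-thinning, and $E$ (the exclusive positive sites, which by definition here are taken positive) consists of those $n$ for which $\xi(-n)=\xi_0(-n)$ together with the thinning applied to the two independent families. By the standard fact that an independent $p$-thinning of a Poisson point process is again Poisson with intensity multiplied by $p$, and that the retained and discarded parts are independent, the positive duplicated points converge to $\Pi^{\ssup{d,+}}$ with intensity $p\mu$, while the exclusive points converge to $\Pi^{\ssup e}$ with intensity $q\mu$, $q = 1-p$, and these limits are independent because they come from disjoint (hence independent) parts of the underlying independent Pareto families and independent thinning variables. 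The reflected process $\Pi^{\ssup{d,-}}_s$ is, by construction, the exact mirror image of $\Pi^{\ssup{d,+}}_s$ (the potential is literally duplicated by reflection on $D$), so its convergence to $\Pi^{\ssup{d,-}}$ is immediate from that of $\Pi^{\ssup{d,+}}_s$ by the continuous mapping theorem applied to the reflection map. Finally, joint convergence of the triple $(\Pi^{\ssup{d,+}}_s, \Pi^{\ssup{d,-}}_s, \Pi^{\ssup e}_s)$ follows by noting that $\Pi^{\ssup e}_s$ is built from data independent of that building $\Pi^{\ssup{d,+}}_s$ (the values $\xi_0(-n)$ on the non-duplicated negative sites, plus an independent portion of the thinning), so joint convergence reduces to marginal convergence of the independent blocks; $\Pi^{\ssup{d,-}}_s$ adds no new randomness. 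Superposition then gives $\Pi_s \Rightarrow \Pi$.

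\medskip

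The main technical obstacle — really the only non-bookkeeping point — is being careful about the joint law and the independence claims near the origin and across the reflection: the negative duplicated sites are \emph{not} independent of the positive ones (they are equal), so one must organize the underlying randomness as $\big((\xi_0(n))_{n \ge 0},\ (\xi_0(-n))_{n \ge 1},\ (B_n)_{n \ge 1}\big)$ with $B_n$ i.i.d.\ Bernoulli($p$), express each of $\Pi^{\ssup{d,+}}_s$, $\Pi^{\ssup{d,-}}_s$, $\Pi^{\ssup e}_s$ as a measurable function of these, and check that the limiting objects decouple into the independent pieces $\Pi^{\ssup{d,+}}$ and $\Pi^{\ssup e}$ with $\Pi^{\ssup{d,-}}$ a deterministic function of $\Pi^{\ssup{d,+}}$. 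Since this is routine and classical, I would state the key regular-variation input, cite the standard point-process convergence and thinning results, and defer the detailed verification to Appendix~\ref{A:B} as the paper indicates.
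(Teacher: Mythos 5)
Your proposal is correct in spirit, but it takes a genuinely different route from the paper. The paper proves everything at once by introducing an auxiliary \emph{signed} point process $\Sigma_s = \sum_{z\in\Z}\e\big(z/s,\ \sigma_{|z|}\xi_0(z)/s^{1/\alpha}\big)$, where the sign $\sigma_{|z|}$ encodes whether $|z|$ is duplicated or exclusive; the joint convergence is then obtained by a direct, self-contained computation of the Laplace functional of $\Sigma_s$ and comparison with the limiting Poissonian Laplace functional on the two-sheeted state space $G$. Your approach is more modular: you reduce to the classical regular-variation input for the i.i.d.\ Pareto family, invoke the thinning theorem for the $D$/$E$ split, and dispatch $\Pi^{\ssup{d,-}}_s$ by continuous mapping applied to the reflection. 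Both are valid; the paper's computation is self-contained and makes the intensity bookkeeping (including the factor of $\delta_z$ at the origin) explicit, while your decomposition exposes the conceptual structure more cleanly.

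One point you should be more careful about. You assert that $\Pi^{\ssup{d,+}}_s$ and $\Pi^{\ssup e}_s$ are built from "independent" data, but this is not literally true at finite $s$: both are measurable functions of the common Bernoulli family $(B_n)$, and the \emph{positions} appearing in the two processes are deterministic complements of each other (if $\Pi^{\ssup{d,+}}_s$ has a point above $n/s$ then $\Pi^{\ssup e}_s$ cannot). Independence only emerges in the Poisson limit. So what you actually need is not "thinning of a Poisson process is Poisson" but the stronger statement that independent Bernoulli marking of a sequence of point processes converging to a Poisson limit yields \emph{jointly} converging retained and discarded processes whose limits are independent Poissons. That result is also standard (and can be verified by exactly the kind of factorised Laplace functional computation the paper carries out), and you do flag the issue in your last paragraph as "the main technical obstacle", so this is not a gap so much as a place where the appeal to independence should be phrased as a conditional independence given $(B_n)$, with the unconditional independence holding only in the limit.
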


\subsection{Asymptotic properties of the top order statistics of the penalisation functional}
We now show how to use the convergence of the potential to extract asymptotic properties of the top order statistics of the penalisation functional $\Psi_t$. We first introduce the limiting versions of $\Za$, $\Zb$ and $\mathfrak{D}_t$ and study their properties, before arguing that we may successfully pass to the limit. 

\smallskip

Given a point measure $\Sigma$, we say that $x\in\Sigma$
if $\Sigma(\{x\})>0$.  Let $\bar \Pi$ be the point process on $[0,\infty)\times (0,\infty]$ defined by 
\begin{align*}
\bar \Pi(A)=\Pi^{\ssup e}(A)+\Pi^{\ssup e}(\hat A)+\Pi^{\ssup {d,+}}(A)
\end{align*}
for any Borel set $A$, where $\hat A$ denotes the reflection of $A$ with respect to the $y$-axis. Remark that the three components of $\bar \Pi$ are independent Poisson point processes with the intensity measures $q\mu$, $q\mu$ and $p\mu$, respectively, and so $\bar \Pi$ is itself a Poisson point process with intensity measure $(2q + p) \mu = (2-p)\mu$. Abbreviate 
\begin{align}
\label{rho}
\rho=\frac{1}{\alpha-1},
\end{align}
and let the positive random variables $X^{\ssup 1}, X^{\ssup 2}, Y^{\ssup 1}$ and $Y^{\ssup 1}$ be defined by the properties that 
\begin{align*}
(X^{\ssup 1}, Y^{\ssup 1}) &\in \bar \Pi, \text{ and if } (x,y) \in \bar \Pi \text{ then }y-\rho x\le Y^{\ssup 1} -\rho X^{\ssup 1}, \\
(X^{\ssup 2}, Y^{\ssup 2}) &\in \bar \Pi,
\text{ and if } (x,y) \in \bar \Pi  \setminus(X^{\ssup 1}, Y^{\ssup 1}) \text{ then }y-\rho|x|\le Y^{\ssup 2}-\rho X^{\ssup 2} .
\end{align*}
In Lemma \ref{xy} we show that these are well-defined. Denote $\mathfrak{D} = \{ (X^{\ssup 1}, Y^{\ssup 1})  \in  \Pi^{\ssup {d,+}}  \}$.

 \smallskip

At the end of this section we shall identify $(X^{\ssup i}, Y^{\ssup i}), i = 1,2$, and $\mathfrak{D}$ as the limiting versions of $(|\Zc|, \xi(\Zc)), i = 1,2$, and $\mathfrak{D}_t$ respectively. For now, we establish some properties of these objects.

\begin{lemma}
\label{xy}
Almost surely, the random variables $X^{\ssup 1}, X^{\ssup 2}, Y^{\ssup 1}$ and $Y^{\ssup 2}$ are well-defined and satisfy $Y^{\ssup 1}-\rho X^{\ssup 1} >Y^{\ssup 2}-\rho X^{\ssup 2} >0$. 
\end{lemma}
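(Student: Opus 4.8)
The plan is to work with the Poisson point process $\bar\Pi$ on $[0,\infty)\times(0,\infty]$ with intensity measure $(2-p)\mu$, and to show that the linear functional $(x,y)\mapsto y-\rho x$ attains a strict, unique maximum over the atoms of $\bar\Pi$, and then that the same functional restricted to $\bar\Pi$ minus the maximising atom again attains a strict unique maximum, with the two maximal values both strictly positive. The key observation is a level-set computation: for any $c\in\R$, the set $\{(x,y): y-\rho x > c,\ x\ge 0\}$ has $\mu$-measure
\[
\mu\big(\{(x,y): x\ge 0,\ y > c+\rho x\}\big)=\int_0^\infty (c+\rho x)^{-\alpha}\,dx
\]
whenever $c>0$ (and is infinite when $c\le 0$, since then the inner integrand fails to be integrable near $x$ where $c+\rho x\le 0$, or more simply because the half-line $x\in[0,(-c)/\rho]$ contributes all $y\in(0,\infty]$). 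Using $\rho=1/(\alpha-1)$ one computes $\int_0^\infty (c+\rho x)^{-\alpha}dx = \frac{1}{\rho(\alpha-1)}c^{-(\alpha-1)} = c^{-(\alpha-1)}$ for $c>0$, so this measure is finite for every $c>0$ and tends to $0$ as $c\to\infty$ and to $\infty$ as $c\downarrow 0$.

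From this, the argument is standard. First, almost surely $\bar\Pi$ has infinitely many atoms in any region of infinite $\mu$-measure, so in particular there are atoms with $y-\rho x$ arbitrarily large positive; hence $S:=\sup_{(x,y)\in\bar\Pi}(y-\rho x)$ is either $+\infty$ or finite, and we must rule out $+\infty$. For any $c>0$ the strip $\{y-\rho x>c\}$ has finite $(2-p)\mu$-measure, so $\bar\Pi$ has only finitely many atoms there; letting $c\downarrow 0$ and using that $\{y-\rho x>0,\ x\ge 0\}$ is a countable increasing union of such strips, we get that a.s.\ $\bar\Pi$ has finitely many atoms in $\{y-\rho x>0\}$ — wait, that union has infinite measure, so instead: fix any $c_0>0$ with at least one atom in $\{y-\rho x>c_0\}$ (which occurs with probability $1-e^{-(2-p)c_0^{-(\alpha-1)}}\to 1$ as $c_0\downarrow0$, so a.s.\ some such $c_0$ exists); then on $\{y-\rho x\ge c_0\}$ there are finitely many atoms and the max over them is attained. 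Since this max is $\ge c_0 > $ the sup over the (possibly infinitely many) remaining atoms with $y-\rho x < c_0$, the overall supremum $S$ is attained, at a unique atom $(X^{\ssup1},Y^{\ssup1})$ — uniqueness because two distinct atoms giving the same value of $y-\rho x$ is a $\mu\otimes\mu$-null event (the diagonal condition $y_1-\rho x_1 = y_2 - \rho x_2$ has measure zero). This gives the first maximiser and shows $Y^{\ssup1}-\rho X^{\ssup1}=S>0$ (indeed $S\ge c_0>0$; and since $c_0$ can be taken arbitrarily small only in the sense of existence, we actually need $S>0$ which follows because a.s.\ there is an atom with $y-\rho x>0$, e.g.\ the closest atom to the $y$-axis at large height). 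Removing $(X^{\ssup1},Y^{\ssup1})$, the point process $\bar\Pi\setminus\{(X^{\ssup1},Y^{\ssup1})\}$ is again a realisation in which the same finiteness argument applies on $\{y-\rho x\ge c_0'\}$ for suitable small $c_0'>0$, so $(X^{\ssup2},Y^{\ssup2})$ is well-defined and unique, with $Y^{\ssup2}-\rho X^{\ssup2}>0$; and strict inequality $Y^{\ssup1}-\rho X^{\ssup1}>Y^{\ssup2}-\rho X^{\ssup2}$ is immediate from uniqueness of the maximiser. This proves the lemma.

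I expect the only genuine subtlety — the ``main obstacle'' — to be the careful bookkeeping showing the supremum is \emph{attained}: one must combine finiteness of atom-counts in each strip $\{y-\rho x>c\}$ (valid only for $c>0$) with the fact that atoms in $\{0<y-\rho x<c\}$ cannot beat an atom already found at level $\ge c$, and handle the event (of probability tending to $1$ as $c\downarrow 0$, hence full probability in the limit) that at least one atom sits above a positive level. Everything else — the $\mu$-measure computation, the independence/superposition structure of $\bar\Pi$, the a.s.\ distinctness of the values $y-\rho x$ across atoms — is routine. It may be cleanest to phrase the attainment via: for each $n$, let $A_n=\{x\ge0,\ y-\rho x > 1/n\}$; then $(2-p)\mu(A_n)=(2-p)n^{\alpha-1}<\infty$, so $N_n:=\bar\Pi(A_n)$ is a.s.\ finite, $N_n$ is nondecreasing in $n$, and $\Prob_*(N_n\ge 1)=1-\exp(-(2-p)n^{\alpha-1})\to 1$; hence a.s.\ $N_n\ge 1$ for all large $n$, and on that event the maximum of $y-\rho x$ over the finitely many atoms in $A_n$ is both attained and $\ge 1/n>0$ and dominates all atoms outside $A_n$, giving $(X^{\ssup1},Y^{\ssup1})$. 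The second maximiser follows by applying the identical reasoning to $\bar\Pi$ with its top atom deleted, which is still Poisson off a single point and certainly still has $\bar\Pi(A_n\setminus\{(X^{\ssup1},Y^{\ssup1})\})\ge1$ for large $n$ a.s.
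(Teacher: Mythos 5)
Your proof is correct and follows essentially the same route as the paper's: compute $\mu\big(\{(x,y):y-\rho x>a\}\big)=a^{1-\alpha}$ (the paper writes $2a^{1-\alpha}$, a harmless discrepancy), deduce a.s.\ finitely many atoms above any positive level but at least one above level zero, and conclude. You spell out the attainment-of-supremum and a.s.-uniqueness steps which the paper leaves implicit; otherwise the argument is identical.
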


\begin{proof}
For any $a>0$ compute 
\begin{align}
\label{mumu}
\mu\big(\{(x,y): y> a+\rho x\}\big)=2\int_0^{\infty}\int_{a+\rho x}^{\infty}\frac{\alpha}{y^{\alpha+1}}dydx=
2 a^{1-\alpha}. 
\end{align}
Since this is finite, almost surely there are finitely many points of $(x,y)\in \bar \Pi$
satisfying $y-\rho x> a$. On the other hand, since~\eqref{mumu} tends to infinity as $a\downarrow 0$, almost surely 
there exist points $(x,y)\in \bar \Pi$ satisfying $y-\rho x> 0$. This implies the result.
\end{proof}

\begin{lemma}
\label{xy2}
The random variable $(X^{\ssup 1},Y^{\ssup 1})$ has density
\begin{align}
\label{density}
p(x,y)=(2-p)\alpha{y^{-\alpha-1}}\exp\{-(2-p) (y-\rho x)^{1-\alpha}\}\one\{y-\rho x>0\}.
\end{align}
\end{lemma}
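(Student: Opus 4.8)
The plan is to compute the density of $(X^{\ssup 1}, Y^{\ssup 1})$ directly from the defining property: $(X^{\ssup 1}, Y^{\ssup 1})$ is the point of the Poisson point process $\bar\Pi$ that maximises the linear functional $(x,y)\mapsto y - \rho x$ over $[0,\infty)\times(0,\infty]$. Since $\bar\Pi$ is a Poisson point process on $[0,\infty)\times(0,\infty]$ with intensity measure $(2-p)\mu$, the standard heuristic is: the probability that there is a point in an infinitesimal box $dx\otimes dy$ around $(x,y)$ is $(2-p)\mu(dx\otimes dy) = (2-p)\alpha y^{-\alpha-1}\,dx\,dy$, and the probability that this point is the maximiser of $y-\rho x$ is the probability that $\bar\Pi$ places no points in the (open) half-plane $H_{x,y} = \{(x',y') : x'\ge 0,\ y' - \rho x' > y - \rho x\}$, which by the Poisson property equals $\exp\{-(2-p)\mu(H_{x,y})\}$.

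\smallskip

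So the key computational step is to evaluate $\mu(H_{x,y})$ for a point $(x,y)$ with $y-\rho x > 0$; writing $a = y - \rho x > 0$, this is exactly the quantity computed in~\eqref{mumu} in the proof of Lemma~\ref{xy}, namely $\mu(\{(x',y') : y' > a + \rho x'\}) = 2a^{1-\alpha}$. Wait --- I should be careful here: \eqref{mumu} is a computation with the measure $\mu$ on all of $\R^2$ restricted via $x' \ge 0$ implicitly? Looking at~\eqref{mumu}, the integral $2\int_0^\infty\int_{a+\rho x}^\infty$ already has the factor $2$ accounting for the reflection structure of $\bar\Pi$ being built from processes on $[0,\infty)$; more precisely $\bar\Pi$ has intensity $(2-p)\mu$ where $\mu$ here should be understood as $dx\otimes\alpha|y|^{-\alpha-1}dy$ but the relevant region for the exceedance is governed by the computation giving $2a^{1-\alpha}$ after the normalisation is sorted out. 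In any case, the clean statement to extract is $(2-p)\mu(H_{x,y}) = (2-p)(y-\rho x)^{1-\alpha}$, matching the exponent in~\eqref{density}. I would present this by redoing the integral cleanly: $\mu(H_{x,y}) = \int_{x'\ge 0}\int_{\rho x' + (y-\rho x)}^\infty \alpha (y')^{-\alpha-1}\,dy'\,dx' = \int_0^\infty (\rho x' + y - \rho x)^{-\alpha}\,dx' = \frac{1}{\rho(\alpha-1)}(y-\rho x)^{1-\alpha} = (y-\rho x)^{1-\alpha}$ using $\rho = 1/(\alpha-1)$, so that $\rho(\alpha-1) = 1$.

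\smallskip

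To make the heuristic rigorous, I would use the standard characterisation: for a Poisson point process, the location of the extremal point of a functional has density obtained by integrating the "void probability times intensity" over the event structure. Concretely, for a test set $B\subseteq [0,\infty)\times(0,\infty]$,
\[
\Prob_*\big((X^{\ssup 1},Y^{\ssup 1})\in B\big) = \mathrm{E}_*\Big[\sum_{(x,y)\in\bar\Pi}\one_B(x,y)\,\one\{\bar\Pi(H_{x,y}) = 0\}\Big],
\]
and then apply the Mecke equation (Palm formula) for Poisson point processes: this equals $\int_B \Prob_*(\bar\Pi(H_{x,y}) = 0)\,(2-p)\mu(dx\otimes dy) = \int_B e^{-(2-p)\mu(H_{x,y})}(2-p)\alpha y^{-\alpha-1}\one\{y-\rho x>0\}\,dx\,dy$, where the indicator $\one\{y - \rho x > 0\}$ appears because if $y - \rho x \le 0$ then $(x,y)$ cannot be the maximiser (the origin-side points beat it, or more precisely $H_{x,y}$ has infinite $\mu$-mass so the void probability is zero). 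Substituting $\mu(H_{x,y}) = (y-\rho x)^{1-\alpha}$ yields precisely~\eqref{density}.

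\smallskip

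The main obstacle is essentially bookkeeping rather than a genuine difficulty: one must be careful about the combined structure of $\bar\Pi$ (it is a superposition of the exclusive process, its reflection, and the duplicated positive process, hence Poisson with intensity $(2-p)\mu$ on $[0,\infty)\times(0,\infty]$ --- this is already noted in the text), and one must correctly identify the half-plane $H_{x,y}$ and confirm its $\mu$-measure is finite exactly when $y - \rho x > 0$, which is guaranteed by Lemma~\ref{xy}. One should also note in passing that the density integrates to $1$: $\int e^{-(2-p)a^{1-\alpha}}$ against the pushforward of $(2-p)\mu$ under $(x,y)\mapsto a = y-\rho x$, which by~\eqref{mumu}-type reasoning has "density" $(2-p)\cdot\frac{d}{da}(a^{1-\alpha})\cdot(-1) = (2-p)(\alpha-1)a^{-\alpha}$ on $(0,\infty)$, giving $\int_0^\infty (2-p)(\alpha-1)a^{-\alpha}e^{-(2-p)a^{1-\alpha}}\,da = 1$ after the substitution $b = a^{1-\alpha}$ --- a sanity check worth including but not essential to the proof.
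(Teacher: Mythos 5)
Your proposal is correct and takes essentially the same route as the paper's proof: the paper also writes $\Prob_*(X^{\ssup 1}\in dx, Y^{\ssup 1}\in dy)$ directly as the "one point in $dx\times dy$ and none in the exceedance half-plane" probability, splits by Poisson independence of disjoint regions, and evaluates $\mu(\{(u,v):u\ge 0,\ v-\rho u>y-\rho x\}) = (y-\rho x)^{1-\alpha}$ exactly as you do in your recomputation (the $2a^{1-\alpha}$ in~\eqref{mumu} refers to a two-sided set, so you were right to redo the integral over $u\ge 0$). Your explicit appeal to the Mecke/Palm formula is simply the rigorous wrapping of the paper's infinitesimal notation, and the normalisation check you sketch is a sound sanity test, though not needed.
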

\begin{proof}
We have
\begin{align*}
\Prob_*&\big(X^{\ssup 1}\in dx,Y^{\ssup 1}\in dy\big)\\
&=\Prob_*\Big(\bar \Pi(dx\times dy)=1,
\bar \Pi\big(\{(u,v): v-\rho u>y-\rho x\}\big)=0\Big)\notag\\
&=\Prob_*\big(\bar \Pi(dx\times dy)=1\big)\times \Prob_*\Big(\bar \Pi\big(\{(u,v): v-\rho u>y-\rho x \}\big)=0 \Big) \notag \\
&= (2-p) \exp\Big\{-(2-p)\mu\big(\{(u,v): u\ge 0,  y-\rho x<v-\rho u\}\big)\Big\}\mu(dx,dy).
\end{align*}
To complete the result, compute
\begin{align*}
\mu\big(\{(u,v): u\ge 0, y-\rho x<v-\rho u\}\big)
&=\int_0^{\infty}\int_{y-\rho x+\rho u}^{\infty}\frac{\alpha}{v^{\alpha+1}}dvdu\notag\\
&=\int_0^{\infty}(y-\rho x+\rho u)^{-\alpha}du
=(y-\rho x)^{1-\alpha} . \qedhere
\end{align*}
\end{proof}

\begin{lemma} 
\label{probb}
$\Prob_*(\mathfrak{D})= p / (2-p)$.
\end{lemma}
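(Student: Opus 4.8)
The plan is to mirror the computation carried out in the proof of Lemma~\ref{xy2}, but now keeping track of \emph{which} of the three independent Poisson components of $\bar\Pi$ the extremal point $(X^{\ssup 1},Y^{\ssup 1})$ belongs to. Recall that $\bar\Pi$ is the superposition on $[0,\infty)\times(0,\infty]$ of three independent Poisson point processes with intensities $q\mu$, $q\mu$ and $p\mu$, the last of which is the restriction of $\Pi^{\ssup{d,+}}$, and that $q=1-p$ so the total intensity is $(2q+p)\mu=(2-p)\mu$. First I would disintegrate over the location of the maximiser: the event $\mathfrak{D}\cap\{(X^{\ssup 1},Y^{\ssup 1})\in dx\times dy\}$ requires that $\Pi^{\ssup{d,+}}$ has a point in $dx\times dy$ (contributing the rate $p\,\mu(dx\otimes dy)$) and that no point of $\bar\Pi$ lies strictly above the line through $(x,y)$ of slope $\rho$; by the independence of the components and the computation $\mu(\{(u,v):u\ge 0,\ v-\rho u>y-\rho x\})=(y-\rho x)^{1-\alpha}$ already done in Lemma~\ref{xy2}, this latter event has probability $\exp\{-(2-p)(y-\rho x)^{1-\alpha}\}$.

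Putting this together (the possibility of two points of $\bar\Pi$ sharing a location has probability zero, since the component intensities are non-atomic), I obtain
\begin{align*}
\Prob_*(\mathfrak{D})
&= \int_{0}^{\infty}\int_{\rho x}^{\infty}
\Prob_*\big(\Pi^{\ssup{d,+}}(dx\times dy)=1\big)\,
\Prob_*\big(\bar\Pi(\{(u,v):v-\rho u>y-\rho x\})=0\big) \\
&= p\int_{0}^{\infty}\int_{\rho x}^{\infty}
\exp\{-(2-p)(y-\rho x)^{1-\alpha}\}\,\mu(dx\otimes dy)
=: pC.
\end{align*}
The identical computation with $\bar\Pi$ in place of $\Pi^{\ssup{d,+}}$ and $2-p$ in place of $p$ gives $(2-p)C=\Prob_*\big((X^{\ssup 1},Y^{\ssup 1})\in\bar\Pi\big)=1$ (equivalently, $(2-p)C=1$ is just the statement that the density $p(x,y)$ of Lemma~\ref{xy2} integrates to one). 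Dividing the two identities yields $\Prob_*(\mathfrak{D})=p/(2-p)$.

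There is essentially no serious obstacle; the only step requiring a little care is the disintegration, which is legitimate because $(X^{\ssup 1},Y^{\ssup 1})$ is a measurable functional of the configuration of $\bar\Pi$ and is almost surely a unique, well-defined point by Lemma~\ref{xy}. An alternative that avoids the integral altogether is the colouring theorem for Poisson processes: since the three components of $\bar\Pi$ have intensities proportional to the common measure $\mu$, conditionally on the point configuration of $\bar\Pi$ each point lies in $\Pi^{\ssup{d,+}}$ independently with probability $p/(2q+p)=p/(2-p)$; applying this to the configuration-measurable point $(X^{\ssup 1},Y^{\ssup 1})$ gives the claim immediately, and this is the formulation I would most likely present.
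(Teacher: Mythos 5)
Your preferred argument — the colouring/marking theorem applied to the superposition $\bar\Pi$, noting that each point independently belongs to $\Pi^{\ssup{d,+}}$ with probability $p/(2q+p)=p/(2-p)$ — is exactly the argument the paper gives, just stated there more tersely. Your alternative integral derivation via the density of $(X^{\ssup 1},Y^{\ssup 1})$ is also correct, but it is not what the paper does and adds nothing beyond the colouring argument.
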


\begin{proof} 
Since the components $\Pi^{\ssup e}$ and $\Pi^{\ssup {d,+}}$ appearing in the definition of $\bar \Pi$ are independent Poisson point processes with the intensity measures $q\mu$ and $p\mu$ respectively, we have 
\begin{equation*}
Prob_* \left( (X^{\ssup 1}, Y^{\ssup 1}) \in \Pi^{\ssup{d,+}} \right)=\frac{p}{p+2q}=\frac{p}{2-p} . \qedhere
\end{equation*}
\end{proof}

We now argue that we can successfully pass to the limit. As a consequence, we prove that the event $\mathcal{E}_t$ holds eventually with overwhelming probability. Since the proof of these results are rather technical, we defer them to Appendix \ref{A:B}.

\begin{prop}
\label{sl1}\label{lpsi}
As $t\to\infty$, \begin{enumerate}[(i)]
\item $ \Big(\frac{|\Za|}{r_t},\frac{|\Zb|}{r_t},\frac{\xi(\Za)}{a_t},\frac{\xi(\Zb)}{a_t}\Big)
\Rightarrow (X^{\ssup 1},X^{\ssup 2},Y^{\ssup 1},Y^{\ssup 2}),$
\item  $\Big(\frac{\Psi_t(\Za)}{a_t},
\frac{\Psi_t(\Zb)}{a_t}\Big)
\Rightarrow (Y^{\ssup 1}-\rho X^{\ssup 1},Y^{\ssup 2}-\rho X^{\ssup 2}).$
\end{enumerate}
\end{prop}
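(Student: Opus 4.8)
The plan is to deduce Proposition~\ref{sl1} from the point process convergence $\Pi_s \Rightarrow \Pi$ of Lemma~\ref{lppp} via a continuous-mapping-type argument. The key observation is that, although $\xi(\Za)$ and $|\Za|$ are defined through maximising $\Psi_t(z) = \xi(z) - \frac{|z|}{t}\log \xi(z)$, once we rescale by writing $z = r_t x$ and $\xi(z) = a_t y$, the penalisation functional becomes (to leading order) linear: using $r_t/(t a_t) = 1/\log t$ and $\log(a_t y) = \frac{1}{\alpha-1}\log(t/\log t) + \log y = (\rho^{-1} + o(1))\log t$, one finds
\begin{align*}
\frac{1}{a_t}\Psi_t(r_t x) = y - \rho |x| + o(1),
\end{align*}
uniformly on compact sets in $(x,y)$. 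Thus the maximiser of $\Psi_t$ over $z$ corresponds, after rescaling, to the maximiser of $y - \rho|x|$ over the atoms of $\Pi_s$ with $s = s(t)$ chosen so that $r_t = s$ and $a_t = s^{1/\alpha}$ (i.e.\ $s = r_t$); note the atoms of $\Pi_s$ coming from $D$ appear symmetrically about the $y$-axis, which is exactly how $\bar\Pi$ is built from $\Pi^{\ssup{d,+}}$ and two copies of $\Pi^{\ssup e}$.

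First I would make precise the map $\Phi$ that sends a locally finite point configuration on $\R \times (0,\infty]$ to the pair of locations $(x,y)$ achieving the largest and second-largest values of $y - \rho|x|$; by Lemma~\ref{xy} and the computation \eqref{mumu}, this map is $\Prob_*$-a.s.\ well-defined and continuous at $\Pi$ (the set of atoms near the maximising half-line is a.s.\ finite, there are a.s.\ no ties, and the maximiser is a.s.\ not on the boundary $x=0$ nor at infinity). Second, I would show that $\Phi$ applied to $\Pi_{r_t}$ agrees, with probability tending to one, with the correctly-rescaled $(|\Za|/r_t, \xi(\Za)/a_t)$ and $(|\Zb|/r_t, \xi(\Zb)/a_t)$: this requires controlling the $o(1)$ error in the display above uniformly, and ruling out that the true maximiser of $\Psi_t$ sits at a site with $|z|$ either much smaller than $r_t$ (handled by a lower bound: some fixed site has $\Psi_t$ bounded below, cf.\ Lemma~\ref{exi}) or much larger than $r_t g_t$ (handled by the deterministic bound $\xi(z) < \frac{|z|}{t}\log\frac{|z|}{2et}$ for large $|z|$, which follows from Lemma~\ref{assup}, forcing $\Psi_t(z) < 0$ there). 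Combining these with Lemma~\ref{lppp} and the continuous mapping theorem gives part~(i). Part~(ii) then follows immediately from part~(i) together with the rescaling identity for $\Psi_t$ above and an application of the continuous mapping theorem to the (continuous) map $(x^{\ssup 1}, x^{\ssup 2}, y^{\ssup 1}, y^{\ssup 2}) \mapsto (y^{\ssup 1} - \rho x^{\ssup 1}, y^{\ssup 2} - \rho x^{\ssup 2})$, using that $X^{\ssup i} \ge 0$ a.s.\ so $|X^{\ssup i}| = X^{\ssup i}$.

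I expect the main obstacle to be the uniformity of the approximation $\frac{1}{a_t}\Psi_t(r_t x) = y - \rho|x| + o(1)$: the error term $\frac{|z|}{t}(\log \xi(z) - \rho^{-1}\log t)$ is genuinely delicate because $\log \xi(z)$ can be as large as order $\log r_t$ for the extreme atoms, and the crossover region $|z| \asymp r_t$ is where the competition between the two candidate maximisers takes place, so one cannot simply truncate to a fixed compact box. This is handled by a two-scale argument: on the (random but tight) compact set of rescaled atoms with $y - \rho|x|$ within a constant of the maximum, the error is genuinely $o(1)$; outside it, crude bounds (as in the negligibility arguments of Section~\ref{s:neg}, e.g.\ Lemmas~\ref{upperbound1}--\ref{upperbound2}) show no maximiser can occur. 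Since this bookkeeping is lengthy and largely mechanical, it is deferred to Appendix~\ref{A:B}.
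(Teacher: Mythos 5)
Your proposal is correct and follows essentially the same route as the paper: point process convergence via Lemma~\ref{lppp}, the rescaling $\Psi_t(z)/a_t \approx \xi(z)/a_t - \rho|z|/r_t$, a continuous-mapping/test-set argument for the maximisers, and a separate control of the non-uniform error term. The paper implements the ``two-scale'' step you flag by introducing the auxiliary functional $\hat\Psi_t(z)=\xi(z)-\frac{|z|}{t}\rho\log t$, whose rescaling $\hat\Psi_t(z)/a_t=\xi(z)/a_t-\rho|z|/r_t$ is \emph{exact}; it then proves convergence for the $\hat\Psi_t$-maximisers (Lemma~\ref{LB1}) and separately shows the $\Psi_t$- and $\hat\Psi_t$-maximisers coincide with probability tending to one (Lemma~\ref{coincide}), by restricting to a high-probability compact set on which $F_t(x,y)=y-x\frac{\log a_t}{\log t}-x\frac{\log y}{\log t}\to y-\rho x$ uniformly while ruling out contributions from the complement.
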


\begin{prop}
\label{e0}
$\text{\rm Prob}(\mathcal{E}_t)\to 1$ as $t\to\infty$.
\end{prop}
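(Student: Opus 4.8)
\textbf{Proof proposal for Proposition \ref{e0}.}

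The plan is to prove that each of the seven events comprising $\mathcal{E}_t$ holds eventually with overwhelming probability, and then take a union bound. The first four events --- $r_tf_t<|\Za|<r_tg_t$, $a_tf_t<\xi(\Za)<a_tg_t$, $\Psi_t(\Za)-\Psi_t(\Zb)>a_tf_t$, and $\Psi_t(\Za)>f_t\xi(\Za)$ --- should all follow by combining the weak convergence in Proposition \ref{sl1} with the qualitative facts about the limiting objects established in Lemmas \ref{xy} and \ref{xy2}. Specifically, since $X^{\ssup 1}, Y^{\ssup 1}$ are finite and strictly positive almost surely, and since $Y^{\ssup 1}-\rho X^{\ssup 1}>Y^{\ssup 2}-\rho X^{\ssup 2}>0$ strictly (Lemma \ref{xy}), for any fixed $\varepsilon>0$ the probability that e.g. $|\Za|/r_t$ lies in $(\varepsilon, 1/\varepsilon)$ converges to $\Prob_*(X^{\ssup 1}\in(\varepsilon,1/\varepsilon))$, which tends to $1$ as $\varepsilon\to 0$. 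The subtlety is that $f_t\to 0$ and $g_t\to\infty$ are not fixed, so one needs a diagonal-type argument: since convergence in distribution holds and the limiting event $\{X^{\ssup 1}>0\}$ has probability one, for any sequence $f_t\to 0$ slowly enough (and \eqref{fg} guarantees $1/f_t, g_t = O(\log\log\log t)$, which is as slow as one could want) the event $\{|\Za|>r_tf_t\}$ has probability tending to $1$. The same reasoning handles $\Psi_t(\Za)-\Psi_t(\Zb)>a_tf_t$ using part (ii) of Proposition \ref{sl1} together with the strict inequality $Y^{\ssup 1}-\rho X^{\ssup 1}>Y^{\ssup 2}-\rho X^{\ssup 2}$, and handles $\Psi_t(\Za)>f_t\xi(\Za)$ using $Y^{\ssup 1}-\rho X^{\ssup 1}>0$ together with the upper bound $\xi(\Za)<a_tg_t$.

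The fifth and sixth events, $\xi(\Za)=\xi_{R_t}^{\ssup 1}$ and $\xi_{R_t}^{\ssup 1}-\xi_{R_t}^{\ssup 2}>a_tf_t$, require a little more care because they involve the maximum of $\xi$ over the (random) ball of radius $R_t = |\Za|(1+f_t)$, rather than over a deterministic set. For the first of these, note that if $\xi(\Za)$ were not the largest potential value within distance $R_t$ of the origin, there would be some site $z$ with $|z|\le R_t$ and $\xi(z)>\xi(\Za)$; but then $\Psi_t(z) = \xi(z) - \tfrac{|z|}{t}\log\xi(z)$ would have to be compared with $\Psi_t(\Za)$, and one shows using $|z|\le R_t \sim |\Za|$ and the lower bound $\xi(\Za)\ge a_tf_t$ on the event $\mathcal{E}_t'$ (already established above) that such a $z$ would contradict the maximality of $\Za$ for $\Psi_t$ --- the penalty term $\tfrac{|z|}{t}\log\xi(z)$ cannot compensate. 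This is essentially the computation already carried out in the proofs of Lemmas \ref{upperbound1} and \ref{upperbound2}. For the gap $\xi_{R_t}^{\ssup 1}-\xi_{R_t}^{\ssup 2}>a_tf_t$, one can again pass to the point process limit: on the ball of radius $\sim r_t$, the rescaled potential converges (Lemma \ref{lppp}) and the top two order statistics of $\xi$ in this ball converge to the top two points of $\Pi$ restricted to a strip, which are almost surely distinct; the gap between them, on the $a_t$ scale, converges to a strictly positive limit, so it exceeds $a_tf_t$ with probability tending to $1$. The last event, $\xi(z)<\tfrac{|z|}{t}\log\tfrac{|z|}{2et}$ for all $|z|>r_tg_t$, is a purely deterministic consequence of Lemma \ref{assup}: for $|z|$ large the right-hand side grows like $\tfrac{|z|}{t}\log|z|$ while $\xi(z)$ is at most $|z|^{1/\alpha}(\log|z|)^{1/\alpha+\varepsilon}$ almost surely, and since $r_tg_t \to \infty$ faster than any power needed, the inequality holds for all $|z|>r_tg_t$ eventually; one checks the crossover point is comfortably below $r_tg_t$ using $r_t = (t/\log t)^{\alpha/(\alpha-1)}$.

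The main obstacle I anticipate is the interplay between the slowly-varying functions $f_t, g_t$ and the weak convergence: weak convergence gives control only along fixed thresholds, so one must argue that, because \eqref{fg} forces $f_t$ and $1/g_t$ to decay \emph{extremely} slowly (slower than any fixed positive power, indeed $O(1/\log\log\log t)$), the probabilities of the ``$f_t$-shrunk'' and ``$g_t$-inflated'' events still converge to the probabilities of the corresponding limiting events with strict inequalities, which are one. The cleanest way to organise this is to fix an arbitrary $\varepsilon>0$, note that eventually $f_t<\varepsilon$ and $g_t>1/\varepsilon$, so that $\mathcal{E}_t$ contains the ``$\varepsilon$-version'' of each event; by Proposition \ref{sl1} the probability of this $\varepsilon$-version converges to the probability of a limiting event which increases to $1$ as $\varepsilon\downarrow 0$; hence $\liminf_t \Prob(\mathcal{E}_t)\ge 1-O(\varepsilon)$ for every $\varepsilon$, giving the result. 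The other mild annoyance is that $R_t$ is random, but since $R_t/r_t = (|\Za|/r_t)(1+f_t)$ is tight and bounded away from $0$ by the already-established bounds on $|\Za|$, one can sandwich $R_t$ between deterministic multiples of $r_t$ with high probability and apply the point process convergence on those deterministic balls.
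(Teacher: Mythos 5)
Your outline correctly handles the first four components of $\mathcal{E}_t$ --- the paper does essentially what you describe, combining Proposition \ref{sl1} with the strict positivity from Lemma \ref{xy} and the slow variation of $f_t, g_t$. However, there are two genuine gaps in the remaining parts.

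\textbf{The event $\xi(\Za)=\xi_{R_t}^{\ssup 1}$ is not a deterministic consequence of the maximality of $\Psi_t(\Za)$.} Your argument --- that the penalty term ``cannot compensate'' --- works for sites $|z|\le|\Za|$: since $\xi(\Za)>a_tf_t\gg |\Za|/t$ on $\mathcal{E}_t'$, the map $x\mapsto x-\frac{|z|}{t}\log x$ is increasing past $\xi(\Za)$, so $\xi(z)>\xi(\Za)$ would force $\Psi_t(z)>\Psi_t(\Za)$. But it fails for $|\Za|<|z|\le R_t$. There, $\Psi_t(\Za)\ge\Psi_t(z)$ only yields
\[
\big(\xi(z)-\xi(\Za)\big)\Big(1-\frac{|\Za|}{t\xi(\Za)}\Big)\le \frac{|z|-|\Za|}{t}\log\xi(z)\le \frac{|\Za|f_t}{t}\log\xi(z)\lesssim a_tf_tg_t,
\]
which is a smallness bound on the gap $\xi(z)-\xi(\Za)$, not a contradiction. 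A site in the annulus $(|\Za|,R_t]$ with potential slightly exceeding $\xi(\Za)$ is perfectly consistent with $\Psi_t$-maximality. Ruling this out requires a \emph{probability} argument, which is what the paper supplies: via Lemma \ref{coincide} it passes to $\hat\Psi_t$, and then shows that the rescaled point process $\bar\Pi_{r_t}$ has no point, other than $(X^{\ssup 1}, Y^{\ssup 1})$, in a thin region $M_\delta(x,y)$ combining the super-level set of $v-\rho u$ with the strip $\{u<(1+\delta)x,\ v>y-\delta\}$; this probability tends to one as $t\to\infty$ and then $\delta\downarrow 0$, treating the fifth and sixth events jointly.

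\textbf{The event $\xi(z)<\frac{|z|}{t}\log\frac{|z|}{2et}$ for all $|z|>r_tg_t$ is not an almost-sure consequence of Lemma \ref{assup}.} At $|z|=r_tg_t$ the required inequality $|z|^{1/\alpha}(\log|z|)^{1/\alpha+\e}\le \frac{|z|}{t}\log\frac{|z|}{2et}$ rearranges (using $r_t^{(\alpha-1)/\alpha}=t/\log t$ and $\log\frac{r_tg_t}{2et}\sim\rho\log t$) to
\[
g_t^{(\alpha-1)/\alpha}\gtrsim (\log t)^{1/\alpha+\e},
\]
i.e.\ $g_t$ must grow like a positive power of $\log t$. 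But \eqref{fg} only gives $g_t=O(\log\log\log t)$, which is far too slow, so the crossover point is \emph{above} $r_tg_t$, not ``comfortably below'' it. The paper instead uses the Pareto tail directly: $\Prob\big(\xi(z)\ge\frac{|z|}{t}\log\frac{|z|}{2et}\big)\lesssim \frac{t^\alpha}{(\log t)^\alpha}|z|^{-\alpha}$, and a union bound over $|z|>r_tg_t$ gives order $g_t^{1-\alpha}\to 0$, a probability estimate that crucially exploits $g_t\to\infty$.

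In short, two of the seven events require probabilistic arguments (one via the tailored point-process set $M_\delta$, one via a union bound), and your proposal mistakenly treats them as deterministic consequences of estimates already in hand.
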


\begin{prop} 
\label{prob}
$\Prob(\mathfrak{D}_t)\to \frac{p}{2-p}$
as $t\to\infty$. 
\end{prop}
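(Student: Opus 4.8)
The plan is to deduce Proposition \ref{prob} directly from the point-process machinery developed in this section, specifically from Proposition \ref{sl1}(ii) together with Lemma \ref{probb}. Recall that $\mathfrak{D}_t = \{|\Omega_t| = 2\}$ is the event that the maximiser of $\Psi_t$ is a duplicated site, while $\mathfrak{D} = \{(X^{\ssup 1}, Y^{\ssup 1}) \in \Pi^{\ssup{d,+}}\}$ is its limiting analogue, and Lemma \ref{probb} already gives $\Prob_*(\mathfrak{D}) = p/(2-p)$. So the entire content of the proposition is the statement that $\Prob(\mathfrak{D}_t) \to \Prob_*(\mathfrak{D})$, i.e.\ that the indicator of ``the top statistic of $\Psi_t$ is duplicated'' passes to the limit.

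First I would set up the joint convergence at the level of labelled point processes. Rather than just tracking $(|\Za|, |\Zb|, \xi(\Za), \xi(\Zb))$ as in Proposition \ref{sl1}, I would use the convergence $(\Pi^{\ssup{d,+}}_s, \Pi^{\ssup{d,-}}_s, \Pi^{\ssup e}_s) \Rightarrow (\Pi^{\ssup{d,+}}, \Pi^{\ssup{d,-}}, \Pi^{\ssup e})$ from Lemma \ref{lppp}, with $s = r_t$ and potential rescaled by $a_t = r_t^{1/\alpha}$. Fold $\Pi^{\ssup e}_s$ and its reflection together with $\Pi^{\ssup{d,+}}_s$ to form the rescaled analogue $\bar\Pi_s$ of $\bar\Pi$ (this is legitimate because $\Psi_t(z) = \Psi_t(-z)$ for $z \in D$, so the maximiser of $\Psi_t$ over $\Z$ corresponds to the maximiser of $v - \rho|u|$ over points $(u,v)$ of $\bar\Pi_s$ with $u \ge 0$). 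The event $\mathfrak{D}_t$ is then exactly the event that the point of $\bar\Pi_s$ achieving $\max (v - \rho u)$ came from the $\Pi^{\ssup{d,+}}_s$ component rather than from one of the two $\Pi^{\ssup e}$-type components — which is the pre-limit version of $\mathfrak{D}$.

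Next I would argue the continuity needed to pass the indicator to the limit. Fix a large constant $M$ and a small $\eta > 0$; on the region $\{v - \rho u > \eta\}$ truncated to $u \le M$, the measure $\mu$ assigns finite mass (cf.\ \eqref{mumu}), there are a.s.\ finitely many points of $\bar\Pi$ there, and a.s.\ no two of them have the same value of $v - \rho u$ and none lies exactly on $v - \rho u = \eta$. Hence the map taking a locally finite point configuration to (the label of) its $(v-\rho u)$-maximiser, restricted to this region, is a.s.\ continuous at $\bar\Pi$. By Proposition \ref{sl1}(i)--(ii) (or directly Lemma \ref{lppp}) the maximising point of $\bar\Pi_s$ lies in this region with probability tending to $1$ as first $t \to \infty$ and then $\eta \downarrow 0$, $M \to \infty$: indeed $\Psi_t(\Za)/a_t \Rightarrow Y^{\ssup 1} - \rho X^{\ssup 1} > 0$ and $|\Za|/r_t \Rightarrow X^{\ssup 1} < \infty$, both tight. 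The continuous mapping theorem (applied to the Skorokhod-coupled versions, or via the portmanteau lemma, using that $\partial\mathfrak{D}$ has limiting probability zero) then gives $\Prob(\mathfrak{D}_t) \to \Prob_*(\mathfrak{D}) = p/(2-p)$ by Lemma \ref{probb}.

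The main obstacle is the bookkeeping in the second step: making precise that the ``argmax label'' functional is continuous at $\bar\Pi$ in the vague topology on the relevant half-space, and handling the non-compactness in the $u$-direction (points far from the origin are heavily penalised, so a tail estimate via \eqref{mumu} controls them, but this must be stated carefully and combined with the joint convergence). There is also a minor subtlety in checking that the decomposition $\bar\Pi_s = \Pi^{\ssup e}_s + (\Pi^{\ssup e}_s\text{ reflected}) + \Pi^{\ssup{d,+}}_s$ correctly encodes $\mathfrak{D}_t$ — in particular that, on an event of probability tending to $1$, the maximiser is unique and sits strictly in the interior $u > 0$ so that the ``$u \ge 0$'' boundary plays no role — but this follows from Lemma \ref{exi} together with the tightness of $|\Za|/r_t$ away from $0$. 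Given these, the proof is short; since it is so close to the arguments already deferred to Appendix \ref{A:B}, I would in fact fold it into that appendix alongside the proofs of Propositions \ref{sl1} and \ref{e0}.
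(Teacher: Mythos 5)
Your plan --- truncate the rescaled point processes to a compact window, observe that ``the argmax label is duplicated'' is a functional that is a.s.\ continuous because the $d$- and $e$-components of the limit do not share points, pass through the continuous mapping theorem via Lemma~\ref{lppp}, and finish with Lemma~\ref{probb} --- is essentially the paper's proof. The paper packages the argmax-label map explicitly as the functional $\Theta$ on $M_c\times M_c$ and executes exactly this strategy.

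There is, however, one genuine gap. You identify $\mathfrak{D}_t$ directly with ``the $(v-\rho u)$-maximiser of $\bar\Pi_{r_t}$ belongs to the duplicated component,'' asserting that ``the maximiser of $\Psi_t$ over $\Z$ corresponds to the maximiser of $v-\rho|u|$.'' This is not exact. Writing $u=|z|/r_t$ and $v=\xi(z)/a_t$, one finds
\[
\frac{\Psi_t(z)}{a_t} = v - u\,\frac{\log\xi(z)}{\log t},
\]
and the slope $\log\xi(z)/\log t$ is random and only \emph{asymptotically} equal to $\rho$. So for each fixed $t$ the $\Psi_t$-maximiser need not coincide with the $(v-\rho u)$-maximiser of $\bar\Pi_{r_t}$, and the argmax-label functional you want to push through the continuous mapping theorem is a $t$-dependent one to which the theorem does not directly apply. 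The object that \emph{is} the $(v-\rho u)$-argmax is the maximiser of the surrogate $\hat\Psi_t(z)=\xi(z)-\frac{|z|}{t}\rho\log t$, since $\hat\Psi_t(z)/a_t=v-\rho u$ exactly. Accordingly, the paper runs your continuity-plus-CMT argument to conclude $\Prob(\mathcal{\hat D}_t)\to p/(2-p)$ for $\mathcal{\hat D}_t=\{\Zaa\in D\}$, and then needs a separate comparison step --- Lemma~\ref{coincide}, which shows $\Prob(|\Za|=|\Zaa|)\to 1$ and hence that $\mathfrak{D}_t$ and $\mathcal{\hat D}_t$ agree with probability tending to $1$. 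That transfer is missing from your proposal; without it the continuous mapping argument establishes the limit for the wrong event.
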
 

\subsection{An explicit construction of the limiting random variable}
\label{s:up}
We complete this section by giving an explicit construction of the limiting random variable $\Upsilon$ in Theorem \ref{main1}. For any $\delta>0$, let 
\begin{align*}
S^{\ssup{\delta,+}}
&=\!\!\sum_{\heap{(x,y)\in \Pi^{\ssup e}}{0<x<X^{\ssup 1} ,y\ge \delta Y^{\ssup 1}}}\!\!\log \Big(1-\frac{y}{Y^{\ssup 1}}\Big)
\qquad\text{and}\qquad
S^{\ssup{\delta,-}}
=\!\!\sum_{\heap{(x,y)\in \Pi^{\ssup e}}{-X^{\ssup 1}<x<0,y\ge \delta Y^{\ssup 1}}}\!\!\log \Big(1-\frac{y}{Y^{\ssup 1}}\Big),
\end{align*}
on the event $\mathfrak{D}$, and zero otherwise, and let
\begin{align}
\label{sdel}
S^{\ssup \delta}=- X^{\ssup 1} \big(S^{\ssup{\delta,+}}-S^{\ssup{\delta,-}}\big) .
\end{align}
Observe that these variables are well-defined since, for every $(x,y)\in\Pi^{\ssup e}$
such that $|x|<X^{\ssup 1}$, we have $y<Y^{\ssup 1}-\rho X^{\ssup 1}+\rho|x|<Y^{\ssup 1}$. 

\smallskip

In the next lemma we show that, as $\delta \to 0$,
\[ \mathcal{L} (  S^{\ssup \delta}|\mathfrak{D} )  \Rightarrow \mathcal{L}(\log \Upsilon) \]
 for a certain random variable $\Upsilon$ with positive density on $\R_+$. In Section \ref{s:main1} we identify $\Upsilon$ with the random variable appearing in Theorem \ref{main1}.

\begin{lemma} 
\label{s}
As $\delta\downarrow 0$,
\begin{align*}
\mathcal{L}(S^{\ssup \delta}|\mathfrak{D})\Rightarrow \mathcal{L}( \log \Upsilon ),
\end{align*}
where $\Upsilon$ is a random variable with positive density on $\R_+$ defined as follows. Let $(X, Y) \in \R^2$ be a random variable with density given by \eqref{density}. Then, conditionally on $(X, Y)$, $\log \Upsilon$ is the value at time $X$ of a time-inhomogeneous L\' evy process with zero drift, no Brownian component and the L\' evy measure 
\begin{align}
\label{levy}
L(dx\otimes dz)=
\left\{
\begin{array}{ll}
\displaystyle \frac{q\alpha e^{|z|}}{(Y-\rho X+\rho x)^{\alpha}
Y^{\alpha}(1-e^{|z|})^{\alpha+1}}
dx\otimes dz & \text{ if }
|z|<\log\frac{Y}{\rho( X-x)},\\
0 & \text{ otherwise.}
\end{array}
\right.
\end{align}
\end{lemma}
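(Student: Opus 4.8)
The plan is to condition on the extremal point $(X^{\ssup 1},Y^{\ssup 1})$ of $\bar\Pi$ and on the event $\mathfrak D$, to recognise $S^{\ssup\delta}$ as a compensated Poisson integral against $\Pi^{\ssup e}$, to pass to the limit $\delta\downarrow 0$ using square-integrability of the small increments (which is exactly where $\alpha\in(1,2)$ is used), and finally to identify the limiting law and to show it is absolutely continuous with a strictly positive density. First I would set up the conditioning. Since $\bar\Pi$ is, by definition, the superposition of three independent Poisson processes with intensities $q\mu$, $q\mu$ and $p\mu$ (coming from $\Pi^{\ssup e}$ on the positive half-line, from the reflection of $\Pi^{\ssup e}$ on the negative half-line, and from $\Pi^{\ssup{d,+}}$ respectively), the colouring theorem shows that the component to which the (almost surely unique) maximiser of $y-\rho x$ belongs is an independent mark, lying in the $\Pi^{\ssup{d,+}}$-component with probability $p/(2-p)$; consequently conditioning on $\mathfrak D$ does not change the law of $(X^{\ssup 1},Y^{\ssup 1})$, so that $(X,Y)$ in the statement is indeed distributed according to \eqref{density} by Lemma~\ref{xy2}. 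Conditioning further on $\{(X^{\ssup 1},Y^{\ssup 1})=(x,y)\}\cap\mathfrak D$, a standard Palm argument for Poisson processes identifies the restriction of $\Pi^{\ssup e}$ to the strip $\{|x'|<x\}$ as a Poisson process with intensity $q\mu$ restricted to the region $\{y'<y-\rho x+\rho|x'|\}$ cut out by the extremality constraint, with its positive- and negative-position parts independent; the conditioning on $\mathfrak D$ only fixes the colour of the top point and leaves $\Pi^{\ssup e}$ inside this strip untouched.

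Given this description, $S^{\ssup{\delta,+}}$ and $S^{\ssup{\delta,-}}$ are sums of $g(x',y'):=\log(1-y'/Y^{\ssup 1})$ over, respectively, the positive- and negative-position points of the conditioned process that additionally satisfy $y'\ge\delta Y^{\ssup 1}$. Since these two families are independent and have the same intensity in the $y'$-variable, the difference $M^{\ssup\delta}:=S^{\ssup{\delta,+}}-S^{\ssup{\delta,-}}$ has mean zero, and adjoining lower and lower horizontal strips of points as $\delta$ decreases turns $(M^{\ssup\delta})$ into a martingale with
\[
\mathrm E_*\!\big[(M^{\ssup\delta})^2\,\big|\,X,Y\big]=2q\!\int_{|x'|<X}\!\!\int_{\delta Y}^{\,Y-\rho X+\rho|x'|}\!\!\big(\log(1-y'/Y)\big)^2\,\alpha (y')^{-\alpha-1}\,dy'\,dx'.
\]
The integrand is of order $(y')^{1-\alpha}$ as $y'\downarrow0$ and bounded near the upper endpoint, hence integrable precisely because $\alpha<2$; thus $(M^{\ssup\delta})$ is $L^2$-bounded, converges almost surely and in $L^2$ as $\delta\downarrow0$ to a limit $M^{\ssup 0}$, and therefore $S^{\ssup\delta}=-X^{\ssup 1}M^{\ssup\delta}\to-X^{\ssup 1}M^{\ssup 0}=:\log\Upsilon$ almost surely on $\mathfrak D$. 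Since $\Prob_*(\mathfrak D)>0$ is fixed, this almost-sure convergence already yields $\mathcal L(S^{\ssup\delta}\,|\,\mathfrak D)\Rightarrow\mathcal L(\log\Upsilon)$, with $\log\Upsilon$ the limit just constructed; what remains is to identify its law.

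Running the same construction using only points of position in $(-a,a)$ defines $M^{\ssup 0}_a$ for $a\in[0,X^{\ssup 1}]$; since disjoint vertical strips of a Poisson process are independent, $M^{\ssup 0}_{\cdot}$ has independent increments, i.e.\ it is a time-inhomogeneous Lévy process on $[0,X^{\ssup 1}]$. As it is an $L^2$-limit of pure-jump processes whose jump activity is not diffusively rescaled, it carries no Gaussian component, and the symmetric compensation between the two equal-intensity families produces zero drift. Its Lévy measure at time $a$ is the image of the conditional intensity $q\alpha(y')^{-\alpha-1}\one\{y'<Y-\rho X+\rho a\}\,dy'$ of $\Pi^{\ssup e}$ under the jump maps $y'\mapsto\pm\log(1-y'/Y)$; carrying out the substitution $y'=Y(1-e^{-|z|})$, noting that the extremality constraint $y'<Y-\rho X+\rho a$ becomes $|z|<\log\frac{Y}{\rho(X-a)}$, and incorporating the deterministic rescaling of the jumps by $-X^{\ssup 1}$, one recovers exactly the description in \eqref{levy}. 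Finally, conditionally on $(X,Y)$ the Lévy measure of $\log\Upsilon$ is infinite, absolutely continuous, and of order $|z|^{-\alpha-1}$ near $z=0$ on both sides of the origin, which by standard criteria for infinitely divisible laws (an Orey-type small-jump condition together with the two-sided infinite jump activity) forces the law of $\log\Upsilon$ to be absolutely continuous with density strictly positive on all of $\R$; this positivity is inherited by the mixture over $(X,Y)$, so $\Upsilon=e^{\log\Upsilon}$ has a strictly positive density on $\R_+$.

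The step I expect to be the main obstacle is the passage to the limit $\delta\downarrow0$ together with the identification of the limiting characteristics: one must check that no Brownian component emerges in the limit — it does not, precisely because $\alpha<2$ keeps the small increments square-summable rather than forcing a central-limit-type renormalisation — and one must track the change of variables and the deterministic factor $-X^{\ssup 1}$ carefully enough to reproduce the explicit form \eqref{levy}. The remaining ingredients (the Palm-theoretic conditioning in the first step and the strict positivity of the limiting density) are standard but will need some care.
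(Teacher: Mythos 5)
Your proposal follows the same overall structure as the paper's argument: condition on $(X^{\ssup 1},Y^{\ssup 1})$ and $\mathfrak{D}$, identify the conditioned $\Pi^{\ssup e}$ as Poisson on the region cut out by the extremality constraint, recognise $S^{\ssup{\delta,\pm}}$ as evaluations at time $X^{\ssup 1}$ of time-inhomogeneous L\'evy processes, pass $\delta\downarrow 0$ using square-integrability of the small jumps (which is where $\alpha<2$ enters), and finally average the positive-density L\'evy laws over $(X,Y)$. Where you genuinely depart from the paper is in the $\delta\downarrow 0$ step: the paper argues by weak convergence of the pre-limit L\'evy processes and checks validity of the limiting L\'evy measure via the integral criterion $\int\min\{1,z^2\}L<\infty$, whereas you set up $M^{\ssup\delta}=S^{\ssup{\delta,+}}-S^{\ssup{\delta,-}}$ as an $L^2$-bounded martingale in decreasing $\delta$ and obtain almost sure and $L^2$ convergence from Doob's theorem. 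Your route is a genuine strengthening — it gives a.s.\ convergence and a concrete object $\log\Upsilon$ rather than just a weak limit — and it uses the same integrability input. You also fill in two points the paper states without proof: the Palm/colouring description of the conditioned law of $\Pi^{\ssup e}$, and the strict positivity of the limit density via an Orey-type small-jump criterion, where the paper simply invokes the positivity of L\'evy densities at positive times.

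One place I would push back: you claim that carrying out the substitution $y'=Y(1-e^{-|z|})$ in the conditional intensity $q\alpha (y')^{-\alpha-1}\one\{y'<Y-\rho X+\rho a\}\,dy'$ and "incorporating the deterministic rescaling of the jumps by $-X^{\ssup 1}$" recovers exactly \eqref{levy}, but this does not check out as written. Your stated conditional intensity has no factor $(Y-\rho X+\rho|x|)^{-\alpha}$, while the paper's $\mu^{\ssup e}$ and \eqref{levy} both carry such a factor; moreover a genuine deterministic rescaling of jump sizes by $X^{\ssup 1}$ would replace $|z|$ by $|z|/X$ and introduce an extra $1/X$, neither of which appear in \eqref{levy}; and the change of variables itself produces $e^{-|z|}$ and $(1-e^{-|z|})^{\alpha+1}$ rather than the $e^{|z|}$ and $(1-e^{|z|})^{\alpha+1}$ of \eqref{levy}. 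Some of this is almost certainly a sign/typesetting problem in the paper (note $1-e^{|z|}<0$, so \eqref{levy} as printed is not even a non-negative measure for non-integer $\alpha$), but that is exactly the reason you cannot just assert that the computation "recovers exactly \eqref{levy}"; you need to carry the substitution through explicitly and confront the discrepancy, deciding whether the extraneous factor of $X^{\ssup 1}$ in \eqref{sdel} and the sign in \eqref{levy} are typos to be corrected or whether your conditional intensity is missing the $(Y-\rho X+\rho|x|)^{-\alpha}$ factor. As it stands, that sentence is the one real gap in an otherwise sound argument.
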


\begin{proof} 
Denote for brevity $X=X^{\ssup 1}$ and $Y=Y^{\ssup 1}$. Conditionally on $\Pi^{\ssup d}$, $\mathfrak{D}$ and $(X, Y)$, the point process $\Pi^{\ssup e}$ is Poissonian with the intensity measure 
\begin{align*}
\mu^{\ssup e}(dx\otimes dy)=
\left\{
\begin{array}{ll}
q(Y-\rho X+\rho |x|)^{-\alpha}\mu (dx\otimes dy) & \text{ if }
y-\rho |x|<Y-\rho X,\\
0 & \text{ otherwise, }
\end{array}
\right.
\end{align*}
and $S^{\ssup{\delta,+}}$ is the value at time $X$ of a time-inhomogeneous L\' evy process with zero drift, no Brownian 
component and the L\' evy measure 
\begin{align*}
L^{\ssup{\delta,+}}(dx\otimes dz)=
\left\{
\begin{array}{ll}
\displaystyle \frac{q\alpha e^z}{(Y-\rho X+\rho x)^{\alpha}
Y^{\alpha}(1-e^z)^{\alpha+1}}
dx\otimes dz & \text{ if }
\log\frac{\rho(X-x)}{Y}<z\le \log(1-\delta),\\
0 & \text{ otherwise,}
\end{array}
\right.
\end{align*}
where we consider $\delta<Y-\rho X$. Further, conditionally on $\Pi^{\ssup d}$, $\mathfrak{D}$ and $(X, Y)$, the variable $S^{\ssup{\delta,-}}$ is independent and identically distributed with $S^{\ssup{\delta,+}}$.  Due to symmetry, $S^{\ssup \delta}$  is therefore the value of a time $X$ of a time-inhomogeneous L\' evy process with zero drift, no Brownian 
component and the L\' evy measure 
\begin{align*}
L^{\ssup{\delta}}(dx\otimes dz)=
\left\{
\begin{array}{ll}
\displaystyle \frac{q\alpha e^{|z|}}{(Y-\rho X+\rho x)^{\alpha}
Y^{\alpha}(1-e^{|z|})^{\alpha+1}}
dx\otimes dz & \text{ if }
\log\frac{1}{1-\delta}\le |z|< \log\frac{Y}{\rho(X-x)},\\
0 & \text{ otherwise. }
\end{array}
\right.
\end{align*}
As $\delta\downarrow 0$, $S^{\ssup{\delta,-}}$ converges weakly to  
the value at time $X$ of a time-inhomogeneous L\' evy process with zero drift, no Brownian 
component and the L\' evy measure given by \eqref{levy}, where the limiting L\' evy measure is valid because
\begin{align*}
\int_0^{X}&\int_{\R}\min\{1,z^2\}L(dx\otimes dz)
=2\int_0^{X}\int_0^{\infty}\min\{1,z^2\}L(dx\otimes dz)\\
&=2\int_0^{X}\int_0^{Y-\rho X+\rho x}\frac{q\alpha \min\{1,\log^2(1-y/Y)\}}{(Y-\rho X+\rho x)^{\alpha}
y^{\alpha+1}}dydx\\
&\le 2\int_0^{ X}\int_0^{Y}\frac{q\alpha \min\{1,\log^2(1-y/Y)\}}{(Y-\rho X+\rho x)^{\alpha}
y^{\alpha+1}}dydx\\
&=\frac{2q\alpha}{\alpha-1}\Big[(Y-\rho  X)^{1-\alpha}-Y^{1-\alpha}\Big]\int_0^{Y}\frac{ \min\{1,\log^2(1-y/Y)\}}
{y^{\alpha+1}}dy<\infty.
\end{align*}
Since a L\'evy process has positive density on $\R$ at positive times, and since the law of $\log \Upsilon$ is obtained by averaging over the law of L\'{e}vy processes at positive times, $\Upsilon$ also has positive density on $\R_+$.
\end{proof}

\bigskip


\section{Fluctuation theory in the case $\alpha \in (1,2)$}
\label{s:main1}

In this section we study the fluctuations in the ratio $u(t, \Za)/u(t, \Zb)$ in the case $\alpha \in (1, 2)$, building on our analysis in Section \ref{s:paths1}, and hence complete the proof of Theorem \ref{main1}.

\smallskip

Recall that $E$ denotes the set $\Z \setminus D$. For any $t>0$, let
\begin{align*}
S_t=-\sum_{\heap{0<|z|<|\Za|}{z \in E}}\text{sgn}(z/\Za)\log \Big(1-\frac{\xi(z)}{\xi(\Za)}\Big)
\end{align*}
on the event $\mathcal{E}_t$ and zero otherwise. In Section \ref{s:paths} we showed that the ratio $u(t, \Za)/u(t, -\Za)$ was well-approximated by $\exp\{S_t\}$, so it remains to study the convergence of $S_t$. To do this, we first truncate the sum at potential values above a certain threshold and show that this is a good approximation of the full sum; we then study the convergence of the truncated sums.  

\smallskip

For any $\delta>0$, define 
\[  S_{t}^{\ssup{\delta}} =-\sum_{\heap{0<|z|<|\Za|}{z\in E}}
\text{sgn}(z/\Za)\log \Big(1-\frac{\xi(z)}{\xi(\Za)}\Big)\one_{\{\xi(z) \ge \delta\xi(\Za)\}} \]
on the event $\mathcal{E}_t$ and zero otherwise, and let $\hat{S}_{t}^{\ssup{\delta}} = S_t -  S_{t}^{\ssup{\delta}}$. Denote by $\Prob^{\ssup e}$ and $\mathrm{E}^{\ssup e}$ the conditional probability and expectation given $D$ and $\{\xi(z): z\in D\}$. The next lemma shows that the truncated sum $S_{t}^{\ssup{\delta}}$ is a good approximation for the full sum $S_t$.

\begin{lemma} 
\label{epseps}
For any $\e_1,\e_2>0$ there is $\delta_0>0$ such that for each $0<\delta\le \delta_0$ 
\begin{align*}
\Prob\big(\{|\hat S_{t}^{\ssup{\delta}}|>\e_1\}\cap \mathfrak{D}_t\big)<\e_2
\end{align*}
eventually for all $t$.
\end{lemma}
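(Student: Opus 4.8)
The plan is to condition on the $\sigma$-algebra $\mathcal{G}_t=\sigma(D,\Za,\xi(\Za))$ and run a second-moment (Chebyshev) estimate, using that on $\mathfrak{D}_t$ the quantity $\hat S_{t}^{\ssup{\delta}}$ splits into independent, conditionally mean-zero contributions indexed by reflected pairs of exclusive sites. Since $\hat S_{t}^{\ssup{\delta}}$ vanishes off $\mathcal{E}_t$, we have $\{|\hat S_{t}^{\ssup{\delta}}|>\e_1\}\subseteq\mathcal{E}_t$, and on $\mathcal{E}_t\cap\mathfrak{D}_t$ one has $\Za>0$, so $\mathrm{sgn}(z/\Za)=\mathrm{sgn}(z)$ and hence
\[
\hat S_{t}^{\ssup{\delta}}=\sum_{\heap{0<z<|\Za|}{z\in E}}\big(f(\xi(z))-f(\xi(-z))\big),\qquad
f(x):=-\log\Big(1-\frac{x}{\xi(\Za)}\Big)\one\{x<\delta\xi(\Za)\}.
\]
Exactly as in the proof of Proposition~\ref{e2}, conditionally on $\mathcal{G}_t$ the variables $(\xi(z))_{z\in E,\,|z|<|\Za|}$ are independent, and each $\xi(z)$ is Pareto($\alpha$) conditioned on $\{\Psi_t(z)<\Psi_t(\Za)\}$; since $\Psi_t(z)$ depends only on $\xi(z)$ and $|z|=|-z|$, the pair $\xi(z),\xi(-z)$ is i.i.d.\ given $\mathcal{G}_t$. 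So each summand has conditional mean zero, the summands are conditionally independent over $z$, and $\mathrm{Var}_{\mathcal{G}_t}(f(\xi(z))-f(\xi(-z)))=2\mathrm{Var}_{\mathcal{G}_t}(f(\xi(z)))\le 2\mathrm{E}_{\mathcal{G}_t}[f(\xi(z))^2]$.

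Next I would establish the variance bound. Using $0\le-\log(1-u)\le u/(1-\delta)$ for $0\le u\le\delta<1$, the elementary estimate $\int_1^{M}x^2\alpha x^{-\alpha-1}\,dx\le\frac{\alpha}{2-\alpha}M^{2-\alpha}$ valid for $M\ge1$ (this is the only place $\alpha<2$ is used), and the fact that $\{\Psi_t(z)<\Psi_t(\Za)\}$ has $\mathcal{G}_t$-probability at least $1/2$ once $\Psi_t(\Za)$ is large (cf.~\eqref{c1}), I get $\mathrm{E}_{\mathcal{G}_t}[f(\xi(z))^2]\le\frac{2\alpha}{(2-\alpha)(1-\delta)^2}\delta^{2-\alpha}\xi(\Za)^{-\alpha}$, and summing over the at most $|\Za|$ pairs,
\[
\mathrm{Var}_{\mathcal{G}_t}\big(\hat S_{t}^{\ssup{\delta}}\big)\le\frac{4\alpha}{(2-\alpha)(1-\delta)^2}\delta^{2-\alpha}\frac{|\Za|}{\xi(\Za)^{\alpha}},
\]
so that $\Prob_{\mathcal{G}_t}(|\hat S_{t}^{\ssup{\delta}}|>\e_1)\le\e_1^{-2}\mathrm{Var}_{\mathcal{G}_t}(\hat S_{t}^{\ssup{\delta}})$ by Chebyshev.

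The remaining issue is to bound $|\Za|/\xi(\Za)^{\alpha}$ uniformly in $t$, and here the crude bound from $\mathcal{E}_t$ — namely $|\Za|/\xi(\Za)^{\alpha}\le g_t/f_t^{\alpha}$, using $r_t=a_t^{\alpha}$ — is not good enough, since $g_t/f_t^{\alpha}\to\infty$. Instead I would invoke Proposition~\ref{sl1}(i): since $r_t=a_t^{\alpha}$, $|\Za|/\xi(\Za)^{\alpha}=(|\Za|/r_t)(\xi(\Za)/a_t)^{-\alpha}\Rightarrow X^{\ssup 1}/(Y^{\ssup 1})^{\alpha}$, which is a.s.\ finite as $X^{\ssup 1}<\infty$ and $Y^{\ssup 1}>\rho X^{\ssup 1}\ge0$ by Lemma~\ref{xy}. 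So $|\Za|/\xi(\Za)^{\alpha}$ is tight, and I may fix $K$ with $\Prob(|\Za|/\xi(\Za)^{\alpha}>K)<\e_2/4$ eventually. Together with $\Prob(\mathcal{E}_t)\to1$ (Proposition~\ref{e0}), set $\mathcal{H}_t^K=\mathfrak{D}_t\cap\{|\Za|/\xi(\Za)^{\alpha}\le K\}\cap\{\xi(\Za)>a_tf_t\}\cap\{\Psi_t(\Za)>f_t\xi(\Za)\}$, a $\mathcal{G}_t$-measurable event with $\Prob(\mathfrak{D}_t\setminus\mathcal{H}_t^K)<\e_2/2$ eventually (the last three conditions are among those defining $\mathcal{E}_t$), on which $\Psi_t(\Za)>a_tf_t^2\to\infty$, so the estimates above apply. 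Then, eventually,
\[
\Prob\big(\{|\hat S_{t}^{\ssup{\delta}}|>\e_1\}\cap\mathfrak{D}_t\big)\le\mathrm{E}\big[\one_{\mathcal{H}_t^K}\Prob_{\mathcal{G}_t}(|\hat S_{t}^{\ssup{\delta}}|>\e_1)\big]+\e_2/2\le\frac{4\alpha K}{(2-\alpha)(1-\delta)^2\e_1^2}\delta^{2-\alpha}+\e_2/2,
\]
and choosing $\delta_0\le1/2$ small enough (depending on $\e_1,\e_2,K,\alpha$) makes the first term $<\e_2/2$ for all $0<\delta\le\delta_0$.

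The hard part will be this last step: one must not settle for the bound on $|\Za|/\xi(\Za)^{\alpha}$ coming from $\mathcal{E}_t$ alone (which carries the growing factor $g_t/f_t^{\alpha}$) but instead use the sharper distributional control of $(|\Za|,\xi(\Za))$ from the point-process analysis to produce a genuinely bounded $K$. The rest — the conditional law of the exclusive potential values given $\mathcal{G}_t$, the mean-zero and independence structure, and the truncated second-moment estimate for the Pareto distribution (finite precisely because $\alpha<2$) — is routine, but needs to be assembled carefully.
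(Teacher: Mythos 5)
Your proposal is correct and takes essentially the same approach as the paper: a conditional Chebyshev estimate using the i.i.d.\ pairing of $(\xi(z),\xi(-z))$ for exclusive $z$, the fact that the truncated Pareto second moment scales as $\delta^{2-\alpha}\xi(\Za)^{-\alpha}$ (finite only because $\alpha<2$), and tightness of $|\Za|/\xi(\Za)^\alpha$ via Proposition~\ref{sl1}(i) to control the sum. The paper packages the tightness control (and a few other conditions) into the auxiliary event $\mathcal{\hat E}_t$ with constants $c_1,c_2$ from Proposition~\ref{sl1}, and conditions on $\Prob^{\ssup e}$ rather than on $\mathcal{G}_t$; your choice of $\mathcal{G}_t$ is arguably slightly cleaner since $\Za$ is $\mathcal{G}_t$-measurable, and you bound the truncated integral with $\le$ (dropping the indicator) where the paper uses $\delta\le c_1<\Psi_t(\Za)/\xi(\Za)$ to get an equality — neither difference is material.
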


\begin{proof} 
Let 
\begin{align*}
\mathcal{\hat E}_t=\Big\{c_1 <\frac{\xi(\Za)}{a_t}<c_2, c_1 <\frac{\Psi_t(\Za)}{a_t}<c_2, c_1<\frac{\Psi_t(\Za)}{\xi(\Za)}, \frac{|\Za|}{\xi(\Za)^{\alpha}}<c_2\Big\},
\end{align*}
where $c_1,c_2>0$  are chosen according to Proposition~\ref{sl1} so that
\begin{align*}
\Prob(\mathcal{\hat E}_t)>1-\e_2/3
\end{align*}
for all $t\ge t_1$. By Proposition~\ref{e0}, let $t_2$ be such that for all $t\ge t_2$
\begin{align*}
\Prob(\mathcal{E}_t)>1-\e_2/3. 
\end{align*}
For each $\delta>0$, let $t_3$ be such that $\delta c_1a_{t_3}>1$. Let $t_0= \max\{t_1,t_2,t_3\}$. 
Consider the event $\mathfrak{D}_t\cap \mathcal{E}_t\cap \mathcal{\hat E}_t$, $\delta>0$ and $t\ge t_0$. 

\smallskip

By Chebychev's inequality we have  
\begin{align*}
&\text{Prob}^{\ssup e}
\big(|\hat S_t^{\ssup{\delta}}|>\e_1\big)
\le \e_1^{-2}\mathrm{E}^{\ssup e}\big[\hat S_t^{\ssup{\delta}}\big]^2\\
&= \e_1^{-2}\mathrm{E}^{\ssup e}\Big[\!\!\!\!\!\!
\sum_{\heap{0<z<|\Za|}{z\in E}}\!\!\!\!\!\!
\text{sgn}(\Za)
\Big(\log \Big(1-\frac{\xi(-z)}{\xi(\Za)}\Big)\one_{\{\xi(-z)< \delta\xi(\Za)\}}-
\log \Big(1-\frac{\xi(z)}{\xi(\Za)}\Big)\one_{\{\xi(z)< \delta\xi(\Za)\}}\Big)\Big]^2
\end{align*}
Since the summands are independent and consist of the differences of two independent identically distributed terms under $\Prob^{\ssup e}$ we obtain 
\begin{align*}
\text{Prob}^{\ssup e}
&\big(|\hat S_t^{\ssup{\delta}}|>\e_1\big)
\le 4\e_1^{-2}\!\!\sum_{\heap{0<z<|\Za|}{z\in E}}\!\!
\mathrm{E}^{\ssup e}\Big[\log \Big(1-\frac{\xi(z)}{\xi(\Za)}\Big)\one_{\{\xi(z)< \delta\xi(\Za)\}}\Big]^2 .
\end{align*}
For each $0<z<|\Za|$, $z\in E$, the conditional distribution of $\xi(z)$ is the Pareto distribution with parameter $\alpha$
conditioned on $\Psi_t(z)<\Psi_t(\Za)$, that is, on 
\begin{align*}
\xi(z)-\frac{|z|}{t}\log \xi(z)<\Psi_t(\Za).
\end{align*} 
Observe that for all $\delta\le c_1$
\begin{align*}
\Big\{y\in [1,\infty):y-\frac{|z|}{t}\log y<\Psi_t(\Za)\Big\}
\supset [1,\Psi_t(\Za)]\supset [1,\delta\xi(\Za)].
\end{align*}
This implies 
\begin{align*}
\int_{1}^{\infty}\frac{\alpha}{y^{\alpha+1}}\one_{\{y-\frac{|z|}{t}\log y<\Psi_t(\Za)\}}dy
\ge \int_{1}^{\infty}\frac{\alpha}{y^{\alpha+1}}\one_{\{y<\Psi_t(\Za)\}}dy=1-\Psi_t(\Za)^{-\alpha}>1/2.
\end{align*}
Observe that $\delta\xi(\Za)>\delta c_1 a_t\ge 1$. Using 
the change of variables $y=u\xi(\Za)$, we obtain
\begin{align*}
\int_{1}^{\delta\xi(\Za)}
&\frac{\alpha}{y^{\alpha+1}}\log^2 \Big(1-\frac{y}{\xi(\Za)}\Big)
\one_{\{y-\frac{|z|}{t}\log y<\Psi_t(\Za)\}}dy\\
&=\int_{1}^{\delta\xi(\Za)}
\frac{\alpha}{y^{\alpha+1}}\log^2 \Big(1-\frac{y}{\xi(\Za)}\Big)dy\\
&\le\frac{\alpha}{\xi(\Za)^{\alpha}}\int_{0}^{\delta}
u^{-\alpha-1}\log^2 (1-u)du.
\end{align*}
Since 
\begin{align*}
\int_{0}^{\delta}
u^{-\alpha-1}\log^2 (1-u)du\sim \int_0^{\delta}u^{1-\alpha}du
=\frac{\delta^{2-\alpha}}{2-\alpha}
\end{align*}
as $\delta\downarrow 0$, 
we can choose $\delta_1\le c_1$ small enough so that for all $\delta\le \delta_1$ 
\begin{align*}
\mathrm{E}^{\ssup e}\Big[\log \Big(1-\frac{\xi(z)}{\xi(\Za)}\Big)\one_{\{\xi(z)< \delta\xi(\Za)\}}\Big]^2
\le \frac{4\alpha\delta^{2-\alpha}}{(2-\alpha)\xi(\Za)^{\alpha}}
\end{align*}
and 
\begin{align*}
\text{Prob}^{\ssup e}
&\big(|\hat S_t^{\ssup{\delta}}|>\e_1\big)
\le\frac{16\alpha\delta^{2-\alpha}}{\e_1^2(2-\alpha)}\cdot\frac{|\Za|}{\xi(\Za)^{\alpha}}
<\frac{16\alpha c_2\delta^{2-\alpha}}{\e_1^2(2-\alpha)}
<\e_2/3
\end{align*}
for all $\delta\le \delta_0$ with some $\delta_0\le \delta_1$. Hence
\begin{align*}
\Prob\big(\{|\hat S_{t}^{\ssup{\delta}}|>\e_1\big\}\cap \mathfrak{D}_t)
\le \Prob\big(\{|\hat S_{t}^{\ssup{\delta}}|>\e_1\big\}\cap \mathfrak{D}_t\cap\mathcal{E}_t\cap\mathcal{\hat E}_t)
+\Prob(\mathcal{E}_t^c)+\Prob(\mathcal{\hat E}_t^c)
<\e_2
\end{align*}
as required.
\end{proof}

We next show that the truncated sum $S_t^{\ssup \delta}$ converges to the variable $S^{\ssup \delta}$ introduced in \eqref{sdel}; since the proof is similar to those appearing in Appendix \ref{A:B}, we also defer it to the appendix.

\begin{prop}
\label{td}
As $t\to\infty$,
\begin{align*}
\mathcal{L}(S_t^{\ssup \delta}|\mathfrak{D}_t)
\Rightarrow \mathcal{L}(S^{\ssup{\delta}}|\mathfrak{D}).
\end{align*}
\end{prop}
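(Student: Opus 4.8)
The plan is to recognise $S_t^{\ssup\delta}$ as an almost-surely continuous functional of the rescaled exclusive-potential point process together with the pair $(|\Za|,\xi(\Za))$, and then to pass the joint weak convergence of these objects through the continuous mapping theorem, handling carefully the conditioning on $\mathfrak{D}_t$. First I would restrict attention to $\mathcal{E}_t\cap\mathfrak{D}_t$: by Propositions~\ref{e0} and~\ref{prob} this event carries asymptotically all of the conditional mass, and on it $\Za$ is a positive duplicated site, so that $\xi(\Za)\notin\{\xi(z):z\in E\}$ and $S_t^{\ssup\delta}$ sees the exclusive potential only through the atoms of $\Pi^{\ssup e}_{r_t}$. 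Since $a_t=r_t^{1/\alpha}$, the atom of $\Pi^{\ssup e}_{r_t}$ coming from $z\in E$ is $(z/r_t,\xi(z)/a_t)$, and one checks directly that $S_t^{\ssup\delta}=\Phi_\delta(\Pi^{\ssup e}_{r_t};|\Za|/r_t,\xi(\Za)/a_t)$, where for a locally finite point measure $\nu$ on $\R\times(0,\infty]$ and $b,c>0$ we set
\[
\Phi_\delta(\nu;b,c)=-\sum_{(x,y)\in\nu}\mathrm{sgn}(x)\,\log\Big(1-\frac{y}{c}\Big)\,\one\{0<|x|<b\}\,\one\{y\ge\delta c\};
\]
here the indicators and argument translate the restrictions $0<|z|<|\Za|$, $\xi(z)\ge\delta\xi(\Za)$ and the summand $\log(1-\xi(z)/\xi(\Za))$ faithfully, and the corresponding limiting functional $\Phi_\delta(\Pi^{\ssup e};X^{\ssup 1},Y^{\ssup 1})$ equals $S^{\ssup\delta}$ on $\mathfrak{D}$ (cf.\ \eqref{sdel} and the definition of $S^{\ssup{\delta,\pm}}$).

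The structural point is that $\Phi_\delta$ is continuous --- vaguely in $\nu$ and Euclidean in $(b,c)$ --- at any configuration $(\nu;b,c)$ for which $\nu$ has only finitely many atoms in $\{0<|x|<b,\ y\ge\delta c\}$, each with $y<c$, and no atom on $\{x=0\}\cup\{|x|=b\}\cup\{y=\delta c\}\cup\{y=c\}$. For the limit $(\Pi^{\ssup e};X^{\ssup 1},Y^{\ssup 1})$ conditioned on $\mathfrak{D}$ all of this holds almost surely: every atom of $\Pi^{\ssup e}$ with $|x|<X^{\ssup 1}$ has $y<Y^{\ssup 1}$ by the remark before Lemma~\ref{s}; the number of such atoms with additionally $y\ge\delta Y^{\ssup 1}$ is finite since $\mu(\{0<|x|<X^{\ssup 1},\,y\ge\delta Y^{\ssup 1}\})<\infty$; and on $\mathfrak{D}$ the pair $(X^{\ssup 1},Y^{\ssup 1})$ lies in $\Pi^{\ssup{d,+}}$, which is independent of $\Pi^{\ssup e}$, so conditionally on $(X^{\ssup 1},Y^{\ssup 1})$ and $\mathfrak{D}$ the process $\Pi^{\ssup e}$ remains Poisson and a.s.\ puts no atom on any of those boundary sets.

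The remaining input is the joint weak convergence
\[
\Big(\Pi^{\ssup e}_{r_t},\ \frac{|\Za|}{r_t},\ \frac{\xi(\Za)}{a_t},\ \one_{\mathfrak{D}_t}\Big)\ \Rightarrow\ \Big(\Pi^{\ssup e},\ X^{\ssup 1},\ Y^{\ssup 1},\ \one_{\mathfrak{D}}\Big),
\]
which follows from the point-process convergence of $(\Pi^{\ssup{d,+}}_{r_t},\Pi^{\ssup{d,-}}_{r_t},\Pi^{\ssup e}_{r_t})$ in Lemma~\ref{lppp} once one knows --- as is established inside the proof of Proposition~\ref{sl1} in Appendix~\ref{A:B} --- that $|\Za|$, $\xi(\Za)$ and the event $\mathfrak{D}_t$ are recovered from the rescaled configuration by maps that are almost surely continuous at the limit. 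Since $\Prob(\mathfrak{D}_t)\to\Prob_{*}(\mathfrak{D})=p/(2-p)>0$ by Proposition~\ref{prob} and Lemma~\ref{probb}, for any bounded functional $g$ that is a.s.\ continuous at the limit the conditional expectation commutes with the limit: $\mathrm{E}[g\mid\mathfrak{D}_t]=\mathrm{E}[g\,\one_{\mathfrak{D}_t}]/\Prob(\mathfrak{D}_t)\to\mathrm{E}_{*}[g\,\one_{\mathfrak{D}}]/\Prob_{*}(\mathfrak{D})=\mathrm{E}_{*}[g\mid\mathfrak{D}]$. Applying this with $g=\exp\{i\theta\Phi_\delta\}$, which is bounded by $1$ and a.s.\ continuous by the previous paragraph, and noting that the convention $S_t^{\ssup\delta}=0$ off $\mathcal{E}_t$ changes $\mathrm{E}[e^{i\theta S_t^{\ssup\delta}}\one_{\mathfrak{D}_t}]$ by only $O(\Prob(\mathcal{E}_t^c))=o(1)$, I would conclude $\mathrm{E}[e^{i\theta S_t^{\ssup\delta}}\mid\mathfrak{D}_t]\to\mathrm{E}_{*}[e^{i\theta S^{\ssup\delta}}\mid\mathfrak{D}]$ for every $\theta\in\R$; L\'evy's continuity theorem then gives the stated weak convergence.

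The main obstacle is the joint weak convergence above --- above all the assertion that conditioning on $\mathfrak{D}_t$ commutes with taking the limit, which comes down to the a.s.\ continuity, at the limiting configuration, of the maps extracting $(|\Za|,\xi(\Za))$ and the indicator of the event that the leading maximiser of $\Psi_t$ is duplicated. This continuity is essentially proved inside the appendix treatment of Proposition~\ref{sl1}; the genuinely new point is the bookkeeping needed to ensure that no atom of $\Pi^{\ssup e}$ lands on the relevant boundaries conditionally on $\mathfrak{D}$. The $\delta$-truncation is precisely what confines $\Phi_\delta$ to finitely many atoms and so makes it a.s.\ continuous --- the passage from $S_t^{\ssup\delta}$ back to $S_t$, where this breaks down, is exactly what Lemma~\ref{epseps} handles.
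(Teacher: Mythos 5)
Your proposal is correct and follows essentially the same route as the paper's proof: recognise $S_t^{\ssup\delta}$ as an (a.s.\ continuous at the limit) functional of the rescaled point processes, invoke Lemma~\ref{lppp} together with the continuous mapping theorem, and use Proposition~\ref{prob} and Lemma~\ref{probb} to convert the joint convergence of $\{S_t^{\ssup\delta}<x\}\cap\mathfrak{D}_t$ into the conditional statement. The main presentational difference is one of bookkeeping. The paper keeps $(|\Za|/r_t,\xi(\Za)/a_t)$ and $\one_{\mathfrak{D}_t}$ \emph{inside} a single functional $\Gamma:M_c\times M_c\to\R$ acting on the pair $(\Pi^{\ssup d}_{r_t}|_c,\Pi^{\ssup e}_{r_t}|_c)$ restricted to a compact window $[-c,c]$, and extracts the maximiser and the indicator via the auxiliary maps $\Xi$ and $\Theta$; you instead promote $(|\Za|/r_t,\xi(\Za)/a_t,\one_{\mathfrak{D}_t})$ to separate coordinates and apply the CMT to the functional $\Phi_\delta(\nu;b,c)$. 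Both work, but the paper's version is tidier on two technical points that your sketch leaves implicit: (a) continuity of the extraction of the maximiser really is proved only for the simplified functional $\hat\Psi_t$, and the passage from $\hat\Psi_t$ to $\Psi_t$ (and from $\hat{\mathfrak{D}}_t$ to $\mathfrak{D}_t$) is controlled by Lemma~\ref{coincide}, which deserves to be cited explicitly where you write ``as is established inside the proof of Proposition~\ref{sl1}''; and (b) the truncation to $[-c,c]$ is what makes $\Xi$ and $\Theta$ genuine continuous maps on $M_c\times M_c$ — without it, the maximiser-extraction map lives on a space where vague convergence does not directly give what you want. Your $\delta$-cutoff and the restriction $0<|x|<b$ do confine $\Phi_\delta$ to a $\mu$-finite set, which is why the argument still goes through; but you should verify, as the paper does via \eqref{fff}, that the probability of the maximiser escaping $[-c,c]$ is uniformly small. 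One final remark: your formula for $S^{\ssup\delta}$ (without the $X^{\ssup 1}$ prefactor that appears in the displayed equation \eqref{sdel}) is the one that matches the prelimit $S_t^{\ssup\delta}$ and the L\'evy-process interpretation in Lemma~\ref{s}, so you have used the intended definition.
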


We are now ready to put everything together to complete the proof of Theorem \ref{main1}, in particular showing that the ratio $u(t, \Za)/u(t, -\Za)$ converges in distribution to $\Upsilon$, where $\Upsilon$ is the random variable defined in Lemma \ref{s}.

\subsection{Completion of the proof of Theorem~\ref{main1}}

By Propositions~\ref{u12} and~\ref{u3} on the event $\mathfrak{D}_t$
only the shortest paths to $\Za$ and $-\Za$ are non-negligible and hence 
we have by Proposition~\ref{shortest} 
\begin{align*}
\frac{u(t,\Za)}{u(t,-\Za)}
&=\left\{\prod_{j=0}^{|\Za|-1}\frac{\xi(\Za)-\xi(-j)}{\xi(\Za)-\xi(j)}\right\}^\iota+o(1)\\
&=\exp\Big\{-\text{sgn}(\Za)\!\!\sum_{0\le j<|\Za|}\!\!
\Big[\log\Big(1-\frac{\xi(j)}{\xi(\Za)}\Big)-\log\Big(1-\frac{\xi(-j)}{\xi(\Za)}\Big)\Big]\Big\}+o(1)\\
&=\exp\big\{S_t\big\}+o(1).
\end{align*}
It suffices to show that 
\begin{align*}
\mathcal{L}(S_t|\mathfrak{D}_t)\Rightarrow \mathcal{L}(\log \Upsilon),
\end{align*}
where $\Upsilon$ is the random variable defined in Lemma \ref{s}. Let $x\in \R$, $\e>0$
and choose $\hat\e>0$ so that 
\begin{align}
\label{d1}
\Prob(\log \Upsilon \le x-\hat \e)
&> \Prob(\log \Upsilon \le x)-\e/4,\\
\Prob(\log \Upsilon \le x+\hat \e)
&< \Prob(\log \Upsilon \le x)+\e/4.
\label{d2}
\end{align}
Choose $\delta_0$ according to Lemma~\ref{epseps}
with $\e_1=\hat \e$ and $\e_2<\e\Prob(\mathfrak{D}_t)/4$ for all $t$, which is possible by Proposition~\ref{prob} since 
$\Prob(\mathfrak{D}_t)$ converges to a positive limit.
By Lemma~\ref{s}, choose 
$\delta\le \delta_0$ so that 
\begin{align}
\label{a1}
\Prob\big(S^{\ssup \delta}\le x-\e_1| \mathfrak{D}\big)
&> \Prob(\log \Upsilon \le x-\e_1)-\e/4,\\
\Prob\big(S^{\ssup \delta}\le x+\e_1| \mathfrak{D}\big)
&< \Prob(\log \Upsilon \le x+\e_1)+\e/4,
\label{a2}
\end{align}
and choose $t_0$ such that the statement of Lemma~\ref{epseps} holds for $t\ge t_0$.
We have 
\begin{align*}
\Prob(\{S_t\le x\}\cap \mathfrak{D}_t)
&=\Prob\big(\{S^{\ssup \delta}_t\le x-\hat S^{\ssup \delta}_t\}\cap \{|\hat S^{\ssup \delta}_t|\le\e_1\}\cap \mathfrak{D}_t\big)\\
&\quad +\Prob\big(\{S^{\ssup \delta}_t\le x-\hat S^{\ssup \delta}_t\}\cap \{|\hat S^{\ssup \delta}_t|>\e_1\}\cap \mathfrak{D}_t\big).
\end{align*}
Hence by Lemma~\ref{epseps} for all $t>t_0$,
\begin{align*}
\Prob\big(S^{\ssup \delta}_t\le x-\hat\e| \mathfrak{D}_t\big)-\e/4
< \Prob(S_t\le x|\mathfrak{D}_t)
< \Prob\big(S^{\ssup \delta}_t\le x+\hat\e| \mathfrak{D}_t\big)+\e/4.
\end{align*}
By Proposition~\ref{td} there is $t_1\ge t_0$ such that for all $t\ge t_1$
\begin{align*}
\Prob\big(S^{\ssup \delta}_t\le x-\hat\e| \mathfrak{D}_t\big)
&> \Prob\big(S^{\ssup \delta}\le x-\hat\e| \mathfrak{D}\big)
-\e/4,\\
\Prob\big(S^{\ssup \delta}_t\le x+\hat\e| \mathfrak{D}_t\big)
&< \Prob\big(S^{\ssup \delta}\le x+\hat\e| \mathfrak{D}\big)
+\e/4,
\end{align*}
implying 
\begin{align*}
\Prob\big(S^{\ssup \delta}\le x-\hat\e| \mathfrak{D}\big)-\e/2
< \Prob(S_t\le x|\mathfrak{D}_t)
< \Prob\big(S^{\ssup \delta}\le x+\hat\e| \mathfrak{D}\big)+\e/2.
\end{align*}
Combining this with~\eqref{a1} and~\eqref{a2} we obtain 
\begin{align*}
\Prob(S\le x-\hat\e)-3\e/4
< \Prob(S_t\le x|\mathfrak{D}_t)
< \Prob(\log \Upsilon \le x+\hat\e)+3\e/4.
\end{align*}
Together with~\eqref{d1} and~\eqref{d2} this gives 
\begin{align*}
\Prob(\log \Upsilon \le x)-\e
< \Prob(S_t\le x|\mathfrak{D}_t)
< \Prob(\log \Upsilon \le x)+\e.
\end{align*}
for all $t\ge t_1$. 

\bigskip


\section{Fluctuation theory in the case $\alpha\ge 2$}
\label{s:main2}

In this section we study the fluctuations in the ratio $u(t, \Za)/u(t, \Zb)$ in the case $\alpha \ge 2$, and hence complete the proof of Theorem \ref{main2}. Due to our analysis in Section \ref{s:paths}, we know that it is sufficient to study only the contribution from paths which visit the sites in $\mathcal{N}_t$ at most once.

\smallskip

The first step is to show that, by conditioning on the information not contained in the sites in $\mathcal{N}_t$, we are left with an expression that is amenable to applying standard fluctuation theory; here we use our analysis of the function $I$. The final step is to show that the fluctuations due to $\mathcal{N}_t$ are already enough to imply that $|\log u(t, \Za)/u(t, \Zb) | \to \infty$, regardless of the contributions from all other sites; we achieve this by invoking a central limit argument.

\smallskip

For each $t>0$, let $\mathcal{F}_t$ be the $\sigma$-algebra generated by $D$, $\Za$, $\mathcal{N}_t$, and $\{\xi(z):z\notin \mathcal{N}_t\}$. 
Let $\text{Prob}_{\mathcal{F}_t}$, ${\mathrm E}_{\mathcal{F}_t}$ and $\text{Var}_{\mathcal{F}_t}$ denote, respectively, conditional probability, expectation and variance with respect to~$\mathcal{F}_t$. For each $z\in \mathcal{N}_t$, define
\begin{align*}
Q_t(z)=-\text{sgn}(z/\Za)\log\Big(1-\frac{\xi(z)}{\xi(\Za)}\Big)
\end{align*}
whenever $\xi(z)<\xi(\Za)$ and zero otherwise. 
Let 
\begin{align*}
Q_t=\sum_{z\in\mathcal{N}_t}Q_t(z). 
\end{align*}
Observe that $Q_t(z)$, $z\in\mathcal{N}_t$, are conditionally independent with respect to $\mathcal{F}_t$, which implies that
\begin{align}
\label{varvar}
\text{Var}_{\mathcal{F}_t} Q_t 
=\sum_{z \in \mathcal{N}_t}\text{Var}_{\mathcal{F}_t} Q_t(z).
\end{align}
Further, it is easy to see that for each $z\in\mathcal{N}_t$, the conditional distribution 
of $\xi(z)$ is the Pareto distribution with parameter $\alpha$ conditioned on $\Psi_t(z)<\Psi_t(\Za)$
and $\xi(z)>\delta_t\xi(\Za)$. 

\smallskip

The next lemma establishes that, after conditioning on $\mathcal{F}_t$, the contribution to the ratio $u(t, \Za)/u(t, \Zb)$ due to the sites in $\mathcal{N}_t$ is well-approximated by a product over these sites.

\begin{prop}
\label{P:decomp}
There exists an $\mathcal{F}_t$-measurable random variable $P_t$ such that
\[ \big|  \log u ( t, \Za ) - \log u ( t, -\Za ) - Q_t  + P_t  \big| \one_{\mathfrak{D}_t\cap \mathcal{E}_t\cap \mathcal{E}_t^{[2,\infty)}}  \to 0 \]
almost surely, as $t\to\infty$.
\end{prop}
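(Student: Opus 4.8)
The plan is to work throughout on $\mathfrak{D}_t\cap\mathcal{E}_t\cap\mathcal{E}_t^{[2,\infty)}$, on which $\Za>0$, $\Omega_t=\{-\Za,\Za\}$ and $\xi(-\Za)=\xi(\Za)=\xi_{R_t}^{\ssup 1}$. On this event I would express $u(t,\Za)$ and $u(t,-\Za)$, each up to a multiplicative $(1+o(1))$, as an explicit product over the sites of $\mathcal{N}_t$ --- the product that generates $Q_t$ --- times a leftover factor that turns out to be $\mathcal{F}_t$-measurable; then $P_t$ is built from the ratio of these leftover factors plus a $\log\xi(\Za)$ correction. Abbreviate $\mathcal{N}_t^{+}=\mathcal{N}_t\cap\N$, $\mathcal{N}_t^{-}=\mathcal{N}_t\cap(-\N)$.

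First I would reduce, \emph{endpoint by endpoint}, to null paths. Since every path in $\mathcal{P}^t$ has length $\le R_t<2|\Za|$, it cannot visit both $\Za$ and $-\Za$, so each makes all its $\Omega_t$-visits at its final site. Writing $u(t,z)=\sum_{y:\,y_{\ell(y)}=z}U(t,y)$ by Feynman--Kac and using the crude lower bound $u(t,\pm\Za)\ge\exp\{t\Psi_t(\Za)-2t-o(ta_tf_t)\}$ (restrict the time spent at each site of the relevant direct path to $[0,(\xi(\Za)-\xi(\cdot))^{-1}]$, cf.\ Lemma~\ref{lowerbound}), every quantity shown negligible with respect to $U(t)$ in Section~\ref{s:paths} is also negligible with respect to each of $u(t,\pm\Za)$, since those estimates carry an exponential margin $\asymp ta_tf_t$. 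Hence $u(t,\pm\Za)=(1+o(1))\sum_{y\in\mathcal{P}^t,\,y\to\pm\Za}U(t,y)$ ($y\to z$ meaning $y$ ends at $z$), and by the equivalence-class structure underlying Lemmas~\ref{L:nullpaths} and~\ref{L:null1} --- whose classes have a common final $\Omega_t$-site, and in which, on $\mathcal{E}_t^{[2,\infty)}$, the minimal representative lies in $\mathrm{Null}^t_1$ because a loop from $\Omega_t$ reaching $\mathcal{N}_t$ has length exceeding the threshold $k_0$ of Lemma~\ref{L:null1} --- this refines to $u(t,\pm\Za)=(1+o(1))\sum_{y\in\mathrm{Null}^t_1,\,y\to\pm\Za}U(t,y)$. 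Finally, from the one-dimensional geometry and the description of null paths after Proposition~\ref{prop:U0null}, a path $y\in\mathrm{Null}^t_1$ with $y\to\Za$ visits $0$ and each site of $\mathcal{N}_t^{+}$ exactly once, visits no site of $\mathcal{N}_t^{-}$, and has all its other vertices outside $\mathcal{N}_t$ (so carrying $\mathcal{F}_t$-measurable potential values); symmetrically for $y\to-\Za$ with $\mathcal{N}_t^{-}$ replacing $\mathcal{N}_t^{+}$.

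The main step is to extract the $\mathcal{N}_t$-factor. Fix $y\in\mathrm{Null}^t_1$ with $y\to\Za$ and write $U(t,y)=e^{-2t}I_{\ell(y)}(t;\xi(y_0),\dots,\xi(y_{\ell(y)}))$ via \eqref{uy1}; here $\xi(\Za)$ is the unique maximum of the arguments and on $\mathcal{E}_t$ one has $\xi(\Za)-\xi(z)>\xi_{R_t}^{\ssup 1}-\xi_{R_t}^{\ssup 2}>a_tf_t$ for each $z\in\mathcal{N}_t$. Using the symmetry of $I_n$ (Lemma~\ref{bounds}) to bring an argument $\xi(z)$, $z\in\mathcal{N}_t^{+}$, into the second-to-last slot, the recursion \eqref{rec2} gives
\[ I_m(t;\mathbf{c},\xi(z),\xi(\Za))=\frac{1}{\xi(\Za)-\xi(z)}\big[I_{m-1}(t;\mathbf{c},\xi(\Za))-I_{m-1}(t;\mathbf{c},\xi(z))\big], \]
and, every entry of $\mathbf{c}$ being $\le\xi(\Za)$, Lemma~\ref{L:Ixtoy} bounds the subtracted term by a factor $\le R_t/(a_tf_t t)$ times the first. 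Iterating over all (at most $|\mathcal{N}_t|$) sites of $\mathcal{N}_t^{+}$ and using $(1+R_t/(a_tf_t t))^{|\mathcal{N}_t|}\to1$ (by the choices of $\delta_t,f_t,g_t$, recalling $|\mathcal{N}_t|<\delta_t^{-\alpha}\log(1/\delta_t)$ on $\mathcal{E}_t^{[2,\infty)}$ and $r_ta_t^{-\alpha}=1$) yields, uniformly over such $y$,
\[ U(t,y)=(1+o(1))\Big(\prod_{z\in\mathcal{N}_t^{+}}\frac{1}{\xi(\Za)-\xi(z)}\Big)\,e^{-2t}\,I_{\ell'(y)}\big(t;\mathbf{b}(y)\big), \]
where $\mathbf{b}(y)$ lists the potential values of $y$ at its vertices outside $\mathcal{N}_t^{+}$ and $\ell'(y)=\ell(y)-|\mathcal{N}_t^{+}|$. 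Summing, $u(t,\Za)=(1+o(1))\,G_t^{+}\prod_{z\in\mathcal{N}_t^{+}}(\xi(\Za)-\xi(z))^{-1}$ with $G_t^{+}:=e^{-2t}\sum_{y\in\mathrm{Null}^t_1,\,y\to\Za}I_{\ell'(y)}(t;\mathbf{b}(y))$ a positive $\mathcal{F}_t$-measurable random variable (the index set depends only on $\Za$ and on $\mathcal{N}_t$ as a set, every entry of every $\mathbf{b}(y)$ is $\mathcal{F}_t$-measurable, and $I_{\ell'(y)}>0$). The same argument at $-\Za$, using $\xi(-\Za)=\xi(\Za)$, gives $u(t,-\Za)=(1+o(1))\,G_t^{-}\prod_{z\in\mathcal{N}_t^{-}}(\xi(\Za)-\xi(z))^{-1}$ with $G_t^{-}>0$ and $\mathcal{F}_t$-measurable.

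To conclude, take logarithms and use $\log(\xi(\Za)-\xi(z))=\log\xi(\Za)+\log(1-\xi(z)/\xi(\Za))$ together with $Q_t=-\sum_{z\in\mathcal{N}_t^{+}}\log(1-\xi(z)/\xi(\Za))+\sum_{z\in\mathcal{N}_t^{-}}\log(1-\xi(z)/\xi(\Za))$ (valid since $\Za>0$ on $\mathfrak{D}_t$); the $e^{-2t}$ factors cancel, giving
\[ \log u(t,\Za)-\log u(t,-\Za)=Q_t-P_t+o(1),\qquad P_t:=\log\frac{G_t^{-}}{G_t^{+}}+\big(|\mathcal{N}_t^{+}|-|\mathcal{N}_t^{-}|\big)\log\xi(\Za), \]
which is $\mathcal{F}_t$-measurable as $G_t^{\pm}$, $|\mathcal{N}_t^{\pm}|$ and $\xi(\Za)$ all are, and the $o(1)$ holds almost surely, eventually, on the event. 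The hard part will be the extraction step: Proposition~\ref{shortest} must be carried out not only for the direct path but uniformly over the exponentially many null paths of $\mathrm{Null}^t_1$ (of length up to $R_t$), so that the product over $\mathcal{N}_t$ separates out while the $o(1)$ error, accumulated over all $|\mathcal{N}_t|$ extraction steps, still vanishes; controlling this accumulation is where the scale $\delta_t=(\log t)^{-1/(2\alpha)}$ is needed.
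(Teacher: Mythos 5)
Your proof follows the same route as the paper: reduce to $\mathrm{Null}^t_1$ paths ending at $\pm\Za$, then repeatedly use the symmetry of $I_n$ together with the recursion \eqref{rec2} and Lemma~\ref{L:Ixtoy} to peel off one factor $(\xi(\Za)-\xi(z))^{-1}$ per site $z\in\mathcal{N}_t^{\pm}$ with a controlled $(1+O(g_t/(f_t\log t)))$ error, and then observe that the accumulated error over $|\mathcal{N}_t|$ extractions vanishes by the choice of $\delta_t$. Your $G_t^{\pm}$ and $P_t$ are exactly the paper's $\mathcal{F}_t$-measurable residual sums; the extra care you take to justify the endpoint-by-endpoint reduction from $U(t)$ to $u(t,\pm\Za)$ is a reasonable elaboration of a step the paper leaves implicit, but the argument as a whole is the same.
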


\begin{proof}
We write $\mathrm{Null}^t_{1+}$, $\mathrm{Null}^t_{1-}$ for the set of null paths in $\mathrm{Null}^t_1$ ending in $\Za$ and 
$-\Za$, respectively. Further, we denote by $\mathcal{N}_t^+$ and $\mathcal{N}_t^{-}$ the subsets of $\mathcal{N}_t$ consisting of the points lying between $0$ and $\Za$, and $0$ and $-\Za$, respectively. Finally, 
we denote by $N_t^{+}$ and $N_t^{-}$ the respective cardinalities of
$\mathcal{N}_t^+$ and $\mathcal{N}_t^{-}$.

\smallskip

By Propositions~\ref{u12} and ~\ref{prop:U0null} we have that on the event $\mathcal{E}_t\cap \mathcal{E}_t^{[2,\infty)}$ almost surely 
\begin{align}
\label{eq:unull+}
u(t,\Za)=(1+o(1))\sum_{y\in \mathrm{Null}^t_{1+}}U(t,y),
\end{align}
as $t\to\infty$.
Fix a path $y\in\mathrm{Null}^t_{1+}$ and note that $y_{\ell(y)}=\Za$. We wish to extract from the path terms involving potential values of sites in $\mathcal{N}_t$. To do this first we recall
that by~\eqref{uy1} 
\begin{align}
\label{eq:UytoI}
U(t,y)=e^{-2t}I_{\ell(y)}(t;\xi({\bf y})),
\end{align}
where $\xi({\bf y})$ denotes the sequence $\xi(y_0),\ldots,\xi(y_{\ell(y)})$. 
Then, for any $z\in\mathcal{N}_t^+$
it follows from the recursion~\eqref{rec2} and the symmetry of $I$ proved in Lemma~\ref{bounds} 
that 
\[
I_{\ell(y)}(t;\xi({\bf y}))= \frac1{\xi(\Za)-\xi(z)}
\Big[I_{\ell(y)-1}\big(t;\xi({\bf y}\backslash\{z\})\big)-I_{\ell(y)-1}\big(t;\xi({\bf y}\backslash\{\Za\})\big)\Big],
\]
where $\xi({\bf y}\backslash \{z\})$ denotes the sequence $\xi(y_0),\ldots,\xi(y_{\ell(y)})$ with the occurrence of $\xi(z)$ removed (note that since $y\in \mathrm{Null}^t_{1+}$ it makes exactly one visit to $z$). 
On the event $\mathcal{E}_t$ we have 
$\xi(z)<\xi(\Za)$ and therefore we can 
use Lemma \ref{L:Ixtoy} and obtain
\begin{align*}
I_{\ell(y)-1}\big(t;\xi({\bf y}\backslash\{\Za\})\big)\le \frac{\ell(y)}{(\xi(\Za)-\xi(z))t}
I_{\ell(y)-1}\big(t;\xi({\bf y}\backslash\{z\})\big).
\end{align*}
Observe that we have $\xi(\Za)-\xi(z)>a_tf_t$ on the event 
$\mathcal{E}_t$ as well as $\ell(y)\le R_t<r_tg_t(1+f_t)$. Plugging this into the above gives
\begin{align*}
I_{\ell(y)-1}\big(t;\xi({\bf y}\backslash\{\Za\})\big)\le\frac{g_t(1+f_t)}{f_t\log t}
I_{\ell(y)-1}\big(t;\xi({\bf y}\backslash\{z\})\big),
\end{align*}
and thus
\begin{align*}
I_{\ell(y)}(t;\xi({\bf y}))
=I_{\ell(y)-1}\big(t;\xi({\bf y}\backslash\{z\})\big)
\Big(1+O\Big(\frac{g_t}{f_t\log t}\Big)\Big)
\frac{1}{\xi(\Za)-\xi(z)}
\end{align*}
on $\mathcal{E}_t$, as $t\to\infty$. Iterating this procedure for all 
$z\in\mathcal{N}_t^+$ and observing that 
\begin{align*}
\frac{N_t^+g_t}{f_t\log t}<
\frac{\delta_t^{-\alpha}\log(1/\delta_t)g_t}{f_t\log t}\to 0
\end{align*}
on the event $\mathcal{E}_t^{[2,\infty)}$, 
we obtain that 
\begin{align*}
I_{\ell(y)}(t;\xi({\bf y}))
=(1+o(1))I_{\ell(y)-N_t^+}\big(t;\xi({\bf y}\backslash\mathcal{N}_t)\big)
\prod_{z\in\mathcal{N}_t^+}\frac{1}{\xi(\Za)-\xi(z)}
\end{align*}
on $\mathcal{E}_t\cap \mathcal{E}_t^{[2,\infty)}$,
as $t\to\infty$,
where $\xi({\bf y}\backslash\mathcal{N}_t)$ denotes the sequence $\xi(y_0),\ldots,\xi(y_{\ell(y)})$ with all occurrences of $\xi(z)$ with $z\in\mathcal{N}_t$ removed.
Combining this with~\eqref{eq:unull+} 
and~\eqref{eq:UytoI} we obtain
\begin{align*}
u(t,\Za)=(1+o(1))e^{-2t}
\prod_{z\in\mathcal{N}_t^+}\frac{1}{\xi(\Za)-\xi(z)}
\sum_{y\in{\rm Null}^t_{1+}}
I_{\ell(y)-N_t^+}\big(t;\xi({\bf y}\backslash\mathcal{N}_t)\big)
\end{align*}
and hence 
\begin{align}
\log u(t,\Za)
&=
-\sum_{
z\in \mathcal{N}_t^+}\log \Big(1-\frac{\xi(z)}{\xi(\Za)}\Big)
-N_t^+\log\xi(\Za)\notag\\
&+\log\sum_{y\in{\rm Null}^t_{1+}}
I_{\ell(y)-N_t^+}\big(t;\xi({\bf y}\backslash\mathcal{N}_t)\big)
-2t+o(1)
\label{fff1}
\end{align}
on $\mathcal{E}_t\cap \mathcal{E}_t^{[2,\infty)}$,
as $t\to\infty$. Similarly, 
\begin{align}
\log u(t,-\Za)
&=
-\sum_{z\in \mathcal{N}_t^-}\log \Big(1-\frac{\xi(z)}{\xi(\Za)}\Big)
-N_t^-\log\xi(\Za)\notag\\
&+\log\sum_{y\in{\rm Null}^t_{1-}}
I_{\ell(y)-N_t^-}\big(t;\xi({\bf y}\backslash\mathcal{N}_t)\big)
-2t+o(1)
\label{fff2}
\end{align}
on $\mathfrak{D}\cap\mathcal{E}_t\cap \mathcal{E}_t^{[2,\infty)}$,
as $t\to\infty$. Combining~\eqref{fff1} and~\eqref{fff2}
we obtain the desired result with 
\begin{align*}
P_t
&=N_t^+\log\xi(\Za)+\log\sum_{y\in{\rm Null}^t_{1+}}
I_{\ell(y)-N_t^+}\big(t;\xi({\bf y}\backslash\mathcal{N}_t)\big)\\
&-N_t^-\log\xi(\Za)-\log\sum_{y\in{\rm Null}^t_{1-}}
I_{\ell(y)-N_t^-}\big(t;\xi({\bf y}\backslash\mathcal{N}_t)\big),
\end{align*}
which is obviously $\mathcal{F}_t$-measurable.
\end{proof}

We now study the scale of the fluctuations due to the sites in $\mathcal{N}_t$, showing in particular that these fluctuations are unbounded.

\begin{prop}
\label{P:var}
As $t \to \infty$,
\[  \left[\text{\rm Var}_{\mathcal{F}_t}   Q_t  \right]^{-1} \id_{\mathcal{E}_t \cap \mathcal{E}_t^{[2,\infty)}}  \to 0 \quad 
\text{almost surely} \]
and
\[   \max_{z \in \mathcal{N}_t} 
\big|{\mathrm E}_{\mathcal{F}_t} Q_t(z) \big| \id_{\mathcal{E}_t }  \to  0 \quad \text{almost surely.} \]
\end{prop}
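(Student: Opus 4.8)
The plan is to reduce both statements to estimates on the conditional law of a single $\xi(z)$, $z \in \mathcal{N}_t$, which by the remark preceding the proposition is the Pareto($\alpha$) distribution conditioned on the two events $\{\Psi_t(z) < \Psi_t(\Za)\}$ and $\{\xi(z) > \delta_t \xi(\Za)\}$. First I would fix a generic $z \in \mathcal{N}_t$ and record, exactly as in the proof of Proposition~\ref{e2} (see~\eqref{c1} and the displays around~\eqref{bin1}), that on $\mathcal{E}_t$ the normalising constant of this conditional law is bounded between $(\delta_t \xi(\Za))^{-\alpha}$ and $2(\delta_t\xi(\Za))^{-\alpha}$ up to constants, and that $[1,\delta_t\xi(\Za)] \subset [1,\Psi_t(\Za)]$ is contained in the range where the $\Psi_t$-conditioning is inactive, so that for $y$ in the relevant range the conditional density is $\asymp \delta_t^\alpha \xi(\Za)^\alpha \cdot \alpha y^{-\alpha-1}$. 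Then, with the change of variables $y = u\,\xi(\Za)$, one has for $z$ in, say, $\mathcal{N}_t^+$,
\begin{align*}
{\mathrm E}_{\mathcal{F}_t} Q_t(z) &= -\int_{\delta_t\xi(\Za)}^{\xi(\Za)} \log\Big(1-\frac{y}{\xi(\Za)}\Big)\,(\text{cond.\ density})\,dy \;\asymp\; \delta_t^\alpha \alpha \int_{\delta_t}^{1} (-\log(1-u))\, u^{-\alpha-1}\,du,
\end{align*}
and similarly ${\mathrm E}_{\mathcal{F}_t} Q_t(z)^2 \asymp \delta_t^\alpha \alpha \int_{\delta_t}^1 \log^2(1-u)\,u^{-\alpha-1}\,du$; the same bounds hold for $z \in \mathcal{N}_t^-$ by the i.i.d.\ structure, and the sign factor $\mathrm{sgn}(z/\Za)$ is irrelevant for the second moment and bounded by $1$ for the first.

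For the second assertion, I would bound the integral $\int_{\delta_t}^1 (-\log(1-u))u^{-\alpha-1}\,du$ from above. Splitting at $u = 1/2$: on $[\delta_t, 1/2]$ we use $-\log(1-u) \le 2u$, giving a contribution $\lesssim \int_{\delta_t}^{1/2} u^{-\alpha}\,du \lesssim \delta_t^{1-\alpha}$ (for $\alpha > 1$), while the contribution from $[1/2,1]$ is a finite constant. Hence $\max_{z \in \mathcal{N}_t} |{\mathrm E}_{\mathcal{F}_t} Q_t(z)| \lesssim \delta_t^\alpha \cdot \delta_t^{1-\alpha} = \delta_t \to 0$ on $\mathcal{E}_t$, as required; note only $\mathcal{E}_t$ is needed here, not $\mathcal{E}_t^{[2,\infty)}$, matching the statement. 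For the first assertion, I would get a \emph{lower} bound on $\text{Var}_{\mathcal{F}_t} Q_t(z)$: since $\text{Var}_{\mathcal{F}_t} Q_t(z) = {\mathrm E}_{\mathcal{F}_t}Q_t(z)^2 - ({\mathrm E}_{\mathcal{F}_t}Q_t(z))^2$ and the square of the first moment is $O(\delta_t^2)$ while the second moment is $\asymp \delta_t^\alpha \int_{\delta_t}^1 \log^2(1-u)u^{-\alpha-1}\,du$, it suffices to bound this last integral below by a constant, which is immediate by restricting to $u \in [1/2, 3/4]$ where the integrand is bounded below. Thus $\text{Var}_{\mathcal{F}_t} Q_t(z) \gtrsim \delta_t^\alpha$ uniformly over $z \in \mathcal{N}_t$, and by~\eqref{varvar} and the lower bound $|\mathcal{N}_t| > \delta_t^{-\alpha}/\log\log(1/\delta_t)$ from $\mathcal{E}_t^{[2,\infty)}$,
\[
\text{Var}_{\mathcal{F}_t} Q_t \;\gtrsim\; \delta_t^\alpha \cdot \frac{\delta_t^{-\alpha}}{\log\log(1/\delta_t)} \;=\; \frac{1}{\log\log(1/\delta_t)} \;\to\; \infty,
\]
so its reciprocal tends to $0$ on $\mathcal{E}_t \cap \mathcal{E}_t^{[2,\infty)}$.

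The routine-but-delicate part is making the ``$\asymp$'' statements uniform in $z \in \mathcal{N}_t$ and almost sure: one must check that the error from the $\Psi_t$-conditioning (the tail beyond $\delta_t\xi(\Za)$ where $\Psi_t(z) < \Psi_t(\Za)$ may fail) is genuinely negligible, and here the key point, already used in Proposition~\ref{e2}, is that for $z \in \mathcal{N}_t$ one has $|z| < |\Za|$, so the penalty slope $|z|/t$ is smaller than that at $\Za$ and the constraint $\Psi_t(z) < \Psi_t(\Za)$ only excludes $y$ well above $\Psi_t(\Za) > f_t \xi(\Za) > \delta_t \xi(\Za)$; thus on the range $u \in (\delta_t, 1)$ the conditioning is inert and the density is exactly proportional to $y^{-\alpha-1}$ there, with the normalisation pinned down by~\eqref{bin1}. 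The main obstacle is therefore not any single estimate but keeping all constants uniform over the (random, growing) set $\mathcal{N}_t$ while working on the good event $\mathcal{E}_t$; once that bookkeeping is done, the two displayed limits follow from the elementary integral asymptotics above together with the cardinality bounds in $\mathcal{E}_t^{[2,\infty)}$.
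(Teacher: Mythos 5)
Your approach to the second assertion (the first-moment bound) is sound and matches the paper; your handling of the first assertion has several compounding errors that invalidate it.

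First, the claim that ``on the range $u\in(\delta_t,1)$ the conditioning is inert'' is wrong. The $\Psi_t$-constraint $y - \frac{|z|}{t}\log y < \Psi_t(\Za)$ is only guaranteed to be satisfied for $y < \Psi_t(\Za)$, and $\Psi_t(\Za)/\xi(\Za)$ is a random quantity that stays \emph{bounded away from $1$}: by Proposition~\ref{sl1} it converges to $(Y^{\ssup 1}-\rho X^{\ssup 1})/Y^{\ssup 1}$, which can easily be below $1/2$. For $z$ near the origin (small penalty slope $|z|/t$) the constraint effectively reduces to $\xi(z)\lesssim \Psi_t(\Za)$, so the range $u\in[1/2,3/4]$ you rely on to bound the second moment below may lie entirely outside the support of the conditional density. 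The paper circumvents this by restricting the lower-bound integral to $[\delta_t\xi(\Za),\Psi_t(\Za)]$ and using $\Psi_t(\Za)>f_t\xi(\Za)$ on $\mathcal{E}_t$, i.e.\ $u\in[\delta_t,f_t]$, which is the only range where the constraint is unconditionally inactive.

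Second, your per-site variance bound $\gtrsim\delta_t^\alpha$ is both incorrectly derived (for the reason above) and too weak. The dominant contribution to $\int_{\delta_t}^{f_t}u^{-\alpha-1}\log^2(1-u)\,du$ comes from near $u=\delta_t$, where $\log^2(1-u)\sim u^2$, giving $\sim\frac{1}{\alpha-2}\delta_t^{2-\alpha}$ for $\alpha>2$ and $\sim\log(1/\delta_t)$ for $\alpha=2$ (the paper's~\eqref{www2}). Multiplying by the normalising factor $\delta_t^\alpha$ gives $\mathrm{E}_{\mathcal{F}_t}Q_t(z)^2\gtrsim\delta_t^2$ for $\alpha>2$, not $\delta_t^\alpha$; since $\alpha>2$ means $\delta_t^\alpha\ll\delta_t^2$, your bound is genuinely weaker. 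You also omit the $\alpha=2$ case entirely, where the extra logarithmic factor $\delta_t^2\log(1/\delta_t)$ is essential.

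Third and most glaringly, the final display is wrong even on its own terms: since $\delta_t\to 0$, $\log\log(1/\delta_t)\to\infty$, so $1/\log\log(1/\delta_t)\to 0$, not $\infty$. Thus the bound $\mathrm{Var}_{\mathcal{F}_t}Q_t\gtrsim|\mathcal{N}_t|\delta_t^\alpha\gtrsim 1/\log\log(1/\delta_t)$ does \emph{not} prove divergence. The correct exponent $\delta_t^2$ is what saves the argument: it yields $\mathrm{Var}_{\mathcal{F}_t}Q_t\gtrsim\delta_t^{2-\alpha}/\log\log(1/\delta_t)$, and since $\delta_t^{2-\alpha}$ grows like a power of $\log t$ while $\log\log(1/\delta_t)\sim\log\log\log t$, this does tend to infinity for $\alpha>2$ (and similarly with the extra log for $\alpha=2$). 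Your weaker per-site bound cannot close this gap no matter how the final step is arranged.
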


\begin{proof}
We work throughout on the event $\mathcal{E}_t$. 
Using $\delta_t\xi(\Za)>\delta_tf_ta_t>1$ for the upper bound
and $\delta_t\xi(\Za)/\Psi_t(\Za)<\delta_t/f_t\to 0$
for the lower bound 
we obtain   
\begin{align*}
\int_1^{\infty}\frac{\alpha dy}{y^{\alpha+1}}
\one_{\{y-\frac{|z|}{t}\log y<\Psi_t(\Za),\xi(z)>\delta_t\xi(\Za)\}}
\le \int_{\delta_t\xi(\Za)}^{\infty}\frac{\alpha dy}{y^{\alpha+1}}=\big(\delta_t\xi(\Za)\big)^{-\alpha}
\end{align*}
and 
\begin{align*}
\int_1^{\infty}\frac{\alpha dy}{y^{\alpha+1}}
\one_{\{y-\frac{|z|}{t}\log y<\Psi_t(\Za),\xi(z)>\delta_t\xi(\Za)\}}
\ge \int_{\delta_t\xi(\Za)}^{\Psi_t(\Za)}\frac{\alpha dy}{y^{\alpha+1}}=(1+o(1))\big(\delta_t\xi(\Za)\big)^{-\alpha}
\end{align*}
implying 
\begin{align}
\label{pb}
\int_1^{\infty}\frac{\alpha dy}{y^{\alpha+1}}
\one_{\{y-\frac{|z|}{t}\log y<\Psi_t(\Za),\xi(z)>\delta_t\xi(\Za)\}}=(1+o(1))\big(\delta_t\xi(\Za)\big)^{-\alpha}
\end{align}
as $t\to\infty$ uniformly for all $z$ almost surely.  

\smallskip

Further, using $\delta_t\xi(\Za)>1$, the change of variables $y=u\xi(\Za)$, and $\Psi_t(\Za)/\xi(\Za)>f_t$ we get 
\begin{align}
\int_1^{\infty}&\frac{\alpha dy}{y^{\alpha+1}}
\log^2\Big(1-\frac{y}{\xi(\Za)}\Big)
\one_{\{y-\frac{|z|}{t}\log y<\Psi_t(\Za),\delta_t\xi(\Za)<y<\xi(\Za)\}}\notag\\
&\ge \int_{\delta_t\xi(\Za)}^{\Psi_t(\Za)}\frac{\alpha dy}{y^{\alpha+1}}
\log^2\Big(1-\frac{y}{\xi(\Za)}\Big)
\ge \frac{\alpha}{\xi(\Za)^{\alpha}} \int_{\delta_t}^{f_t} u^{-\alpha-1}
\log^2(1-u)du.
\label{www1}
\end{align}
Since $\delta_t/f_t\to 0$ we have 
\begin{align}
\label{www2}
 \int_{\delta_t}^{f_t} u^{-\alpha-1}
\log^2(1-u)du=(1+o(1)) \times
\left\{\begin{array}{ll}
\frac{1}{\alpha-2}\delta_t^{2-\alpha}, & \alpha>2,\\ 
\log(1/\delta_t), & \alpha=2.
\end{array}\right.
\end{align}
Combining~\eqref{pb}, ~\eqref{www1}, and~\eqref{www2} 
we obtain 
\begin{align}
\label{www5}
{\mathrm E}_{\mathcal{F}_t}Q_t^2(z)
\ge (1+o(1)) \times
\left\{\begin{array}{ll}
\frac{\alpha}{\alpha-2}\delta_t^{2}, & \alpha>2,\\ 
\alpha\delta_t^2\log(1/\delta_t), & \alpha=2.
\end{array}\right.
\end{align}
Now, using $\delta_t\xi(\Za)>1$ and the change of variables $y=u\xi(\Za)$ we compute 
\begin{align}
-\int_1^{\infty}&\frac{\alpha dy}{y^{\alpha+1}}
\log\Big(1-\frac{y}{\xi(\Za)}\Big)
\one_{\{y-\frac{|z|}{t}\log y<\Psi_t(\Za),\delta_t\xi(\Za)<y<\xi(\Za)\}}\notag\\
&\le -\int_{\delta_t\xi(\Za)}^{\xi(\Za)}\frac{\alpha dy}{y^{\alpha+1}}
\log\Big(1-\frac{y}{\xi(\Za)}\Big)
= -\frac{\alpha}{\xi(\Za)^{\alpha}} \int_{\delta_t}^{1} u^{-\alpha-1}
\log(1-u)du.
\label{www3}
\end{align}
Observe that 
\begin{align}
\label{www4}
-\int_{\delta_t}^{1} u^{-\alpha-1}
\log(1-u)du=(1+o(1))\frac{1}{\alpha-1}\delta_t^{1-\alpha}.
\end{align}
Combining~\eqref{pb}, \eqref{www3} and~\eqref{www4} we get 
\begin{align}
\label{www6} 
\big |{\mathrm E}_{\mathcal{F}_t}Q_t(z)\big|
\le (1+o(1))
\frac{\alpha}{\alpha-1}\delta_t.
\end{align}
Combining~\eqref{www5} and~\eqref{www6} we obtain
\begin{align}
\label{varz}
\text{Var}_{\mathcal{F}_t} Q_t(z)
\ge (1+o(1)) \times
\left\{\begin{array}{ll}
c(\alpha)\delta_t^{2}, & \alpha>2,\\ 
\alpha\delta_t^2\log(1/\delta_t), & \alpha=2.
\end{array}\right.,
\end{align} 
where $c(\alpha)=\frac{\alpha}{(\alpha-2)(\alpha-1)^2}$.
To prove the first result, it remains to notice that $|\mathcal{N}_t|>\delta_t^{-\alpha}/\log\log(1/\delta_t)$ on 
event $\mathcal{E}_t^{[2,\infty)}$ and that  
$\delta_t^{2-\alpha}/\log\log(1/\delta_t)\to \infty$
if $\alpha>2$ and $\delta_t^{2-\alpha}\log(1/\delta_t)/\log\log(1/\delta_t)\to \infty$ if $\alpha=2$. The second result follows immediately from~\eqref{www6}. 
\end{proof}

The final step is to apply a central limit theorem to show that the fluctuations due to the sites in $\mathcal{N}_t$ are in the Gaussian universality class. For each $z\in\mathcal{N}_t$, denote 
\begin{align}
\label{defv}
V_t(z)=\frac{Q_t(z)-{\mathrm E}_{\mathcal{F}_t}Q_t(z)}
{\sqrt{\text{Var}_{\mathcal{F}_t}Q_t}} \, , \quad V_t=\sum_{z\in \mathcal{N}_t}V_t(z),
\end{align}
and denote by 
\begin{align*}
F_{V_t}(x)=\Prob_{\mathcal{F}_t}\big(V_t\le x\big)
\end{align*}
the conditional distribution function of $V_t$. \

\begin{prop}
\label{P:clt}
As $t \to \infty$,
\[  \sup_{x \in \mathbb{R}} |F_{V_t} (x) - \Phi(x) |  \id_{\mathcal{E}_t \cap \mathcal{E}_t^{[2,\infty)}} \to 0 \quad \text{almost surely} , \]
where $\Phi$ denotes the distribution function of a standard normal random variable.
\end{prop}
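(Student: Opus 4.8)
The plan is to apply a Lindeberg–Feller–type central limit theorem for triangular arrays of conditionally independent random variables, combined with a Berry–Esseen bound to upgrade the convergence to the uniform (Kolmogorov-distance) statement required. Working throughout on the event $\mathcal{E}_t \cap \mathcal{E}_t^{[2,\infty)}$, recall from \eqref{defv} that $V_t = \sum_{z \in \mathcal{N}_t} V_t(z)$ is a sum of conditionally independent (with respect to $\mathcal{F}_t$), conditionally centred random variables with $\text{Var}_{\mathcal{F}_t} V_t = 1$ by construction. The Berry–Esseen theorem for independent (not necessarily identically distributed) summands gives
\[
\sup_{x \in \mathbb{R}} |F_{V_t}(x) - \Phi(x)| \le C \sum_{z \in \mathcal{N}_t} {\mathrm E}_{\mathcal{F}_t} |V_t(z)|^3 = C \frac{\sum_{z \in \mathcal{N}_t} {\mathrm E}_{\mathcal{F}_t} |Q_t(z) - {\mathrm E}_{\mathcal{F}_t} Q_t(z)|^3}{(\text{Var}_{\mathcal{F}_t} Q_t)^{3/2}},
\]
for an absolute constant $C$, so it suffices to show the right-hand side tends to zero almost surely.

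First I would control the third absolute moments of the centred summands. Using the conditional law of $\xi(z)$ for $z \in \mathcal{N}_t$ (Pareto$(\alpha)$ conditioned on $\Psi_t(z) < \Psi_t(\Za)$ and $\xi(z) > \delta_t \xi(\Za)$), and the normalising estimate \eqref{pb}, one computes as in the proof of Proposition~\ref{P:var} that
\[
{\mathrm E}_{\mathcal{F}_t} |Q_t(z)|^3 \le (1+o(1)) \, \frac{\alpha}{\xi(\Za)^\alpha} \int_{\delta_t}^{1} u^{-\alpha-1} |\log(1-u)|^3 \, du \cdot (\delta_t \xi(\Za))^{\alpha} = (1+o(1)) \, c'(\alpha) \, \delta_t^{3}
\]
uniformly in $z$ (the dominant contribution to the integral coming from $u$ near $\delta_t$, giving order $\delta_t^{3-\alpha}$ when $\alpha > 3$; the borderline cases $\alpha \le 3$ produce at worst an extra logarithmic or $\delta_t^{3-\alpha}$ factor, all of which are still $o(\delta_t^{-\alpha})$ after multiplying by $|\mathcal{N}_t|$ — see below). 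Since ${\mathrm E}_{\mathcal{F}_t}|Q_t(z) - {\mathrm E}_{\mathcal{F}_t}Q_t(z)|^3 \le 8 ({\mathrm E}_{\mathcal{F}_t}|Q_t(z)|^3 + |{\mathrm E}_{\mathcal{F}_t}Q_t(z)|^3)$ and $|{\mathrm E}_{\mathcal{F}_t} Q_t(z)| = O(\delta_t)$ by \eqref{www6}, each centred summand has conditional third absolute moment $O(\delta_t^3)$ (up to log factors for small $\alpha$). Summing over the at most $\delta_t^{-\alpha}\log(1/\delta_t)$ sites in $\mathcal{N}_t$ gives numerator $O(\delta_t^{3-\alpha}\log(1/\delta_t))$ (again with harmless extra log factors), while by \eqref{varz} and $|\mathcal{N}_t| > \delta_t^{-\alpha}/\log\log(1/\delta_t)$ the denominator is at least of order $(\delta_t^{2-\alpha}/\log\log(1/\delta_t))^{3/2} = \delta_t^{3(2-\alpha)/2}(\log\log(1/\delta_t))^{-3/2}$ for $\alpha > 2$, and similarly with an extra $(\log(1/\delta_t))^{3/2}$ for $\alpha = 2$. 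The ratio is therefore bounded by $\delta_t^{3-\alpha - 3(2-\alpha)/2} \cdot (\text{polylog in } 1/\delta_t) = \delta_t^{\alpha/2} \cdot (\text{polylog})$, which tends to zero since $\delta_t = (\log t)^{-1/(2\alpha)} \to 0$ and any power of $\log(1/\delta_t)$ or $\log\log(1/\delta_t)$ is negligible against $\delta_t^{-\alpha/2} = (\log t)^{1/2}$; this is precisely the reason $\delta_t$ was chosen in \eqref{n} with $1/\delta_t$ growing more slowly than $(\log t)^{1/\alpha}$.

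The main obstacle is making the moment estimates uniform over $z \in \mathcal{N}_t$ and over realisations of $\mathcal{F}_t$, and handling the $\alpha$-dependence of the integrals $\int_{\delta_t}^1 u^{-\alpha-1}|\log(1-u)|^k \, du$ carefully in the regimes $2 \le \alpha \le 3$ versus $\alpha > 3$, where the behaviour near $u = 1$ (from $|\log(1-u)|^k$) competes with the behaviour near $u = \delta_t$. A clean way to do this is to split the integral at $u = 1/2$: on $[\delta_t, 1/2]$ we bound $|\log(1-u)| \le 2u$ and obtain the leading $\delta_t^{k-\alpha}$ (or $\log(1/\delta_t)$ when $\alpha = k$) term, while on $[1/2,1]$ the integral is a finite $\alpha$-dependent constant since $u^{-\alpha-1}$ is bounded there and $\int_{1/2}^1 |\log(1-u)|^k \, du < \infty$. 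One should also note that on $\mathcal{E}_t \cap \mathcal{E}_t^{[2,\infty)}$ the quantities $\xi(\Za)$, $|\mathcal{N}_t|$, $\Psi_t(\Za)$ are pinned down up to the scaling functions $a_t, r_t, f_t, g_t$, so all the $o(1)$ and $O(\cdot)$ terms above are deterministic functions of $t$ once we condition, and the $\log\log\log t$ growth restriction \eqref{fg} on $f_t, g_t$ ensures they do not interfere with the $\delta_t^{\alpha/2}$ gain. With the Berry–Esseen bound thus shown to vanish, the proposition follows; note this is stronger than, and immediately implies, the conditional CLT $F_{V_t} \Rightarrow \Phi$, and combined with Propositions~\ref{P:decomp} and \ref{P:var} it will yield $|\log u(t,\Za) - \log u(t,-\Za)| \to \infty$ in probability, completing the proof of Theorem~\ref{main2}.
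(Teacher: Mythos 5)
Your proposal is correct in strategy and conclusion, but takes a genuinely different route from the paper. The paper states and proves a Lindeberg--Feller-type central limit theorem for conditionally independent triangular arrays (Appendix B), and then verifies the Lindeberg condition by showing that the truncated conditional second moments $\sum_{z\in\mathcal{N}_t}{\mathrm E}_{\mathcal{F}_t}[V_t(z)^2\one_{\{|V_t(z)|\ge\e\}}]$ vanish; uniformity of the resulting convergence of distribution functions is then obtained from the continuity of $\Phi$. You instead invoke the Berry--Esseen inequality and control the Lyapunov ratio $\sum_z{\mathrm E}_{\mathcal{F}_t}|V_t(z)|^3$. This is a stronger third-moment condition (it implies Lindeberg via Markov), and has the advantage of giving the Kolmogorov-distance bound directly without a separate uniformity argument; it is a perfectly valid alternative here since the relevant third moments do exist and are computable from the conditional Pareto law and \eqref{pb}, \eqref{www6}, \eqref{varz}.

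One caveat: your intermediate exponent book-keeping is off in the regime $2\le\alpha<3$. The per-site conditional third absolute moment is not $O(\delta_t^3)$ there; since
\[
{\mathrm E}_{\mathcal{F}_t}|Q_t(z)|^3 \sim \alpha\,\delta_t^\alpha\int_{\delta_t}^1 u^{-\alpha-1}|\log(1-u)|^3\,du,
\]
and the integral is $\Theta(\delta_t^{3-\alpha})$ for $\alpha>3$, $\Theta(\log(1/\delta_t))$ for $\alpha=3$, but $\Theta(1)$ for $2\le\alpha<3$, the third moment is $\Theta(\delta_t^\alpha)$ for $\alpha<3$, which is much larger than $\delta_t^3$. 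Consequently the numerator after summing over $\mathcal{N}_t$ is $\Theta(\operatorname{polylog}(1/\delta_t))$ rather than $\Theta(\delta_t^{3-\alpha}\log(1/\delta_t))$ for $\alpha<3$, and the final bound on the Lyapunov ratio is $\delta_t^{3(\alpha-2)/2}\cdot\operatorname{polylog}$ (or $(\log(1/\delta_t))^{-1/2}\cdot\operatorname{polyloglog}$ when $\alpha=2$), not $\delta_t^{\alpha/2}\cdot\operatorname{polylog}$. This still tends to zero for all $\alpha\ge2$ because the reduced denominator exponent $\delta_t^{3(2-\alpha)/2}$ more than compensates, so your conclusion is unaffected; but the exponent $\alpha/2$ as written is only correct for $\alpha>3$ and you should track the three regimes $\alpha>3$, $\alpha=3$, $2\le\alpha<3$ separately.
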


\begin{proof}
This result follows from an application of the central limit theorem that we state and prove in Appendix \ref{clt}. It remains to verify that the conditions of the theorem are satisfied. 

\smallskip

First note that, conditionally on $\mathcal{F}_t$, the random variables $V_t(z)$, $z \in \mathcal{N}_t$, are independent. Moreover, by construction, for each $t>0$ and 
$z \in \mathcal{N}_t$,
\[ {\mathrm E}_{\mathcal{F}_t}V_t(z) = 0  \quad \text{and} \quad   \sum_{z \in \mathcal{N}_t} {\mathrm E}_{\mathcal{F}_t} V_t^2(z)  = 1   \quad  \text{almost surely.}  
\] 
Hence it remains to verify that, for each $\varepsilon > 0$,
\begin{align}
\label{tt3}
\sum_{z \in \mathcal{N}_t} 
{\mathrm E}_{\mathcal{F}_t}\big[ V_t^2(z) 
\id_{\{|V_t(z)| \ge \varepsilon \}}  \big]  
\id_{\mathcal{E}_t \cap \mathcal{E}_t^{[2,\infty)}}  \to 0   \quad \text{almost surely}.
\end{align}

For the rest of the proof assume the event 
$\mathcal{E}_t \cap \mathcal{E}_t^{[2,\infty)}$ holds, and remark that, according to~\eqref{defv} 
\begin{align*}
Q_t(z)={\mathrm E}_{\mathcal{F}_t}Q_t(z)+V_t(z)\sqrt{\text{Var}_{\mathcal{F}_t}Q_t}.
\end{align*}
Since $Q_t(z)$ and ${\mathrm E}_{\mathcal{F}_t}Q_t(z)$
are either both non-negative or non-positive almost surely, we obtain, using that $\text{Var}_{\mathcal{F}_t} Q_t$ diverges  and that $E_{\mathcal{F}_t}Q_t(z)$ tends to zero 
by Proposition~\ref{P:var},  
\begin{align*}
\big\{|V_t(z)| \ge \varepsilon\big\} \subseteq 
\big\{|Q_t(z)|\ge \e\sqrt{\text{Var}_{\mathcal{F}_t}Q_t}\big\}.
\end{align*}
Hence 
\begin{align}
\sum_{z \in \mathcal{N}_t} {\mathrm E}_{\mathcal{F}_t} 
\big[V_t^2(z)\id_{\{ |V_t(z)| \ge \varepsilon \}} \big]  
& \le  \frac{1}{\text{Var}_{\mathcal{F}_t}Q_t}
\sum_{z \in \mathcal{N}_t} {\mathrm E}_{\mathcal{F}_t} 
\Big[\big(Q_t(z)-{\mathrm E}_{\mathcal{F}_t}Q_t(z)\big)^2\id_{\big\{ |Q_t(z)| \ge \varepsilon \sqrt{\text{Var}_{\mathcal{F}_t}Q_t}\big\}} \Big] 
\label{tt1}
\end{align}
Combining this with~\eqref{varvar} we observe that in order to prove~\eqref{tt3} it suffices to show that 
\begin{align}
\label{tt4}
\frac{{\mathrm E}_{\mathcal{F}_t} 
\Big[\big(Q_t(z)-{\mathrm E}_{\mathcal{F}_t}Q_t(z)\big)^2\id_{\big\{ |Q_t(z)| \ge \varepsilon \sqrt{\text{Var}_{\mathcal{F}_t}Q_t}\big\}} \Big] }
{\text{Var}_{\mathcal{F}_t}Q_t(z)}\to 0
\end{align}
uniformly in $z$ almost surely.

\smallskip

Denote 
\begin{align*}
\nu_t^{\e}=\exp\big\{-\varepsilon \sqrt{\text{Var}_{\mathcal{F}_t}Q_t}\big\}.
\end{align*}
Then
\begin{align*}
\big\{ |Q_t(z)| \ge \varepsilon \sqrt{\text{Var}_{\mathcal{F}_t}Q_t}\big\}
=\big\{\xi(z)>(1-\nu_t^{\e})\xi(\Za)\big\}
\subset\big\{\xi(z)>\delta_t\xi(\Za)\big\}
\end{align*}
since $\nu_t^{\e}\to 0$ almost surely on the event $\mathcal{E}_t \cap \mathcal{E}_t^{[2,\infty)}$ by Proposition~\ref{P:var}. Similarly to the proof of Proposition~\ref{P:var} we use the change of 
variables $y=u\xi(\Za)$ to compute, for $k\in\{0,1,2\}$,
\begin{align*}
&\int_1^{\infty}\frac{\alpha}{y^{\alpha+1}}
\Big|\log\Big(1-\frac{y}{\xi(\Za)}\Big)\Big|^k
\one_{\big\{y-\frac{|z|}{t}\log y<\Psi_t(\Za),\delta_t\xi(\Za)<y<\xi(\Za), y>(1-\nu_t^{\e})\xi(\Za)\big\}}dy\notag\\
&\le \int_{(1-\nu_t^{\e})\xi(\Za)}^{\infty}\frac{\alpha}{y^{\alpha+1}}
\Big|\log\Big(1-\frac{y}{\xi(\Za)}\Big)\Big|^k dy
= \frac{\alpha}{\xi(\Za)^{\alpha}} \int_{1-\nu_t^{\e}}^{\infty} u^{-\alpha-1}
|\log(1-u)|^kdu
\sim\frac{c_k}{\xi(\Za)^{\alpha}}
\end{align*}
with some $c_k>0$. By the second part of Proposition~\ref{P:var} and using~\eqref{pb} we obtain
\begin{align*}
{\mathrm E}_{\mathcal{F}_t} 
\Big[\big(Q_t(z)-{\mathrm E}_{\mathcal{F}_t}Q_t(z)\big)^2\id_{\big\{ |Q_t(z)| \ge \varepsilon \sqrt{\text{Var}_{\mathcal{F}_t}Q_t}\big\}} \Big] 
\sim c_2\delta_t^{\alpha}
\end{align*}
uniformly in $z$ almost surely. Combining this with~\eqref{varz} we arrive at~\eqref{tt4}. 
\end{proof}

We are now ready to complete the proof of Theorem \ref{main2}. The point is that, since we have shown that the fluctuations due to $\mathcal{N}_t$ are unbounded and in the Gaussian universality class, they place negligible probability mass on any bounded scale. Hence we have the result.

\smallskip

\subsection{Completion of the proof of Theorem~\ref{main2}}
Let $c>0$. As a direct corollary of Theorem \ref{main0}, 
\begin{align*}
\Prob\Big(\Big\{\Big|\log\frac{u(t,\Za)}{u(t,-\Za)}\Big|<c\Big\}\cap \mathfrak{D}^c_t \Big)\to 0,
\end{align*}
so it remains to show the convergence on the event $\mathfrak{D}_t$. \smallskip

Let $c>0$. Since $\text{Prob}(\mathfrak{D}_t)\not\to 0$ by Theorem~\ref{main0}
and $\text{Prob}(\mathcal{E}_t\cap \mathcal{E}_t^{[2,\infty)})\to 1$ by Propositions~\ref{e2} and~\ref{e0}, it suffices to show that, as $t\to\infty$, 
\begin{align*}
\Prob\Big(\Big\{\Big|\log\frac{u(t,\Za)}{u(t,-\Za)}\Big|<c\Big\}\cap \mathfrak{D}_t\cap\mathcal{E}_t\cap \mathcal{E}_t^{[2,\infty)}\Big)\to 0,
\end{align*}
By Proposition~\ref{P:decomp} it is then enough to prove that, as $t\to\infty$, 
\begin{align*}
\Prob\big(\big\{|Q_t-P_t|<2c\big\}\cap \mathfrak{D}_t\cap\mathcal{E}_t\cap \mathcal{E}_t^{[2,\infty)}\big)\to 0,
\end{align*}
for which, in turn, it suffices to show that 
\begin{align*}
\mathrm{E}\Big[\Prob_{\mathcal{F}_t}
\big\{|Q_t-P_t|<2c\big\}
\one_{\mathcal{E}_t\cap \mathcal{E}_t^{[2,\infty)}}\Big]\to 0.
\end{align*}
Observe that even though the event $\mathcal{E}_t$ does not belong to $\mathcal{F}_t$ we can take it out of $\Prob_{\mathcal{F}_t}$ by Proposition~\ref{e0} and since the function under $\mathrm{E}$ is bounded. 
Now, by the dominated convergence theorem, it remains to prove that 
\begin{align}
\label{tt6}
\Prob_{\mathcal{F}_t}
\big\{|Q_t-P_t|<2c\big\}\to 0
\end{align}
almost surely on $\mathcal{E}_t\cap \mathcal{E}_t^{[2,\infty)}$.
To do so, assume that $\mathcal{E}_t\cap \mathcal{E}_t^{[2,\infty)}$ holds and observe that  
\begin{align*}
Q_t=V_t\sqrt{\text{Var}_{\mathcal{F}_t}Q_t}+\mathrm{E}_{\mathcal{F}_t}Q_t.
\end{align*}
Hence~\eqref{tt6} is equivalent to showing that, almost surely, 
\begin{align}
\label{tt7}
\Prob_{\mathcal{F}_t}
\Big\{V_t\in 
\big[\text{Var}_{\mathcal{F}_t}Q_t\big]^{-\frac 1 2}\big(P_t-\mathrm{E}_{\mathcal{F}_t}Q_t-2c,P_t-\mathrm{E}_{\mathcal{F}_t}Q_t+2c\big)\Big\}\to 0
\end{align} 
Since $P_t$, $\mathrm{E}_{\mathcal{F}_t}Q_t$, and 
$\text{Var}_{\mathcal{F}_t}Q_t$ are $\mathcal{F}_t$-measurable, and the length of the interval on the right-hand side of $\in$ tends to zero by Proposition~\ref{P:var}, 
\eqref{tt7} now follows from~Proposition~\ref{P:clt}.

\bigskip


\appendix

\section{Point process arguments}
\label{A:B}
In this appendix we give the details of our point process arguments in Section \ref{s:pp}, and in particular provide the proofs of Lemma \ref{lppp} and Propositions \ref{sl1}--\ref{prob} and \ref{td}. All notation in this appendix is carried over from the main part of the paper.

\smallskip

\subsection{Point process convergence for the potential}
We first give the proof of the point process convergence for the rescaled potential in Lemma \ref{lppp}.

\begin{proof}[Proof of Lemma \ref{lppp}]
Let $(\sigma_n)_{n\in\N_0}$ be a collection of random variables defined by $\sigma_n=1$ if $n \in D$ and $\sigma_n=-1$ if $n \in E$. Define the point process
\begin{align*}
\Sigma_s=\sum_{z\in \Z}\e\Big(\frac{z}{s},\frac{\sigma_{|n|}\xi_0(z)}{s^{1/\alpha}}\Big)
\end{align*}
and let $\Sigma$ be a Poisson point process on $G=\big(\R\times [-\infty,0)\big) \cup \big(\R \times (0,\infty]\big)$
with the intensity measure $\hat\mu$ which equals $p\mu$ on the lower half-plane 
and $q\mu$ on the upper half-plane. It suffices to show that $\Sigma_s$ converges in law to $\Sigma$ on the state space $G$, 
as $\Pi_s^{\ssup{d,+}}$ can be represented 
by the restriction of $\Sigma_s$ to the right half of the lower half plane (reflected with respect to the $x$-axes)
and $\Pi_s^{\ssup e}$ by the restriction of $\Sigma_s$ to the upper half plane. Clearly the corresponding restrictions of $\Sigma$ have the same law as the pair $(\Pi^{\ssup{d,+}}, \Pi^{\ssup e})$. 

\smallskip
Let $\mathcal{C}_K^+$ be the set of continuous functions $h:G\to\mathbb{R}_+$ with compact support.
For any $s$, denote by   
\begin{align*}
\Phi_s(h)=\mathrm{E} \exp\Big\{-\int hd\Sigma_s\Big\}
\qquad\text{and}\qquad
\Phi(h)=\mathrm{E} \exp\Big\{-\int hd\Sigma\Big\},
\end{align*}
the Laplace transforms of $\Sigma_s$ and $\Sigma$, where $h\in\mathcal{C}_K^+$. 
Recall from~\cite[Prop.\ 3.6]{resnick} that since $\Sigma$ is a Poisson point process its Laplace transform is given by 
\begin{align}
\label{lt}
\log\Phi(h)=-\iint_{\R^2}(1-e^{-h(x,y)})\hat\mu(dx,dy).
\end{align}
By~\cite[Prop.\ 3.19]{resnick} it suffices to show that $\Phi_t(h)\to\Phi(h)$ for all $h\in\mathcal{C}_K^+$. We have 
\begin{align*}
\log \Phi_s(h)
&=\log\mathrm{E} \exp\Big\{-\int hd\Sigma_s\Big\}
=\log \mathrm{E}\exp\Big\{-\sum_{z\in\Z}h
\Big(\frac{z}{s},\frac{\sigma_{|z|}\xi_0(z)}{s^{1/\alpha}}\Big)\Big\}\\
&=\sum_{z=0}^{\infty}\log \mathrm{E}\exp\Big\{-\delta_{z}\Big[h
\Big(\frac{z}{s},\frac{\sigma_{z}\xi_0(z)}{s^{1/\alpha}}\Big)
+h\Big(-\frac{z}{s},\frac{\sigma_{|z|}\xi_0(-z)}{s^{1/\alpha}}\Big)\Big]\Big\}, 
\end{align*}
where $\delta_0=1/2$ and $\delta_z=1$ otherwise. 
Computing the expectation with respect to $(\sigma_n)$ first, we have 
\begin{align*}
\log \Phi_s(h)
&=\sum_{z=0}^{\infty}\log \Big(p\mathrm{E}\exp\Big\{-\delta_{z}\Big[h
\Big(\frac{z}{s},-\frac{\xi_0(z)}{s^{1/\alpha}}\Big)
+h\Big(-\frac{z}{s},-\frac{\xi_0(-z)}{s^{1/\alpha}}\Big)\Big]\Big\}\\
&\phantom{aaaaaaa}+q\mathrm{E}\exp\Big\{-\delta_{z}\Big[h
\Big(\frac{z}{s},\frac{\xi_0(z)}{s^{1/\alpha}}\Big)
+h\Big(-\frac{z}{s},\frac{\xi_0(-z)}{s^{1/\alpha}}\Big)\Big]\Big\}\Big). 
\end{align*}
Further, using the independence of $\xi(z)$ and $\xi(-z)$
we get 
\begin{align}
\log \Phi_s(h)
&=\log\Big(p\mathrm{E}\exp\Big\{-h
\Big(\frac{0}{s},-\frac{\xi_0(0)}{s^{1/\alpha}}\Big)\Big\}
+q\mathrm{E}\exp\Big\{-h
\Big(\frac{0}{s},\frac{\xi_0(0)}{s^{1/\alpha}}\Big)\Big\}\Big)\notag\\
&\quad +\sum_{z=1}^{\infty}\log \Big(p\mathrm{E}\exp\Big\{-h
\Big(\frac{z}{s},-\frac{\xi_0(z)}{s^{1/\alpha}}\Big)\Big\}
\mathrm{E}\exp\Big\{-h\Big(-\frac{z}{s},-\frac{\xi_0(-z)}{s^{1/\alpha}}\Big)\Big\}\notag\\
&\phantom{aaaaaaa}+q\mathrm{E}\exp\Big\{-h
\Big(\frac{z}{s},\frac{\xi_0(z)}{s^{1/\alpha}}\Big)\Big\}
\mathrm{E}\exp\Big\{-h\Big(-\frac{z}{s},\frac{\xi_0(-z)}{s^{1/\alpha}}\Big)\Big\}\Big). 
\label{ee3}
\end{align}
Using the substitution $y=us^{1/\alpha}$, compute  
\begin{align}
\mathrm{E}\exp\Big\{-h
\Big(\frac{z}{s},\frac{\xi_0(z)}{s^{1/\alpha}}\Big)\Big\}
&=1-\mathrm{E}\Big[1-\exp\Big\{-h
\Big(\frac{z}{s},\frac{\xi_0(z)}{s^{1/\alpha}}\Big)\Big\}\Big]\notag\\
&=1-\int_1^{\infty} \Big[1-\exp\Big\{-h
\Big(\frac{z}{s},\frac{y}{s^{1/\alpha}}\Big)\Big\}\Big]
\frac{\alpha dy}{y^{\alpha+1}}\notag\\
&=1-\frac{1}{s}\int_{0}^{\infty} \Big[1-\exp\Big\{-h
\Big(\frac{z}{s},u\Big)\Big\}\Big]
\frac{\alpha du}{u^{\alpha+1}}
\label{ee1}
\end{align}
for all $s$ such that $s^{-1/\alpha}\le c$, where $c$ is the  distance from the set $A$ to the $y$-axes, which is positive since $A$ is compactly supported in $G$. 
Observe that 
\begin{align}
\int_{0}^{\infty} \Big[1-\exp\Big\{-h
\Big(\frac{z}{s},u\Big)\Big\}\Big]
\frac{\alpha du}{u^{\alpha+1}}\le 
\int_c^{\infty}\frac{\alpha du}{u^{\alpha+1}}<\infty
\label{ee2}
\end{align}
uniformly in $z$. Repeating the calculations~\eqref{ee1} and~\eqref{ee2} for all expectations in~\eqref{ee3} and doing the Taylor expansion of the logarithm, which is possible by~\eqref{ee2}, we obtain  
\begin{align*}
&\log \Phi_s(h)\\
&=\sum_{z=1}^{\infty}\log \Big(1-\Big[\frac{p}{s}
\int_{0}^{\infty} \Big[1-\exp\Big\{-h
\Big(\frac{z}{s},-u\Big)\Big\}\Big]
\frac{\alpha du}{u^{\alpha+1}}
+\frac{p}{s}\int_{0}^{\infty} \Big[1-\exp\Big\{-h
\Big(-\frac{z}{s},-u\Big)\Big\}\Big]
\frac{\alpha du}{u^{\alpha+1}}\\
&\quad+\frac{q}{s}\int_{0}^{\infty} \Big[1-\exp\Big\{-h
\Big(\frac{z}{s},u\Big)\Big\}\Big]
\frac{\alpha du}{u^{\alpha+1}}+\frac{q}{s}\int_{0}^{\infty} \Big[1-\exp\Big\{-h
\Big(-\frac{z}{s},u\Big)\Big\}\Big]
\frac{\alpha du}{u^{\alpha+1}}\Big](1+o(1))\Big)+o(1)\\
&=-\sum_{z=1}^{\infty}\Big[\frac{p}{s}
\int_{-\infty}^{0} \Big[1-\exp\Big\{-h
\Big(\frac{z}{s},u\Big)\Big\}\Big]
\frac{\alpha du}{|u|^{\alpha+1}}
+\frac{p}{s}\int_{-\infty}^{0} \Big[1-\exp\Big\{-h
\Big(-\frac{z}{s},u\Big)\Big\}\Big]
\frac{\alpha du}{|u|^{\alpha+1}}\\
&\quad +\frac{q}{s}\int_{0}^{\infty} \Big[1-\exp\Big\{-h
\Big(\frac{z}{s},u\Big)\Big\}\Big]
\frac{\alpha du}{u^{\alpha+1}}+\frac{q}{s}\int_{0}^{\infty} \Big[1-\exp\Big\{-h
\Big(-\frac{z}{s},u\Big)\Big\}\Big]
\frac{\alpha du}{u^{\alpha+1}}\Big](1+o(1))\\ 
&=-\sum_{z\in \Z}\Big[\frac{p}{s}
\int_{-\infty}^{0} \Big[1-\exp\Big\{-h
\Big(\frac{z}{s},u\Big)\Big\}\Big]
\frac{\alpha du}{|u|^{\alpha+1}}
+\frac{q}{s}\int_{0}^{\infty} \Big[1-\exp\Big\{-h
\Big(\frac{z}{s},u\Big)\Big\}\Big]
\frac{\alpha du}{u^{\alpha+1}}\Big](1+o(1))\\ 
&=-p\iint_{\R\times (-\infty,0)}
\big(1-e^{-h(s,u)}\big)\mu(ds,du)
-q\iint_{\R\times (0,\infty)}
\big(1-e^{-h(s,u)}\big)\mu(ds,du)+o(1). 
\end{align*}
According to~\eqref{lt} we obtain  
$\log\Phi_s(h)\to \log\Phi(h)$ and so $(\Pi^{\ssup{d,+}}_s, \Pi^{\ssup{e}}_s)$ converges in law to $(\Pi^{\ssup{d,+}}, \Pi^{\ssup e})$. This implies that $(\Pi^{\ssup{d,+}}_s,\Pi^{\ssup{d,-}}_s, \Pi^{\ssup{e}}_s)$ converges in law to $(\Pi^{\ssup{d,+}}, \Pi^{\ssup{d,-}},\Pi^{\ssup e})$ since 
$\Pi_s^{\ssup{d,-}}$ and $\Pi^{\ssup{d,-}}$ are deterministic reflections of $\Pi_s^{\ssup{d,+}}$ and $\Pi^{\ssup{d,+}}$ respectively, and so we have the result.
\end{proof}

\subsection{Convergence for functionals of the potential}
We now show how to use the point process convergence of the rescaled potential to pass certain functionals of $\xi$ to the limit. In particular, we prove Propositions \ref{sl1}--\ref{prob} and \ref{td}. 

\smallskip

For technical reasons, instead of working directly with the functional $\Psi_t$ it will be convenient to work with a simpler version $\hat{\Psi}_t$ in which the penalty term is not random (and depends on $|z|$ and $t$ only). To this end, for any $t>0$ and $z\in\Z$, let 
\begin{align*}
\hat\Psi_t(z)=\xi(z)-\frac{|z|}{t}\rho\log t. 
\end{align*}
Denote by $\Zaa$ a maximiser of $\hat \Psi_t$
and by $\Zbb$ a maximiser of $\hat\Psi_t$ on the set $\Z\backslash\{\hat Z_t^{\ssup 1},-\hat Z_t^{\ssup 1}\}$
if $\hat Z_t^{\ssup 1}\in D$ and on the set $\Z\backslash\{\hat Z_t^{\ssup 1}\}$
if $\hat Z_t^{\ssup 1}\in E$.  By the same argument as in Lemma \ref{exi}, for each $i = 1,2$, there are at most two choices for $\Zcc$ and, moreover, there are two if and only if both are in $D$ and symmetrical about the origin. Denote 
\begin{align*}
\mathcal{\hat D}_t=\{\Zaa\in D\} .
\end{align*}
In Lemma \ref{LB1} we establish convergence for the maximisers of $\hat{\Psi}_t$; in Lemma \ref{coincide} we prove that the maximisers of $\Psi_t$ and $\hat{\Psi}_t$ are the same with overwhelming probability.

\smallskip

For all the arguments in this section, we shall make use of the point process $\bar \Pi_s$ defined by 
\begin{align*}
\bar \Pi_s(A)=\Pi_s^{\ssup e}(A)+\Pi_s^{\ssup e}(\hat A)+\Pi_s^{\ssup {d,+}}(A)
\end{align*}
for any Borel set $A$, where $\hat A$ denotes the reflection of $A$ with respect to the $y$-axis. This is the prelimit version of the Poisson point process $\bar \Pi$. 

\begin{lemma}\label{LB1}
As $t\to\infty$, \[\Big(\frac{|\Zaa|}{r_t},\frac{|\Zbb|}{r_t},\frac{\xi(\Zaa)}{a_t},\frac{\xi(\Zbb)}{a_t}\Big)
\Rightarrow (X^{\ssup 1},X^{\ssup 2},Y^{\ssup 1},Y^{\ssup 2}).\]
\end{lemma}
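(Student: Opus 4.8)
The plan is to derive Lemma~\ref{LB1} directly from the point process convergence in Lemma~\ref{lppp}, using a continuous mapping argument for the functional that extracts the top two order statistics of $y-\rho x$ from a point configuration. First I would rescale space by $r_t$ and potential values by $a_t$. Note that $r_t a_t^{-\alpha} = 1$ and $a_t^{-1} r_t/t \cdot (\rho \log t) = \rho$ by the definitions of $r_t, a_t$, so that for a site $z$ with $|z| = x r_t$ and $\xi(z) = y a_t$ one has
\begin{align*}
\frac{1}{a_t}\hat\Psi_t(z) = \frac{\xi(z)}{a_t} - \frac{|z|}{t a_t}\rho \log t = y - \rho x.
\end{align*}
Hence, after rescaling, the maximiser $\Zaa$ of $\hat\Psi_t$ corresponds exactly to the point of $\bar\Pi_{r_t}$ maximising $y - \rho x$ (using the reflection symmetry of $\hat\Psi_t$ to fold the negative half-line onto $\bar\Pi_{r_t}$, exactly as $\bar\Pi$ is the folded version of $\Pi$), and $\Zbb$ to the point maximising $y-\rho x$ among the remaining points. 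The target random variables $(X^{\ssup 1}, Y^{\ssup 1})$ and $(X^{\ssup 2}, Y^{\ssup 2})$ are, by their defining property in the excerpt, precisely the corresponding top two maximisers for the limit process $\bar\Pi$.

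The key steps, in order, are: (1) express $(|\Zaa|/r_t, \xi(\Zaa)/a_t)$ and $(|\Zbb|/r_t, \xi(\Zbb)/a_t)$ as a deterministic functional $T$ of the point configuration $\bar\Pi_{r_t}$, namely $T(\Sigma) = $ the top two points of $\Sigma$ in the order induced by the linear functional $(x,y) \mapsto y - \rho x$; (2) verify that $T$ is well-defined and continuous (in the vague topology on point measures) at $\bar\Pi$-almost every configuration — this is where Lemma~\ref{xy} is invoked, since it guarantees that for the limit process the maximiser of $y-\rho x$ is almost surely unique, lies at a strictly positive level, and that there are only finitely many points above any positive level, so the argmax is stable under small perturbations; (3) apply Lemma~\ref{lppp}, which gives $\bar\Pi_{r_t} \Rightarrow \bar\Pi$ (as a superposition of the three convergent components $\Pi^{\ssup{d,+}}_{r_t}, \Pi^{\ssup{d,-}}_{r_t}, \Pi^{\ssup e}_{r_t}$, folded), together with the continuous mapping theorem to conclude $T(\bar\Pi_{r_t}) \Rightarrow T(\bar\Pi) = (X^{\ssup 1}, X^{\ssup 2}, Y^{\ssup 1}, Y^{\ssup 2})$.

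The main obstacle is step~(2): the functional $T$ is not continuous on all of the space of point measures, so one must carefully restrict to a set of full $\bar\Pi$-measure on which it is continuous, and check that convergence in the vague topology really does force convergence of the top two order statistics. The subtlety is that vague convergence does not control mass escaping to infinity (in the $x$-direction, the level $y - \rho x$ can only decrease, so this is harmless, but one should still argue it carefully by restricting attention to the region $\{y - \rho x > \varepsilon\}$ for small $\varepsilon > 0$, which is relatively compact in the relevant sense). Concretely I would: fix $\varepsilon \in (0, Y^{\ssup 2} - \rho X^{\ssup 2})$, which is positive almost surely by Lemma~\ref{xy}; observe that the region $A_\varepsilon = \{(x,y): y - \rho x > \varepsilon\}$ contains both top points of $\bar\Pi$ but is, after the coordinate change $(x,y) \mapsto (x, y - \rho x)$, of the form $\R \times (\varepsilon, \infty]$ which carries finite $\bar\mu$-mass restricted to any bounded $x$-range, and moreover $\bar\Pi(A_\varepsilon)$ is almost surely finite with no point on the boundary; then standard facts about vague convergence (e.g.\ \cite[Prop.~3.13]{resnick}) imply that $\bar\Pi_{r_t}$ restricted to $A_\varepsilon$ converges to $\bar\Pi$ restricted to $A_\varepsilon$ as a finite point process, in particular the number of points and their locations converge, so the top two (which lie in $A_\varepsilon$ eventually, with high probability) converge as claimed. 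The rest is routine bookkeeping to package this as weak convergence of the four-dimensional vector.
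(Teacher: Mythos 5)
Your overall strategy — pass the rescaled configuration $\bar\Pi_{r_t}$ through an argmax-extracting functional $T$, verify almost-sure continuity via Lemma~\ref{xy}, and invoke the continuous mapping theorem — is the same underlying argument as the paper's (which phrases it as a direct computation over compact test sets, but uses the explicit CMT-with-functional packaging for the closely related Propositions~\ref{prob} and~\ref{td}). However, there is a genuine gap in step~(2), precisely at the point you yourself flag as the subtlety.

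The claim that $A_\varepsilon = \{(x,y): y - \rho x > \varepsilon\}$ is ``relatively compact in the relevant sense'' is false, and the parenthetical remark ``the level $y-\rho x$ can only decrease in the $x$-direction, so this is harmless'' does not save it. A sequence of points $(x_n, y_n)$ with $x_n \to \infty$ and $y_n = \varepsilon + 1 + \rho x_n$ stays inside $A_\varepsilon$ at constant level $\varepsilon+1$ while escaping to infinity in both coordinates; such points are invisible to vague convergence on $[0,\infty)\times(0,\infty]$, since compact sets in this space have bounded $x$-component. Consequently, given a realization $\sigma$ of $\bar\Pi$ with argmax $(x_0,y_0)$ at level $M>0$, one can manufacture $\sigma_n \to \sigma$ vaguely whose extra escaping points have level $M+1$, so $T(\sigma_n) \not\to T(\sigma)$. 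The functional $T$ is therefore discontinuous at \emph{every} configuration, not just a null set of them, and the continuous mapping theorem does not apply as stated. Likewise, \cite[Prop.~3.13]{resnick} is a statement about relatively compact sets, and $A_\varepsilon$ is not one, so you cannot invoke it directly to conclude that the restricted point configurations converge.

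The fix is exactly what the paper does: work with genuinely compact test sets $A \subseteq \{(x_1,x_2,y_1,y_2): y_1 > \rho x_1, y_2 > \rho x_2\}$ (hence bounded in the $x$-directions), show via Lemma~\ref{lppp} and the finiteness of $\mu$ on $\{y > a + \rho x\}$ (equation~\eqref{mumu}) that the probability of the rescaled vector landing in $A$ converges, and then verify via Lemma~\ref{xy} that such compact $A$ eventually exhaust the support of $(X^{\ssup 1}, X^{\ssup 2}, Y^{\ssup 1}, Y^{\ssup 2})$. Equivalently, you could retain your CMT framing but restrict the functional to configurations on a bounded box $[0,c]\times(0,\infty]$, establish convergence there (where Prop.~3.13 is legitimate), and separately prove the tightness estimate $\sup_t \Prob(\bar\Pi_{r_t}(A_\varepsilon \cap \{x > c\}) > 0) \to 0$ as $c \to \infty$, which follows from a Pareto tail union bound as in the proof of Proposition~\ref{e0}. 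Without one of these tightness arguments, the step ``so the top two converge as claimed'' is unjustified.
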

\begin{proof}
Observe that for all $z\in\Z$
\begin{align*}
\frac{\hat\Psi_t(z)}{a_t}=\frac{\xi(z)}{a_t}-\frac{|z|}{t a_t}\rho\log t
=\frac{\xi(z)}{a_t}-\rho \frac{|z|}{r_t}.
\end{align*}
Let $A$ be a compact Borel subset of 
$\{(x_1,x_2, y_1, y_2): y_1>  \rho x_2, y_2>\rho x_2 \} \subseteq ([0, \infty) \times (0,\infty])^2$ such that its boundary $\partial A$ has zero Lebesgue 
measure $\text{Leb}(\partial A)$. Hence 
$A\subseteq \{(x_1,x_2, y_1, y_2): y_1> a+ \rho x_2, y_2>a+\rho x_2\}$ with some $a>0$, and this set has finite measure $\mu$ by~\eqref{mumu}. 
Since $a_t=r_t^{1/\alpha}$ we obtain by Lemma~\ref{lppp} that 
\begin{align}
\Prob\Big(&\Big(\frac{|\Zaa|}{r_t},\frac{|\Zbb|}{r_t},\frac{\xi(\Zaa)}{a_t},\frac{\xi(\Zbb)}{a_t}\Big)\in A\Big)\notag\\
&=\int_A\Prob\Big(\bar \Pi_{r_t}(dx_1\times dy_1)=\bar \Pi_{r_t}(dx_2\times dy_2)=1,\notag\\
&\phantom{aaaaaaaallll} \bar \Pi_{r_t}\big(\big\{(u,v): v-\rho u>y_1-\rho x_1 \big\}\big)=0\notag\\
&\phantom{aaaaaaaallll} \bar \Pi_{r_t}\big(\big\{(u,v): y_2-\rho x_2<v-\rho u<y_1-\rho x_1\big\}\big)=0\Big)\notag\\
&\to\int_A\Prob_*\Big(\bar \Pi(dx_1\times dy_1)=\bar \Pi(dx_2\times dy_2)=1,\notag\\
&\phantom{aaaaaaaallllll}\bar \Pi\big(\{(u,v): v-\rho u>y_1-\rho x_1 \}\big)=0\notag\\
&\phantom{aaaaaaaallllll}\bar \Pi\big(\{(u,v): y_2-\rho|x_2|<v-\rho u<y_1-\rho x_1 \}\big)=0\Big).
\label{e11}
\end{align}
The probability under the integral equals 
\[ p(x_1, x_2, y_1, y_2) = \Prob_*\big(X^{\ssup 1}\in dx_1, X^{\ssup 2}\in dx_2, 
Y^{\ssup 1}\in dy_1, Y^{\ssup 2}\in dy_2\big) \]
by definition. To complete the proof, we need to show that the collection of test sets $A$ is big enough. To do so, it suffices to see that 
\begin{align*}
\int\limits_{ ([0, \infty) \times (0,\infty])^2} 
\!\!\!\!\!\!\!\!\!\!\!\!
& p(x_1,x_2, y_1, y_2)\one\big\{y_1>\rho x_1,y_2>\rho x_2 \big\}dx_1dx_2dy_1dy_2\\
&=\Prob_*\big(Y^{\ssup 1}>\rho X^{\ssup 1}
,Y^{\ssup 2}>\rho X^{\ssup 2})=1
\end{align*}
by Lemma~\ref{xy}.
\end{proof}

\begin{lemma} 
\label{coincide}
$\Prob\big( |\Za| = |\Zaa|, |\Zb| = |\Zbb| \big)\to 1$ as $t\to\infty$. 
\end{lemma}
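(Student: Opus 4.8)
\textbf{Proof proposal for Lemma \ref{coincide}.} The plan is to show that the difference between $\Psi_t$ and $\hat\Psi_t$ is, uniformly over the relevant range of sites, much smaller than the gap $\Psi_t(\Za) - \Psi_t(\Zb)$ that separates the maximisers from the rest, so that the two functionals must be maximised at the same site(s). First I would work on the event $\mathcal{E}_t$, which holds with overwhelming probability by Proposition \ref{e0}; on this event $|\Za| < r_t g_t$ and $\xi(\Za) = \xi_{R_t}^{\ssup 1}$, and also the top two values of $\Psi_t$ are separated by at least $a_t f_t$. A parallel statement holds for $\hat\Psi_t$ by Lemma \ref{LB1} (combined with Lemma \ref{xy}): with overwhelming probability $|\Zaa| < r_t g_t$, $\xi(\Zaa) \asymp a_t$, and $\hat\Psi_t(\Zaa) - \hat\Psi_t(\Zbb) > a_t f_t$ (shrinking $f_t$ if necessary, which is harmless). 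Intersecting these two events, I may assume all of $\Za, \Zb, \Zaa, \Zbb$ lie in $\{|z| \le r_t g_t\}$.

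Next I would quantify the discrepancy $|\Psi_t(z) - \hat\Psi_t(z)|$ for $|z| \le r_t g_t$. We have
\[
\Psi_t(z) - \hat\Psi_t(z) = \frac{|z|}{t}\big(\rho\log t - \log\xi(z)\big),
\]
so $|\Psi_t(z) - \hat\Psi_t(z)| \le \frac{r_t g_t}{t}\,\big|\rho\log t - \log\xi(z)\big|$. By Lemma \ref{assup}, for any small $\e>0$ we have $\xi(z) \le |z|^{1/\alpha}(\log|z|)^{1/\alpha+\e} \le (r_t g_t)^{1/\alpha}(\log t)^{O(1)}$ eventually, and trivially $\xi(z) \ge 1$; hence $|\log\xi(z)| = O(\log t)$ uniformly over this range. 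Therefore
\[
\sup_{|z| \le r_t g_t} |\Psi_t(z) - \hat\Psi_t(z)| = O\!\Big(\frac{r_t g_t \log t}{t}\Big) = O\big(a_t g_t (\log t)^{-1} \cdot \log t / \log t\big),
\]
and since $r_t/t = a_t/\log t$, this bound is $O(a_t g_t \log t / \log t) = O(a_t g_t)$ — which is too crude. I would instead be more careful: the factor $\rho\log t - \log\xi(z)$ should be compared against $\log\xi(z) \approx \frac1\alpha \log r_t = \rho \log t \cdot \frac{\alpha-1}{\alpha} \cdot \frac{1}{\alpha-1}$... the point is that for sites with $\xi(z) \asymp a_t$ one has $\log\xi(z) = \rho\log t - \rho\log\log t + O(\log g_t)$, so $\rho\log t - \log\xi(z) = \rho\log\log t + O(\log g_t) = O(\log\log t)$, giving $\sup |\Psi_t - \hat\Psi_t| = O(r_t \log\log t / t) = O(a_t \log\log t/\log t) = o(a_t f_t)$ by \eqref{fg}. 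For sites with much smaller potential the prefactor $|z|/t$ is correspondingly smaller, so the same $o(a_t f_t)$ bound holds after a short case analysis (this is essentially the computation already performed in Lemma \ref{upperbound2}).

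Finally I would combine these facts. Since $\sup_{|z|\le r_t g_t}|\Psi_t(z) - \hat\Psi_t(z)| = o(a_t f_t)$ while both $\Psi_t$ and $\hat\Psi_t$ have a gap of order at least $a_t f_t$ between their largest and second-largest values over $\{|z| \le r_t g_t\}$ (and their maximisers lie in this range), a standard argument shows that $\arg\max \Psi_t = \arg\max \hat\Psi_t$: if $z^\star$ maximises $\hat\Psi_t$ but not $\Psi_t$, then $\Psi_t(z^\star) \ge \hat\Psi_t(z^\star) - o(a_t f_t) \ge \hat\Psi_t(\Za) - o(a_t f_t) \ge \Psi_t(\Za) - o(a_t f_t)$, contradicting $\Psi_t(\Za) - \Psi_t(z^\star) > a_t f_t$ (where here I use that $\Za$, not being the maximiser of $\hat\Psi_t$... one has to chase the inequalities in both directions, but the gap on both sides makes this routine). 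The same reasoning applied to the restricted functionals on $\Z \setminus \Omega_t$ (respectively $\Z \setminus \{\Zaa, -\Zaa\}$) identifies $\Zb$ with $\Zbb$. Hence $|\Za| = |\Zaa|$ and $|\Zb| = |\Zbb|$ with probability tending to $1$. The main obstacle is getting the discrepancy bound sharp enough — the naive bound $O(r_t g_t \log t / t)$ is not $o(a_t f_t)$, and one genuinely needs the observation that the relevant sites have $\log\xi(z)$ close to $\rho\log t$ so that $\rho\log t - \log\xi(z)$ is only of order $\log\log t$ (or smaller, for low-potential sites, where the spatial prefactor saves us).
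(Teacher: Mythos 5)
Your guiding intuition — that $\Psi_t$ and $\hat\Psi_t$ differ by a small amount on the relevant range and so share maximisers — is the same one the paper formalises, but as written your argument has two genuine problems.

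\textbf{The discrepancy bound is false as stated, and the proposed fix doesn't work.} You claim $\sup_{|z| \le r_t g_t}|\Psi_t(z) - \hat\Psi_t(z)| = o(a_t f_t)$, and you correctly compute the useful bound when $\xi(z) \asymp a_t$. But the discrepancy is $\frac{|z|}{t}\big(\rho\log t - \log\xi(z)\big)$, and for a site with $|z| \approx r_t g_t$ but $\xi(z) = O(1)$ (which happens with overwhelming probability somewhere in this range) one gets $\frac{r_tg_t}{t}\cdot \rho\log t = \rho\,a_t g_t$, which is \emph{much larger} than $a_t f_t$, not smaller. Your attempted repair — "for sites with much smaller potential the prefactor $|z|/t$ is correspondingly smaller" — conflates the value of $\xi(z)$ with the distance $|z|$: these are independent, and a low-potential site can sit anywhere in $\{|z| \le r_t g_t\}$. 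The comparison with Lemma~\ref{upperbound2} is also misleading, since that lemma does not bound $|\Psi_t - \hat\Psi_t|$ pointwise. The correct repair is different: one observes that any site with $\xi(z)$ much smaller than $a_t$ has \emph{both} $\Psi_t(z)$ and $\hat\Psi_t(z)$ small (both are bounded above by $\xi(z)$, using $\log\xi(z)\ge 0$), so such sites cannot be candidate maximisers of either functional, and the uniform $o(a_t f_t)$ bound is only needed on the high-potential sites where your $\log\log t$ computation is valid. You gesture at this but do not actually carry it out.

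\textbf{The argument is circular.} You work on the event $\mathcal{E}_t$ and invoke Proposition~\ref{e0} to say $\Prob(\mathcal{E}_t)\to 1$, in particular using the gap $\Psi_t(\Za) - \Psi_t(\Zb) > a_t f_t$. But Proposition~\ref{e0} is proved via Proposition~\ref{sl1}, which in turn explicitly uses Lemma~\ref{coincide} ("Part (i) follows from Lemmas \ref{LB1} and \ref{coincide}"). In the paper's logical ordering, Lemma~\ref{coincide} sits strictly upstream of the statement $\Prob(\mathcal{E}_t)\to 1$, so you cannot assume it here. One would need to derive a gap for $\hat\Psi_t$ instead — namely $\hat\Psi_t(\Zaa) - \hat\Psi_t(\Zbb) \gtrsim a_t$ with high probability, which does follow non-circularly from Lemma~\ref{LB1} together with Lemma~\ref{xy} — and then run the exchange argument in that direction. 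This is doable but is not what you wrote.

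The paper's proof avoids both pitfalls by staying entirely in the point-process picture: instead of bounding the pointwise difference $|\Psi_t - \hat\Psi_t|$ over a range of $z$, it compares the superlevel sets $\{(u,v): F_t(u,v) > F_t(x_1,y_1)\}$ and $\{(u,v): v-\rho u > y_1 - \rho x_1\}$ (where $F_t$ encodes $\Psi_t/a_t$), restricted to a compact box $K_b$, and shows the symmetric difference has vanishing $\mu$-measure; the tails $|z|>b\,r_t$ are handled separately. That comparison automatically sidesteps the low-potential sites (they never enter the superlevel sets), and it relies only on Lemmas \ref{exi}, \ref{xy}, \ref{lppp} and \ref{LB1}, none of which depend on $\mathcal{E}_t$.
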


\begin{proof}
Fix $\e>0$ and let $a>0$ be sufficiently small that 
\begin{align*}
A=\big\{(x_1,x_2,y_1,y_2): x_1 \le 1/a,  x_2 \le 1/a, y_1\ge a+\rho x_1 ,y_2\ge a+\rho x_2 \big\}  \subseteq ( [0, \infty) \times (0,\infty])^2
\end{align*}
is such that 
\begin{align}
\label{g66}
\Prob_*\big((X^{\ssup 1},X^{\ssup 2},Y^{\ssup 1}, Y^{\ssup 2})\in A\big)&>1-\e/2,
\end{align}
which is possible according to Lemma~\ref{xy}. Observe that 
\begin{align*}
\frac{\Psi_t(z)}{a_t}=\frac{\xi(z)}{a_t}-\frac{|z|}{t a_t}\log\xi(z)
=F_t\Big(\frac{|z|}{r_t}, \frac{\xi(z)}{a_t}\Big),
\end{align*}
where 
\begin{align*}
F_t(x,y)=y- x \cdot \frac{\log a_t}{\log t}- x \cdot\frac{\log y}{\log t}.
\end{align*}
It is easy to see that $F_t(x,y)\to y-\rho  x$ as $t\to\infty$. 
Using $\Psi_t(\Za)\ge \Psi_t(\Zb)> 1$ eventually almost surely, proved in
Lemma~\ref{exi}, we obtain 
\begin{align}
\Prob\big(& |\Za| = |\Zaa|, |\Zb| = |\Zbb| \big)\notag\\
&\ge \int_A\Prob\Big(\bar \Pi_{r_t}(dx_1\times dy_1)
= \bar \Pi_{r_t}(dx_2\times dy_2)=1,\notag\\
&\phantom{aaaaaaaallll} \bar \Pi_{r_t}\big(\big\{(u,v): v-\rho u>y_1-\rho x_1 \big\}\big)=0,\notag\\
&\phantom{aaaaaaaallll} \bar \Pi_{r_t}\big(\big\{(u,v): y_2-\rho x_2 <v-\rho u <y_1-\rho x_1 \big\}\big)=0,\notag\\
&\phantom{aaaaaaaallll} \bar \Pi_{r_t}\big(\{(u,v): F_t(u,v)>F_t(x_1,y_1)\}\big)=0\notag\\
&\phantom{aaaaaaaallll} \bar \Pi_{r_t}\big(\{(u,v): F_t(x_2,y_2)<F_t(u,v)<F_t(x_1,y_1)\}\big)=0\Big)\notag\\
&\phantom{aaaaalll}\times \one\{F_t(x_1,y_1)>F_t(x_2,y_2)>1/a_t\}.
\label{g2}
\end{align}
Let $b>0$ and denote $K_b=[0,b]\times (0,\infty)$. Observe that since $\xi(z)>1$ for all $z$ the point process $\bar \Pi_t$
has no points below the level $1/a_t$.  

\smallskip

First, by examining the graphs of $F_t$ one can see that  
the first set in~\eqref{g2} is close to the third and the second is close to the fourth if we restrict them to $K_b$, that is,
\begin{align*}
&\{(u,v): v>1/a_t, F_t(u,v)>F_t(x_1,y_1)\}\cap K_b+E^{\ssup{t,b}}_1(x_1,y_1)\\
& \qquad =\{(u,v): v-\rho u>y_1-\rho x_1\}\cap K_b+E^{\ssup{t,b}}_2(x_1,y_1)
\end{align*}
and 
\begin{align*}
&\{(u,v): v>1/a_t, F_t(x_2,y_2)<F_t(u,v)<F_t(x_1,y_1)\}\cap K_b+E^{\ssup{t,b}}_1(x_1,x_2,y_1,y_2)\\
&\qquad =\{(u,v): v-\rho u >y_1-\rho x_1 \}\cap K_b+E^{\ssup{t,b}}_2(x_1,x_2,y_1,y_2),
\end{align*}
where
\begin{align*}
\text{Leb}\big(E^{\ssup{t,b}}_i(x_1,y_1)\big)\to 0\qquad\text{and}\qquad \text{Leb}\big(E^{\ssup{t,b}}_i(x_1,x_2,y_1,y_2)\big)\to 0, \qquad i=1,2,
\end{align*}
as $t\to\infty$ uniformly on $A$. Moreover, we have 
\begin{align}
\label{g3}
\{(u,v): v>1/a_t, F_t(u,v)>F_t(x_1,y_1)\}\cap K_b\subseteq \big\{(u,v): v\ge  a/2+\rho u \big\}
\end{align}
and 
\begin{align}
\label{g4}
\{(u,v): v>1/a_t, F_t(x_2,y_2)<F_t(u,v)<F_t(x_1,y_1)\}\cap K_b\subseteq \big\{(u,v): v\ge  a/2+\rho u\big\}
\end{align}
eventually for all $t$ uniformly on $A$, and 
the set on the right hand side of~\eqref{g3} and~\eqref{g4}
has finite measure $\mu$ according to~\eqref{mumu}.

\smallskip

Second, the portion of the third and fourth set
in~\eqref{g2} belonging to the complement of $K_b$ is negligible since 
\begin{align*}
&\big\{(u,v): v>1/a_t, F_t(u,v)>F_t(x_1,y_1)\big\}\cap K_b^c
\subseteq \Big\{(u,v): v>  a+\frac{\rho}{2} u, u > b\Big\}.
\end{align*}
and 
\begin{align*}
\big\{(u,v): v>1/a_t, F_t(x_2,y_2)<F_t(u,v)<F_t(x_1,y_1)\big\}\cap K_b^c
\subseteq \Big\{(u,v): v>  a+\frac{\rho}{2} u ,  u >b\Big\},
\end{align*}
where, as $b \to \infty$,
\begin{align*}
\mu\Big(\Big\{(u,v): v>  a+\frac{\rho}{2} u, u>b\Big\}\Big)
=2\int_b^{\infty}\int_{a+\frac{\rho}{2}u}^{\infty}\frac{\alpha}{v^{\alpha+1}}dvdu
=2\Big(a+\frac{\rho b}{2}\Big)^{1-\alpha}\to 0 .
\end{align*} 

Third, we have 
\begin{align*}
\{F_t(x_1,y_1)>F_t(x_2,y_2)\}+E_1^{\ssup t}= \{y_1-\rho x_1 >y_2-\rho x_2 \} +E_2^{\ssup t},
\end{align*}
where, as $t \to \infty$,
\begin{align*}
\text{Leb}(E^{\ssup{t}}_i)\to 0, \qquad i=1,2.
\end{align*} 

By the three arguments above, and since $\bar \Pi_{r_t}\to \bar \Pi$ as $t\to\infty$ by Lemma~\ref{lppp}, it follows from~\eqref{g2} that 
\begin{align*}
\Prob\big(& |\Za| = |\Zaa|, |\Zb| = |\Zbb| \big)\notag\\
&\ge \int_A\Prob_*\Big(\bar \Pi_{r_t}(dx_1\times dy_1)
= \bar \Pi_{r_t}(dx_2\times dy_2)=1,\Big)\notag\\
&\phantom{aaaaaaaallll} \bar \Pi_{r_t}\big(\big\{(u,v): v-\rho u >y_1-\rho x_1 \big\}\big)=0,\notag\\
&\phantom{aaaaaaaallll} \bar \Pi_{r_t}\big(\big\{(u,v): y_2-\rho |x_2|<v-\rho u <y_1-\rho x_1 \big\}\big)=0\Big)-\e/2\\
&=\Prob_*\big((X^{\ssup 1},X^{\ssup 2},Y^{\ssup 1}, Y^{\ssup 2})\in A\big)-\e/2>1-\e
\end{align*}
eventually for all $t$ by~\eqref{g66}. 
\end{proof}

We are now ready to give the proof of Propositions \ref{sl1} and \ref{e0}. 

\begin{proof}[Proof of Proposition \ref{sl1}]
Part (i) follows from Lemmas \ref{LB1} and \ref{coincide}, so it remains to prove part (ii). On the event $\big\{ |\Za| = |\Zaa|, |\Zb| = |\Zbb| \big\}$ we have, for $i = 1,2$,
\begin{align*}
\frac{\hat\Psi(\hat Z_t^{\ssup i})}{a_t}-\frac{\Psi(Z_t^{\ssup i})}{a_t}
=\frac{|\hat Z_t^{\ssup i}|}{ta_t}\log\frac{\xi(\hat Z_t^{\ssup i})}{t^{\rho}}
=\frac{|\hat Z_t^{\ssup i}|}{r_t}\frac{\log(\xi(\hat Z_t^{\ssup i})/t^{\rho})}{\log t} .
\end{align*}
By part (i), as $t\to\infty$,
\begin{align*}
\frac{\log(\xi(\hat Z_t^{\ssup i})/t^{\rho})}{\log t}
=\frac{\log(\xi(\hat Z_t^{\ssup i})/a_t)-\rho\log\log t}{\log t}\to 0
\end{align*}
in probability and hence 
\begin{align*}
\frac{\hat\Psi(\hat Z_t^{\ssup i})}{a_t}-\frac{\Psi(Z_t^{\ssup i})}{a_t}
\to 0
\end{align*}
in probability. Observe that 
\begin{align*}
\frac{\hat\Psi(\hat Z_t^{\ssup i})}{a_t}=
\frac{\xi(\hat Z_t^{\ssup i})}{a_t}-\frac{|\hat Z_t^{\ssup i}|}{ta_t}\rho\log t
=\frac{\xi(\hat Z_t^{\ssup i})}{a_t}-\rho\frac{|\hat Z_t^{\ssup i}|}{r_t}
\end{align*}
and so 
\begin{align*}
\Big(\frac{\hat\Psi_t(\Zaa)}{a_t},
\frac{\hat\Psi_t(\Zbb)}{a_t}\Big)
\Rightarrow (Y^{\ssup 1}-\rho X^{\ssup 1},Y^{\ssup 2}-\rho X^{\ssup 2})
\end{align*}
by part (i). The result now follows from Lemma~\ref{coincide}.
\end{proof}

\begin{proof}[Proof of Proposition \ref{e0}]
We first show that \[
\Prob\left(\xi(z)<\frac{|z|}{t}\log\frac{|z|}{2et}\,\,\forall\, |z|>r_tg_t\right)\to1.
\]
Indeed there exists $c>0$ such that for each $z$ with $|z|>r_tg_t$, we have 
\[
\Prob\left(\xi(z) \ge \frac{|z|}{t}\log\frac{|z|}{2et}\right)<c\frac{t^\alpha}{(\log t)^\alpha}|z|^{-\alpha},
\]
and hence by a union bound there exists $c'>0$ such that
\[
\Prob\left(\xi(z) \ge \frac{|z|}{t}\log\frac{|z|}{2et}\,\,\mbox{ for some }\, |z|>r_tg_t\right)<c'g_t^{1-\alpha}\to0.
\] 
Next, it follows from Proposition~\ref{sl1} that 
\begin{align*}
\Prob\big(r_tf_t<|\Za|<r_tg_t, a_tf_t<\xi(\Za)<a_tg_t\big) \to 1
\end{align*}
and 
\begin{align*}
\Prob\big(\Psi_t(\Za)>f_t\xi(\Za)\big) \to 1
\end{align*}
since $f_t\to 0$ and $g_t\to\infty$.
Thus, it remains to show that
\begin{align}
\Prob\Big(\xi(\Za)=\xi_{R_t}^{\ssup 1}, \xi_{R_t}^{\ssup 1}-\xi_{R_t}^{\ssup 2}>a_tf_t\Big)
\to 1.
\end{align}
Let $\e>0$. 
By Proposition~\ref{xy2}, choose $a>0$ and the set 
\begin{align*}
A=\big\{(x,y): x \le 1/a,y>a+\rho x \big\} \subseteq ([0, \infty) \cup (0, \infty])^2
\end{align*}
so that
\begin{align}
\label{e6}
\Prob_*((X^{\ssup 1},Y^{\ssup 1})\in A)>1-\e
\end{align}
eventually. Let $0<\delta<\min\{\frac{a}{2},\frac{a^2}{2\rho }\}$. 
Observe that by Lemma~\ref{coincide}
\begin{align}
\label{e8}
\Prob&\Big(\xi(\Za)=\xi_{R_t}^{\ssup 1}, \xi_{R_t}^{\ssup 1}-\xi_{R_t}^{\ssup 2}>a_tf_t\Big)\\
&\ge 
\Prob\Big(\xi(\Zaa)=\xi^{\ssup{1}}_{(1+\delta)|\Zaa|}, 
\xi(\Zaa)-\xi^{\ssup{2}}_{(1+\delta)|\Zaa|}>\delta a_t\Big)-\e/4
\end{align}
eventually as $f_t\to 0$ and $R_t/|\Za|=1+f_t\to 1$. 
This implies 
\begin{align}
\Prob&\Big(\xi(\Za)=\xi_{R_t}^{\ssup 1}, \xi_{R_t}^{\ssup 1}-\xi_{R_t}^{\ssup 2}>a_tf_t\Big)\notag\\
&\ge \int_A\Prob\big(\bar \Pi_{r_t}(dx\times dy)=1, \bar \Pi_{r_t}(M_{\delta}(x,y))=0\big)-\e,
\label{e10}
\end{align}
where 
\begin{align*}
M_{\delta}(x,y)=\big\{(u,v):v-\rho u >y-\rho x \big\}
\cup\big\{(u,v): u <(1+\delta)x,y>y-\delta\big\}.
\end{align*} 
It is easy to see that 
\begin{align*}
M_{\delta}(x,y)\subseteq \{(u,v):v>a/2+\rho u \}
\end{align*}
for all $(x,y)\in A$, and this set has finite measure 
$\mu$ by~\eqref{mumu}. This implies 
\begin{align}
\int_A&\Prob\big(\bar \Pi_{r_t}(dx\times dy)=1, \bar \Pi_{r_t}(M_{\delta}(x,y))=0\big)\notag\\
&\to \int_A\Prob_*\big(\bar \Pi(dx\times dy)=1\big)
\Prob_*\big(\bar \Pi(M_{\delta}(x,y))=0\big)\notag\\
&=\int_A
(2-p) \exp\big\{- (2-p) \mu\big(M_{\delta}(x,y)\big)\big\}\mu(dx,dy).
\label{e5}
\end{align}
Since
\begin{align*}
\lim_{\delta\downarrow 0}\mu\big(M_{\delta}(x,y)\big)\to 
\mu\big(\big\{(u,v):v-\rho u>y-\rho x\big\}\big) 
\end{align*}
we obtain by the dominated convergence theorem that 
\begin{align*}
\lim_{\delta\downarrow 0}\int_A (2-p)
\exp\big\{-(2-p) \mu\big(M_{\delta}(x,y)\big)\big\}\mu(dx,dy)
\to \Prob_*((X^{\ssup 1},Y^{\ssup 2})\in A)>1-\e/4
\end{align*}
by~\eqref{e6}.
Hence for a sufficiently small $\delta$ we have 
\begin{align*}
\int_A
(2-p) \exp\big\{-(2-p) \mu\big(M_{\delta}(x,y)\big)\big\}\mu(dx,dy)>1-\e/2
\end{align*}
and using~\eqref{e5}
\begin{align*}
\int_A&\Prob\big(\bar \Pi_{r_t}(dx\times dy)=1, \bar \Pi_{r_t}(M_{\delta}(x,y))=0\big)>1-3\e/4
\end{align*}
eventually for all $t$. Combining this with~\eqref{e10} we obtain, eventually for all $t$,
\begin{equation*}
\Prob \Big(\xi(\Za)=\xi_{R_t}^{\ssup 1}, \xi_{R_t}^{\ssup 1}-\xi_{R_t}^{\ssup 2}>a_tf_t\Big)>1-\e . \qedhere
\end{equation*}
\end{proof}

We finish this section by proving Propositions \ref{prob} and \ref{td}. For this we need a few more definitions. For any $c>0$, denote by $M_c$ the space of point measures on $[-c,c]\times (0,\infty]$ equipped with the vague convergence, see~\cite[Ch.3]{resnick}. For any point measure $\Sigma$ on $\R\times (0,\infty]$, we denote by $\Sigma|_c$ the restriction of $\Sigma$ to $[-c,c]\times (0,\infty]$. Let 
$\Xi: M_c \times M_c \to [-c,c] \times (0,\infty)$ be defined by 
the property that $\Xi(0,0)=(0,0)$ and 
\begin{align*}
\Xi(\Sigma_1,\Sigma_2)=(x,y) \text{ if } 
(x,y)\in\Sigma_1+\Sigma_2\text{ and if }
(u,v)\in\Sigma_1+\Sigma_2\text{ then }v-\rho u <y-\rho x, 
\end{align*}
which is well-defined for any non-zero $(\Sigma_1,\Sigma_2)$
since each set $\{(u,v):|u|\le c, v>a+\rho |u|\}$, $a>0$, is compact in $[-c,c]\times (0,\infty]$ and hence $\Sigma_1+\Sigma_2$ has an most finitely many points there. By~\cite[Prop.~3.13]{resnick}, which states that vague convergence corresponds to the convergence of points carrying the point measures in each compact set, we obtain that $\Xi$ is continuous at all non-zero points. Let $\Xi_1$ and $\Xi_2$ be the projection of $\Xi$ onto its first and second coordinates respectively. Let 
$\Theta:M_c\times M_c\to \{0,1\}$ be defined by 
\begin{align*}
\Theta(\Sigma_1,\Sigma_2)=\left\{\begin{array}{ll}
1&\text{ if }\Xi (\Sigma_1,\Sigma_2)\in\Sigma_1,\\
0&\text{ otherwise.}
\end{array}\right.
\end{align*}
Now~\cite[Prop.~3.13]{resnick} implies that $\Theta$ is continuous at all non-zero points whenever $\Sigma_1$ and $\Sigma_2$ 
do not share points, that is, $\Sigma_1(\{x\})\Sigma_2(\{x\})=0$ for all $x$.

\begin{proof}[Proof of Proposition \ref{prob}]
Let $\e>0$ and let $c>0$ be such that
\begin{align*}
\Prob\big(|\Zaa|>cr_t\big)<\e/8 \qquad\text{and}\qquad \Prob_*(|X^{\ssup 1}|>c)<\e/8,
\end{align*}
which is possible by Proposition~\ref{sl1}. This implies  
\begin{align}
\label{pp1}
\big|\Prob(\mathcal{\hat D}_t)-\Prob\big(\Theta\big(\Pi_{r_t}^{\ssup d}|_c,\Pi_{r_t}^{\ssup e}|_c\big)=1\big)\big|<\e/4,
\end{align}
and 
\begin{align}
\label{pp2}
\big|\Prob_*(\mathfrak{D})-\Prob_*\big(\Theta\big(\Pi^{\ssup d}|_c,\Pi^{\ssup e}|_c\big)=1\big)\big|<\e/4.
\end{align}
By Lemma~\ref{lppp}
$(\Pi_{r_t}^{\ssup d}|_c,\Pi_{r_t}^{\ssup e}|_c)$ converges in law to $(\Pi^{\ssup d}|_c,\Pi^{\ssup e}|_c)$.
The point measures $\Pi^{\ssup d}|_c$ and $\Pi^{\ssup e}|_c$ almost surely do not share points and hence by the continuous mapping theorem~\cite[p.30]{B}
\begin{align*}
\Prob\big(\Theta\big(\Pi_{r_t}^{\ssup d}|_c,\Pi_{r_t}^{\ssup e}|_c\big)=1\big)
\to \Prob_*\big(\Theta\big(\Pi^{\ssup d}|_c,\Pi^{\ssup e}|_c\big)=1\big)
\end{align*}
as $t\to\infty$. Combining this with~\eqref{pp1} and~\eqref{pp2} we obtain that 
\begin{align*}
\big|\Prob(\mathcal{\hat D}_t)-\Prob_*(\mathfrak{D})\big|<\e
\end{align*}  
eventually, which together with Lemma~\ref{probb} completes the proof. 
\end{proof}

\begin{proof}[Proof of Proposition \ref{td}]
By Lemma~\ref{probb} and Proposition~\ref{prob} it suffices to show that
\begin{align*}
\Prob(\{S_t^{\ssup{\delta}}<x\}\cap \hat{\mathfrak{D}}_t)\to \Prob_*(\{S^{\ssup{\delta}}<x\}\cap \mathfrak{D})
\end{align*}
for all $x$ as $t\to\infty$. 

\smallskip 

Let $\e>0$ and let $c>0$ be such that
\begin{align}
\label{fff}
\Prob\big(|\Zaa|>cr_t\big)<\e/8 \qquad\text{and}\qquad \Prob_*(|X^{\ssup 1}|>c)<\e/8,
\end{align}
which is possible by Proposition~\ref{sl1}. 
Let $\Gamma:M_c\times M_c$ be defined by 
\begin{align*}
\Gamma(\Sigma_1,\Sigma_2)=
-\text{sgn}(\Xi_1(\Sigma_1,\Sigma_2))\Big[\!\!\!\!\!\!\!\!\!\!\!\!\!\!\!\!\!\!\!\!\!\!\!\!\sum_{\heap{(x,y)\in \Pi^{\ssup e}}{0<x<|\Xi_1(\Sigma_1,\Sigma_2)|,y\ge 
\delta \Xi_2(\Sigma_1,\Sigma_2)}}\!\!\!\!\!\!\!\!\!\!\!\!\!\!\!\!\!\!\!\!\!\!\!\!\log \Big(1-\frac{y}{\Xi_2(\Sigma_1,\Sigma_2)}\Big)
-\!\!\!\!\!\!\!\!\!\!\!\!\!\!\!\!\!\!\!\!\!\!\!\!\sum_{\heap{(x,y)\in \Pi^{\ssup e}}{-|\Xi_1(\Sigma_1,\Sigma_2)|<x<0,y\ge 
\delta \Xi_2(\Sigma_1,\Sigma_2)}}\!\!\!\!\!\!\!\!\!\!\!\!\!\!\!\!\!\!\!\!\!\!\!\!\log \Big(1-\frac{y}{\Xi_2(\Sigma_1,\Sigma_2)}\Big)\Big]
\end{align*}
Observe that $\Gamma$ is continuous at all points $(\Sigma_1,\Sigma_2)$ satisfying $\Xi_2(\Sigma_1,\Sigma_2)\neq 0$. 

\smallskip

Further,
$S_t^{\ssup{\delta}}=\Gamma(\Pi^{\ssup d}_{r_t}|_c,\Pi^{\ssup e}_{r_t}|_c)$ and $S^{\ssup{\delta}}=\Gamma(\Pi^{\ssup d}|_c,\Pi^{\ssup e}|_c)$,  provided that $|\Za|\le cr_t$ and $|X^{\ssup 1}|\le c$, respectively. 
Hence the inequalities~\eqref{fff} imply
\begin{align}
\label{qq1}
\big|\Prob(\{S_t^{\ssup{\delta}}<x\}\cap \hat{\mathfrak{D}}_t)-\Prob\big(\Gamma(\Pi^{\ssup d}_{r_t}|_c,\Pi^{\ssup e}_{r_t}|_c)<x,\Theta\big(\Pi_{r_t}^{\ssup d}|_c,\Pi_{r_t}^{\ssup e}|_c\big)=1\big)\big|<\e/4,
\end{align}
and 
\begin{align}
\label{qq2}
\big|\Prob_*(\{S^{\ssup{\delta}}<x\}\cap \mathfrak{D})-\Prob_*\big(\Gamma(\Pi^{\ssup d}|_{c},\Pi^{\ssup e}|_{c})<x, \Theta\big(\Pi^{\ssup d}|_c,\Pi^{\ssup e}|_c\big)=1\big)\big|<\e/4.
\end{align}
By Lemma~\ref{lppp}
$(\Pi_{r_t}^{\ssup d}|_c,\Pi_{r_t}^{\ssup e}|_c)$ converges in law to $(\Pi^{\ssup d}|_c,\Pi^{\ssup e}|_c)$.
Almost surely the point measures $\Pi^{\ssup d}|_c$ and $\Pi^{\ssup e}|_c$ do not share points and $\Xi_2(\Pi^{\ssup d}|_c,\Pi^{\ssup e}|_c)>0$. Hence by the continuous mapping theorem~\cite[p.30]{B}
\begin{align*}
\Prob\big(\Gamma(\Pi^{\ssup d}_{r_t}|_c,\Pi^{\ssup e}_{r_t}|_c)<x,\Theta\big(\Pi_{r_t}^{\ssup d}|_c,\Pi_{r_t}^{\ssup e}|_c\big)=1\big)
\to \Prob_*\big(\Gamma(\Pi^{\ssup d}|_c,\Pi^{\ssup e}|_c)<x, \Theta\big(\Pi^{\ssup d}|_c,\Pi^{\ssup e}|_c \big)=1\big)
\end{align*}
as $t\to\infty$. Combining this with~\eqref{qq1} and~\eqref{qq2} we obtain that, eventually,
\begin{equation*}
\big|\Prob(\{S_t^{\ssup{\delta}}<x\}\cap \hat{\mathfrak{D}}_t)-\Prob_*(\{S^{\ssup{\delta}}<x\}\cap \mathfrak{D})\big|<\e . \qedhere
\end{equation*}   
\end{proof}

\bigskip


\section{Central limit theorem}
\label{clt}

In this appendix we state and prove the central limit theorem that we apply in Section~\ref{s:main2}. This theorem is similar in spirit to the Lindeberg--Feller central limit theorem for triangular arrays, albeit in a slightly non-classical set-up. The notation used in this appendix is independent of the rest of the paper. 

\smallskip

For each $t>0$, let $\mathcal{F}_t$ be a $\sigma$-algebra, and $N_t$
be an $\mathcal{F}_t$-measurable $\N$-valued random variable. Denote by ${\mathrm E}_{\mathcal{F}_t}$
conditional expectation with respect to $\mathcal{F}_t$. Let $\{V_{t,i}: 1\le i\le N_t\}$, $t>0$, be a triangular array of random variables. For each $t>0$, denote 
\begin{align*}
V_t=\sum_{i=1}^{N_t}V_{t,i},
\end{align*} 
and let $F_{V_t}(x)={\mathrm E}_{\mathcal{F}_t}\one\{V_t\le x\}$ be the conditional distribution function of $V_t$.

\begin{theorem}
Let $\{\mathcal{E}_t:t>0\}$ be a family of events.  
Suppose that the following conditions hold:
\begin{enumerate}
\item 
For each $t>0$, conditionally on $\mathcal{F}_t$, the random variables $\{V_{t, i}:1 \le i \le N_t\}$ are independent;
\item 
For each $t>0$ and $ 1 \le i \le N_t$,
\begin{align*}
{\mathrm E}_{\mathcal{F}_t} V_{t, i} = 0  
\quad \text{and} \quad 
{\mathrm E}_{\mathcal{F}_t} V^2_t  = 1  
\quad \text{almost surely}  ;
\end{align*}
\item Lindeberg condition: 
For each $\varepsilon > 0$, as $t \to \infty$, 
\begin{align*}
\sum_{i=1}^{N_t}  
{\mathrm E}_{\mathcal{F}_t}\big[ V^2_{t, i} 
\id_{\{|V_{t, i}| \ge \varepsilon\}} \big] \id_{\mathcal{E}_t}  \to 0   \quad\text{almost surely.}
\end{align*}
\end{enumerate}
Then, as $t \to \infty$,
\[ \sup_{x \in \mathbb{R}} | F_{V_t}(x) - \Phi(x) | \id_{\mathcal{E}_t}   \to 0   \quad  \text{almost surely,}  \]
where $\Phi$ denotes the distribution function of a standard normal random variable.
\end{theorem}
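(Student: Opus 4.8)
The plan is to run the classical Lindeberg argument via characteristic functions, but carried out conditionally on $\mathcal{F}_t$ and with care taken over the indicator $\id_{\mathcal{E}_t}$. Write $\phi_t(\theta)={\mathrm E}_{\mathcal{F}_t}[e^{i\theta V_t}]$ for the conditional characteristic function of $V_t$, and for $1\le i\le N_t$ put $\phi_{t,i}(\theta)={\mathrm E}_{\mathcal{F}_t}[e^{i\theta V_{t,i}}]$ and $\sigma_{t,i}^2={\mathrm E}_{\mathcal{F}_t}V_{t,i}^2$. By the conditional independence in condition~$(1)$ we have $\phi_t(\theta)=\prod_{i=1}^{N_t}\phi_{t,i}(\theta)$, and by condition~$(2)$ the conditional means vanish while $\sum_{i}\sigma_{t,i}^2={\mathrm E}_{\mathcal{F}_t}V_t^2=1$ almost surely, so that $\prod_{i}e^{-\theta^2\sigma_{t,i}^2/2}=e^{-\theta^2/2}$ exactly. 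The ultimate goal is to show that, almost surely, the conditional law of $V_t$ converges weakly to $N(0,1)$ along the (random) set of times where $\id_{\mathcal{E}_t}=1$; since $\Phi$ is continuous, P\'olya's theorem then upgrades this to the uniform convergence $\sup_x|F_{V_t}(x)-\Phi(x)|\id_{\mathcal{E}_t}\to 0$ claimed.

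The heart of the matter is the standard Lindeberg estimate. Fix $\theta\in\R$ and $\varepsilon>0$. Using $|\prod a_i-\prod b_i|\le\sum|a_i-b_i|$ for $|a_i|,|b_i|\le 1$, then the elementary bounds $|e^{ix}-1-ix+\tfrac{x^2}{2}|\le\min(\tfrac{|x|^3}{6},x^2)$ and $|1-y-e^{-y}|\le\tfrac{y^2}{2}$ for $y\ge 0$, together with the vanishing of the conditional means and the bound $\max_i\sigma_{t,i}^2\le\varepsilon^2+L_t(\varepsilon)$ (which controls the $\sum_i\sigma_{t,i}^4$ term), I would derive
\[
\bigl|\phi_t(\theta)-e^{-\theta^2/2}\bigr|
\;\le\; \theta^2 L_t(\varepsilon)+\frac{|\theta|^3\varepsilon}{6}+\frac{\theta^4}{8}\bigl(\varepsilon^2+L_t(\varepsilon)\bigr),
\qquad
L_t(\varepsilon):=\sum_{i=1}^{N_t}{\mathrm E}_{\mathcal{F}_t}\bigl[V_{t,i}^2\id_{\{|V_{t,i}|\ge\varepsilon\}}\bigr].
\]
Multiplying by $\id_{\mathcal{E}_t}$ and invoking the Lindeberg condition~$(3)$, which says $L_t(\varepsilon)\id_{\mathcal{E}_t}\to 0$ almost surely, we get $\limsup_{t\to\infty}|\phi_t(\theta)-e^{-\theta^2/2}|\,\id_{\mathcal{E}_t}\le\tfrac{|\theta|^3\varepsilon}{6}+\tfrac{\theta^4\varepsilon^2}{8}$ almost surely; letting $\varepsilon\downarrow 0$ along a sequence yields $|\phi_t(\theta)-e^{-\theta^2/2}|\,\id_{\mathcal{E}_t}\to 0$ almost surely, for each fixed $\theta$.

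It remains to pass from this pointwise-in-$\theta$ statement to almost sure weak convergence. Fix a countable dense $\Theta_0\subset\R$ and work off the null set on which the previous convergence holds for every $\theta\in\Theta_0$ simultaneously. For a realisation outside this set, let $T$ be the set of times with $\id_{\mathcal{E}_t}=1$; if $T$ is bounded there is nothing to prove, and otherwise, along $t\in T$ with $t\to\infty$, the conditional laws of $V_t$ are tight (their second moments are $1$ by condition~$(2)$, so Chebyshev applies) and their characteristic functions converge to $e^{-\theta^2/2}$ on the dense set $\Theta_0$. Any weak subsequential limit therefore has a continuous characteristic function coinciding with $e^{-\theta^2/2}$ on $\Theta_0$, hence equals $N(0,1)$; by tightness the whole family converges weakly to $N(0,1)$ along $T$, and P\'olya's theorem gives $\sup_x|F_{V_t}(x)-\Phi(x)|\to 0$ along $T$, which is the assertion. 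The one genuinely delicate step is this last paragraph's bookkeeping: upgrading ``for each $\theta$, almost surely'' to ``almost surely, for all $\theta$'' (handled by the countable dense set together with the $1$-Lipschitz equicontinuity $|\phi_t(\theta)-\phi_t(\theta')|\le|\theta-\theta'|$ that again comes from condition~$(2)$), and checking that the interplay with the indicator $\id_{\mathcal{E}_t}$ and with a continuum of times $t$ causes no measurability problems; the characteristic-function estimates themselves are routine.
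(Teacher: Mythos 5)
Your proof is correct and follows essentially the same route as the paper: the conditional Lindeberg estimate via characteristic functions with the same decomposition into a Taylor-remainder term controlled by $L_t(\varepsilon)$ and an $\exp\{-\cdot\}$ linearisation term controlled by $\max_i\sigma_{t,i}^2\le\varepsilon^2+L_t(\varepsilon)$, followed by L\'evy/P\'olya. The only difference is cosmetic: you spell out the passage from ``for each $\theta$, a.s.'' to ``a.s., for all $\theta$'' via a countable dense set, tightness and subsequential limits, whereas the paper invokes L\'evy's theorem more tersely; both are justified because the quantitative bound depends on $\theta$ only through explicit deterministic factors, so fixing a countable sequence $\varepsilon_n\downarrow 0$ already gives the convergence for all $\theta$ simultaneously off a single null set.
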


\begin{proof}
Our proof is adapted from \cite[Th.~27.1]{B2}. First note that it is sufficient to prove that, for each 
$u \in \mathbb{R}$,
\[   
\big|  {\mathrm E}_{\mathcal{F}_t} 
\exp \left\{  i u V_t \right\}  
- \exp \{-u^2/2\} \big|    \id_{\mathcal{E}_t}   \to  0 \quad  \text{almost surely,}  \]
since by Levy's theorem this implies that, for each $x \in \mathbb{R}$, 
\[      | F_{V_t}(x) - \Phi(x) | \id_{\mathcal{E}_t}   \to 0   \quad\text{almost surely,}  \]
and moreover, by the continuity of $\Phi$, the pointwise convergence of the cumulative distribution functions necessarily takes place uniformly.

\smallskip

To proceed, abbreviate $\sigma^2_{t, i} 
= {\mathrm E}_{\mathcal{F}_t}V^2_{t, i}$, and use conditions $(1)$ and $(2)$ to write 
\begin{align*}
\big|  {\mathrm E}_{\mathcal{F}_t} 
\exp \left\{  i u V_t \right\}   
- \exp \left\{-u^2/2\right\} \big|   
& = \Big|{\mathrm E}_{\mathcal{F}_t} 
\exp \Big\{\sum_{i = 1}^{N_t}  i u V_{t,i} \Big\} 
-  \exp \Big\{ - \sum_{i = 1}^{N_t} u^2 \sigma_{t, i}^2 /2 \Big\} \Big|       \\
& = \Big|  \prod_{i = 1}^{N_t} {\mathrm E}_{\mathcal{F}_t} 
\exp \left\{  i u V_{t,i} \right\}  - \prod_{i = 1}^{N_t} \exp \left\{  -u^2 \sigma_{t, i}^2 /2 \right\} \Big|       \\
& \le \sum_{i = 1}^{N_t} \left|  {\mathrm E}_{\mathcal{F}_t} \exp \left\{  i u V_{t,i} \right\}   
-  \exp \left\{  -u^2 \sigma_{t, i}^2  /2 \right\} \right|  ,
\end{align*}
where in the last step we used the fact that, for complex numbers $\{z_i\}_{1 \le i \le n}$ and  $\{z'_i\}_{1 \le i \le n}$ of modulus at most $1$,
\[ \Big| \prod_{i = 1}^n z_i  - \prod_{i=1}^n z'_i \Big| \le \sum_{i = 1}^n |z_i - z'_i|. \]
Hence, applying the triangle inequality,
\[
\big|  {\mathrm E}_{\mathcal{F}_t} \exp \left\{  i u V_{t,i} \right\}   
-  \exp \big\{  -u^2 \sigma_{t, i}^2  /2 \big\} \big| \id_{\mathcal{E}_t}   \le A_t + B_t, \]
where 
\[A_t =  \sum_{i = 1}^{N_t} \left|  {\mathrm E}_{\mathcal{F}_t} \left[ \exp \left\{  i u V_{t,i} \right\}  -  \left( 1 - u^2 V^2_{t, i} /2 \right) \right]   \right|  \id_{\mathcal{E}_t}\]
and
\[B_t = \sum_{i = 1}^{N_t} \left|   \exp \left\{  -u^2 \sigma_{t, i}^2 /2 \right\} - \left( 1 - u^2 \sigma^2_{t, i}/2 \right)   \right| \id_{\mathcal{E}_t}  . \]
It remains to show that each of $A_t$ and $B_t$ converge to zero almost surely.

\smallskip

Proceeding first with $A_t$, we apply Taylor's inequality \cite[Eq.~($26.4_2$)]{B2}
\[   \left|  \exp \{ ix \} - \left( 1 + ix - x^2/2 \right) \right| \le \min \{ |x^2|, |x^3| \}  , \quad x \in \R, \]
and condition $(2)$ to bound
\[ A_t \le  \sum_{i = 1}^{N_t}{\mathrm E}_{\mathcal{F}_t} \left[  \min\big\{ |u V_{t, i}|^2, |u V_{t, i}|^3  \big\}\right]\id_{\mathcal{E}_t}  . \]
Fixing $\varepsilon > 0$, we then have
\begin{align*}
A_t
&\le \sum_{i = 1}^{N_t}\Big\{
{\mathrm E}_{\mathcal{F}_t} \left[ |u V_{t, i}|^2 
\id_{\{|V_{t, i}| \ge \varepsilon\}} \right] \id_{\mathcal{E}_t} +  {\mathrm E}_{\mathcal{F}_t} \left[  |u V_{t, i}|^3 \id_{\{|V_{t, i}|  < \varepsilon\}}   \right] \id_{\mathcal{E}_t} \Big\}  \\
& \le u^2\sum_{i = 1}^{N_t}{\mathrm E}_{\mathcal{F}_t}[ V^2_{t, i} \id_{\{|V_{t, i}| \ge \varepsilon\}} ] \id_{\mathcal{E}_t} 
+\varepsilon |u|^3 \sum_{i = 1}^{N_t}\sigma_{t, i}^2   .
 \end{align*}
The second term equals $\varepsilon |u|^3$ by $(2)$, 
and the first term tends to zero by  $(3)$, which proves that $A_t\to 0$ almost surely as $t\to\infty$ as $\e$ was arbitrary.

\smallskip

Turning then to $B_t$, we use the fact that, for $x \ge 0$,
\[ \left| e^{-x} - 1 +x\right| \le x^2/2, \]
to bound
\[ B_t \le  \frac{u^4}{8}  \sum_{i = 1}^{N_t} 
\sigma^4_{t, i}     
\le  \frac{u^4}{8}  \max_{1 \le i \le N_t} \sigma^2_{t, i}  \sum_{i = 1}^{N_t} 
 \sigma^2_{t, i}    . \]
The sum equals one according to $(2)$. 
Fixing $\varepsilon > 0$, we also have that
\begin{align*}
\max_{1 \le i \le N_t} \sigma^2_{t, i}  
&\le \max_{1 \le i \le N_t} \Big\{
{\mathrm E}_{\mathcal{F}_t}[ V^2_{t, i} \id_{\{|V_{t, i}| < \varepsilon\}} ]  
+ {\mathrm E}_{\mathcal{F}_t}[ V^2_{t, i} 
\id_{\{|V_{t, i}| \ge \varepsilon\}} ]    
\Big\} \id_{\mathcal{E}_t} \\
& \le \varepsilon^2 +  \sum_{1 \le i \le N_t}  
{\mathrm E}_{\mathcal{F}_t}[ V^2_{t, i} \id_{\{|V_{t, i}| \ge \varepsilon\}}  ]\id_{\mathcal{E}_t}   .
   \end{align*}
Applying condition $(3)$ we have the result, since $\varepsilon > 0$ was arbitrary.
\end{proof}

\bigskip

\printbibliography

\end{document}